 \newcommand{\Z}{\mathbb{Z}}
 \newcommand{\Red}[1]{\textcolor{black}{#1}}
 \newcommand{\Blue}[1]{\textcolor{black}{#1}}
  \newcommand{\Ha}[1]{\widehat{#1}}
  \newcommand{\Ov}[1]{\overline{#1}}
 \newcommand{\Ti}[1]{\widetilde{#1}}
  \newcommand{\Ch}[1]{\check{#1}}
  \newcommand{\Bb}[1]{\mathbb{#1}}
  \newcommand{\Fr}[1]{\mathfrak{#1}}
   \newcommand{\Rm}[1]{\mathrm{#1}}
   \newcommand{\Ca}[1]{\mathcal{#1}} 
\newtheorem{prop}{Proposition}[section]
 \newtheorem{thm}[prop]{Theorem}
 \newtheorem{lem}[prop]{Lemma}
 \newtheorem{cor}[prop]{Corollary}
\theoremstyle{remark}
\newtheorem{define}[prop]{Definition}
\newtheorem{rmk}[prop]{Remark}
\numberwithin{equation}{section} 
\begin{document}
\baselineskip=18pt
 \title{Bounds for the order of automorphism groups of cyclic covering fibrations of an elliptic surface}
\author{Hiroto Akaike}

\keywords{fibration, automorphism group}
 
\subjclass[2020]{Primary~14J50, Secondary~14J10}
 
\maketitle

\begin{abstract}
We study automorphism groups of fibered surfaces for finite cyclic covering fibrations of an elliptic surface.
We estimate the order of a finite subgroup of automorphism groups in terms of the genus of the base curve, the covering degree and the square of the relative canonical divisor.
\end{abstract}

\section*{Introduction}
\Red{
We shall work over the complex number field $\mathbb{C}$.
This is a continuation of our previous work \cite{Aka} studying the order of 
automorphism groups of fibred algebraic surfaces.}

The study of automorphism groups of projective varieties has a long history.
\Red{
Hurwitz's classical theorem states that if $C$ is a smooth projective curve of genus $g(C) \geq 2$,
then} \Blue{the order of the automorphism group} \Red{is bounded by $84(g(C)-1)$.}
Xiao attempted to generalize \Red{this result} to minimal surfaces of general type and gave the upper bound of the order of \Blue{the automorphism groups} (\cite{Xiao1},\cite{Xiao2}).
\Red{While studies of the automorphism groups of surfaces exist in absolute settings} \Blue{as above},
\Red{we can consider a similar problem in relative settings.}
For this purpose, 
we define basic notions of fibered surfaces and its automorphisms.

Let $f:S \to B$ be a surjective morphism from a complex smooth projective surface $S$ 
to a smooth projective curve $B$ with connected fibers. 
We call it a {\it fibration} or \Red{a {\it fibered surface}} of genus $g$ when a general fiber is a curve of genus $g$.
A fibration is called {\it relatively minimal}, when any $(-1)$-curve is not contained in fibers.
Here we call a smooth rational curve $C$ with $C^2=-n$ a $(-n)$-curve.
A fibration is called {\it smooth} when all fibers are smooth, {\it isotrivial} when all of the 
smooth fibers are isomorphic, {\it locally trivial} when it is smooth and isotrivial.
\Red{
Assume that $f: S \to B$ is a relatively minimal fibration of genus $g$.
We denote by $K_{f}=K_{S}-f^{\ast}K_{B}$ a relative canonical divisor.
}

An automorphism of the fibration $f:S \to B$ is a pair of automorphisms $(\kappa_{S},\kappa_{B})\in \Rm{Aut}(S)\times  \Rm{Aut}(B)$
satisfying $ f \circ \kappa_{S} =  \kappa_{B} \circ  f $, that is, the diagram 
\begin{equation*}
\xymatrix{
S\ar[r]^-{\kappa_{S}}\ar[d]_-{f}\ar@{}[rd]|{\circlearrowright}&S\ar[d]^-{f}\\
B\ar[r]_-{\kappa_{B}}&B
}
\end{equation*}
is commutative.
We denote by $\Rm{Aut} (f)$ the group of all automorphisms of $f$.

Our main objects are primitive cyclic covering fibrations:
\Red{
\begin{define}[\cite{Eno}]
Let $f:S \to B$ be a relatively minimal fibration of genus $g\geq2$.
We call it a {\it primitive cyclic covering fibration} of type $(g,1,n)$, when 
there are  a $($not necessarily relatively minimal $)$ fibration $\tilde{\varphi}:\Ti{W} \to B$ of 
genus $1$ ({\it i.e.} elliptic surface) and a classical $n$-cyclic covering
$$
\tilde{\theta}:\Ti{S}=
\mathrm{Spec}_{\Ti{W}}\left(\bigoplus_{j=0}^{n-1} \mathcal{O}_{\Ti{W}}(-j\Ti{\Fr{d}})\right)\to\Ti{W}
$$
branched over a smooth (not necessarily irreducible) curve $\Ti{R} \in |n\Ti{\Fr{d}}|$ 
and $\Ti{\Fr{d}} \in \Rm{Pic}(\Ti{W})$ such that $f$ is the relatively minimal model of 
$\tilde{f}=\tilde{\varphi}\circ\tilde{\theta}$.
\end{define}
}
\noindent
\Red{
Though it seems restrictive, it should be noticed that this class 
contains some classically important fibrations as subclasses.
In fact, one sees immediately that any hyperelliptic (resp. bielliptic) 
fibrations of genus $g$ are necessarily
primitive cyclic covering fibrations of type $(g,0,2)$ (resp. of type 
$(g,1,2)$ when $g\geq 6$).
Here, a bielliptic curve is a smooth projective curve admitting a double covering of an elliptic curve
and a fibration is called {\it bielliptic} if a general fiber is a bielliptic curve.
}
Bielliptic fibrations are \Blue{an interesting class} of fibrations.
For instance, 
it seemed that the lower bound of slopes of fibrations   
increases monotonically with respect to \Red{its gonality}.
But it is not true.
\Red{There exists a tetragonal fibration and its slope is less than the lower bound 
of the slope of trigonal fibrations.
Such tetragonal fibrations are nothing more than bielliptic fibrations (\cite{Bar},\cite{Kon})}.

Arakawa  \cite{Ara} and later Chen \cite{Chen} studied \Red{the automorphism groups} for 
hyperelliptic fibrations and gave the upper bound 
of the order of a finite subgroup of $\Rm{Aut} (f)$ in terms of $g$, $g(B)$ (the genus of $B$) and $K_f^2$.
\Red{
In [1], we considered the case that $h=0$, and
gave an upper bound on the order of $\mathrm{Aut}(f)$, generalizing 
results due to Arakawa \cite{Ara} and Chen \cite{Chen} for
hyperelliptic fibrations.}
\Red{We} constructed an example that shows \Red{our} bound is almost optimal in some cases.

\Red{
In this paper, we study the case that $h=1$, that is, cyclic covering 
fibrations of elliptic surfaces.
In order to state our results, we need some further notation.
Keeping the notation in Definition 0.1 with $h=1$, let $\varphi:W\to B$ 
be the relatively minimal model of
$\Ti{\varphi}: \Ti{W}\to B$, $R$ the image of $\Ti{R}$ by the 
natural birational map $\Ti{W}\to W$,
and $\Gamma_p$ the fiber of $\varphi$ over $p\in B$. If $\Gamma_p$ is a 
smooth
fiber, fixing a point $O_p\in \Gamma_p$, we can regard $\Gamma_p$ as an abelian group with the unit element $O_p$.
Put
\[
  \delta:=\min \{\sharp \mathrm{Aut}(\Gamma_p, O_p)\mid p\in B \text{ 
and }\Gamma_p \text{ smooth }\},
\]
where $\mathrm{Aut}(\Gamma_p, O_p):=\{\kappa \in 
\mathrm{Aut}(\Gamma_p)\mid \kappa(O_p)=O_p\}$.
}
\Blue{
\begin{thm}[Theorem~\ref{main1}]
Let $f:S \to B$ be a non-locally trivial primitive cyclic covering fibration of type $(g,1,n)$ 
with $g \geq \Rm{max}\{\frac{30n^2 -47n +25}{n+1}, \frac{7}{2}n(n-1)+1\}$.
Put 
\begin{align*}
\mu_{n}:=&\frac{12 n^2 \delta}{n^2 -1}.
\end{align*}
Assume furthermore that when $g(B)=0$, $f$ has at least $3$ singular fibers.
Let $G$ be a finite subgroup of $\Rm{Aut} (f)$.
Then it holds
\[
  \sharp G \leq \begin{cases}
    6(2g(B)-1)\mu_{n}K_{f}^2 & (g(B) \geq 1 ),\\
     5\mu_{n}K_{f}^2 & (g(B)=0).
  \end{cases}
\]
\end{thm}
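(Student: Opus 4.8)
The plan is to separate the action of $G$ into a horizontal part on the base $B$ and a vertical part along the fibers, bound each combinatorially, and then convert the result into a bound in $K_f^2$ by means of an effective lower bound for $K_f^2$ of type $(g,1,n)$ fibrations. Restricting the homomorphism $\Rm{Aut}(f)\to\Rm{Aut}(B)$, $(\kappa_S,\kappa_B)\mapsto\kappa_B$, to $G$ gives an exact sequence $1\to G_S\to G\to G_B\to 1$, where $G_S$ consists of the automorphisms acting trivially on $B$ (equivalently, preserving every fiber) and $G_B\subset\Rm{Aut}(B)$ is the image; hence $\sharp G=\sharp G_S\cdot\sharp G_B$ and the two factors may be estimated independently.

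For the vertical factor I would first descend to the elliptic surface. Since the covering datum $(\Ti{\Fr{d}},\Ti{R})$ of $\Ti{\theta}$ is intrinsically attached to $f$ (this is the role of primitivity, cf.\ \cite{Eno},\cite{Aka}), every $\kappa\in G$ normalizes the Galois group $\langle\sigma\rangle\cong\Z/n\Z$ of the covering and descends to an automorphism $\Ov{\kappa}$ of the relatively minimal elliptic surface $\varphi:W\to B$. This yields a homomorphism $G\to\Rm{Aut}(\varphi)$ with kernel $G\cap\langle\sigma\rangle$ of order at most $n$, so that $\sharp G\leq n\cdot\sharp\Ov{G}$ with $\Ov{G}\subset\Rm{Aut}(\varphi)$ the image. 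On a smooth fiber $\Gamma_p$ a fiber-preserving element of $\Ov{G}$ acts as a translation composed with an element of $\Rm{Aut}(\Gamma_p,O_p)$; the origin-fixing part injects into $\Rm{Aut}(\Gamma_p,O_p)$ for general $p$ and hence has order at most $\sharp\Rm{Aut}(\Gamma_p,O_p)$ for every smooth fiber, that is, at most $\delta$, while the translation part is a group of torsion sections whose order is controlled by the singular fibers of $\varphi$.

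For the horizontal factor I would apply Riemann--Hurwitz to $B\to B/G_B$, using that $G_B$ permutes the finite set of points over which $f$ has singular fibers. When $g(B)\geq 1$ this bounds $\sharp G_B$ linearly in the number of such special points, with the genus entering through the factor $2g(B)-1$; when $g(B)=0$ the standing hypothesis of at least three singular fibers forces $G_B$ to preserve at least three orbits on $\mathbb{P}^{1}$ and accounts for the separate constant $5$. The decisive step is then to feed both the torsion bound and the singular-fiber count into an effective lower bound for $K_f^2$ valid for fibrations of type $(g,1,n)$: the hypothesis $g\geq\max\{\frac{30n^2-47n+25}{n+1},\ \frac{7}{2}n(n-1)+1\}$ is precisely what makes this slope-type estimate usable, and its coefficients produce the factor $\mu_n=\frac{12n^2\delta}{n^2-1}$ once the explicit $n\delta$ obtained above is absorbed.

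The step I expect to be the main obstacle is this last conversion. One must simultaneously bound the Mordell--Weil torsion part of $G_S$ together with the order of $G_B$ by the number of singular fibers, and bound that number from below by a multiple of $K_f^2$ with constants sharp enough that the product $\sharp G_S\cdot\sharp G_B$ stays below $6(2g(B)-1)\mu_nK_f^2$, respectively $5\mu_nK_f^2$. Aligning the constants in the two regimes $g(B)\geq 1$ and $g(B)=0$ against the single expression $\mu_n$, and checking that the genus hypothesis is exactly the threshold needed for the $K_f^2$ lower bound to hold, is where the bookkeeping is most delicate.
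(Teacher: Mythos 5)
There is a genuine gap, and it sits exactly where you try to make the global bookkeeping ``independent.'' You claim $\sharp G=\sharp G_S\cdot\sharp G_B$ with ``the two factors estimated independently,'' but no bound on $\sharp G_B$ in terms of $g(B)$ alone exists: for $g(B)=0$ finite cyclic subgroups of $\mathrm{PGL}_2(\C)$ have unbounded order, for $g(B)=1$ translation subgroups of $B$ do, and even for $g(B)\geq 2$ Hurwitz only gives $84(g(B)-1)$, which does not produce the stated constants. The paper never bounds $\sharp H$ by a constant. Instead it fixes a single well-chosen fiber $\Gamma_p$ with $K_f^2(\Gamma_p)>0$, writes $\sharp H=\sharp(H\cdot p)\cdot r_p$ with $r_p=\sharp\,\mathrm{Stab}_H(p)$, and absorbs the orbit length into the invariant via the localization $K_f^2=\sum_{q}K_f^2(\Gamma_q)\geq \sharp(H\cdot p)\,K_f^2(\Gamma_p)$, which is legitimate because every local contribution is non-negative (Lemma~\ref{lowerbdK1}). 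Only the stabilizer order then needs a genus-dependent bound: the Riemann--Hurwitz case analysis for $B\to B/H$ yields $r_p\leq 6(2g(B)-1)$ when $g(B)\geq 2$, the fixed-point bound on an elliptic curve gives $r_p\leq 6$ when $g(B)=1$, and for $g(B)=0$ the constant $5$ comes from point stabilizers of $A_4$, $S_4$, $A_5$, while for cyclic or dihedral $H$ the three-singular-fiber hypothesis is used to find a fiber with $K_f^2(\Gamma_p)>0$ and $r_p\leq 2$. Your closing paragraph senses this tension, but the fix you propose --- bounding $\sharp G_B$ by the number of singular fibers and bounding that number ``from below by a multiple of $K_f^2$'' --- has the inequality pointing the wrong way and is in any case not the mechanism: the orbit length is absorbed by localization of $K_f^2$, not by a count of singular fibers.

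The second gap is in the vertical factor: there is no global ``effective lower bound for $K_f^2$'' of slope type in the paper, and the genus hypothesis is not the threshold for such an estimate. Via $r=2(g-1)/(n-1)$, the hypothesis $g\geq\max\{\frac{30n^2-47n+25}{n+1},\frac{7}{2}n(n-1)+1\}$ translates into $r\geq\max\{60+\frac{12}{n^2-1}-\frac{96}{n+1},7n\}$, which is exactly what makes the \emph{fiberwise} estimates $n\,\sharp\Ti{K}\leq\mu_n K_f^2(\Gamma_p)$ (Propositions~\ref{estimateK2}, \ref{estimateK1} and \ref{estimateK1.5}) go through. Proving those occupies Sections~2--5: the refinement of Enokizono's localization down to points $K_f^2(\Gamma_p)_{\Ha{z}}$, Kodaira's group structure on $\Gamma_{q,\mathrm{sm}}$ and the subgroup of translations $T(\Ti{K})_p$ with $\sharp\Ti{K}\leq\delta\,\sharp T(\Ti{K})_p$, the unramified base change $\Pi:W_{\Delta^{\dagger}}\to W_{\Delta}$ eliminating multiple fibers of type $l\mathrm{I}_c$, and the case division (C1)--(C4) at singular points of the branch locus. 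Your sketch compresses all of this into ``the translation part is controlled by the singular fibers of $\varphi$,'' which is precisely the content that has to be proved; so as written the proposal has a genuine gap both in the reduction (independent factors cannot work) and in the local estimates that constitute the bulk of the argument.
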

\begin{thm}[Theorem~\ref{main2}]
Let $f:S \to B$ be a non-locally trivial primitive cyclic covering fibration of type $(g,1,n)$ 
with $g \geq \Rm{max}\{\frac{30n^2 -47n +25}{n+1},  \frac{7}{2}n(n-1)+1\}$.
Put 
\begin{align*}
\mu_{n}':=\frac{6 n^{2} \delta }{(n-1)(5n-4)}.
\end{align*}
Assume furthermore that 
the branch locus $R$ has singular points on at least three $($resp. one$)$ fibers 
when $g(B)=0$ $($resp. $g(B)\geq 1$$)$.
Let $G$ be a finite subgroup of $\Rm{Aut} (f)$.
Then it holds
\[
  \sharp G \leq \begin{cases}
    6(2g(B)-1)\mu_{n}'K_{f}^2 & (g(B) \geq 1 ),\\
     5\mu_{n}' K_{f}^2 & (g(B)=0).
  \end{cases}
\]
\end{thm}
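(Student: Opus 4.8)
The plan is to bound $\sharp G$ by the product of a vertical and a horizontal contribution. Let $G_S\subset G$ be the normal subgroup of automorphisms inducing the identity on $B$, and let $G_B\subset\Rm{Aut}(B)$ be the image of $G$, so that $1\to G_S\to G\to G_B\to1$ and $\sharp G=\sharp G_S\cdot\sharp G_B$. I expect the entire factor $K_f^2$ to come from the vertical part, proving $\sharp G_S\le\mu_n'\,K_f^2$, while the base contributes only the numerical factor $\sharp G_B\le 5$ (resp.\ $6(2g(B)-1)$) when $g(B)=0$ (resp.\ $g(B)\ge1$); the two bounds then multiply to give the theorem.

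For the vertical bound I would first use the largeness of $g$ to make the covering canonical: for $g\ge\frac72 n(n-1)+1$ the cyclic structure $\Ti\theta$ is intrinsic to $f$, so every element of $G$ normalizes the covering transformation $\sigma$ generating the Galois group $\Z/n\Z$, and $G$ descends to a finite group acting on the relatively minimal elliptic surface $\varphi:W\to B$ preserving $R$ and $\Fr d$. Restricting $G_S$ to a general fibre $F$, an $n$-cyclic cover of a smooth elliptic fibre $\Gamma$ branched at the $b:=\frac{2(g-1)}{n-1}$ points of $R\cap\Gamma$, the descent gives $1\to G_S\cap\langle\sigma\rangle\to G_S\to H\to1$ with $H\hookrightarrow\Rm{Aut}(\Gamma)$ preserving those $b$ points. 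Splitting $\Rm{Aut}(\Gamma)=\Gamma\rtimes\Rm{Aut}(\Gamma,O)$, the translation part of $H$ acts freely on the $b$ branch points, so its order divides $b$; the linear part, coming from a global automorphism of the elliptic surface, restricts compatibly to every smooth fibre and hence injects into $\Rm{Aut}(\Gamma_{p_0},O_{p_0})$ at a fibre realizing the minimum $\delta$, so has order at most $\delta$. Since the kernel consists of elements acting on $F$ as powers of $\sigma$ and is therefore contained in $\langle\sigma\rangle$, it has order at most $n$, and I obtain $\sharp G_S\le n\,b\,\delta$.

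It then remains to convert $n\,b\,\delta$ into a multiple of $K_f^2$, and this is where the hypothesis that $R$ is singular on at least one (resp.\ three) fibre enters. These singularities force extra blow-ups in the resolution $\Ti W\to W$, which improve the lower bound for $K_f^2$ from the generic slope estimate $K_f^2\ge\frac{(n+1)(g-1)}{6n}$ behind Theorem~\ref{main1} to $K_f^2\ge\frac{(5n-4)(g-1)}{3n}$. Substituting $b=\frac{2(g-1)}{n-1}$, a direct computation gives $\mu_n'\,K_f^2\ge\frac{6n^2\delta}{(n-1)(5n-4)}\cdot\frac{(5n-4)(g-1)}{3n}=\frac{2n\delta(g-1)}{n-1}=n\,b\,\delta\ge\sharp G_S$, which is the desired vertical bound. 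For the horizontal factor, the same singular points of $R$ supply a nonempty $G_B$-invariant finite subset of $B$; the orbifold Riemann--Hurwitz estimate for automorphisms of $B$ fixing such a marked set (the base-curve analysis already used in the $h=0$ case) then yields $\sharp G_B\le5$ for $g(B)=0$, using the three marked fibres, and $\sharp G_B\le6(2g(B)-1)$ for $g(B)\ge1$.

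The main obstacle is the improved slope inequality $K_f^2\ge\frac{(5n-4)(g-1)}{3n}$: one has to track, fibre by fibre, the effect on $K_f^2$ of the singularities of $R$ through the chain $\Ti S\to\Ti W$, the resolution $\Ti W\to W$, and the relative minimalization $\Ti f\to f$, and to show that singular points of $R$ contribute enough positivity to upgrade the coefficient $n+1$ to $2(5n-4)$ \emph{uniformly}, not merely up to a constant; getting this numerology exact, so that the substitution above cancels cleanly, is the delicate part. A secondary but genuine difficulty is proving that at the stated genus threshold $\langle\sigma\rangle$ is normal in $\Rm{Aut}(f)$, since the whole descent to the elliptic surface — and hence the appearance of $\delta$ — rests on it.
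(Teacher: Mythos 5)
Your decomposition $\sharp G=\sharp G_S\cdot\sharp G_B$ with the \emph{absolute} bounds $\sharp G_B\le 5$ (for $g(B)=0$) and $\sharp G_B\le 6(2g(B)-1)$ (for $g(B)\ge 1$) is the fatal gap: no such bound on the horizontal part holds under the hypotheses of the theorem. The paper's own example in Section~7 satisfies those hypotheses (base $\Bb{P}^{1}$, branch locus singular on twelve fibers), yet its horizontal part contains the icosahedral group $I_{60}$, of order $60>5$; similarly for $g(B)=1$ translations allow $\sharp G_B$ to grow with the number of marked fibers, hence with $K_f^2$. The constants $5$ and $6(2g(B)-1)$ in the statement bound not $\sharp H$ but the stabilizer order $r_p=\sharp\Rm{Stab}_H(p)$ at one well-chosen fiber; the remaining factor $\sharp H/r_p=\sharp(H\cdot p)$ has to be absorbed into $K_f^2$, which is exactly what the paper does: since every fiber satisfies $K_f^2(\Gamma)\ge 0$ (Proposition~\ref{localization} with Lemma~\ref{lowerbdK1}), one has $K_f^2\ge \sharp(H\cdot p)\,K_f^2(\Gamma_p)$ and hence $\sharp G=n\,\sharp\Ti{K}\,\sharp H\le \frac{n\,\sharp\Ti{K}}{K_f^2(\Gamma_p)}\,r_p\,K_f^2$, after which only $r_p$ is bounded by the Hurwitz analysis in the proof of Theorem~\ref{main1}. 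Relatedly, the hypothesis of three marked fibers when $g(B)=0$ is used to choose, when $H$ is cyclic or dihedral, a marked point off the two poles so that $r_p\le 2$; preserving a set of at least three points does \emph{not} bound the order of $H$ (a dihedral group of arbitrary order preserves large orbits), contrary to what your orbifold Riemann--Hurwitz step asserts.

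Independently, your vertical step rests on the unproven global slope inequality $K_f^2\ge\frac{(5n-4)(g-1)}{3n}$, which the paper neither proves nor needs. Your reduction $\sharp G_S\le n\,r\,\delta$ does agree with the paper ($\Sigma\lhd K$ and $\sharp\Ti{K}\le\delta r$ follow from the normality lemma in Section~4 and Corollary~\ref{groupstr}), but the paper then proceeds in a qualitatively different way: Propositions~\ref{estimateK2}, \ref{estimateK1} and \ref{estimateK1.5} bound the localized quantity $K_f^2(\Gamma_p)$ from below by a constant multiple of $\sharp\Ti{K}$ \emph{itself}, with no reference to $g$. The mechanism is that the translation subgroup $T(\Ti{K})_p$ spreads a singular point of $R$ into an orbit of analytically equivalent singular points on the same fiber (after the unramified base change $\Pi$ of degree $l$ removing the multiplicity, Lemma~\ref{lifting2}), each contributing to the singularity indices $\alpha_k$ in the localization, while the stabilizer is dominated by local intersection numbers $(\Bb{D},\Gamma_{p^{\dagger}})_{z^{\dagger}}$; this yields $2n\delta\,K_f^2(\Gamma_p)\ge\frac{1}{3}(n-1)(5n-4)\,\sharp\Ti{K}$, from which $\frac{n\,\sharp\Ti{K}}{K_f^2(\Gamma_p)}\le\mu_n'$ directly. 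Your route would require the coefficient $\frac{5n-4}{3n}$ to hold uniformly for every fibration whose branch locus has a single singular point, a much stronger global statement whose proof would involve the same fiber-by-fiber analysis anyway; and even granting it, the argument would still fail at the horizontal step, since the two factors do not decouple.
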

\begin{thm}[Corollary~\ref{biell}]
Let $f:S \to B$ be a non-locally trivial bielliptic fibration 
with $g \geq 17$.
Assume furthermore that the branch locus $R$ has singular points on at least three $($resp. one$)$ fibers 
when $g(B)=0$ $($resp. $g(B)\geq 1$$)$.
Let $G$ be a finite subgroup of $\Rm{Aut} (f)$.
Then it holds
\[
  \sharp G \leq \begin{cases}
    24\delta(2g(B)-1)K_{f}^2 & (g(B) \geq 1 ),\\
     20 \delta  K_{f}^2 & (g(B)=0).
  \end{cases}
\]
\end{thm}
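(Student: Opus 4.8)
The plan is to deduce this directly from Theorem~\ref{main2} by specializing to the case $n=2$. First I would recall the observation made in the introduction that any bielliptic fibration of genus $g\geq 6$ is automatically a primitive cyclic covering fibration of type $(g,1,2)$. Since the hypothesis $g\geq 17$ certainly forces $g\geq 6$, the given fibration $f:S\to B$ belongs to the class treated in Theorem~\ref{main2} with $n=2$, and it is non-locally trivial by assumption.

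Next I would check that the numerical hypotheses of Theorem~\ref{main2} are satisfied with $n=2$. Evaluating the genus threshold, one finds
\[
\frac{30n^2-47n+25}{n+1}\Big|_{n=2}=\frac{120-94+25}{3}=17,\qquad \Bigl(\tfrac{7}{2}n(n-1)+1\Bigr)\Big|_{n=2}=8,
\]
so that $\max\{17,8\}=17$; hence the assumption $g\geq 17$ is precisely the genus bound required by Theorem~\ref{main2}. The hypothesis on the singular points of the branch locus $R$ is identical to the one imposed here, so every hypothesis of Theorem~\ref{main2} is in force.

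Finally I would evaluate the constant $\mu_n'$ at $n=2$:
\[
\mu_2'=\frac{6\cdot 2^2\cdot\delta}{(2-1)(5\cdot 2-4)}=\frac{24\delta}{6}=4\delta.
\]
Substituting $\mu_2'=4\delta$ into the two bounds of Theorem~\ref{main2} gives $6(2g(B)-1)\cdot 4\delta\, K_f^2=24\delta(2g(B)-1)K_f^2$ when $g(B)\geq 1$, and $5\cdot 4\delta\, K_f^2=20\delta K_f^2$ when $g(B)=0$, which are exactly the asserted inequalities. Since the argument is a direct specialization, there is no substantive obstacle; the only points deserving care are the two elementary computations above, namely that the genus threshold collapses to $17$ at $n=2$ and that $\mu_2'$ simplifies to $4\delta$.
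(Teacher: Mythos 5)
Your proposal is correct and is exactly the paper's route: the paper states Corollary~\ref{biell} immediately after Theorem~\ref{main2} with no separate proof, intending precisely the specialization $n=2$ (licensed by the remark in the introduction that a bielliptic fibration with $g\geq 6$ is a primitive cyclic covering fibration of type $(g,1,2)$), and your two computations — the genus threshold $\max\{\frac{120-94+25}{3},\,8\}=17$ and $\mu_2'=\frac{24\delta}{6}=4\delta$ — are both right.
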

\begin{thm}[Corollary~\ref{main3}]
Let $f:S \to B$ be a non-locally trivial primitive cyclic covering fibration of type $(g,1,n)$ 
with $g \geq \Rm{max}\{\frac{30n^2 -47n +25}{n+1},  \frac{7}{2}n(n-1)+1\}$.
Put $\Rm{Aut} (S/B):=\{(\kappa_{S},\Rm{id}_{B})\in \Rm{Aut}(f)\}$.
Assume that the branch locus $R$ has a singular point.
Then it holds
\[
  \sharp \Rm{Aut} (S/B) \leq \frac{6 n^{2} \delta }{(n-1)(5n-4)}K_{f}^2. \]
\end{thm}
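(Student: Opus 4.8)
The plan is to recognize $\Rm{Aut}(S/B)$ as the purely ``vertical'' part of the automorphism group and to extract the corresponding estimate from the proof of Theorem~\ref{main2}, in which the factor depending on $g(B)$ arises only from the induced action on the base $B$.

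First I would confirm that $\Rm{Aut}(S/B)$ is finite, so that it is a genuine finite subgroup $G\le\Rm{Aut}(f)$ to which the machinery applies. Restricting to the generic fiber $F_{\eta}$ of $f$ — a smooth projective curve of genus $g\ge2$ over the function field $K(B)$ — gives a homomorphism $\Rm{Aut}(S/B)\to\Rm{Aut}(F_{\eta}/K(B))$. It is injective, since an automorphism over $B$ that is trivial on the dense generic fiber is the identity on $S$, and its target is finite by Hurwitz's theorem for curves of genus $\ge2$. Hence $\Rm{Aut}(S/B)$ is finite.

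Next, I would recall the structure underlying the proof of Theorem~\ref{main2}. For a finite $G\le\Rm{Aut}(f)$ one has the exact sequence
\[
1\longrightarrow G_{S/B}\longrightarrow G\longrightarrow G_B\longrightarrow 1,
\]
with $G_{S/B}:=\{(\kappa_S,\Rm{id}_B)\in G\}$ the subgroup acting trivially on $B$ and $G_B\subseteq\Rm{Aut}(B)$ its image. The estimate of Theorem~\ref{main2} factors as $\sharp G=\sharp G_{S/B}\cdot\sharp G_B$, where the vertical part is bounded by $\sharp G_{S/B}\le\mu_{n}' K_{f}^2$ — this is where the $n$-cyclic covering structure, its deck transformation of order $n$, the fiberwise quantity $\delta$, and the singular point of $R$ enter — while the factor $6(2g(B)-1)$ (resp.\ $5$ when $g(B)=0$) bounds $\sharp G_B$ and is the sole place where the distribution of singular fibers across the base is used.

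Now take $G=\Rm{Aut}(S/B)$. Every element satisfies $\kappa_B=\Rm{id}_B$ by definition, so $G_B=\{1\}$ and $G=G_{S/B}$. The vertical estimate then yields directly
\[
\sharp\,\Rm{Aut}(S/B)=\sharp G_{S/B}\le\mu_{n}' K_{f}^2=\frac{6n^2\delta}{(n-1)(5n-4)}K_{f}^2,
\]
which is the assertion. The only point requiring care — and the main obstacle — is to verify that the vertical bound $\sharp G_{S/B}\le\mu_{n}' K_{f}^2$ is already available under the weaker hypothesis that $R$ merely has a singular point, in place of the multi-fiber conditions of Theorem~\ref{main2}. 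This is indeed so: those conditions serve only to control $G_B$ (to cut out a bounded finite subgroup of $\Rm{PGL}_2$ when $g(B)=0$, and to run a Riemann--Hurwitz estimate when $g(B)\ge1$), whereas $\Rm{Aut}(S/B)$ fixes every fiber, so the presence of a single singular point of the branch locus suffices to drive the vertical estimate.
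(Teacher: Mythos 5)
Your proposal is correct and takes essentially the paper's own (implicit) route to Corollary~\ref{main3}: for $G=\Rm{Aut}(S/B)$ the horizontal part $H$ is trivial, so $\sharp G=n\,\sharp\Ti{K}\leq \mu_n' K_f^2(\Gamma_p)\leq \mu_n' K_f^2$, where the first inequality is Proposition~\ref{estimateK1} applied at a fiber on which $R$ has a singular point and the second uses the nonnegativity of all local contributions $K_f^2(\Gamma_q)$; your observation that the multi-fiber hypotheses of Theorem~\ref{main2} serve only to control the base action is exactly right. One minor expository inaccuracy, immaterial here since $H=\{1\}$: in the proof of Theorem~\ref{main2} the factor $6(2g(B)-1)$ (resp.\ $5$) bounds the stabilizer order $r_p=\sharp\Rm{Stab}_H(p)$ at a suitable fiber rather than $\sharp G_B=\sharp H$ itself, the orbit length being absorbed through $K_f^2\geq \sharp(H\cdot p)\,K_f^2(\Gamma_p)$.
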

}
\Red{We state the outline of the proof}.
\Red{Let $G$ be a finite subgroup of $\Rm{Aut}(f)$}.
Then $G$ can be expressed as the extension of its horizontal part $H$ by its vertical part $K$, that is, 
$1\to K\to G\to H \to 1$ (exact). 
We can study $H$ by using known results for the automorphism groups of curves. 
Hence it suffices to \Blue{estimate} $K$.
Then $K$ induces a subgroup $\Ti{K}$ of the automorphism group of the elliptic surface \Red{$\varphi:W \to B$} preserving \Red{the branch locus $R$} and our task is reduced to estimating the order of $\Ti{K}$.
For this purpose, we use the localization of the invariant $K_f^2$.
It is known that $K_f^2$ can be localized to fibers of $f$ (\cite{Eno2}).
We refine the localized $K_f^2$ further by using the quantity defined at a point on a fiber, 
and obtain the new expression of it in Proposition~\ref{localization}.
Then, we estimate the order of $\Ti{K}$ from above with the localized $K_f^2$ multiplied by an explicit function in $g$ and $n$ (Propositions~\ref{estimateK2}, \ref{estimateK1} and \ref{estimateK1.5}).

In \Red{Section~1}, we recall \Red{the basic properties} of primitive cyclic covering fibrations mainly due to \cite{Eno}.
In Section~2, we recast Section~5 of \cite{Eno} and consider the refined localization of $K_f^2$.
In Section~3, we \Red{summarize} \Red{the basic properties} of elliptic surfaces as group manifolds.
In Section~4, \Red{we descend the automorphism group to the elliptic surface $W$ and analyze its action.}
In Section~5, we estimate the order of $\Ti{K}$.
In Section~6, we show our main result, Theorem~\ref{main1}, \ref{main2}, Corollary~\ref{biell} and \ref{main3}.
Section~7 is devoted to \Red{constructing} an example of bielliptic fibrations
with a large automorphism group.

\textbf{Acknowledgememt}

The author is deeply grateful to Professor Kazuhiro Konno for his valuable advices and supports.
The author is particularly grateful to Professor Osamu Fujino and Professor Kento Fujita for their continuous supports.  
The author also thanks to Doctor Makoto Enokizono for his precious advices.
The research is supported by JSPS KAKENHI No. 20J20055.

\tableofcontents

\section{Primitive cyclic covering fibrations}

We recall \Red{the basic} properties of primitive cyclic covering fibrations, most of which can be found in \cite{Eno}.

\begin{define}
Let $f:S \to B$ be a relatively minimal fibration of genus $g\geq2$.
We call it a {\it primitive cyclic covering fibration} of type $(g,1,n)$, when 
there are  a $($not necessarily relatively minimal $)$ fibration $\tilde{\varphi}:\Ti{W} \to B$ of 
genus $1$ ({\it i.e.} elliptic surface) and a classical $n$-cyclic covering
$$
\tilde{\theta}:\Ti{S}=
\mathrm{Spec}_{\Ti{W}}\left(\bigoplus_{j=0}^{n-1} \mathcal{O}_{\Ti{W}}(-j\Ti{\Fr{d}})\right)\to\Ti{W}
$$
branched over a smooth (not \Blue{necessarily} irreducible) curve $\Ti{R} \in |n\Ti{\Fr{d}}|$ 
and $\Ti{\Fr{d}} \in \Rm{Pic}(\Ti{W})$ such that $f$ is the relatively minimal model of 
$\tilde{f}=\tilde{\varphi}\circ\tilde{\theta}$.
\end{define}

In addition, we employ the following notations. 
We denote by $\Sigma=\langle \tilde{\sigma} \rangle$ the covering transformation group of $\tilde{\theta}$ and by 
$\varphi: W\to B$ \Blue{the relatively} minimal model of $\tilde{\varphi}:\Ti{W} \to B$ with the natural contraction map 
$\tilde{\psi}: \Ti{W} \to W$. 
Furthermore, 
$\widetilde{F}$, $F$, $\widetilde{\Gamma}$ and $\Gamma$ will denote general fibers of 
$\tilde{f}$, $f$,  $\tilde{\varphi}$ and $\varphi$, respectively.
\Blue{We write fibers of $\tilde{f}$, $f$,  $\tilde{\varphi}$ and $\varphi$ over $p$ as $\widetilde{F}_{p}$, $F_{p}$, $\widetilde{\Gamma}_{p}$ and $\Gamma_{p}$, respectively}. 

\begin{rmk}
\label{pcc}
We note important properties of primitive cyclic covering fibrations in the following, which can be found in \cite{Eno}. 
\begin{itemize}
\item There exists an automorphism $\sigma \in \Rm{Aut} (S)$ which satisfies the following:
\begin{itemize}
\item The natural morphism $\Ti{S}\to S$ is a minimal succession of blowing-ups 
that resolves all isolated fixed points of $\sigma$.
\item The automorphism $\Ti{\sigma}$ \Blue{is induced} by $\sigma$. 
\end{itemize}
\item Let $\Ti{R}_{v}$ be \Blue{the sum of $\Ti{\varphi}$-vertical} components of $\Ti{R}$.
The self-intersection number of each irreducible \Blue{component} of $\Ti{R}_{v}$ is \Blue{equals
to $-an$} for some positive integer $a$.  
\item \Blue{Any $\Ti{\varphi}$-vertical $(-1)$ curve intersects $\Ti{R}$}.
\end{itemize}
\end{rmk}

From now on, we let $f:S \to B$ be a  primitive cyclic covering fibration of type $(g,1,n)$ and freely use the above notations and conventions.

Since the restriction map 
$\tilde{\theta}|_{\widetilde{F}}:\widetilde{F} \to \widetilde{\Gamma}$ is a classical 
$n$-cyclic covering branched over $\Ti{R} \cap\widetilde{\Gamma}$, 
the Hurwitz formula for $\tilde{\theta}|_{\widetilde{F}}$ gives us
\begin{equation}
r:=\Ti{R} \widetilde{\Gamma}=\frac{2\bigl(g-1\bigr)}{n-1}.
\end{equation}
From $\Ti{R} \in |n\Ti{\Fr{d}}|$, it follows that $r$ is a multiple of $n$.

Since $\tilde{\psi}: \Ti{W} \to W$ is a composite of blowing-ups, we can write 
$\tilde{\psi}=\psi_{1}\circ\cdots\psi_{N}$, 
where $\psi_{i}:W_{i}\to W_{i-1}$ denotes the blowing-up 
at $z_{i} \in W_{i-1}\;(i=1,\cdots, N)$, $W_{0}=W$ and $W_{N}=\Ti{W}$.
We define \Blue{reduced curves} $R_{i}$ inductively as $R_{i-1}=(\psi_{i})_{\ast}R_{i}$ 
starting from $R_{N}=\Ti{R}$ down to $R_{0}=R$.
We call $R_{i}$ {\it \Blue{the branch locus} on $W_{i}$}.
We also put $E_{i}=\psi_{i}^{-1}(z_{i})$ and $m_{i}=\mathrm{mult}_{z_{i}}R_{i-1}\;(i=1,\cdots, N)$.

\begin{lem}[\Blue{\cite{Eno}, Lemma~1.5}]
In the above situation, the following hold for any $i=1,\cdots, N$.
\label{lem1.2}
\begin{itemize}
\item[$(1)$] Either $m_{i} \in n\Z_{\geq1}$ or $n\Z_{\geq1}+1$, where $\Z_{\geq 1}$ is the set of positive integers. 
Furthermore, $m_{i}\in n\Z_{\geq1}$
if and only if $E_{i}$ is not contained in $R_{i}$.

\item[$(2)$] $R_{i}={\psi}_{i}^{\ast}R_{i-1}-n[\frac{m_{i}}{n}]E_{i}$, where $[t]$ denotes the 
greatest integer not exceeding $t$.

\item[$(3)$] There exists a $\Fr{d}_{i} \in \Rm{Pic}(P_{i})$ such that $\Fr{d}_{i}=\psi_{i}^{\ast}\Fr{d}_{i-1}-[\frac{m_{i}}{n}]E_{i}$
 and $R_{i}\sim n\Fr{d}_{i}$, $\Fr{d}_{N}=\Ti{\Fr{d}}$. 
\end{itemize}
\end{lem}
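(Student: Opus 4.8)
The plan is to establish assertions (1), (2) and (3) together by a single descending induction on $i$, from $i=N$ down to $i=1$. I would carry along the inductive hypothesis that $R_i$ is reduced and that $R_i\sim n\Fr{d}_i$ for some $\Fr{d}_i\in\Rm{Pic}(W_i)$. The base case $i=N$ is immediate, since $R_N=\Ti R$ is smooth, hence reduced, and $R_N\sim n\Ti{\Fr{d}}=n\Fr{d}_N$ by the definition of the covering data.

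For the inductive step I would fix $i$ and write $R_{i-1}'$ for the strict transform of $R_{i-1}$ under the single blow-up $\psi_i\colon W_i\to W_{i-1}$ at $z_i$. Because $R_{i-1}=(\psi_i)_\ast R_i$, the curves $R_i$ and $R_{i-1}'$ coincide away from $E_i$, so reducedness of $R_i$ forces $R_i=R_{i-1}'+cE_i$ with $c\in\{0,1\}$, and $c=1$ precisely when $E_i\subset R_i$. The blow-up formulas give $\psi_i^\ast R_{i-1}=R_{i-1}'+m_iE_i$ and $R_{i-1}'\cdot E_i=m_i$, whence $R_i\cdot E_i=m_i-c$. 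The key point is that $R_i\cdot E_i$ is divisible by $n$: this is forced by $R_i\sim n\Fr{d}_i$, since then $R_i\cdot E_i=n(\Fr{d}_i\cdot E_i)$. Together with $c\in\{0,1\}$ this shows $c$ is the residue of $m_i$ modulo $n$, so $m_i\in n\Z_{\geq 0}$ when $c=0$ and $m_i\in n\Z_{\geq 0}+1$ when $c=1$. Finally I would invoke the minimality of $\Ti S\to S$ (Remark~\ref{pcc}) to exclude $m_i\in\{0,1\}$, a value that would make $\psi_i$ a superfluous blow-up at a point lying off $R_{i-1}$ or at a smooth point of it; this upgrades the exponents to be positive and yields (1), including its ``furthermore'' clause via $E_i\subset R_i\Leftrightarrow c=1\Leftrightarrow m_i\in n\Z_{\geq 1}+1$.

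For (2) I would substitute $c=m_i-n[\frac{m_i}{n}]$ into $R_i=R_{i-1}'+cE_i=\psi_i^\ast R_{i-1}-(m_i-c)E_i$ to obtain $R_i=\psi_i^\ast R_{i-1}-n[\frac{m_i}{n}]E_i$. For (3) I would use the decomposition $\Rm{Pic}(W_i)=\psi_i^\ast\Rm{Pic}(W_{i-1})\oplus\Z E_i$: by (2) the $E_i$-coefficient of the class of $n\Fr{d}_i\sim R_i$ is $-n[\frac{m_i}{n}]$, so $\Fr{d}_i+[\frac{m_i}{n}]E_i$ has vanishing $E_i$-coefficient and is therefore a pullback, giving a unique $\Fr{d}_{i-1}\in\Rm{Pic}(W_{i-1})$ with $\Fr{d}_i=\psi_i^\ast\Fr{d}_{i-1}-[\frac{m_i}{n}]E_i$. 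Applying $\psi_i^\ast$ and comparing with (2) gives $\psi_i^\ast(n\Fr{d}_{i-1})\sim\psi_i^\ast R_{i-1}$, and the injectivity of $\psi_i^\ast$ on Picard groups yields $R_{i-1}\sim n\Fr{d}_{i-1}$; as $R_{i-1}=(\psi_i)_\ast R_i$ is again reduced, the induction propagates to level $i-1$.

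The main obstacle will be to keep the induction honestly non-circular: the mod-$n$ constraint in (1) relies on the divisibility $R_i\sim n\Fr{d}_i$ holding at level $i$, while it is exactly the descent in (3) that produces the divisibility at level $i-1$ needed for the next step. I would also take care with the Picard bookkeeping, since it is the injectivity of $\psi_i^\ast$ and the splitting off of the $E_i$-summand that turn the purely numerical residue computation into the integral relation $\Fr{d}_i=\psi_i^\ast\Fr{d}_{i-1}-[\frac{m_i}{n}]E_i$. The only genuinely geometric input beyond intersection theory is the appeal to minimality to discard $m_i\in\{0,1\}$; everything else is the bilinear-form computation on the blow-up.
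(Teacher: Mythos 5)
You should first be aware that the paper contains no proof of Lemma~\ref{lem1.2} to compare against: it is quoted verbatim from \cite{Eno}, Lemma~1.5, so your argument has to stand on its own. Its intersection-theoretic core is correct and is the standard canonical-resolution bookkeeping: writing $R_i=R_{i-1}'+cE_i$ with $c\in\{0,1\}$ (legitimate, since all $R_i$ are reduced pushforwards of $\Ti{R}$), the computation $R_i\cdot E_i=m_i-c$ combined with $n\mid R_i\cdot E_i$ (from the level-$i$ hypothesis $R_i\sim n\Fr{d}_i$) pins down $c\equiv m_i\ (\mathrm{mod}\ n)$, which yields (1) up to positivity, including the ``furthermore'' clause; and the splitting $\Rm{Pic}(W_i)=\psi_i^{\ast}\Rm{Pic}(W_{i-1})\oplus\Z E_i$ gives (2) and (3) exactly as you describe. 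Your descending induction also genuinely resolves the circularity you flag, since (3) at level $i$ produces the divisibility $R_{i-1}\sim n\Fr{d}_{i-1}$ needed at the next step.

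The genuine gap is the exclusion of $m_i\in\{0,1\}$. Your justification --- that minimality of $\Ti{S}\to S$ would make such a $\psi_i$ ``superfluous'' --- does not work as stated, because the factorization $\tilde{\psi}=\psi_1\circ\cdots\circ\psi_N$ is forced by relative minimalization of the elliptic surface $\Ti{\varphi}$, not chosen adapted to the branch curve: a blow-down of a $(-1)$-curve disjoint from $R_i$ (the case $m_i=0$), or of a $(-1)$-curve component of $R_i$ with $R_iE_i=0$ (the case $m_i=1$, which is perfectly consistent with your residue computation), cannot be discarded as unnecessary; it must be shown not to occur. What this actually requires is a comparison of the two towers: using (3) one forms the normal $n$-cyclic cover $S_i\to W_i$ branched over $R_i$, identifies $\Ti{S}\to S$ with the induced tower upstairs, and invokes that every centre upstairs is an isolated fixed point of $\sigma$ (Remark~\ref{pcc}); a centre with $m_i=0$ would lift to $n$ pairwise disjoint, cyclically permuted (hence non-fixed) points, and a centre with $m_i=1$ to a point on the divisorial part of the fixed locus, both contradicting minimality. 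Establishing this equivariant correspondence --- in particular that it is available for an \emph{arbitrary} factorization of $\tilde{\psi}$ --- is the substantive geometric content behind \cite{Eno}, Lemma~1.5, and it is precisely the step your proposal names but does not supply. Note also that Remark~\ref{pcc} only asserts that vertical $(-1)$-curves on $\Ti{W}$ itself meet $\Ti{R}$, so it cannot be applied directly to the intermediate curves $E_i$.
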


We say that a singular point of \Blue{$R_i$} is {\it of type $n\mathbb{Z}$} (resp. $n\mathbb{Z}+1$) if its multiplicity 
is in $n\mathbb{Z}_{\geq 1}$ (resp. $n\mathbb{Z}_{\geq 1}+1$).

\section{Localization of $K_{f}^2$}

We \Blue{recall} the argument in  Section~5 of \cite{Eno} and give localizations of $K_f^2$.
Let $f:S \to B$ be a primitive cyclic covering fibration of type $(g,1,n)$.

First of all, let us recall \Blue{the singularity indices introduced in} \cite{Eno}.
For any fixed $p\in B$, we consider all singular points 
of \Blue{the branch loci} $R=R_{0}, \cdots, R_{N-1}$ over $\Gamma_{p}$. 
For any positive integer $k$, we let $\alpha_{k}(\Gamma_{p})$ be the number of singular points of multiplicity either $kn$ or $kn+1$ among them.
We put $\alpha_{k}:=\sum_{p\in B}\alpha_{k}(\Gamma_{p})$ and call it the {\it $k$-th singularity index} of the fibration.
For an effective vertical divisor $T$ and $p \in B$, we denote \Blue{by $T(p)$ the biggest subdivisor of $T$ whose support is in the fiber over $p$}.
Then $T = \sum_{p \in B}T(p)$.
We sometimes write $\sharp T$ \Blue{as} the number of irreducible components of $T$.

Let $j_{b,a}(\Gamma_{p})$ be the number of irreducible curves with genus $b$ and self-intersection number $-an$ contained in $\Ti{R}_{v}(p)$.
 
 We introduce the following indices:
 \[
 j_{\bullet,a}(\Gamma_{p}):= \sum_{b \geq 0} j_{b,a}(\Gamma_{p}), \quad 
 j_{b,\bullet}(\Gamma_{p}):= \sum_{a \geq 0} j_{b,a}(\Gamma_{p}), \quad
j_{0,1} :=\sum_{p \in B} j_{0,1}(\Gamma_{p}).
 \]
Let $A$ be the sum of all $\Ti{\varphi}$-vertical $(-n)$-curves contained in $\Ti{R}$ and put $\Ti{R}_0 := \Ti{R} - A$. 
Then the $0$-th singularity index is defined by $\alpha_{0}:=(K_{\tilde{\varphi}}+\Ti{R}_0)\Ti{R}_0$.
Put
\begin{align*}
\alpha_0^{+} (\Gamma_{p})&:=    \Ti{R}_h \Ti{\Gamma}_{p}-\sharp(\Rm{Supp} (\Ti{R}_{h})\cap \Rm{Supp} (\Ti{\Gamma}_{p})),\\ 
 \alpha_{0}(\Gamma_{p})&:=\alpha_{0}^{+}(\Gamma_{p}) -2 \sum_{a \geq 2} j_{0,a}(\Gamma_{p}),
 \end{align*}
where $\Ti{R}_h$ is \Blue{the $\Ti{\varphi}$-horizontal} part of $\Ti{R}$.
We note that $\alpha_{0}=\sum_{p\in B}\alpha_{0}(\Gamma_{p})$.
We introduce some indices depending only \Blue{on} $\Gamma_{p}$.
\begin{itemize}
\item[] $\chi_{\varphi}(\Gamma_{p}):=e(\Gamma_{p})/12$ where
$e(\Gamma_{p})$ is the topological Euler number of $\Gamma_{p}$
\item[] $\nu(\Gamma_{p}):=1-1/l(\Gamma_{p})$ where $l(\Gamma_{p})$ is a multiplicity of $\Gamma_{p}$ 
\item[] $\nu := \sum_{p \in B} \nu (\Gamma_{p})$
\end{itemize}
Then it holds that $e_{\varphi}=\sum_{p\in B}e(\Gamma_{p})$ and $\chi_{\varphi}=\sum_{p \in B}\chi_{\varphi}(\Gamma_{p})$.
 
\begin{lem}[\Blue{\cite{Eno2}, Lemma~2.1}]
\label{Enolem}
Let $f:S\to B$ be a primitive cyclic covering fibration of type $(g,1,n)$.
Then it holds 
\begin{align*}
 K_{f}^2 &=\sum_{k\geq 1}\left((n^{2}-1)k-n \right)\alpha_{k} 
 +\frac{(n-1)^{2}}{n}(\alpha_{0}-2j_{0,1})
+\frac{n^{2}-1}{n}r(\chi_{\varphi}+\nu) + j_{0,1}.
\end{align*}
\end{lem}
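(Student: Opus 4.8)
The plan is to compute everything upstairs on the cyclic cover $\Ti{S}$ and then descend to the minimal elliptic surface $W$ through the blow-up tower of Lemma~\ref{lem1.2}. Since $\Ti{\theta}$ is a classical $n$-cyclic covering branched along the smooth $\Ti{R}\in|n\Ti{\Fr{d}}|$, the ramification formula gives $K_{\Ti{S}}=\Ti{\theta}^{\ast}(K_{\Ti{W}}+(n-1)\Ti{\Fr{d}})$, hence
\[
K_{\Ti{f}}=\Ti{\theta}^{\ast}\bigl(K_{\Ti{\varphi}}+(n-1)\Ti{\Fr{d}}\bigr),\qquad K_{\Ti{f}}^{2}=n\bigl(K_{\Ti{\varphi}}+(n-1)\Ti{\Fr{d}}\bigr)^{2}.
\]
Expanding with $\Ti{R}\sim n\Ti{\Fr{d}}$ yields $K_{\Ti{f}}^{2}=nK_{\Ti{\varphi}}^{2}+2(n-1)K_{\Ti{\varphi}}\Ti{R}+\tfrac{(n-1)^{2}}{n}\Ti{R}^{2}$. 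Because $f$ is the relatively minimal model of $\Ti{f}$, the natural morphism $\Ti{S}\to S$ is a composite of $t$ contractions of $\Ti{\varphi}$-vertical $(-1)$-curves, so $K_{f}^{2}=K_{\Ti{f}}^{2}+t$, and the task reduces to rewriting these four quantities in terms of the singularity indices.

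Next I would descend along $W=W_{0}\leftarrow\cdots\leftarrow W_{N}=\Ti{W}$, writing $k_{i}:=[m_{i}/n]$ and letting $\varphi_{i}:W_{i}\to B$ denote the induced fibration. Using $K_{\varphi_{i}}=\psi_{i}^{\ast}K_{\varphi_{i-1}}+E_{i}$ together with Lemma~\ref{lem1.2}(2), $R_{i}=\psi_{i}^{\ast}R_{i-1}-nk_{i}E_{i}$, a one-step computation gives $K_{\varphi_{i}}R_{i}=K_{\varphi_{i-1}}R_{i-1}+nk_{i}$ and $K_{\varphi_{i}}^{2}=K_{\varphi_{i-1}}^{2}-1$, so telescoping yields $K_{\Ti{\varphi}}\Ti{R}=K_{\varphi}R+n\sum_{i}k_{i}$ and $K_{\Ti{\varphi}}^{2}=K_{\varphi}^{2}-N=-N$, the minimal elliptic surface satisfying $K_{\varphi}^{2}=0$. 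Rather than expand $\Ti{R}^{2}$ directly, I would feed it into the definition $\alpha_{0}=(K_{\Ti{\varphi}}+\Ti{R}_{0})\Ti{R}_{0}$ with $\Ti{R}_{0}=\Ti{R}-A$. Since $\Ti{R}$ is smooth its components are pairwise disjoint, and each component $C$ of $A$ is a rational $(-n)$-curve, so by adjunction $K_{\Ti{\varphi}}C=n-2$ and $\Ti{R}\,C=C^{2}=-n$; a short bilinear expansion then produces the clean identity
\[
\alpha_{0}-2j_{0,1}=(K_{\Ti{\varphi}}+\Ti{R})\Ti{R}=K_{\Ti{\varphi}}\Ti{R}+\Ti{R}^{2},
\]
which I use to eliminate $\Ti{R}^{2}$.

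Substituting $\Ti{R}^{2}=(\alpha_{0}-2j_{0,1})-K_{\Ti{\varphi}}\Ti{R}$ collapses the coefficient of $K_{\Ti{\varphi}}\Ti{R}$ to $2(n-1)-\tfrac{(n-1)^{2}}{n}=\tfrac{n^{2}-1}{n}$. For the horizontal contribution I would invoke the canonical bundle formula $K_{\varphi}\equiv\varphi^{\ast}L+\sum_{p}(l(\Gamma_{p})-1)\Gamma_{p,\mathrm{red}}$ with $\deg L=\chi_{\varphi}$: intersecting with $R$ and using $R_{h}\cdot\Gamma=r$ gives $K_{\varphi}R=K_{\varphi}R_{h}=r(\chi_{\varphi}+\nu)$, the multiple fibres producing exactly the $\nu$-term, while $K_{\varphi}R_{v}=0$ as $K_{\varphi}$ is numerically vertical. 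Reading $N=\sum_{k\geq1}\alpha_{k}$ and $\sum_{i}k_{i}=\sum_{k\geq1}k\,\alpha_{k}$ off the definitions, and using the geometric input $t=j_{0,1}$ (each $\Ti{\varphi}$-vertical $(-n)$-curve $C\subset\Ti{R}$ is totally ramified, so its reduced preimage $\Ti{C}$ has $\Ti{\theta}^{\ast}C=n\Ti{C}$ and $\Ti{C}^{2}=-1$, and these are precisely the curves contracted by $\Ti{S}\to S$), I assemble
\[
-nN+(n^{2}-1)\sum_{i}k_{i}+t=\sum_{k\geq1}\bigl((n^{2}-1)k-n\bigr)\alpha_{k}+j_{0,1}.
\]
Combining this with the $\tfrac{(n-1)^{2}}{n}(\alpha_{0}-2j_{0,1})$ and $\tfrac{n^{2}-1}{n}r(\chi_{\varphi}+\nu)$ terms gives the stated formula.

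The main obstacle is the identity $t=j_{0,1}$: one must show that the only $\Ti{\varphi}$-vertical $(-1)$-curves on $\Ti{S}$ are the reduced preimages of the $(-n)$-curves contained in $\Ti{R}$, and that contracting them yields a relatively minimal surface without triggering a cascade of new $(-1)$-curves. This rests on the structural facts of Remark~\ref{pcc} — that $\Ti{S}\to S$ is the minimal resolution of the isolated fixed points of $\sigma$ and that every vertical $(-1)$-curve meets $\Ti{R}$ — together with a local analysis of the cover over each exceptional curve $E_{i}$, where $E_{i}\cdot\Ti{\Fr{d}}=k_{i}\geq1$ forces ramification and makes the preimage an irreducible $(-an)$-curve rather than a $(-1)$-curve. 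A secondary but routine point requiring care is the multiple-fibre bookkeeping in the canonical bundle formula, ensuring the $\nu$-term appears with the correct coefficient.
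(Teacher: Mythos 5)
The paper gives no proof of this lemma at all: it is imported verbatim from \cite{Eno2}, Lemma~2.1, so there is no in-paper argument to compare against, and your reconstruction must be judged on its own terms. On those terms it is the natural derivation and every explicit step checks out. The ramification formula $K_{\Ti{f}}=\Ti{\theta}^{\ast}(K_{\Ti{\varphi}}+(n-1)\Ti{\Fr{d}})$ and its expansion are correct; the one-step identities $K_{\varphi_i}R_i=K_{\varphi_{i-1}}R_{i-1}+nk_i$ and $K_{\varphi_i}^2=K_{\varphi_{i-1}}^2-1$ telescope as you say, with $K_{\varphi}^2=0$ on the relatively minimal model; the bookkeeping $N=\sum_{k\geq1}\alpha_k$ and $\sum_i k_i=\sum_{k\geq1}k\alpha_k$ is legitimate because by Lemma~\ref{lem1.2}(1) every center of the tower is a singular point of the branch locus with multiplicity in $n\Z_{\geq1}$ or $n\Z_{\geq1}+1$; the identity $(K_{\Ti{\varphi}}+\Ti{R})\Ti{R}=\alpha_0-2j_{0,1}$ is a correct two-line expansion, using that the components of the smooth $\Ti{R}$ are pairwise disjoint and that, by the paper's convention, a $(-n)$-curve is smooth rational, so each component $C$ of $A$ has $K_{\Ti{\varphi}}C=n-2$ and $C^2=-n$; and $K_{\varphi}R=r(\chi_{\varphi}+\nu)$ follows from Kodaira's formula as you indicate, although your justification ``$K_{\varphi}R_v=0$ as $K_{\varphi}$ is numerically vertical'' is loose (vertical times vertical does not vanish in general) — the correct reason is that $K_{\varphi}$ is numerically a rational multiple of the fiber class, since $\Gamma_{p,\mathrm{red}}$ is numerically $\Gamma_p/l(\Gamma_p)$. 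Granting $t=j_{0,1}$, your assembly reproduces the stated formula exactly.

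The genuine remaining gap is the one you yourself flag, $t=j_{0,1}$, and your sketch covers only half of it. The easy direction is fine: a vertical $(-n)$-curve $C\subset\Ti{R}$ has reduced preimage $\Ti{C}$ with $\Ti{\theta}^{\ast}C=n\Ti{C}$, hence $\Ti{C}^2=-1$, pointwise fixed by $\sigma$, and its contraction creates an isolated fixed point of local type $\tfrac{1}{n}(1,1)$, consistent with Remark~\ref{pcc}. The converse needs two exclusions that your appeal to $E_i\cdot\Ti{\Fr{d}}=k_i\geq1$ does not settle. First, a vertical $(-1)$-curve $E$ on $\Ti{W}$ meets $\Ti{R}$ by Remark~\ref{pcc}, but if all local intersection multiplicities of $E$ with $\Ti{R}$ were divisible by $n$, the restricted cover would be \'etale and $\Ti{\theta}^{-1}(E)$ could split into $n$ disjoint $(-1)$-curves, which would be contracted in $\Ti{S}\to S$ without contributing to $j_{0,1}$; one must rule this out (for instance by observing that such curves would be cyclically permuted by $\sigma$ and contracted to non-fixed points, contradicting the minimality statement of Remark~\ref{pcc}, or by the finer singularity analysis of \cite{Eno}). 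Second, one must exclude isolated fixed points of $\sigma$ of type $\tfrac{1}{n}(1,q)$ with $q\neq1$, whose minimal resolution is a chain contributing several blow-ups to $t$ but whose intermediate exceptional curves are not pointwise fixed and so contribute nothing to $j_{0,1}$. Both points are exactly the content of Enokizono's structure theory in Section~1 of \cite{Eno}; so your proposal is a correct and complete skeleton whose single hard step is delegated, with accurate attribution, to the facts the cited papers establish.
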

Hence if we put 
\begin{align*}
 K_{f}^2 (\Gamma_{p}) = & \sum_{k\geq 1}\left((n^{2}-1)k-n \right)\alpha_{k} (\Gamma_{p}) 
 +\frac{(n-1)^{2}}{n}(\alpha_{0}(\Gamma_{p}) -2j_{0,1}(\Gamma_{p}) ) \\
 &+\frac{n^{2}-1}{n}r(\chi_{\varphi}(\Gamma_{p}) +\nu(\Gamma_{p}) ) + j_{0,1}(\Gamma_{p}).
 \end{align*}
 for $p \in B$, then we get $K_{f}^{2}=\sum_{p \in B} K_{f}^2 (\Gamma_{p})$.
 
 Let $\Ha{\varphi}:\Ha{W}\to B$ be any intermediate elliptic surface between $\Ti{W}$ and $W$, and regard $\Ti{\psi}:\Ti{W}\to W$ as the composite of 
the natural birational morphisms $\Ha{\psi}: \Ti{W}\to \Ha{W}$ and $\Ch{\psi}:\Ha{W}\to W$.
\[
  \xymatrix{
    \Ti{W} \ar[r]^{\Ha{\psi}} \ar[dr]_{\Ti{\varphi}} & \Ha{W} \ar[d]^{\Ha{\varphi}} \ar[r]^{\Ch{\psi}}& W \ar[ld]^{\varphi}\\
     & B &
}
\]
We put $\Ha{R}=\Ha{\psi}_*\Ti{R}$.
The fiber of $\Ha{\varphi}$ over $p \in B$ will be denoted by $\Ha{\Gamma}_{p}$.
 
Let $\Ha{\alpha}_{k}(\Gamma_{p})$ and $\Ch{\alpha}_{k}(\Gamma_{p})$ 
be the number of the singular points 
contributing to $\alpha_{k}(\Gamma_{p})$ appearing in $\Ha{\psi}$ and $\Ch{\psi}$, respectively. 
We note that the number of singular points of $\Ha{R}$ on $\Ha{\Gamma}_{p}$ is counted by $\Ha{\alpha}_{k}(\Gamma_{p})$.
Then we have $\alpha_k(\Gamma_p)=\Ha{\alpha}_{k}(\Gamma_{p})+\Ch{\alpha}_{k}(\Gamma_{p})$. 
  
\Blue{We note that an arbitrary irreducible component of $\Ti{R}_v(p)$ is a proper transform of either an exceptional curve appearing from $\psi: \Ti{W} \to W$ or an irreducible component of $\Gamma_{p}$.
Let $\Ha{j}_{0,a}(\Gamma_{p})$ ({\it resp.} $\Ch{j}_{0,a}(\Gamma_{p})$) be the number of irreducible components of  $\Ti{R}_v(p)$ contributing to $j_{0,a}(\Gamma_{p})$
which are proper transforms of exceptional curves appearing from $\Ha{\psi}$ ({\it resp.} $\Ch{\psi}$).
Let $j_{0,a}'(\Gamma_{p})$ be the number of irreducible components contributing 
to $j_{0,a}(\Gamma_{p})$ which are proper transforms of irreducible components of $\Gamma_{p}$.}
Then we have $j_{0,a}(\Gamma_p)=\Ha{j}_{0,a}(\Gamma_{p})+\Ch{j}_{0,a}(\Gamma_{p})+j_{0,a}'(\Gamma_{p})$. 
Put $\Ha{j}_{0,\bullet}(\Gamma_{p}):=\sum_{a\geq1}\Ha{j}_{0,a}(\Gamma_{p})$, 
$\Ch{j}_{0,\bullet}(\Gamma_{p}):=\sum_{a \geq 1}\Ch{j}_{0,a}(\Gamma_{p})$
and $j_{0,\bullet}'(\Gamma_{p}):=\sum_{a \geq 1}j_{0,a}'(\Gamma_{p})$.

Choose and fix $p \in B$ and $\Ha{z} \in \Ha{\Gamma}_p$.  
We consider the vertical part $\Ti{R}_v=\sum_{p\in B}\Ti{R}_v(p)$ of $\Ti{R}$ with respect to $\Ti{\varphi}:\Ti{W}\to B$.
We let $\Ti{R}_v(p)_{\Ha{z}}$ be the biggest subdivisor of $\Ti{R}_v(p)$ contracted to $\Ha{z}$ by $\Ha{\psi}$.
Note that we have $\Ti{R}_v(p)_{\Ha{z}}\neq 0$ only when there exists a singular point (of \Red{the branch loci}) \Blue{of $n\Z+1$ type}
and which is infinitely near to $\Ha{z}$ 
(including $\Ha{z}$ itself).
Note also that $\Ti{R}_v(p)_{\Ha{z}}$ is a disjoint union of non-singular rational curves each of which is a $(-an)$-curve for some positive integer $a$.

To decompose $\Ti{R}_v(p)_{\Ha{z}}$, we define a family $\{L_{i}\}_i$ consisting of vertical irreducible curves in $\Ti{R}_v(p)_{\Ha{z}}$ as follows.
\begin{itemize}
\item[(i)] Choose and fix a $(-1)$-curve $E_1$ over $\Ha{z}$ on \Blue{an elliptic} surface between $\Ti{W}$ and $\Ha{W}$, and 
let $L_1$ be the proper transform of $E_1$ on $\Ti{W}$.

\item[(ii)] For $i \geq 2$, $L_{i}$ is the proper transform of an exceptional $(-1)$-curve $E_i$ that is contracted to a point $x_i$ in $E_k$ or its proper transform for some $k<i$.
\end{itemize}
Put $\Fr{L}:=\{\{L_{i}\}_{i}\mid$ $\{L_{i}\}_{i}$ satisfies (i) and (ii)$\}$. 
If $\{L_{i}\}_{i} $ and $\{L'_{j}\}_{j}$ ($\{L_{i}\}_{i} $,$\{L'_{j}\}_{j} \in \Fr{L}$) have a common curve, one can show the union of them $\{L_{i},L'_{j}\}_{i,j}$ is in $\Fr{L}$.
We define a \Blue{partial order} $\{L_{i}\}_{i} \leq \{L'_{j}\}_{j}$ if $\{L_{i}\}_{i} \subset \{L'_{j}\}_{j}$. 
Then $(\Fr{L},\leq)$ is a partial order set.
Applying \Blue{Zorn's lemma}, one should check that every chain admits \Red{the upper bound}.
Hence there exist maximal elements \Blue{$\{L_{1,k}\}_{k=1}^{k_{1}}, \cdots ,\{L_{\eta_{\Ha{z}},k}\}_{k=1}^{k_{\eta_{\Ha{z}}}}$}, where $\eta_{\Ha{z}}$ is the number 
of maximal elements.
\Blue{Any two of $\{L_{1,k}\}_{k=1}^{k_{1}}, \cdots ,\{L_{\eta_{\Ha{z}},k}\}_{k=1}^{k_{\eta_{\Ha{z}}}}$ have no common curves.}
We put $D_{t} := \sum _{k = 1}^{k_t} L_{t,k}$ for $t=1, \cdots , \eta_{\Ha{z}}$.
%

We can describe $\Ti{R}_{v}(p)_{\Ha{z}}$ 
\Blue{by using (the above) $D_{t}$'s as follows}.
\Red{The divisor} $\Ti{R}_{v}(p)_{\Ha{z}}$ is \Blue{decomposed into} a disjoint sum \Blue{consisting} of such \Red{sums} uniquely.
 We denote as
\begin{align}
\label{6/16.1} 
\Blue{\Ti{R}_{v}(p)_{\Ha{z}}= D_{1}+ \cdots  +D_{\eta_{\Ha{z}}},  \quad D_{t} =\sum _{k = 1}^{k_t}L_{t,k}}.
\end{align}
Let $C_{t,k}$ be the exceptional $(-1)$-curve whose proper \Blue{transform on} $\Ti{W}$ is $L_{t,k}$.
We let $\iota^t (\Gamma_{p})_{\Ha{z}}$ and $\kappa^t (\Gamma_{p})_{\Ha{z}}$ denote the numbers of singular points of \Blue{the branch loci $R=R_{0}, \cdots, R_{N-1}$} over $\Ha{z}$ of types $n\Z$ and $n\Z +1$, respectively, 
at which the proper transforms of two \Blue{curves in} $\{C_{t,k}\}_{k=1}^{k_t}$ meet, 
and put $\iota(\Gamma_p)_{\Ha{z}}=\sum_{t=1}^{\eta_{\Ha{z}}}\iota^t(\Gamma_p)_{\Ha{z}}$ and 
$\kappa(\Gamma_p)_{\Ha{z}}=\sum_{t=1}^{\eta_{\Ha{z}}}\kappa^t(\Gamma_p)_{\Ha{z}}$.
By the definition \Red{of $\iota^t (\Gamma_{p})_{\Ha{z}}$ and $\kappa^t (\Gamma_{p})_{\Ha{z}}$}, we note that $\iota(\Gamma_p)_{\Ha{z}}=\kappa(\Gamma_p)_{\Ha{z}}=0$ if $\Ha{R}$ is smooth at $\Ha{z}$.

Furthermore, we localize some indices to \Blue{points} on $\Ha{\Gamma}_{p}$.
Let $\Ha{\alpha}_{k}(\Gamma_{p})_{\Ha{z}}$ be the number of singular points contributing to $\Ha{\alpha}_{k}(\Gamma_{p})$ which is infinitely near to $\Ha{z}$.
\Red{Let $\Ha{j}_{0,a}(\Gamma_{p})_{\Ha{z}}$ ({\it resp.} $\Ha{j}_{0,a}^{t}(\Gamma_{p})_{\Ha{z}}$ for $t=1, \cdots \eta_{\Ha{z}}$) be the number of irreducible components of  $\Ti{R}_{v}(p)_{\Ha{z}}$ ({\it resp.} $D_t$) contributing to $\Ha{j}_{0,a}$}.
Put $\Ha{j}_{0,\bullet}(\Gamma_{p})_{\Ha{z}} :=\sum_{a \geq 1} \Ha{j}_{0,a}(\Gamma_{p})_{\Ha{z}}$ and 
\Red{$\Ha{j}_{0,\bullet}^{t}(\Gamma_{p})_{\Ha{z}} :=\sum_{a \geq 1} \Ha{j}_{0,a}^{t}(\Gamma_{p})_{\Ha{z}}$
for each $t=1, \cdots \eta_{\Ha{z}}$}.
\Red{Note that $\Ha{j}_{0,\bullet}(\Gamma_{p})_{\Ha{z}} = \sum_{t} \Ha{j}_{0,a}^{t}(\Gamma_{p})_{\Ha{z}}$}.
We have 
$\Ha{\alpha}_{k}(\Gamma_{p})=\sum_{\Ha{z}\in \Gamma_{p}}\Ha{\alpha}_{k}(\Gamma_{p})_{\Ha{z}}$ and
 $\Ha{j}_{0,a}(\Gamma_{p})=\sum_{\Ha{z}\in \Gamma_{p}}\Ha{j}_{0,a}(\Gamma_{p})_{\Ha{z}}$.
  We put
\begin{equation}\label{6/10,1}
\begin{aligned}
\alpha_0^{+} (\Gamma_{p})_{\Ha{z}} &:=
  \begin{cases}
    (\Ti{R}_{h}, \Ti{\Gamma}_{p,\Ha{z}})-\sharp \left(\Rm{Supp} (\Ti{R}_{h})\cap \Rm{Supp} (\Ti{\Gamma}_{p,\Ha{z}})\right) &({\rm if\;}\Ha{R} {\rm\;is\;singular\;at\;} \Ha{z}\;),\\
    \\
    \left({\rm The\;ramification\;index\;of\;}\Ha{\varphi}|_{\Ha{R}_{h}}:\Ha{R}_{h} \to B{\rm \;at\;}\Ha{z}\right)-1 &({\rm if\;}\Ha{R} {\rm\;is\;smooth\;at\;} \Ha{z}\;),\\
    \\
   0 & ({\rm if\;} \Ha{z} \not\in \Ha{R}),
  \end{cases}\\
\Ha{\alpha}_{0}(\Gamma_{p})_{\Ha{z}}&:= \alpha_{0}^{+}(\Gamma_{p})_{\Ha{z}}-2\sum_{a \geq 2} \Ha{j}_{0,a}(\Gamma_{p})_{\Ha{z}},
\end{aligned}
\end{equation}
where $\Ti{\Gamma}_{p,\Ha{z}}$ is the biggest subdivisor of $\Ti{\Gamma}_{p}$ which is contracted to $\Ha{z}$ by $\Ha{\psi}$.

We put 
 \begin{align*}
 K_{f}^2 (\Gamma_{p})_{\Ha{z}} := & \sum_{k\geq 1}\left((n^{2}-1)k-n \right)\Ha{\alpha}_{k} (\Gamma_{p}) _{\Ha{z}} 
 +\frac{(n-1)^{2}}{n}(\Ha{\alpha}_{0}(\Gamma_{p})_{\Ha{z}}  -2\Ha{j}_{0,1}(\Gamma_{p})_{\Ha{z}} ) + \Ha{j}_{0,1}(\Gamma_{p})_{\Ha{z}}.
\end{align*}

\smallskip

Then we get the following:
 \begin{prop}
 \label{localization}
Let $f:S\to B$ be a primitive cyclic covering fibration of type $(g,1,n)$.
Let $\Ti{\psi}=\Ch{\psi}\circ\Ha{\psi} $ be an arbitrary decomposition of $\Ti{\psi}:\Ti{W} \to W$ \Blue{so that} the commutative diagram
\[
  \xymatrix{
    \Ti{W} \ar[r]^{\Ha{\psi}} \ar[dr]_{\Ti{\varphi}} & \Ha{W} \ar[d]^{\Ha{\varphi}} \ar[r]^{\Ch{\psi}}& W\ar[ld]^{\varphi}\\
     & B. &
}
\]
Denote a fiber of $\Ha{\varphi}$ over $p \in B$ by $\Ha{\Gamma}_{p}$.
Then it holds that 
\begin{align*}
 K_{f}^2 (\Gamma_{p}) = &\sum_{\Ha{z}\in \Ha{\Gamma}_{p}} K_{f}^2 (\Gamma_{p})_{\Ha{z}} 
 + \sum_{k\geq 1}\left((n^{2}-1)k-n \right)\Ch{\alpha}_{k} (\Gamma_{p}) 
 -2\frac{(n-1)^{2}}{n}\left(\Ch{j}_{0,\bullet}(\Gamma_{p}) +j_{0,\bullet}'(\Gamma_{p})\right)\\
 &+\frac{n^{2}-1}{n}r(\chi_{\varphi}(\Gamma_{p}) +\nu(\Gamma_{p}) ) + \Ch{j}_{0,1}(\Gamma_{p})+j_{0,1}'(\Gamma_{p}).
 \end{align*}
 \end{prop}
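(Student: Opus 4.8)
The plan is to start from the expression for $K_{f}^2(\Gamma_{p})$ recorded just after Lemma~\ref{Enolem}, substitute the additivity relations for the singularity and vertical indices that have already been set up above, and observe that the whole identity collapses to a single local statement about the horizontal index $\alpha_{0}^{+}$.

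First I would expand $\sum_{\Ha{z}\in \Ha{\Gamma}_{p}} K_{f}^2(\Gamma_{p})_{\Ha{z}}$ by means of the summation rules $\Ha{\alpha}_{k}(\Gamma_{p})=\sum_{\Ha{z}}\Ha{\alpha}_{k}(\Gamma_{p})_{\Ha{z}}$ and $\Ha{j}_{0,a}(\Gamma_{p})=\sum_{\Ha{z}}\Ha{j}_{0,a}(\Gamma_{p})_{\Ha{z}}$, together with the definition $\Ha{\alpha}_{0}(\Gamma_{p})_{\Ha{z}}=\alpha_{0}^{+}(\Gamma_{p})_{\Ha{z}}-2\sum_{a\geq2}\Ha{j}_{0,a}(\Gamma_{p})_{\Ha{z}}$. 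Using $\sum_{a\geq 2}\Ha{j}_{0,a}+\Ha{j}_{0,1}=\Ha{j}_{0,\bullet}$, this yields a term $\frac{(n-1)^{2}}{n}\bigl(\sum_{\Ha{z}}\alpha_{0}^{+}(\Gamma_{p})_{\Ha{z}}-2\Ha{j}_{0,\bullet}(\Gamma_{p})\bigr)$ alongside the $\Ha{\alpha}_{k}$ and $\Ha{j}_{0,1}$ contributions. Adding the remaining terms on the right-hand side and invoking the global decompositions $\alpha_{k}(\Gamma_{p})=\Ha{\alpha}_{k}(\Gamma_{p})+\Ch{\alpha}_{k}(\Gamma_{p})$ and $j_{0,a}(\Gamma_{p})=\Ha{j}_{0,a}(\Gamma_{p})+\Ch{j}_{0,a}(\Gamma_{p})+j_{0,a}'(\Gamma_{p})$ recombines the $\Ha{\psi}$- and $\Ch{\psi}$-parts into the full $\alpha_{k}(\Gamma_{p})$, $j_{0,\bullet}(\Gamma_{p})$ and $j_{0,1}(\Gamma_{p})$. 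Comparing with the definition of $K_{f}^2(\Gamma_{p})$ and rewriting $\alpha_{0}(\Gamma_{p})-2j_{0,1}(\Gamma_{p})=\alpha_{0}^{+}(\Gamma_{p})-2j_{0,\bullet}(\Gamma_{p})$, I find that the proposition follows once the single identity
\[
\alpha_{0}^{+}(\Gamma_{p})=\sum_{\Ha{z}\in \Ha{\Gamma}_{p}}\alpha_{0}^{+}(\Gamma_{p})_{\Ha{z}}
\]
is established.

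The substance of the argument is thus this additivity of $\alpha_{0}^{+}$, which I would prove by a local analysis at each $\Ha{z}\in\Ha{\Gamma}_{p}$, following the three cases in the definition of $\alpha_{0}^{+}(\Gamma_{p})_{\Ha{z}}$. Globally, $\alpha_{0}^{+}(\Gamma_{p})=\Ti{R}_{h}\Ti{\Gamma}_{p}-\sharp(\Rm{Supp}(\Ti{R}_{h})\cap \Rm{Supp}(\Ti{\Gamma}_{p}))$ measures the total excess intersection, equivalently the total ramification, of the smooth horizontal branch $\Ti{R}_{h}$ with the fiber. I would split $\Ti{\Gamma}_{p}$ into the part $\sum_{\Ha{z}}\Ti{\Gamma}_{p,\Ha{z}}$ contracted by $\Ha{\psi}$ and the proper transforms of the components of $\Ha{\Gamma}_{p}$, and distribute both $\Ti{R}_{h}\Ti{\Gamma}_{p}$ and the point count over the points $\Ha{z}$. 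Where $\Ha{z}\notin\Ha{R}$ nothing is contributed; where $\Ha{R}$ is smooth at $\Ha{z}$ the morphism $\Ha{\psi}$ is a local isomorphism, so $\Ti{\Gamma}_{p,\Ha{z}}=0$ and the local excess equals the ramification index of $\Ha{\varphi}|_{\Ha{R}_{h}}$ at $\Ha{z}$ minus $1$, matching the second case; where $\Ha{R}$ is singular at $\Ha{z}$ the blow-ups of $\Ha{\psi}$ over $\Ha{z}$ produce $\Ti{\Gamma}_{p,\Ha{z}}$ and the local excess is read off on $\Ti{W}$ as $(\Ti{R}_{h},\Ti{\Gamma}_{p,\Ha{z}})-\sharp(\Rm{Supp}(\Ti{R}_{h})\cap \Rm{Supp}(\Ti{\Gamma}_{p,\Ha{z}}))$, matching the first case.

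I expect the main obstacle to be the bookkeeping at the singular points $\Ha{z}$ of $\Ha{R}$: one must verify that redistributing the global point count $\sharp(\Rm{Supp}(\Ti{R}_{h})\cap \Rm{Supp}(\Ti{\Gamma}_{p}))$ over the $\Ha{z}$ neither over- nor under-counts at the nodes where the exceptional curves making up $\Ti{\Gamma}_{p,\Ha{z}}$ meet the proper transforms of fiber components, and that the excess intersection which $\Ti{R}_{h}$ carries onto the exceptional configuration is captured by $(\Ti{R}_{h},\Ti{\Gamma}_{p,\Ha{z}})$ rather than being lost to the non-contracted part of $\Ti{\Gamma}_{p}$. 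This amounts to tracking the blow-up centers of $\Ha{\psi}$ over $\Ha{z}$ together with the way $\Ti{R}_{h}$ passes through them; the facts that $\Ti{R}$ is smooth and that the branch loci acquire only singularities of type $n\Z$ or $n\Z+1$ (Remark~\ref{pcc}, Lemma~\ref{lem1.2}) are what make this local redistribution consistent.
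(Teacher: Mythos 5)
Your proposal follows the paper's proof exactly in its main line: the printed argument consists precisely of substituting the relations $\alpha_{k}(\Gamma_{p})=\sum_{\Ha{z}}\Ha{\alpha}_{k}(\Gamma_{p})_{\Ha{z}}+\Ch{\alpha}_{k}(\Gamma_{p})$ and $j_{0,a}(\Gamma_{p})=\sum_{\Ha{z}}\Ha{j}_{0,a}(\Gamma_{p})_{\Ha{z}}+\Ch{j}_{0,a}(\Gamma_{p})+j'_{0,a}(\Gamma_{p})$ into the expression for $K_{f}^{2}(\Gamma_{p})$ obtained from Lemma~\ref{Enolem}, then regrouping into the local terms $K_{f}^{2}(\Gamma_{p})_{\Ha{z}}$. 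Your observation that after this bookkeeping the whole proposition is equivalent to the single identity $\alpha_{0}^{+}(\Gamma_{p})=\sum_{\Ha{z}\in\Ha{\Gamma}_{p}}\alpha_{0}^{+}(\Gamma_{p})_{\Ha{z}}$ is exactly right; the paper absorbs that identity into the words ``by definition'' and never argues it, so your attempt to prove it is an addition to, not a deviation from, the paper's route.

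Precisely there, however, your sketch stops short, and the difficulty you flag in your last paragraph is genuine rather than routine bookkeeping. Writing $\alpha_{0}^{+}(\Gamma_{p})=\sum_{\Ti{z}}\bigl((\Ti{R}_{h},\Ti{\Gamma}_{p})_{\Ti{z}}-1\bigr)$ and sorting the points $\Ti{z}$ by their images $\Ha{z}=\Ha{\psi}(\Ti{z})$, the cases $\Ha{z}\notin\Ha{R}$ and $\Ha{R}$ smooth at $\Ha{z}$ match as you say, since the ramification index of $\Ha{\varphi}|_{\Ha{R}_{h}}$ at $\Ha{z}$ equals $(\Ha{R}_{h},\Ha{\Gamma}_{p})_{\Ha{z}}$. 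But over a singular point $\Ha{z}$ the local term records only $(\Ti{R}_{h},\Ti{\Gamma}_{p,\Ha{z}})$, i.e.\ intersection with the contracted part, whereas the global count also contains $(\Ti{R}_{h},\Ti{\Gamma}_{p}-\Ti{\Gamma}_{p,\Ha{z}})_{\Ti{z}}$ at points $\Ti{z}$ over $\Ha{z}$ where $\Ti{R}_{h}$ passes through the meeting of the exceptional configuration with the strict transform of $\Ha{\Gamma}_{p}$. Your appeal to the smoothness of $\Ti{R}$ and to Lemma~\ref{lem1.2} does not exclude this: for $n=2$, take a node of $\Ha{R}$ at $\Ha{z}$ with one branch tangent to the fiber, locally $(t-x)(t-x^{2})=0$ with fiber $t=0$; a single blow-up resolves it, yet the strict transform of the tangent branch still meets the strict transform of the fiber at a point over $\Ha{z}$, contributing $1$ to $\alpha_{0}^{+}(\Gamma_{p})$ but to no local term (and the tangency sits at $\Ha{z}$ itself, so the smooth-case ramification term never sees it). So the redistribution is exact only if such residual tangencies are tracked explicitly, or if the intersection over a singular $\Ha{z}$ is taken against all of $\Ti{\Gamma}_{p}$ localized over $\Ha{z}$ rather than against $\Ti{\Gamma}_{p,\Ha{z}}$ alone; note the discrepancy is non-negative, so the later applications, which use the proposition only to bound $K_{f}^{2}(\Gamma_{p})$ from below, are unaffected. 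Since the paper offers no argument at all for the additivity, your proposal is, if anything, more complete than the printed proof, but as written it does not close this case.
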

\begin{proof}
 \Blue{By definition}, we have 
 \begin{align}
 \label{localalphaj}
 \alpha_{k}(\Gamma_{p})=\sum_{\Ha{z}\in \Ha{\Gamma}_{p}} \Ha{\alpha}_{k}(\Gamma_{p})_{\Ha{z}}
 +\Ch{\alpha}_{k}(\Gamma_{p}),\quad 
 j_{0,a}(\Gamma_{p})=\sum_{\Ha{z}\in \Ha{\Gamma}_{p}} \Ha{j}_{0,a}(\Gamma_{p})_{\Ha{z}}
 +\Ch{j}_{0,a}(\Gamma_{p})
 +j_{0,a}'(\Gamma_{p}).
 \end{align}
 By (\ref{localalphaj}), we have 
 \begin{align*}
  K_{f}^2 (\Gamma_{p}) = & \sum_{k\geq 1}\left((n^{2}-1)k-n \right)\alpha_{k} (\Gamma_{p}) 
 +\frac{(n-1)^{2}}{n}(\alpha_{0}(\Gamma_{p}) -2j_{0,1}(\Gamma_{p}) ) \\
 &+\frac{n^{2}-1}{n}r(\chi_{\varphi}(\Gamma_{p}) +\nu(\Gamma_{p}) ) + j_{0,1}(\Gamma_{p})  \\
 =& \sum_{k\geq 1}\left((n^{2}-1)k-n \right)\left(\sum_{\Ha{z}\in \Ha{\Gamma}_{p}} \Ha{\alpha}_{k}(\Gamma_{p})_{\Ha{z}}+\Ch{\alpha}_{k}(\Gamma_{p})\right)\\
 &+\frac{(n-1)^{2}}{n}\biggl(
 \sum_{\Ha{z}\in \Ha{\Gamma}_{p}} \Ha{\alpha}_{0}(\Gamma_{p})_{\Ha{z}}
 -2\sum_{a\geq 2}\left(\Ch{j}_{0,a}(\Gamma_{0})+j_{0,a}'(\Gamma_{p})\right)\biggr.\\
 &\biggl.-2\sum_{\Ha{z}\in \Ha{\Gamma}_{p}} \Ha{j}_{0,1}(\Gamma_{p})_{\Ha{z}}
 -2\left(\Ch{j}_{0,1}(\Gamma_{p})+j_{0,1}'(\Gamma_{p})\right)
 \biggr) \\
 &+\sum_{\Ha{z}\in \Ha{\Gamma}_{p}} \Ha{j}_{0,1}(\Gamma_{p})_{\Ha{z}}
 + \Ch{j}_{0,1}(\Gamma_{p}) + j'_{0,1}(\Gamma_{p})
+\frac{n^{2}-1}{n}r(\chi_{\varphi}(\Gamma_{p}) +\nu(\Gamma_{p}) ) \\
=&\sum_{\Ha{z}\in \Ha{\Gamma}_{p}} K_{f}^2 (\Gamma_{p})_{\Ha{z}} 
 + \sum_{k\geq 1}\left((n^{2}-1)k-n \right)\Ch{\alpha}_{k} (\Gamma_{p}) \\
 &-2\frac{(n-1)^{2}}{n}\left(\Ch{j}_{0,\bullet}(\Gamma_{p}) +j_{0,\bullet}'(\Gamma_{p})\right)\\
 &+\frac{n^{2}-1}{n}r(\chi_{\varphi}(\Gamma_{p}) +\nu(\Gamma_{p}) ) + \Ch{j}_{0,1}(\Gamma_{p})+j_{0,1}'(\Gamma_{p})
\end{align*}
 \end{proof}
 
 We consider and recast Lemma~5.2 in \cite{Eno}.

\begin{lem}
\label{lem5.2}
The following hold:
\smallskip

$(1)$  $\iota (\Gamma_{p})_{\Ha{z}}=\Ha{j}_{0,\bullet}(\Gamma_{p})_{\Ha{z}}-\eta_{\Ha{z}}$.
\smallskip

$(2)$ $\alpha_{0}^{+}(\Gamma_{p})_{\Ha{z}} \geq (n-2)
\left( \Ha{j}_{0,\bullet}(\Gamma_{p})_{\Ha{z}}-\eta_{\Ha{z}}+2\kappa (\Gamma_{p})_{\Ha{z}} \right)$.
\smallskip

$(3)$ $\displaystyle{\sum_{k \geq 1}\Ha{\alpha}_{k}(\Gamma_{p})_{\Ha{z}}\geq \sum _{a\geq 1}(an-2)\Ha{j}_{0,a} (\Gamma_{p})_{\Ha{z}}+2\eta_{\Ha{z}}-\kappa (\Gamma_{p})_{\Ha{z}}}$.
\end{lem}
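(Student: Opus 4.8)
The plan is to reduce everything to the combinatorics of the blow-up tree of each maximal element $D_{t}=\sum_{k=1}^{k_t}L_{t,k}$, exploiting two dictionaries throughout. First, the exceptional curve $C_{t,k}$ underlying a $(-an)$-curve $L_{t,k}$ must carry exactly $an-1$ infinitely near centers on its successive proper transforms, since each blow-up on it drops the self-intersection by one from $-1$ down to $-an$. Second, by Lemma~\ref{lem1.2} every blow-up center over $\Ha{z}$ is a singular point of some $R_i$ of multiplicity at least $n$, and such a center is of type $n\Z+1$ exactly when the exceptional curve it creates lies in the branch locus (hence is one of the $C_{t,k}$), and of type $n\Z$ otherwise. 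Since $\Ti{R}$ is smooth, the $L_{t,k}$ are pairwise disjoint on $\Ti{W}$, so every meeting of two of the $C_{t,k}$ is eventually resolved by a later blow-up.

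For $(1)$ I would run a bookkeeping argument along the whole succession of blow-ups over $\Ha{z}$, recording at each stage the quantity $V-E$, where $V$ is the number of $C$-curves already created and $E$ is the number of points at which two of their proper transforms currently meet. A type-$n\Z+1$ blow-up at a free point of a single $C$-curve, or at a meeting of two $C$-curves, leaves $V-E$ unchanged (in the latter case it removes one edge but adds two along with one vertex); a type-$n\Z+1$ blow-up at a point lying on no $C$-curve creates a new root and raises $V-E$ by one; and a type-$n\Z$ blow-up at a meeting of two $C$-curves separates them, lowering $E$ by one and so raising $V-E$ by one. All other blow-ups fix $V-E$. Starting from $0$ and ending at $V=\Ha{j}_{0,\bullet}(\Gamma_{p})_{\Ha{z}}$, $E=0$ (disjointness on $\Ti{W}$), the total increment equals the number of roots $\eta_{\Ha{z}}$ plus the number of type-$n\Z$ meetings $\iota(\Gamma_{p})_{\Ha{z}}$, giving $\Ha{j}_{0,\bullet}(\Gamma_{p})_{\Ha{z}}=\eta_{\Ha{z}}+\iota(\Gamma_{p})_{\Ha{z}}$, which is $(1)$.

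For $(3)$ I would count incidences. Writing $I:=\sum_{a\geq1}(an-1)\Ha{j}_{0,a}(\Gamma_{p})_{\Ha{z}}$ for the total number of (center, $C$-curve through it) incidences, the first dictionary gives $I=p_1+2p_2$, where $p_i$ is the number of centers lying on exactly $i$ of the $C$-curves; since at each meeting exactly two $C$-curves come together, $p_2=\iota(\Gamma_{p})_{\Ha{z}}+\kappa(\Gamma_{p})_{\Ha{z}}$. Every center over $\Ha{z}$ is singular, so $\sum_{k\geq1}\Ha{\alpha}_{k}(\Gamma_{p})_{\Ha{z}}$ counts all of them, and it is at least the $p_1+p_2$ centers lying on a $C$-curve together with the $\eta_{\Ha{z}}$ distinct birth-centers of the roots, which lie on no $C$-curve. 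Hence $\sum_{k}\Ha{\alpha}_{k}(\Gamma_{p})_{\Ha{z}}\geq p_1+p_2+\eta_{\Ha{z}}=I-p_2+\eta_{\Ha{z}}=I-\iota(\Gamma_{p})_{\Ha{z}}-\kappa(\Gamma_{p})_{\Ha{z}}+\eta_{\Ha{z}}$, and substituting $\Ha{j}_{0,\bullet}(\Gamma_{p})_{\Ha{z}}=\iota(\Gamma_{p})_{\Ha{z}}+\eta_{\Ha{z}}$ from $(1)$ rewrites the right-hand side as $\sum_{a}(an-2)\Ha{j}_{0,a}(\Gamma_{p})_{\Ha{z}}+2\eta_{\Ha{z}}-\kappa(\Gamma_{p})_{\Ha{z}}$, which is $(3)$.

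The main obstacle is $(2)$, the one statement that involves the horizontal part $\Ti{R}_{h}$ rather than pure combinatorics. Here I would analyze each type-$n\Z$ and type-$n\Z+1$ meeting locally: two vertical $C$-branches already account for multiplicity $2$, so the multiplicity constraint forces at least $n-2$ (resp. $n-1$) further local branches, and I must show these are forced to contribute to the excess intersection $\alpha_{0}^{+}(\Gamma_{p})_{\Ha{z}}=(\Ti{R}_{h},\Ti{\Gamma}_{p,\Ha{z}})-\sharp(\mathrm{Supp}(\Ti{R}_{h})\cap\mathrm{Supp}(\Ti{\Gamma}_{p,\Ha{z}}))$, by tracking how $\Ti{R}_{h}$ meets the vertical exceptional curves created over the point. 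The delicate point, and the reason a $\kappa$-point is weighted by $2(n-2)$ rather than $(n-2)$, is that resolving a type-$n\Z+1$ meeting spawns a further $C$-curve and propagates the tangency of $\Ti{R}_{h}$ to a second level of the tower; quantifying this propagation uniformly in the local analytic type of the singularity is where the real work lies, and I expect to organize it as an induction on the height of the tower over $\Ha{z}$, mirroring Lemma~5.2 of \cite{Eno}.
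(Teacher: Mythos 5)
Your treatments of (1) and (3) are correct and are essentially the paper's own counting in a different packaging. For (1), the paper attaches to each maximal family $D_t$ the graph $\Bb{G}_t$ with vertices the $C_{t,k}$ and edges the $\iota$-points, observes it is a connected tree, and reads off $\iota^t(\Gamma_{p})_{\Ha{z}}=\Ha{j}^t_{0,\bullet}(\Gamma_{p})_{\Ha{z}}-1$; your running invariant $V-E$ is the same vertices-minus-edges computation performed dynamically along the resolution. For (3), the paper counts the blow-ups needed to create $D_t$, namely $\sum_{a}(an-1)\Ha{j}^t_{0,a}(\Gamma_{p})_{\Ha{z}}$ incidences minus the duplications at the $\iota^t+\kappa^t$ meeting points plus one for the root, which after summing over $t$ is exactly your $I-p_2+\eta_{\Ha{z}}$; since every center has multiplicity in $n\Z$ or $n\Z+1$, the count is dominated by $\sum_{k\geq1}\Ha{\alpha}_k(\Gamma_{p})_{\Ha{z}}$. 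Two facts you use silently (and which the paper absorbs into the tree assertion): proper transforms of two $C$-curves meet transversally in at most one point, so one blow-up at an $\iota$-point really drops your $E$ by one; and a meeting cannot involve curves from two distinct maximal families, since otherwise the families could be merged, contradicting maximality.

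Part (2), however, is where your proposal stops being a proof: you explicitly defer ``the real work,'' and the local mechanism you sketch would not deliver the statement. Two concrete problems. First, at a meeting point of multiplicity at least $n$, the $n-2$ residual local branches need not be horizontal: they can be components of $\Gamma_{p}$ contained in the branch locus or further exceptional components, so they cannot directly feed $\alpha_0^{+}$. Second, and more fundamentally, a transverse intersection of $\Ti{R}_h$ with a reduced component of $\Ti{\Gamma}_{p,\Ha{z}}$ contributes $1-1=0$ to $\alpha_0^{+}(\Gamma_{p})_{\Ha{z}}=\Ti{R}_h\Ti{\Gamma}_{p,\Ha{z}}-\sharp\left(\Rm{Supp}(\Ti{R}_h)\cap\Rm{Supp}(\Ti{\Gamma}_{p,\Ha{z}})\right)$, so ``number of horizontal branches through the point'' is the wrong currency altogether: the excess must be extracted from fiber multiplicities.

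That is precisely how the paper proceeds. Writing $\Ti{\Gamma}_{p,\Ha{z}}=\sum_i m_iG_i$, it first bounds $\alpha_0^{+}(\Gamma_{p})_{\Ha{z}}\geq\sum_i(m_i-1)\Ti{R}_hG_i$, so only components with $m_i\geq2$ matter. It then organizes the $\iota$- and $\kappa$-points into a directed forest $\Bb{F}$, passes over each leaf $x$ to the last infinitely near singular point and its exceptional curve $\Ti{E}^x$, for which $\Ti{R}\,\Ti{E}^x\in n\Z_{>0}$ and at most two vertical components of $\Ti{R}$ meet $\Ti{E}^x$, giving $\Ti{R}_h\Ti{E}^x\geq n-2$; and it proves $m^x\geq 2\iota(P^x(\Bb{T}_j))+\kappa(P^x(\Bb{T}_j))$ inductively along each root-to-leaf path, the fiber multiplicity jumping by $2$ at an $\iota$-point and by $1$ at a $\kappa$-point. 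The weight $2$ on $\kappa(\Gamma_{p})_{\Ha{z}}$ in the final inequality then comes from the combinatorics of the forest, in effect that each $\kappa$-point lies on paths to at least two leaves while $\sharp L(\Bb{T}_j)\leq\iota(\Bb{T}_j)$, and not from a ``propagation of tangency'' of $\Ti{R}_h$ as you conjecture. None of this is a routine induction on the height of the tower, so (2) must be counted as unproven in your proposal.
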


\begin{proof}
For each $t$, we consider the graph $\Bb{G}_t$ corresponding to $D_t$ as follows. 
The vertex set $V(\Bb{G}_t)$ and the edge set $E(\Bb{G}_t)$ are respectively the sets of symbols 
$\{ v_{t,k}\}_{k=1}^{\Ha{j}_{0,\bullet}^{t}(\Gamma_{p})_{\Ha{z}}}$ and $\{e_{x}\}_{x}$, where $x$ runs over all the singular points 
contributing to $\iota^t (\Gamma_{p})_{\Ha{z}}$. 
If $C_{t,k}$ or a proper transform of it meets a proper transform of $C_{t,k'}$ at the singular point $x$ of type $n\Z$, 
the edge $e_{x}$ connects $v_{t,k}$ and $v_{t,k'}$. 
By the definition of $D_t$, we see that $\Bb{G}_t$ is a connected tree.
Counting the numbers of vertices and edges, we have  $\iota^t (\Gamma_{p})_{\Ha{z}}=\Ha{j}_{0,\bullet}^t(\Gamma_{p})_{\Ha{z}}-1$.
Thus, we get (1) by summing it up for $t$.

When the component $L_{t,k}$ of $D_t$ is a $(-an)$-curve, it is obtained by blowing $C_{t,k}$ up 
 $an-1$ times.
\Blue{We recall that $\Ha{j}_{0,a}^{t}(\Gamma_{p})_{\Ha{z}}$ is the number of irreducible components 
of $D_t$ with self-intersection number $-an$.
By $D_{t} := \sum _{k = 1}^{k_t} L_{t,k}$, we need blowing-ups $\sum_{a \geq 1} (an-1)\Ha{j}_{0,a}^{t}(\Gamma_{p})_{\Ha{z}}$ times to get $D_t$, disregarding overlaps.}
Taking into the account the duplication and the first blowing-up creating the $(-1)$-curve for $L_{t,1}$, 
we see that the number of blowing-ups to obtain $D_t$ is not less than
\[
 \sum_{a \geq 1} (an-1)\Ha{j}_{0,a}^{t}(\Gamma_{p})_{\Ha{z}}-\iota^t (\Gamma_{p})_{\Ha{z}}-\kappa^t (\Gamma_{p})_{\Ha{z}}+1\\
 =\sum_{a \geq 2} (an-2)\Ha{j}_{0,a}^{t}(\Gamma_{p})_{\Ha{z}}+2-\kappa^t (\Gamma_{p})_{\Ha{z}}, 
\]
since $\iota^t (\Gamma_{p})_{\Ha{z}}=\Ha{j}_{0,\bullet}^t(\Gamma_{p})_{\Ha{z}}-1$.
\Blue{By $\Ti{R}_{v}(p)_{\Ha{z}} = \sum_{t=1}^{\eta_{\Ha{z}}} D_t $, 
we need blowing-ups at least
\begin{align*}
 \sum_{t=1}^{\eta_{\Ha{z}}}\left(\sum_{a \geq 2} (an-2)\Ha{j}_{a}^{t}(\Gamma_{p})_{\Ha{z}}+2-\kappa^t (\Gamma_{p})_{\Ha{z}}\right)
 =\sum_{a \geq 2} (an-2)\Ha{j}_{0,a}(\Gamma_{p})_{\Ha{z}}+2\eta_{\Ha{z}}-\kappa(\Gamma_{p})_{\Ha{z}}
\end{align*}
times to get $\Ti{R}_{v}(p)_{\Ha{z}}$.}
This gives $(3)$.

It remains to show (2).
We may assume $\Ha{R}$ is singular at $\Ha{z}$.
Let $\Ti{\Gamma}_{p,\Ha{z}}=\sum m_i G_i$ be the irreducible decomposition. Then it follows from (\ref{6/10,1}) that 
\begin{align*}
\alpha_0 ^{+} (\Gamma_{p})_{\Ha{z}} = \sum_{i} m_{i} \Ti{R}_h G_i - \sharp(\Rm{Supp} (\Ti{R}_{h})\cap \Rm{Supp} ( \cup_{i} G_{i})) 
\geq \sum_{i} (m_{i}-1) \Ti{R}_h G_i.
\end{align*}
We consider a directed graph $\Bb{F}$ whose vertex set $V(\Bb{F})$ is the set of symbols $\{v_x\}$, where $x$ runs over all the singular points which contribute to 
either $\iota(\Gamma_{p})_{\Ha{z}}$ or $\kappa(\Gamma_{p})_{\Ha{z}}$.
We define the directed edge from $v_x$ to $v_{x'}$ if $x'$ is a singular point infinitely near to $x$ and any singular point between $x$ and $x'$ 
contributes to neither $\iota(\Gamma_{p})_{\Ha{z}}$ nor $\kappa(\Gamma_{p})_{\Ha{z}}$.
Let $\Bb{T}_{1}, \cdots, \Bb{T}_{s}$ be connected components of the graph $\Bb{F}$.
We note that $\Bb{T}_j$ ($j=1, \cdots, s$) is a \Blue{directed and rooted} tree graph.
We denote the leaf set of $\Bb{T}_j$ by $L(\Bb{T}_j)$.
By the definition of $\Bb{F}$, any vertex in $L(\Bb{T}_j)$ corresponds to a singular point contributing to $\iota(\Gamma_{p})_{\Ha{z}}$.
Let $x_{\Rm{last}}$ be a singular point of \Blue{the branch locus} over $x$ such that
there exist no singular \Blue{points} over $x_{\Rm{last}}$
and let $\Ti{E}^x$ be \Blue{the exceptional curve of the blow-up at} $x_{\Rm{last}}$.
%
%
Since $\Ti{E}^x$ arises from a singular point of type $n\mathbb{Z}$, we see that $\Ti{R}\Ti{E}^x$ is a positive multiple of $n$, 
but it may be possible that $\Ti{E}^x$ meets two vertical components of $\Ti{R}$.
Thus we have $\Ti{R}_h  \Ti{E}^x \geq n-2$.
Letting $m^x$ be the multiplicity along $\Ti{E}^x$ of $\Ti{\Gamma}_p$, we have 
\begin{align*}
 \sum_{i} (m_{i}-1) \Ti{R}_h G_i \geq \sum_{j=1}^{s}\sum_{x \in L(\Bb{T}_j)}(m^x -1)\Ti{R}_h  \Ti{E}^x.
\end{align*}

We will show that 
\begin{align*}
\sum_{x \in L(\Bb{T}_j)}(m^x -1) \geq \iota(\Bb{T}_j)+ 2\kappa(\Bb{T}_j),
\end{align*}
where $ \iota(\Bb{T}_j)$ (resp. $ \kappa(\Bb{T}_j)$) denotes the number of singular points of type $n\Z$ (resp. $n\Z+1$) in 
$\Bb{T}_j$.
Let $P^x(\Bb{T}_j)$ be the set consisting of vertices appearing in 
the path connecting the root of $\Bb{T}_j$ and $x\in  L(\Bb{T}_j)$. 
We denote the number of singular points of type $n\Z$ (resp. $n\Z+1$) in 
$P^x(\Bb{T}_j)$ by $ \iota(P^x(\Bb{T}_j))$ (resp. $ \kappa(P^x(\Bb{T}_j))$).
We claim that $m^x \geq 2\iota(P^x(\Bb{T}_j))+\kappa(P^x(\Bb{T}_j))$.
This can be seen as follows.
Put $P^x(\Bb{T}_j)=\{v_1, \cdots, v_l = v_x \}$ and let $m_1, \cdots, m_l$ be the multiplicities of the fiber $\Ti{\Gamma}_p$ along the proper transforms 
of the exceptional curves arising from $v_1, \cdots, v_l$, respectively.
If $v_1$ is of type $n\Z$, then we have $m_2\geq m_1 +2$.
If $v_1$ is of type $n\Z+1$, then we have $m_2\geq m_1 +1$.
So we get $m^x=m_l \geq 2\iota(P^x(\Bb{T}_j))+\kappa(P^x(\Bb{T}_j))$ inductively.
Then, 
\begin{align*}
\sum_{x \in L(\Bb{T}_j)}(m^x -1) \geq &\sum_{x \in L(\Bb{T}_j)} \left(2\iota(P^x(\Bb{T}_j))+\kappa(P^x(\Bb{T}_j))\right) -\sharp L(\Bb{T}_j)\\
\geq & 2\iota(\Bb{T}_j)+ 2\kappa(\Bb{T}_j) -\sharp L(\Bb{T}_j)\\
\geq & \iota(\Bb{T}_j)+ 2\kappa(\Bb{T}_j)
\end{align*}
as wished.

Now, we get 
\begin{align*}
\sum_{x \in L(\Bb{T}_j)}(m^x -1) \Ti{R}_h G_i
\geq &\sum_{j=1}^{s}\sum_{x \in L(\Bb{T}_j)}(m^x -1)\Ti{R}_h  \Ti{E}^x  \\
\geq & (n-2) \sum_{j=1}^{s}\sum_{x \in L(\Bb{T}_j)}(m^x -1) \\
\geq & (n-2)( \iota(\Gamma_{p})_{\Ha{z}}+ 2\kappa(\Gamma_{p})_{\Ha{z}}).
\end{align*}
\Blue{Plugging the above inequalities} to (\ref{6/10,1}), 
we conclude that $\alpha_0 ^{+} (\Gamma_{p})_{\Ha{z}}\geq (n-2)( \iota(\Gamma_{p})_{\Ha{z}}+ 2 \kappa(\Gamma_{p})_{\Ha{z}})$.
Then, since $\iota(\Gamma_{p})_{\Ha{z}}=j(\Gamma_{p})_{\Ha{z}}-\eta_{\Ha{z}}$ by (1), we get (2).
\end{proof}

We can show the following inequality similarly to Lemma~4.7 of \cite{Eno2}
\begin{lem}
\label{Eno4.7}
If $n=2$, then it holds
\begin{align*}
\kappa(\Gamma_{p})_{\Ha{z}} \leq \frac{2}{3}\sum_{a\geq2} (a-1)\Ha{j}_{0,a}(\Gamma_{p})_{\Ha{z}}.
\end{align*}
\end{lem}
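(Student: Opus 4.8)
The plan is to follow the argument of Lemma~4.7 in \cite{Eno2}, specialized to $n=2$. Since both sides of the asserted inequality split as sums over the maximal trees $D_1,\dots,D_{\eta_{\Ha{z}}}$ into which $\Ti{R}_v(p)_{\Ha{z}}$ decomposes, it suffices to establish the local inequality $\kappa^t(\Gamma_p)_{\Ha{z}}\leq \tfrac{2}{3}\sum_{a\geq 2}(a-1)\Ha{j}_{0,a}^t(\Gamma_p)_{\Ha{z}}$ for each fixed $t$ and then sum over $t$. I would therefore fix one tree $D_t=\sum_k L_{t,k}$, where $L_{t,k}$ is the $(-2a_{t,k})$-curve obtained from the exceptional $(-1)$-curve $C_{t,k}$ by $2a_{t,k}-1$ blow-ups, and study how the type $2\Z+1$ crossings among the $C_{t,k}$ force the numbers $a_{t,k}$ to exceed $1$.

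The reason a separate lemma is needed is that the general estimate of Lemma~\ref{lem5.2}(2) degenerates when $n=2$: its coefficient $n-2$ vanishes, so $\kappa$ can no longer be controlled through the horizontal index $\alpha_0^{+}$, and its cost must instead be paid in the vertical direction. Two facts drive the analysis. First, the proof of Lemma~\ref{lem5.2}(1) shows that the $\iota$-crossings form a spanning tree $\Bb{G}_t$ on the $\Ha{j}_{0,\bullet}^t$ curves, whence the full intersection graph of the $C_{t,k}$ has first Betti number $(\iota^t+\kappa^t)-\Ha{j}_{0,\bullet}^t+1=\kappa^t$; thus each point counted by $\kappa^t$ is a redundant odd crossing and $\Ti{R}$ being smooth (so that its components are disjoint on $\Ti{W}$) means every such redundancy must be undone by the resolution. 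Second, by Lemma~\ref{lem1.2}(1) a blow-up at a type $2\Z+1$ point has local multiplicity odd and at least $3$, drops the self-intersection of each of the two meeting curves by one, and spawns a new vertical branch component that must itself be resolved. Tracking these drops via the multiplicity-growth computation $m^x\geq 2\iota(P^x)+\kappa(P^x)$ of Lemma~\ref{lem5.2}(2) converts each odd crossing into a definite amount of excess self-intersection $\sum_{a\geq 2}2(a-1)\Ha{j}_{0,a}^t(\Gamma_p)_{\Ha{z}}$.

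Carrying out this bookkeeping, each point counted by $\kappa^t$ should be charged at least three units of that excess, giving $3\kappa^t(\Gamma_p)_{\Ha{z}}\leq \sum_{a\geq 2}2(a-1)\Ha{j}_{0,a}^t(\Gamma_p)_{\Ha{z}}$ and hence the claim after summation over $t$. The main obstacle is pinning down the constant $\tfrac{2}{3}$. The naive count of drops (two per crossing, one on each meeting curve, as in the derivation used for Lemma~\ref{lem5.2}(3)) yields only $\kappa^t(\Gamma_p)_{\Ha{z}}\leq\sum_{a\geq2}(a-1)\Ha{j}_{0,a}^t(\Gamma_p)_{\Ha{z}}$; extracting the extra factor requires accounting also for the drops forced on the branch component spawned at each odd crossing, while ensuring that excess charged to one redundancy is never reused for another. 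I would handle this by orienting the blow-up tree as in the proof of Lemma~\ref{lem5.2}(2), assigning to each $\kappa$-point the blow-ups lying strictly below it, and checking that the extremal configuration—two odd crossings supported on curves of total excess $\sum(a-1)=3$—is exactly the one saturating the bound. This final calibration is the step that must follow \cite{Eno2} most closely.
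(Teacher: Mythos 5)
The first thing to note is that the paper does not actually prove this lemma: it says only that the inequality ``can be shown similarly to Lemma~4.7 of \cite{Eno2}'', so there is no internal argument to compare against, and your decision to outsource the final calibration to \cite{Eno2} mirrors the paper's own treatment. Much of your scaffolding is sound and checks out: the reduction to a single family $D_t$, the observation that Lemma~\ref{lem5.2}(2) becomes vacuous at $n=2$ because of the factor $n-2$, the identity $\iota^t=\Ha{j}^t_{0,\bullet}-1$ making the full crossing graph have first Betti number $\kappa^t$, the fact that the naive degree count only yields the constant $1$, and even your guess that the extremal configuration is two odd crossings with total excess $\sum(a-1)=3$ --- all of these are consistent with a correct proof.

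Nevertheless, as a standalone proof the attempt has a genuine gap exactly at the step carrying all the content: the charge of three units of excess per $\kappa$-point is asserted, not established, and the one concrete tool you invoke --- the estimate $m^x\geq 2\iota(P^x(\Bb{T}_j))+\kappa(P^x(\Bb{T}_j))$ from Lemma~\ref{lem5.2}(2) --- is the wrong instrument: it tracks multiplicities of the fiber $\Ti{\Gamma}_p$ along exceptional curves and feeds into the horizontal index $\alpha_0^{+}$, which is precisely the quantity that loses its force when $n=2$; the present lemma concerns only the vertical self-intersections. The gap closes by a global count rather than your local discharging. Writing $-2a_k$ for the self-intersection of $L_{t,k}$, each component absorbs exactly $2a_k-1$ blow-ups (each point of the smooth rational curve drops its self-intersection by one), so $\sum_k(2a_k-1)=2(\iota^t+\kappa^t)+E$, where $E\geq0$ counts blow-ups on family curves at non-crossing points; combined with $\iota^t=\Ha{j}^t_{0,\bullet}-1$ this gives
\begin{align*}
2\sum_{a\geq2}(a-1)\Ha{j}^t_{0,a}(\Gamma_p)_{\Ha{z}}-3\kappa^t(\Gamma_p)_{\Ha{z}}
=\bigl(\Ha{j}^t_{0,\bullet}(\Gamma_p)_{\Ha{z}}-\kappa^t(\Gamma_p)_{\Ha{z}}\bigr)+E-2.
\end{align*}
Every $\kappa$-point spawns a distinct new member of $D_t$ (its exceptional curve lies in the branch locus by Lemma~\ref{lem1.2}(1) and joins the family by maximality), so $\Ha{j}^t_{0,\bullet}-\kappa^t$ equals the number $\rho$ of non-spawned members; and if $\kappa^t\geq1$, the chronologically first odd crossing lies on two members created before any crossing-spawned curve exists, forcing $\rho\geq2$. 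Hence the right-hand side is non-negative whenever $\kappa^t\geq1$, the case $\kappa^t=0$ being trivial, and summing over $t$ gives the lemma. Note that the decisive inputs are the parity of $2a_k-1$ and the two-root count --- not the drops on the spawned component that you emphasize, since the crossings of $E^x$ with the two original curves are already absorbed into the naive count through $\iota^t$.
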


 \begin{lem}
  \label{alphabound}
 Let the notation and the assumption as above.
 Then it holds that 
 \begin{align*}
\Ha{\alpha}_{0}(\Gamma_{p})_{\Ha{z}}+B_{n}\sum_{k \geq 1}\Ha{\alpha}_{k}(\Gamma_{p})_{\Ha{z}}-
  \begin{cases}
  nB_{n}\Ha{j}_{0,1}(\Gamma_{p})_{\Ha{z}} &(n\geq3)\\
  \\
  0 &(n=2)\\
  \end{cases}
\end{align*}
\Blue{is non-negative},
where $B_{n}=2/n$ if $n \geq 4$, $B_{3}=1/2$ and $B_{2}=3/2$.
 \end{lem}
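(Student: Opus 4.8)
The plan is to feed the three inequalities of Lemma~\ref{lem5.2}, together with the defining relation $\Ha{\alpha}_{0}(\Gamma_{p})_{\Ha{z}}=\alpha_{0}^{+}(\Gamma_{p})_{\Ha{z}}-2\sum_{a\geq 2}\Ha{j}_{0,a}(\Gamma_{p})_{\Ha{z}}$ from (\ref{6/10,1}), into the quantity to be estimated, and to reduce the whole statement to checking that a linear form in the nonnegative indices $j_a:=\Ha{j}_{0,a}(\Gamma_{p})_{\Ha{z}}$, $\eta:=\eta_{\Ha{z}}$ and $\kappa:=\kappa(\Gamma_{p})_{\Ha{z}}$ has nonnegative coefficients. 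Since $B_n$ is defined by cases, I would split the argument into $n\geq 4$, $n=3$ and $n=2$. Throughout write $j_\bullet:=\Ha{j}_{0,\bullet}(\Gamma_{p})_{\Ha{z}}=\sum_{a\geq 1}j_a$ and $A:=\sum_{k\geq 1}\Ha{\alpha}_{k}(\Gamma_{p})_{\Ha{z}}$. First I would derive a single master lower bound valid for all $n$: inserting Lemma~\ref{lem5.2}(2) and then (1) into the definition gives
\[
\Ha{\alpha}_{0}(\Gamma_{p})_{\Ha{z}}\geq (n-4)j_\bullet-(n-2)\eta+2(n-2)\kappa+2j_1 ,
\]
while Lemma~\ref{lem5.2}(3) reads $A\geq\sum_{a\geq 1}(an-2)j_a+2\eta-\kappa$. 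Combining these turns the expression $\Ha{\alpha}_{0}(\Gamma_{p})_{\Ha{z}}+B_nA-nB_n\Ha{j}_{0,1}(\Gamma_{p})_{\Ha{z}}$ into an explicit linear combination of the $j_a$, $\eta$ and $\kappa$.

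For $n\geq 4$ I would use $B_n=2/n$, so $nB_n=2$ and the $+2j_1$ above cancels against $-nB_nj_1=-2j_1$. Summing, the coefficient of $\kappa$ is $(2n^2-4n-2)/n>0$, the coefficient of $j_a$ is $n-4+2a-\tfrac4n$, and the only negative coefficient is that of $\eta$, equal to $(-n^2+2n+4)/n$. The decisive step is then the geometric inequality $\eta\leq j_\bullet$: each maximal block $D_t$ contains at least the curve $L_{t,1}$, so $\Ha{j}_{0,\bullet}^{t}(\Gamma_{p})_{\Ha{z}}=k_t\geq 1$ and hence $j_\bullet=\sum_{t}k_t\geq\eta$. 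Replacing $\eta$ by $j_\bullet$ (legitimate since its coefficient is negative) makes the coefficient of each $j_a$ collapse to exactly $2(a-1)\geq 0$, leaving a manifestly nonnegative expression.

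For $n=3$ ($B_3=\tfrac12$, $nB_n=\tfrac32$) the same bookkeeping makes the coefficient of $\eta$ vanish identically, so no auxiliary estimate on $\eta$ is needed; one checks directly that the coefficient of $j_1$ is $0$, those of $j_a$ for $a\geq 2$ are $(3a-4)/2>0$, and that of $\kappa$ is $\tfrac32>0$. For $n=2$ ($B_2=\tfrac32$) the master bound specializes to $\Ha{\alpha}_{0}(\Gamma_{p})_{\Ha{z}}\geq-2\sum_{a\geq 2}j_a$, since every $\eta$- and $\kappa$-term carries the factor $n-2=0$, and Lemma~\ref{lem5.2}(3) gives $A\geq 2\sum_{a\geq 2}(a-1)j_a+2\eta-\kappa$. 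The extra ingredient here is Lemma~\ref{Eno4.7}, $\kappa\leq\tfrac23\sum_{a\geq 2}(a-1)j_a$; feeding it into $\tfrac32A$ and adding the bound on $\Ha{\alpha}_0$ reduces the claim to $2\sum_{a\geq 2}(a-2)j_a+3\eta\geq 0$, which is clear.

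The computations are routine linear algebra in the localized indices, so the heart of the proof lies elsewhere. I expect the only genuinely delicate points to be the geometric bound $\eta\leq j_\bullet$ required for $n\geq 4$ (which is what forces the negative $\eta$-coefficient to be absorbable rather than obstructive), and the observation that for $n=2$ one must fall back on Lemma~\ref{Eno4.7} precisely because Lemma~\ref{lem5.2}(2) degenerates to the vacuous $\alpha_0^{+}(\Gamma_{p})_{\Ha{z}}\geq 0$. The principal risk is simply keeping the three coefficient computations straight and confirming that the $\eta$-coefficient cancels exactly for $n=3$ while it genuinely must be traded against $j_\bullet$ for $n\geq 4$.
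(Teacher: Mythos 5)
Your proposal is correct and follows essentially the same route as the paper: for $n=2$ your computation (definition of $\Ha{\alpha}_{0}(\Gamma_{p})_{\Ha{z}}$ plus Lemma~\ref{lem5.2}(3) plus Lemma~\ref{Eno4.7}, then checking that the coefficients $2(a-2)$ of $\Ha{j}_{0,a}(\Gamma_{p})_{\Ha{z}}$ are non-negative) matches the paper's almost line for line, and for $n\geq 3$ you carry out exactly the coefficient bookkeeping that the paper delegates to Lemma~2.2 of \cite{Aka}. Your key auxiliary step $\eta_{\Ha{z}}\leq \Ha{j}_{0,\bullet}(\Gamma_{p})_{\Ha{z}}$ (equivalently $\iota(\Gamma_{p})_{\Ha{z}}\geq 0$ via Lemma~\ref{lem5.2}(1)) is sound, and all three case computations — the exact cancellation to $2(a-1)$ for $n\geq4$, the vanishing $\eta$-coefficient for $n=3$, and the reduction to $2\sum_{a\geq2}(a-2)\Ha{j}_{0,a}(\Gamma_{p})_{\Ha{z}}+3\eta_{\Ha{z}}\geq 0$ for $n=2$ — check out.
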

 \begin{proof}
 If $n\geq 3$, we can show the desired inequality similarly to Lemma~2.2 of \cite{Aka}.
 
\Blue{For $n=2$, we have}
 \begin{align*}
  &\Ha{\alpha}_{0}(\Gamma_{p})_{\Ha{z}}+B_{2}\sum_{k \geq 1}\Ha{\alpha}_{k}(\Gamma_{p})_{\Ha{z}}\\
 =&\alpha^{+}_{0}(\Gamma_{p})_{\Ha{z}}-2\sum_{a \geq 2}\Ha{j}_{0,a}(\Gamma_{p})_{\Ha{z}}
 +B_{2}\sum_{k \geq 1}\Ha{\alpha}_{k}(\Gamma_{p})_{\Ha{z}}\\
 \geq&\alpha^{+}_{0}(\Gamma_{p})_{\Ha{z}}+\sum_{a \geq 2}
 \left(B_{2}(2a-2)-2 \right)\Ha{j}_{0,a}(\Gamma_{p})_{\Ha{z}}+2B_{2}\eta_{\Ha{z}}-B_{2}\kappa(\Gamma_{p})_{\Ha{z}}\\
 \geq &\alpha^{+}_{0}(\Gamma_{p})_{\Ha{z}}+\sum_{a \geq 2}
 \left(\frac{4}{3}(a-1)B_{2}-2 \right)\Ha{j}_{0,a}(\Gamma_{p})_{\Ha{z}},
 \end{align*}
\Blue{where the first inequality} is by (3) of Lemma~\ref{lem5.2},
 the second inequality is by Lemma~\ref{Eno4.7}.
Since $B_{2}=3/2$, \Blue{the coefficients} of $\Ha{j}_{0,a}(\Gamma_{p})_{\Ha{z}}$ are non-negative for $a \geq 2$.
 \end{proof}
 
 \begin{lem}
 \label{lowerbdK1}
 The following holds for $p \in B$:
 \begin{align*}
 K_{f}^2(\Gamma_{p})_{\Ha{z}} \geq \sum_{k \geq 1} \left( (n^2-1)k-n-\frac{(n-1)^{2}}{n}B_{n}\right)
 \Ha{\alpha}_{k}(\Gamma_{p})_{\Ha{z}}.
 \end{align*}
 In particular, $K_{f}^2(\Gamma_{p})_{\Ha{z}}$ is non-negative.
 \end{lem}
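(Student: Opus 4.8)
The plan is to compare $K_{f}^2(\Gamma_{p})_{\Ha{z}}$ with the claimed lower bound by subtraction. Starting from the definition
\[
K_{f}^2 (\Gamma_{p})_{\Ha{z}} = \sum_{k\geq 1}\left((n^{2}-1)k-n \right)\Ha{\alpha}_{k} (\Gamma_{p}) _{\Ha{z}} +\frac{(n-1)^{2}}{n}\bigl(\Ha{\alpha}_{0}(\Gamma_{p})_{\Ha{z}}  -2\Ha{j}_{0,1}(\Gamma_{p})_{\Ha{z}} \bigr) + \Ha{j}_{0,1}(\Gamma_{p})_{\Ha{z}},
\]
the terms weighted by $(n^{2}-1)k-n$ cancel against the corresponding terms of the claimed bound, and collecting the factor $\frac{(n-1)^{2}}{n}$ one is left with
\[
K_{f}^2(\Gamma_{p})_{\Ha{z}} - \sum_{k \geq 1}\left( (n^2-1)k-n-\tfrac{(n-1)^{2}}{n}B_{n}\right)\Ha{\alpha}_{k}(\Gamma_{p})_{\Ha{z}} = \frac{(n-1)^{2}}{n}\left(\Ha{\alpha}_{0}(\Gamma_{p})_{\Ha{z}}-2\Ha{j}_{0,1}(\Gamma_{p})_{\Ha{z}}+B_{n}\sum_{k\geq1}\Ha{\alpha}_{k}(\Gamma_{p})_{\Ha{z}}\right)+\Ha{j}_{0,1}(\Gamma_{p})_{\Ha{z}}.
\]
Thus the first inequality is equivalent to showing that this right-hand side is non-negative, which is exactly where Lemma~\ref{alphabound} enters.

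The engine is Lemma~\ref{alphabound}, which provides the lower bound for $\Ha{\alpha}_{0}(\Gamma_{p})_{\Ha{z}}+B_{n}\sum_{k}\Ha{\alpha}_{k}(\Gamma_{p})_{\Ha{z}}$. For $n\geq 3$ this quantity is at least $nB_{n}\Ha{j}_{0,1}(\Gamma_{p})_{\Ha{z}}$, so the expression in parentheses is at least $(nB_{n}-2)\Ha{j}_{0,1}(\Gamma_{p})_{\Ha{z}}$ and the whole right-hand side is at least $\bigl(\tfrac{(n-1)^2}{n}(nB_{n}-2)+1\bigr)\Ha{j}_{0,1}(\Gamma_{p})_{\Ha{z}}$; for $n=2$ the quantity is at least $0$, so the right-hand side is at least $\bigl(\tfrac{(n-1)^2}{n}\cdot(-2)+1\bigr)\Ha{j}_{0,1}(\Gamma_{p})_{\Ha{z}}=0$. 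The remaining work, and the only place anything could go wrong, is checking that the coefficient of the (non-negative) count $\Ha{j}_{0,1}(\Gamma_{p})_{\Ha{z}}$ is non-negative for each admissible value of $B_{n}$: with $B_{n}=2/n$ for $n\geq 4$ one has $nB_{n}=2$, giving coefficient $1$; with $B_{3}=1/2$ one gets $\tfrac{4}{3}\cdot(-\tfrac12)+1=\tfrac13$; and the $n=2$ case yields exactly $0$. So the right-hand side is non-negative in every case, proving the displayed inequality.

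For the final assertion that $K_{f}^2(\Gamma_{p})_{\Ha{z}}$ is non-negative, I would use the inequality just proved together with $\Ha{\alpha}_{k}(\Gamma_{p})_{\Ha{z}}\geq 0$: it suffices that the weight
\[
(n^2-1)k-n-\frac{(n-1)^{2}}{n}B_{n}
\]
be non-negative for every $k\geq 1$. Since this weight is strictly increasing in $k$, it is enough to verify it at $k=1$, which is a short case check in $n$ (for instance it equals $\tfrac14$ at $n=2$, $\tfrac{13}{3}$ at $n=3$, and stays positive for $n\geq 4$ because $n^2-n-1$ dominates $\tfrac{2(n-1)^2}{n^2}<2$). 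I do not expect any genuine obstacle here; the whole argument is a direct substitution of Lemma~\ref{alphabound} followed by elementary arithmetic, the only subtlety being to keep the three regimes of $B_{n}$ (and the two forms of Lemma~\ref{alphabound}) straight so that all coefficients come out non-negative.
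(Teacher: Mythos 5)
Your proof is correct and takes essentially the same route as the paper's: both rest entirely on Lemma~\ref{alphabound}, with the identical case analysis on $B_n$ ($n\geq 4$ giving coefficient $1$, $n=3$ giving $\tfrac{4}{3}\cdot(-\tfrac12)+1=\tfrac13$, and $n=2$ giving exact cancellation to $0$), your subtraction identity merely packaging into one display what the paper carries out as three separate case-by-case computations. Your explicit check that $(n^2-1)k-n-\frac{(n-1)^{2}}{n}B_{n}\geq 0$ at $k=1$ for the ``in particular'' clause is the same arithmetic the paper leaves implicit.
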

 \begin{proof}
 If $n \geq 4$, it is clear from Lemma~\ref{alphabound}.
 
 If $n=3$, we have 
 \begin{align*}
 \Ha{\alpha}_{0}(\Gamma_{p})_{\Ha{z}}-2\Ha{j}_{0,1}(\Gamma_{p})_{\Ha{z}}
  \geq -B_{3}\sum_{k \geq 1}\Ha{\alpha}_{k}(\Gamma_{p})_{\Ha{z}}-\frac{1}{2}\Ha{j}_{0,1}(\Gamma_{p})_{\Ha{z}}
 \end{align*}
 from Lemma~\ref{alphabound}.
 Hence we get 
 \begin{align*}
 K_{f}^2(\Gamma_{p})_{\Ha{z}}& \geq \sum_{k \geq 1} \left( 8k-3\right)
 \Ha{\alpha}_{k}(\Gamma_{p})_{\Ha{z}} 
 -\frac{4}{3}B_{3}\sum_{k \geq 1} \Ha{\alpha}_{k}(\Gamma_{p})_{\Ha{z}}
- \frac{2}{3}\;\Ha{j}_{0,1}(\Gamma_{p})_{\Ha{z}}+\Ha{j}_{0,1}(\Gamma_{p})_{\Ha{z}}\\
&\geq  \left( 8k-3-\frac{4}{3}B_{3}\right)
 \Ha{\alpha}_{k}(\Gamma_{p})_{\Ha{z}}.
 \end{align*}
 If $n=2$, we have 
 \begin{align*}
 K_{f}^2(\Gamma_{p})_{\Ha{z}}& = \sum_{k \geq 1} \left( 3k-2\right)
 \Ha{\alpha}_{k}(\Gamma_{p})_{\Ha{z}}
 +\frac{1}{2}\Ha{\alpha}_{0}(\Gamma_{p})_{\Ha{z}}.
 \end{align*}
 Hence we get 
  \begin{align*}
 K_{f}^2(\Gamma_{p})_{\Ha{z}}& \geq \sum_{k \geq 1} \left( 3k-2-\frac{1}{2}B_{2}\right)
 \Ha{\alpha}_{k}(\Gamma_{p})_{\Ha{z}}
 \end{align*}
 from Lemma~\ref{alphabound}.
\end{proof}

\begin{lem}
\label{6/10,2}
 The following holds for $p \in B$.
 \begin{align*}
 K_{f}^2 (\Gamma_{p}) \geq &\sum_{\Ha{z}\in \Ha{\Gamma}_{p}} 
  \frac{(n-1)^{2}}{n}\alpha^{+}_{0}(\Gamma_{p})_{\Ha{z}}
 +\sum_{\Ha{z}\in \Ha{\Gamma}_{p}}
 \sum_{k\geq 1}\left((n^{2}-1)k-n-2\frac{(n-1)^2}{n} \right)\Ha{\alpha}_{k} (\Gamma_{p}) _{\Ha{z}}\\
  &+ \sum_{k\geq 1}\left((n^{2}-1)k-n \right)\Ch{\alpha}_{k} (\Gamma_{p}) 
 -2\frac{(n-1)^{2}}{n}\left(\Ch{j}_{0,\bullet}(\Gamma_{p})+j_{0,\bullet}'(\Gamma_{p})  \right)\\
 &+\frac{n^{2}-1}{n}r(\chi_{\varphi}(\Gamma_{p}) +\nu(\Gamma_{p}) ) + \Ch{j}_{0,1}(\Gamma_{p})+j_{0,1}'(\Gamma_{p}).
 \end{align*}
 \end{lem}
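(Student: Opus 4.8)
The plan is to deduce the inequality from the exact formula of Proposition~\ref{localization} by estimating each local contribution $K_{f}^2(\Gamma_{p})_{\Ha{z}}$ from below separately. Comparing the desired right-hand side with Proposition~\ref{localization}, the terms carrying $\Ch{\alpha}_{k}(\Gamma_{p})$, $\Ch{j}_{0,\bullet}(\Gamma_{p})$, $j_{0,\bullet}'(\Gamma_{p})$, $r(\chi_{\varphi}(\Gamma_{p})+\nu(\Gamma_{p}))$, $\Ch{j}_{0,1}(\Gamma_{p})$ and $j_{0,1}'(\Gamma_{p})$ occur identically on both sides and cancel. Thus it suffices to prove, for every $\Ha{z}\in\Ha{\Gamma}_{p}$, the pointwise bound
\[
K_{f}^2(\Gamma_{p})_{\Ha{z}} \geq \frac{(n-1)^{2}}{n}\alpha_{0}^{+}(\Gamma_{p})_{\Ha{z}} + \sum_{k\geq1}\left((n^{2}-1)k-n-\tfrac{2(n-1)^{2}}{n}\right)\Ha{\alpha}_{k}(\Gamma_{p})_{\Ha{z}},
\]
and then sum over $\Ha{z}$ and reinstate the cancelled terms.

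First I would unfold the definition of $K_{f}^2(\Gamma_{p})_{\Ha{z}}$, inserting $\Ha{\alpha}_{0}(\Gamma_{p})_{\Ha{z}}=\alpha_{0}^{+}(\Gamma_{p})_{\Ha{z}}-2\sum_{a\geq2}\Ha{j}_{0,a}(\Gamma_{p})_{\Ha{z}}$ and merging the resulting $a=1$ and $a\geq2$ terms into $\Ha{j}_{0,\bullet}(\Gamma_{p})_{\Ha{z}}$. Abbreviating $\Ha{\alpha}_{k}$, $\Ha{j}_{0,\bullet}$, $\Ha{j}_{0,1}$, $\alpha_{0}^{+}$ for the corresponding quantities at $(\Gamma_{p})_{\Ha{z}}$, this gives
\[
K_{f}^2(\Gamma_{p})_{\Ha{z}} = \sum_{k\geq1}\left((n^{2}-1)k-n\right)\Ha{\alpha}_{k} + \frac{(n-1)^{2}}{n}\alpha_{0}^{+} - \frac{2(n-1)^{2}}{n}\Ha{j}_{0,\bullet} + \Ha{j}_{0,1}.
\]
Subtracting the common $\alpha_{0}^{+}$ and $\sum_{k}((n^{2}-1)k-n)\Ha{\alpha}_{k}$ contributions, the pointwise bound becomes equivalent to
\[
\frac{2(n-1)^{2}}{n}\left(\sum_{k\geq1}\Ha{\alpha}_{k}-\Ha{j}_{0,\bullet}\right) + \Ha{j}_{0,1} \geq 0.
\]
Since the coefficient and $\Ha{j}_{0,1}$ are nonnegative, everything reduces to the single combinatorial estimate $\sum_{k\geq1}\Ha{\alpha}_{k}\geq\Ha{j}_{0,\bullet}$.

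The hard part, and the only genuine content, is this last estimate. I would prove it by attaching to each component counted by $\Ha{j}_{0,\bullet}$ a distinct singular point counted by $\sum_{k}\Ha{\alpha}_{k}$. Each such component is some $L_{t,k}\subset\Ti{R}_{v}(p)_{\Ha{z}}$, the proper transform of an exceptional $(-1)$-curve $C_{t,k}$ whose image lies in the branch locus. By Lemma~\ref{lem1.2}(1) an exceptional curve enters the branch locus precisely when it is created at a blow-up center of type $n\Z+1$, and a curve once disjoint from the branch locus stays so under further blow-ups; hence each $L_{t,k}$ singles out the center at which its seed $C_{t,k}$ is born, a point of type $n\Z+1$, infinitely near $\Ha{z}$, of multiplicity at least $n+1\geq3$. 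Such a center is a singular point of the branch loci counted by $\Ha{\alpha}_{k}(\Gamma_{p})_{\Ha{z}}$ for the relevant $k\geq1$, and distinct components yield distinct centers because distinct exceptional curves arise from distinct blow-ups. This gives $\sum_{k\geq1}\Ha{\alpha}_{k}\geq\Ha{j}_{0,\bullet}$ and completes the proof.

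I expect the only delicate point to be this counting step: one must be sure that the birth centers are pairwise distinct and really of type $n\Z+1$ (so that they are singular and contribute to the $\Ha{\alpha}_{k}$), rather than smooth points or meeting points already bookkept elsewhere. I would also stress that the argument is uniform in $n$, and in particular covers $n=2$, where extracting $\sum_{k}\Ha{\alpha}_{k}\geq\Ha{j}_{0,\bullet}$ from Lemma~\ref{lem5.2}(3) alone would fail, since its coefficient $an-2$ vanishes at $a=1$; the direct identification of birth centers sidesteps this and keeps the proof self-contained.
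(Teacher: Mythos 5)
Your proposal is correct and takes essentially the same route as the paper: you reduce via Proposition~\ref{localization} to the pointwise bound at each $\Ha{z}$, which after unfolding the definition of $K_{f}^2(\Gamma_{p})_{\Ha{z}}$ comes down to exactly the inequality $\Ha{j}_{0,\bullet}(\Gamma_{p})_{\Ha{z}} \leq \sum_{k\geq 1}\Ha{\alpha}_{k}(\Gamma_{p})_{\Ha{z}}$ that the paper invokes with the one-line justification that the components of $\Ti{R}_{v}(p)_{\Ha{z}}$ arise from singular points of type $n\Z+1$. Your injection of components into their birth centers (via Lemma~\ref{lem1.2}(1)) merely makes explicit the counting the paper leaves implicit, including why it works uniformly for $n=2$.
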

 
 \begin{proof}
 By \Red{the definition of $K_{f}^2 (\Gamma_{p})_{\Ha{z}}$}, we have 
 \begin{align*}
 K_{f}^2 (\Gamma_{p})_{\Ha{z}} = & \sum_{k\geq 1}\left((n^{2}-1)k-n \right)\Ha{\alpha}_{k} (\Gamma_{p}) _{\Ha{z}} 
 +\frac{(n-1)^{2}}{n}(\Ha{\alpha}_{0}(\Gamma_{p})_{\Ha{z}}  -2\Ha{j}_{0,1}(\Gamma_{p})_{\Ha{z}} ) + \Ha{j}_{0,1}(\Gamma_{p})_{\Ha{z}}
\end{align*}
for $\Ha{z}\in \Ha{\Gamma}_{p}$.
Since irreducible components of $\Ha{R}(p)_{\Ha{z}}$ arise from singular points
of type $n\Z+1$, we have 
$\Ha{j}_{0,\bullet}(\Gamma_{p})_{\Ha{z}} \leq \sum_{k \geq 1} \Ha{\alpha}_{k}(\Gamma_{p})_{\Ha{z}}$.
Hence we get
\begin{align*}
 K_{f}^2 (\Gamma_{p})_{\Ha{z}} \geq  \frac{(n-1)^{2}}{n}\alpha^{+}_{0}(\Gamma_{p})_{\Ha{z}}
 + \sum_{k\geq 1}\left((n^{2}-1)k-n-2\frac{(n-1)^2}{n} \right)\Ha{\alpha}_{k} (\Gamma_{p}) _{\Ha{z}}.
\end{align*}
\Blue{Combining this with} Proposition~\ref{localization}, we get the desired inequality.
 \end{proof}

\section{Automorphism groups of elliptic surfaces}

In this section, we summarize some properties of automorphism groups of elliptic surfaces.  
Let $\varphi: W \to B$ be a relatively minimal elliptic surface.
We fix a point $p \in B$ and 
denote by $\Delta \subset B$ an analytic open neighborhood of $p$.
We suppose that $\Gamma_{p}$ is not multiple.
Let $\varphi_{\Delta} : W_{\Delta} \to \Delta$ be a restriction of $\varphi:W \to B$ to $\Delta$:
\[\xymatrix{
 W_{\Delta} \ar[d]_{\varphi_{\Delta}} \ar@{^{(}->}[r] & W \ar[d]^{\varphi} \\
 \Delta \ar@{^{(}->}[r] & B. \\
}\]
Replacing $\Delta$ by a smaller neighborhood of $p$ if necessary,
we may assume that 
$\varphi_{\Delta}$ has only one singular fiber $\Gamma_{p}$.

Let $W_{\Delta,\Rm{sm}}:=W_{\Delta}\setminus \Rm{Sing}(\Gamma_{p})$ and 
$\Gamma_{p,\Rm{sm}}:=\Gamma_{p}\setminus \Rm{Sing}(\Gamma_{p})$,
where $\Rm{Sing}(\Gamma_{p})$ \Blue{is the set} of 
critical points of $\varphi_{\Delta}$ on $\Gamma_{p}$.
Then there exists the natural map $\varphi_{\Delta,\Rm{sm}}:W_{\Delta,\Rm{sm}} \to \Delta$.
\Red{The following theorem is due to Kodaira (\cite{Kod})}.
\begin{thm}[Kodaira]
\label{kodaira}
There exist three holomorphic maps
\begin{align*}
 O_{\Delta}:\Delta \to W_{\Delta,\Rm{sm}},\quad
 \mu_{\Delta}:W_{\Delta,\Rm{sm}}\times_{\Delta} W_{\Delta,\Rm{sm}} \to W_{\Delta,\Rm{sm}},\quad
 \iota_{\Delta}:W_{\Delta,\Rm{sm}} \to W_{\Delta,\Rm{sm}}
\end{align*}
over $\Delta$ which satisfy the following conditions.
\begin{itemize}
\item[(1)] The fiber germ $\varphi_{\Delta,\Rm{sm}}:W_{\Delta,\Rm{sm}} \to \Delta$ is
a group manifold over $\Delta$.
\item[(2)] Let $\Gamma_{q}$ $(q \in \Delta)$ be a fiber of $\varphi_{\Delta}$ and let $O_{q}:=\Gamma_{q}\cap O_{\Delta}(\Delta)$.
These three maps induce the commutative group law on $\Gamma_{q,\Rm{sm}}$
whose unit element is $O_{q}$.
The group $(\Gamma_{q,\Rm{sm}},O_{q})$ is one of the followings.
\[
(\Gamma_{q,\Rm{sm}},O_{q})\cong 
\begin{cases}
\text{The group law of elliptic curve} & (\Rm{I}_{0}\text{-type})\\
\Bb{G}_{m}\times G(\Gamma_{q}) & (\Rm{I}_{c}\text{-type}) \\
\Bb{G}_{a}\times G(\Gamma_{q}) & (\text{other})
\end{cases}
\]
where 
\begin{align*}
&G(\Rm{I}_{c})    \cong \Bb{Z}/c\Bb{Z}, \\
&G(\Rm{I}_{2c}^{\ast})      \cong    (\Bb{Z}/2\Bb{Z})^{2}, \\
&G(\Rm{I}_{2c+1}^{\ast}) \cong  \Bb{Z}/4\Bb{Z}, \\
&G(\Rm{II}) \cong G(\Rm{II}^{\ast}) \cong \{0 \},   \\
&G(\Rm{III}) \cong G(\Rm{III}^{\ast}) \cong \Bb{Z}/2\Bb{Z},\\
&G(\Rm{IV}) \cong G(\Rm{IV}^{\ast}) \cong \Bb{Z}/3\Bb{Z}.
\end{align*}
\end{itemize}
\end{thm}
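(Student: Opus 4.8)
The plan is to follow Kodaira's original argument, organised around the local monodromy and the explicit normal forms of degenerate fibres. Since $\Gamma_{p}$ is assumed non-multiple, it carries a component of multiplicity one, and hence $\varphi_{\Delta}$ admits a holomorphic section $O_{\Delta}:\Delta \to W_{\Delta,\Rm{sm}}$ meeting that component; this is the one place where non-multiplicity is indispensable, as multiple fibres obstruct the existence of a local section. Taking $O_{\Delta}$ as the origin and working over the punctured disc $\Delta^{\ast}=\Delta\setminus\{p\}$, where every fibre is a smooth elliptic curve, I would trivialise the local system $R^{1}\varphi_{\ast}\Bb{Z}$ on the universal cover, integrate a relative holomorphic $1$-form to produce a multivalued period $\tau:\Delta^{\ast}\to\Fr{H}$ with monodromy $T\in\Rm{SL}_{2}(\Bb{Z})$, and thereby identify $W_{\Delta^{\ast}}$ with $(\Bb{C}\times\Delta^{\ast})/(\Bb{Z}+\Bb{Z}\tau)$. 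The additive group law on the universal cover $\Bb{C}$ then descends to holomorphic maps $\mu,\iota$ over $\Delta^{\ast}$, which establishes assertion (1) and the $\Rm{I}_{0}$-part of (2) away from $p$.

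The crucial step is to extend $O_{\Delta},\mu,\iota$ holomorphically across the central fibre over its smooth locus. For this I would first read off the Kodaira type of $\Gamma_{p}$ from the conjugacy class of $T$ together with the order of vanishing of the functional invariant $J$ at $p$: a unipotent $T$ conjugate to $\left(\begin{smallmatrix}1 & c\\ 0 & 1\end{smallmatrix}\right)$ gives type $\Rm{I}_{c}$, its negative gives $\Rm{I}_{c}^{\ast}$ (with $\Rm{I}_{0}^{\ast}$ when $c=0$), and the remaining classes, of finite order, give the additive types $\Rm{II},\Rm{III},\Rm{IV}$ and their starred partners. For each class I would substitute Kodaira's explicit local model of $W_{\Delta}$ and verify directly that the operations built over $\Delta^{\ast}$ extend to holomorphic maps $O_{\Delta},\mu_{\Delta},\iota_{\Delta}$ on $W_{\Delta,\Rm{sm}}$, giving $\Gamma_{p,\Rm{sm}}$ the structure of a commutative algebraic group with unit $O_{p}$.

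It then remains to identify $(\Gamma_{q,\Rm{sm}},O_{q})$ fibrewise. For $q\neq p$, and for $q=p$ of type $\Rm{I}_{0}$, it is the elliptic-curve law. In the multiplicative case $\Rm{I}_{c}$ the smooth locus of a cycle of $c$ rational curves is a disjoint union of $c$ copies of $\Bb{P}^{1}$ minus two points; the exponential $z\mapsto\exp(2\pi i z)$ carries the additive uniformisation to $\Bb{G}_{m}$, and recording the component on which a point lies gives the factor $\Bb{Z}/c\Bb{Z}$, so that $(\Gamma_{q,\Rm{sm}},O_{q})\cong\Bb{G}_{m}\times\Bb{Z}/c\Bb{Z}$. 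In the additive cases the limit of $\Bb{C}/(\Bb{Z}+\Bb{Z}\tau)$ is $\Bb{G}_{a}$, and the finite factor $G(\Gamma_{q})$ is the group of connected components of the N\'{e}ron model, which one reads off from the dual graph of $\Gamma_{p}$ (equivalently, from the discriminant group of the negative-definite intersection lattice of the non-identity components). Carrying out this computation reproduces the listed groups $G(\Rm{I}_{2c}^{\ast})\cong(\Bb{Z}/2\Bb{Z})^{2}$, $G(\Rm{I}_{2c+1}^{\ast})\cong\Bb{Z}/4\Bb{Z}$, $G(\Rm{II})=G(\Rm{II}^{\ast})=\{0\}$, $G(\Rm{III})=G(\Rm{III}^{\ast})\cong\Bb{Z}/2\Bb{Z}$ and $G(\Rm{IV})=G(\Rm{IV}^{\ast})\cong\Bb{Z}/3\Bb{Z}$.

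The main obstacle I anticipate is exactly this holomorphic extension across $p$: over $\Delta^{\ast}$ the group law is transparent, but matching it to Kodaira's normal forms and checking that $O_{\Delta}$, $\mu_{\Delta}$ and $\iota_{\Delta}$ stay holomorphic and land in $W_{\Delta,\Rm{sm}}$ demands a careful case-by-case treatment of each degenerate fibre, and it is precisely these local computations that pin down the component groups $G(\Gamma_{q})$. A more uniform alternative would be to identify $W_{\Delta,\Rm{sm}}$ with the identity component of the relative Picard functor $\Rm{Pic}^{0}_{W_{\Delta}/\Delta}$, representable once a multiplicity-one section is at hand; this produces $O_{\Delta},\mu_{\Delta},\iota_{\Delta}$ at one stroke and exhibits $G(\Gamma_{q})$ as the component group of $\Rm{Pic}_{\Gamma_{p}}$, after which I would still invoke Kodaira's table to evaluate these groups explicitly.
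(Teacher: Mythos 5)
Your proposal is correct in outline, but it takes a genuinely different route from the paper. The paper's proof is a two-step reduction: since $\Gamma_{p}$ is not multiple, the germ $\varphi_{\Delta}:W_{\Delta}\to\Delta$ is realized as a fiber germ of a projective elliptic surface $\varphi^{\flat}:W^{\flat}\to B^{\flat}$ admitting a global section $O_{\flat}$; then Theorem~9.1 of Kodaira's paper is invoked to produce $\mu_{\flat}$ and $\iota_{\flat}$ on the smooth locus of $W^{\flat}$, and the three maps $O_{\Delta},\mu_{\Delta},\iota_{\Delta}$ are obtained by the universal property of base change. All local analysis, including the classification of the groups $(\Gamma_{q,\Rm{sm}},O_{q})$ and the component groups $G(\Gamma_{q})$, is delegated to the citation. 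You instead propose to reprove the cited theorem: you extract the local section directly from non-multiplicity (correctly identifying this as the essential hypothesis, in both approaches), uniformize over $\Delta^{\ast}$ via periods and monodromy, extend the group law across the central fiber by a case-by-case check against Kodaira's normal forms, and compute $G(\Gamma_{q})$ from the dual graph or the discriminant of the intersection lattice; your listed component groups all agree with the statement, and the splitting $\Bb{G}_{m}\times\Bb{Z}/c\Bb{Z}$ in the $\Rm{I}_{c}$ case is legitimate since $\Bb{G}_{m}$ is divisible. What each route buys: the paper's argument is short and avoids redoing any local computation, at the cost of the nontrivial (though standard) realization of the germ inside a global basic member with a section; your argument is self-contained over the disc and never needs a global projective model, but the ``extension across $p$'' step you flag is exactly the lengthy content of Kodaira's original work, so as written it is a plan to reconstruct the citation rather than a shortcut past it. Your closing alternative via $\Rm{Pic}^{0}_{W_{\Delta}/\Delta}$ (the N\'{e}ron-model viewpoint) is the cleanest uniform packaging and would equally yield the three maps at one stroke, though, as you note, you would still consult Kodaira's table to evaluate the component groups explicitly.
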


\begin{proof}
Since $\Gamma_{p}$ is not multiple, there exists a projective elliptic surface over \Blue{a smooth} projective curve
$\varphi^{\flat} :W^{\flat} \to B^{\flat}$ which admits a global section and contains $\varphi_{\Delta} : W_{\Delta} \to \Delta$ as a fiber germ:
\[\xymatrix{
 W_{\Delta} \ar[d]_{\varphi_{\Delta}} \ar@{^{(}->}[r] & W^{\flat} \ar[d]^{\varphi^{\flat}} \\
 \Delta \ar@{^{(}->}[r] & B^{\flat} . \\
}\]
We denote by $O_{\flat}:B^{\flat} \to W^{\flat}$ a global section of $\varphi^{\flat}$. 
Let $W_{\Rm{sm}}^{\flat}$ be \Blue{an open} subset of $W^{\flat}$ consisting of all regular points of $\varphi^{\flat}$.
From Theorem~9.1 of \cite{Kod}, there \Red{exist} holomorphic maps
$\mu_{\flat}:W_{\Rm{sm}}^{\flat}\times_{B^{\flat}} W_{\Rm{sm}}^{\flat} \to W_{\Rm{sm}}^{\flat}$ and
$\iota_{\flat}:W_{\Rm{sm}}^{\flat}\to W_{\Rm{sm}}^{\flat}$ over $B^{\flat}$ which satisfy \Red{conditions} (1) and (2).
These three maps $O_{\flat}$, $\mu_{\flat}$ and $\iota_{\flat}$ induce $O_{\Delta}$, $\mu_{\Delta}$ and $\iota_{\Delta}$ by the universality of base change. 
\end{proof}
\begin{flushleft}
\begin{itemize}
\item We denote by $W_{\Rm{sm}}(\Delta)$ the set of sections of $\varphi_{\Delta,\Rm{sm}}$.
Since $\varphi_{\Delta,\Rm{sm}}:W_{\Delta,\Rm{sm}} \to \Delta$ is a group manifold over $\Delta$, 
$W_{\Rm{sm}}(\Delta)$ is \Blue{a commutative group} with \Red{the unit element} $O_{\Delta}$.
The symbol $\beta_{1}\oplus \beta_{2}$ denotes the sum of 
$\beta_{1},\: \beta_{2} \in W_{\Rm{sm}}(\Delta)$ and 
$\ominus \beta$ denotes the inverse element of $\beta \in W_{\Rm{sm}}(\Delta)$.
\item We have the following commutative diagram for a section $\beta \in W_{\Rm{sm}}(\Delta)$:
\[\xymatrix{
W_{\Delta,\Rm{sm}}   \ar[rr]_{\beta \times_{W_{\Delta,\Rm{sm}}}(\Rm{pr}_{1}) } \ar[d]_{\varphi_{\Delta}}  & & 
W_{\Delta,\Rm{sm}}\times_{\Delta} W_{\Delta,\Rm{sm}} \ar[rr]_{\Rm{pr}_{2}} \ar[d]^{\Rm{pr}_{1}} & &
W_{\Delta,\Rm{sm}} \ar[d]^{\varphi_{\Delta}}\\
\Delta \ar[rr]_{\beta}& & W_{\Delta,\Rm{sm}} \ar[rr]_{\varphi_{\Delta}}&  & \Delta \\
}\]
Hence we have an automorphism 
$\tau_{\beta}:=\mu_{\Delta} \circ \left(\beta \times_{W_{\Delta,\Rm{sm}}}(\Rm{pr}_{1})\right) \in \Rm{Aut} (W_{\Delta,\Rm{sm}}/\Delta)$
for each $\beta \in W_{\Rm{sm}}(\Delta)$.
Thus, there exists a natural injective homomorphism $W_{\Rm{sm}}(\Delta) \to \Rm{Aut} (W_{\Delta,\Rm{sm}}/\Delta);\beta \mapsto \tau_{\beta}$.
\item We define the following groups.
\begin{align}
\nonumber
 &\Rm{Aut} (W_{\Delta,\Rm{sm}}/\Delta):=
\{ \kappa \in \Rm{Aut} (W_{\Delta,\Rm{sm}}) \mid  \varphi_{\Delta,\Rm{sm}} \circ \kappa= \varphi_{\Delta,\Rm{sm}} \}\\
\nonumber
 &\Rm{Aut} (W_{\Delta,\Rm{sm}}/\Delta,O_{\Delta}):=
\{ \kappa \in \Rm{Aut} (W_{\Delta,\Rm{sm}}/\Delta) \mid \kappa \circ O_{\Delta} = O_{\Delta} \}\\
\label{Fixauto}
&\Red{\Rm{Aut} (\Gamma_{p,\Rm{sm}}, O_{p}):=
\{ \kappa \in \Rm{Aut} (\Gamma_{p,\Rm{sm}}) \mid   \kappa (O_{p})= O_{p} \}}
\end{align}
\end{itemize}
\end{flushleft}

\smallskip

\begin{prop}
There exists the exact sequence
\begin{align*}
1 \to W_{\Rm{sm}}(\Delta) \to \Rm{Aut} (W_{\Delta, \Rm{sm}}/\Delta) \to \Rm{Aut} (W_{\Delta,\Rm{sm}}/\Delta, O_{\Delta}) \to 1.
\end{align*}
In particular, it holds that 
\begin{align*}
\Rm{Aut} (W_{\Delta, \Rm{sm}}/\Delta) \cong W_{\Rm{sm}}(\Delta) \rtimes  \Rm{Aut} (W_{\Delta, \Rm{sm}}/\Delta, O_{\Delta}).
\end{align*}
\end{prop}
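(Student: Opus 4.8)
The plan is to produce an explicit retraction of $\Rm{Aut}(W_{\Delta,\Rm{sm}}/\Delta)$ onto the subgroup $\Rm{Aut}(W_{\Delta,\Rm{sm}}/\Delta, O_{\Delta})$ whose kernel is exactly the image of the translations $W_{\Rm{sm}}(\Delta)$. Given $\kappa \in \Rm{Aut}(W_{\Delta,\Rm{sm}}/\Delta)$, the composite $\beta_{\kappa}:=\kappa \circ O_{\Delta}$ is again a section of $\varphi_{\Delta,\Rm{sm}}$, hence $\beta_{\kappa}\in W_{\Rm{sm}}(\Delta)$, because $\kappa$ lies over $\Delta$. Set $r(\kappa):=\tau_{\ominus \beta_{\kappa}}\circ \kappa$. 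Then $r(\kappa)\circ O_{\Delta}=\tau_{\ominus \beta_{\kappa}}(\beta_{\kappa})=O_{\Delta}$, so $r(\kappa)\in \Rm{Aut}(W_{\Delta,\Rm{sm}}/\Delta, O_{\Delta})$, and every $\kappa$ factors as $\kappa=\tau_{\beta_{\kappa}}\circ r(\kappa)$. This factorization is unique: evaluating $\tau_{\beta}\circ h$ at $O_{\Delta}$ recovers $\beta$ whenever $h$ fixes $O_{\Delta}$, so $\beta$, and then $h$, are determined by $\kappa$. Thus, as a set, $\Rm{Aut}(W_{\Delta,\Rm{sm}}/\Delta)$ is the product $W_{\Rm{sm}}(\Delta)\times \Rm{Aut}(W_{\Delta,\Rm{sm}}/\Delta, O_{\Delta})$, and it remains to upgrade this bijection to a split exact sequence of groups.

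The crux is the assertion that every $h\in \Rm{Aut}(W_{\Delta,\Rm{sm}}/\Delta, O_{\Delta})$ is a fiberwise group homomorphism, i.e. $h\circ \mu_{\Delta}=\mu_{\Delta}\circ (h\times_{\Delta} h)$. Over the punctured disc $\Delta\setminus\{p\}$ every fiber $\Gamma_{q}$ is a smooth elliptic curve, and a biholomorphism of an elliptic curve fixing the origin is automatically an isomorphism of complex Lie groups; hence $h|_{\Gamma_{q}}$ intertwines $\mu_{\Delta}$ for all $q\neq p$. Since $\varphi_{\Delta,\Rm{sm}}$ is a submersion, the locus of $W_{\Delta,\Rm{sm}}\times_{\Delta}W_{\Delta,\Rm{sm}}$ lying over $\Delta\setminus\{p\}$ is dense, so the two continuous maps $h\circ \mu_{\Delta}$ and $\mu_{\Delta}\circ (h\times_{\Delta} h)$, agreeing there, coincide everywhere; in particular $h$ is additive on the special fiber $\Gamma_{p,\Rm{sm}}$ as well. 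I expect this to be the main obstacle, precisely because the rigidity lemma for abelian schemes is unavailable here: by Theorem~\ref{kodaira} the smooth part $\Gamma_{p,\Rm{sm}}$ of the central fiber is non-proper (of type $\Bb{G}_{m}\times G(\Gamma_{p})$ or $\Bb{G}_{a}\times G(\Gamma_{p})$), so additivity on the central fiber cannot be read off directly and must be propagated by continuity from the nearby smooth elliptic fibers.

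Granting additivity, the remainder is formal. For $h$ fixing $O_{\Delta}$ and $\beta\in W_{\Rm{sm}}(\Delta)$, additivity gives $h\circ \tau_{\beta}=\tau_{h\circ \beta}\circ h$, where $h\circ \beta\in W_{\Rm{sm}}(\Delta)$; writing an arbitrary $\kappa=\tau_{\gamma}\circ h$ and using that translations commute, one computes $\kappa\circ \tau_{\beta}\circ \kappa^{-1}=\tau_{h\circ \beta}$, so the image of $W_{\Rm{sm}}(\Delta)$ is normal. The same relation shows $r$ is a group homomorphism: for $\kappa_{i}=\tau_{\gamma_{i}}\circ h_{i}$ one checks $\beta_{\kappa_{1}\kappa_{2}}=\gamma_{1}\oplus (h_{1}\circ \gamma_{2})$, whence $r(\kappa_{1}\kappa_{2})=h_{1}\circ h_{2}=r(\kappa_{1})\circ r(\kappa_{2})$. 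By construction $\Rm{Ker}\, r$ consists of the translations $\tau_{\beta}$, which is the image of the injective homomorphism $W_{\Rm{sm}}(\Delta)\to \Rm{Aut}(W_{\Delta,\Rm{sm}}/\Delta)$ recalled above, while $r$ restricts to the identity on $\Rm{Aut}(W_{\Delta,\Rm{sm}}/\Delta, O_{\Delta})$, so $r$ is surjective and split. This yields the asserted exact sequence, and the splitting together with the normality of $W_{\Rm{sm}}(\Delta)$ gives the semidirect product decomposition.
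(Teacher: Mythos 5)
Your proposal is correct and follows essentially the same route as the paper's proof: you decompose $\kappa$ via the section $\beta_{\kappa}=\kappa\circ O_{\Delta}$ and the translation $\tau_{\ominus\beta_{\kappa}}$, and you establish normality of $W_{\Rm{sm}}(\Delta)$ from the fact that an automorphism fixing $O_{\Delta}$ restricts to a group automorphism on each smooth fiber $\Gamma_{q}$, which is exactly the paper's argument (the paper likewise concludes $\varepsilon\circ\tau_{\beta}\circ\varepsilon^{-1}=\tau_{\varepsilon\circ\beta}$ from agreement on the smooth fibers). Your extra care with uniqueness of the factorization, the density argument propagating additivity to the non-proper central fiber, and the verification that the retraction $r$ is a homomorphism only spell out steps the paper leaves implicit.
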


\begin{proof}
It is sufficient to show the following two statements.
\begin{itemize}
\item[(1)]There exist $\tau \in W_{\Rm{sm}}(\Delta)\subset \Rm{Aut} (W_{\Delta, \Rm{sm}}/\Delta)$ and
$\varepsilon \in \Rm{Aut} (W_{\Delta, \Rm{sm}}/\Delta, O_{\Delta})$ 
for an action $\kappa \in \Rm{Aut} (W_{\Delta, \Rm{sm}}/\Delta)$ such that $\kappa = \tau \circ \varepsilon$.

\item[(2)]$W_{\Rm{sm}}(\Delta) \lhd \Rm{Aut} (W_{\Delta, \Rm{sm}}/\Delta)$.
\end{itemize}

(1)~~An action  $\kappa \in \Rm{Aut} (W_{\Delta, \Rm{sm}}/\Delta)$ determines a section $\kappa\circ O_{\Delta} \in W_{\Rm{sm}}(\Delta)$.
We consider the action $\tau_{\ominus(\kappa \circ O_{\Delta})}$.
Then we have $\varepsilon:=\tau_{\ominus(\kappa \circ O_{\Delta})} \circ \kappa \in \Rm{Aut} (W_{\Delta, \Rm{sm}}/\Delta, O_{\Delta})$.
Hence we get $\tau_{\ominus(\kappa \circ O_{\Delta})}^{-1} \in W_{\Rm{sm}}(\Delta)$ and
$\varepsilon \in \Rm{Aut} (W_{\Delta, \Rm{sm}}/\Delta, O_{\Delta})$ 
such that $\kappa = \tau_{\ominus(\kappa \circ O_{\Delta})}^{-1} \circ \varepsilon $.

(2)~~We have to show $\kappa \circ \tau \circ \kappa^{-1} \in W_{\Rm{sm}}(\Delta)$
for any $\kappa \in \Rm{Aut} (W_{\Delta, \Rm{sm}}/\Delta)$ and $\tau \in W_{\Rm{sm}}(\Delta)$.
In order to show the above statement, it is sufficient to show 
$\varepsilon \circ \tau \circ \varepsilon^{-1} \in W_{\Rm{sm}}(\Delta) $
for any $\varepsilon \in \Rm{Aut} (W_{\Delta, \Rm{sm}}/\Delta,O_{\Delta})$ and $\tau \in W_{\Rm{sm}}(\Delta)$.
Suppose a section $\beta \in W_{\Rm{sm}}(\Delta)$ corresponds to the action $\tau \in \Rm{Aut} (W_{\Delta, \Rm{sm}}/\Delta)$ (namely $\tau = \tau_{\beta}$).
Let $\Gamma_{q}$ be an arbitrary smooth fiber of $\varphi_{\Delta}$.
Then it holds that $(\varepsilon\circ\tau_{\beta}\circ\varepsilon^{-1}) |_{\Gamma_{q}}= (\tau _{\varepsilon \circ \beta})|_{\Gamma_{q}}$ 
since $\varepsilon|_{\Gamma_{q}}$ is compatible with the group law of $\Gamma_{q}$.
Therefore we have $(\varepsilon\circ\tau_{\beta}\circ\varepsilon^{-1}) = \tau _{\varepsilon \circ \beta} \in  W_{\Rm{sm}}(\Delta)$.
\end{proof}

\section{Automorphism of fibered surface}

For a fibration $f:S \to B$, we define the automorphism group of $f$ as 
\[
\Rm{Aut}(f):=\{(\kappa_{S},\kappa_{B})\in \Rm{Aut} (S)\times \Rm{Aut}(B) \mid f \circ \kappa_{S} =  \kappa_{B} \circ  f \}.
\]

Let $f:S\to B$ be a primitive cyclic covering fibration of type $(g, 1, n)$.
Let $\Sigma$ be the cyclic group of order $n$ generated by $\sigma$,
where $\sigma$ is defined in Remark~\ref{pcc}.
Let $G$ be an arbitrary finite subgroup of $\Rm{Aut} (f)$.
Since $\Rm{Aut} (S/B):=\{(\kappa_{S}, \Rm{id})\in \Rm{Aut}(f)\}$ is a finite group, we may assume $\sigma \in G$ to estimate the order of it.
We have the exact sequence 
\begin{align*}
1\to K \to G \to H \to1, 
\end{align*}
where 
$K:=\{(\kappa_{S}, {\it id})\in G\}$ and  $H:=\{\kappa_{B} \in \Rm{Aut} (B)\mid  (\kappa_{S},\kappa_{B})\in G, \exists \kappa_{S}\in \Rm{Aut} (S)\}$.
\begin{lem}
Assume $r=\Ti{R}\Ti{\Gamma}\geq 4n$.
Take a smooth fiber $F$ of $f$ and a point $z \in F$.
Let $\kappa_{F}$ be an automorphism of $F$.
Then it holds that 
\begin{align*}
\kappa_{F}(\Sigma \cdot z)=\Sigma \cdot \kappa_{F}(z),
\end{align*}
where $\Sigma \cdot z$ denotes the $\Sigma$-orbits of $z$.
\end{lem}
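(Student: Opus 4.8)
The plan is to realize the $\Sigma$-orbits on $F$ as the fibers of a fixed degree-$n$ map to an elliptic curve and to show that any automorphism $\kappa_F$ permutes these fibers. Since $F$ is a smooth fiber of $f$, the resolution $\Ti{S}\to S$ is an isomorphism near it, so $F\cong \Ti{F}$ and the restriction $\Ti{\theta}|_{\Ti{F}}:\Ti{F}\to\Ti{\Gamma}$ identifies the quotient map $\pi:F\to F/\Sigma\cong \Ti{\Gamma}$ with a classical totally ramified $n$-cyclic covering of the smooth elliptic curve $\Ti{\Gamma}$, branched over the $r=\Ti{R}\Ti{\Gamma}$ points of $\Ti{R}\cap\Ti{\Gamma}$. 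Its fibers are precisely the $\Sigma$-orbits, $\Sigma\cdot z=\pi^{-1}(\pi(z))$, and over each branch point there is a single ramification point.

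The claimed identity $\kappa_F(\Sigma\cdot z)=\Sigma\cdot\kappa_F(z)$ is equivalent to the descent of $\kappa_F$ to the quotient, i.e.\ to the existence of $\alpha\in\Rm{Aut}(\Ti{\Gamma})$ with $\pi\circ\kappa_F=\alpha\circ\pi$. Putting $\pi':=\pi\circ\kappa_F^{-1}$, which is again a totally ramified degree-$n$ cover of $\Ti{\Gamma}$ with the same number $r$ of branch points, this amounts to finding $\alpha$ with $\pi=\alpha\circ\pi'$. To produce it I would consider the morphism $\Psi:=(\pi',\pi):F\to\Ti{\Gamma}\times\Ti{\Gamma}$, set $D:=\Psi(F)$ and $d:=\deg(\Psi:F\to D)$, so that the two projections $p_1|_D,p_2|_D$ have degree $m:=n/d$. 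If $d=n$, then $p_1|_D$ is birational, hence an isomorphism onto $\Ti{\Gamma}$ by Zariski's main theorem, and $\alpha:=p_2|_D\circ(p_1|_D)^{-1}$ satisfies $\pi=\alpha\circ\pi'$.

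The crux is excluding $d<n$, i.e.\ $m\geq 2$, and this is where the hypothesis $r\geq 4n$ enters. I would compare two genera of $D$. On the one hand, the normalization $\Ov{D}\to D$ receives the lift $g:F\to\Ov{D}$ of $\Psi$ of degree $d$, and $\phi_1:=p_1\circ(\Ov{D}\to D):\Ov{D}\to\Ti{\Gamma}$ has degree $m$; factoring the total ramification of $\pi'=\phi_1\circ g$ shows each of the $r$ branch points has a single preimage on $\Ov{D}$ at which $\phi_1$ is totally ramified, so Hurwitz (with $g(\Ti{\Gamma})=1$) gives $g(\Ov{D})\geq 1+(m-1)r/2$. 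On the other hand, $\Ti{\Gamma}\times\Ti{\Gamma}$ is an abelian surface with trivial canonical class, so adjunction reads $2p_a(D)-2=D^2$; since $D\cdot\Gamma_i=m$ for the two rulings and $(\Gamma_1+\Gamma_2)^2=2>0$, the Hodge index theorem yields $D^2\leq 2m^2$, whence $p_a(D)\leq m^2+1$. Combining with $g(\Ov{D})\leq p_a(D)$ gives $r\leq 2m^2/(m-1)\leq 2n^2/(n-1)$, the last step because $2m^2/(m-1)$ is increasing for $m\geq 2$ and $m\leq n$. As $4n-2n^2/(n-1)=2n(n-2)/(n-1)$, this contradicts $r\geq 4n$ for $n\geq 3$.

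Once $\pi\circ\kappa_F=\alpha\circ\pi$ is established, $\kappa_F$ carries the fiber $\Sigma\cdot z=\pi^{-1}(\pi(z))$ bijectively onto $\pi^{-1}(\alpha(\pi(z)))=\pi^{-1}(\pi(\kappa_F(z)))=\Sigma\cdot\kappa_F(z)$, which is the assertion. The step I expect to be the main obstacle is precisely the genus/Hodge-index estimate ruling out $d<n$, together with its sharpness: the inequality $2n^2/(n-1)\leq 4n$ degenerates to an equality at $n=2$, so the boundary case $n=2$, $r=8$ forces every estimate above to be an equality (a smooth $D$ numerically proportional to $\Gamma_1+\Gamma_2$) and must be excluded by a separate analysis of this equality case.
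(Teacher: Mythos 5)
Your proposal follows essentially the same route as the paper: the paper disposes of this lemma by quoting Proposition~3.1 of \cite{Aka}, and the argument you reconstruct --- the graph morphism $\Psi=(\pi',\pi)$ into $\Ti{\Gamma}\times\Ti{\Gamma}$, Zariski's main theorem when $d=n$, and the Hurwitz/adjunction/Hodge-index genus comparison when $d<n$ --- is precisely the proof of that proposition (it even appears, adapted to a genus-one target, in a commented-out block of this paper's source). For $n\geq 3$ your numerics check out: $r\leq 2m^2/(m-1)\leq 2n^2/(n-1)<4n$ for $2\leq m\leq n$ yields the contradiction, and the descent step at the end is correct.

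The boundary case you flag at $n=2$ is a genuine issue, but it is an issue with the lemma's stated hypothesis rather than something a finer analysis of the equality case could repair. For $n=2$ the chain only gives $r\leq 8$, and the equality configuration ($d=1$, $D$ a smooth curve of genus $5$ in the numerical class of $2(\Gamma_1+\Gamma_2)$ on the abelian surface $\Ti{\Gamma}\times\Ti{\Gamma}$) actually occurs: a smooth member $F$ of $|2(\Gamma_1+\Gamma_2)|$ on $E\times E$ invariant under the factor exchange is a genus-$5$ curve carrying two distinct bielliptic involutions that are interchanged by the exchange, so for such an $F$, with $\Sigma$ generated by one of them and $\kappa_F$ the exchange, the conclusion $\kappa_F(\Sigma\cdot z)=\Sigma\cdot\kappa_F(z)$ fails. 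Thus no argument of this fiberwise kind can close the case $n=2$, $r=8$; what is really needed there is $r\geq 10$, equivalently $g\geq 6$, the classical threshold for uniqueness of the bielliptic involution (consistent with the paper's remark that bielliptic fibrations are of type $(g,1,2)$ only for $g\geq 6$). This does not affect the paper downstream, since $r$ is a multiple of $n$ and the main theorems assume $g\geq 17$ when $n=2$, hence $r\geq 32$ wherever the lemma is invoked; but your instinct that the case $n=2$, $r=8$ ``must be excluded by a separate analysis'' is correct, and the only available remedy is to strengthen the hypothesis, not to refine the equality case.
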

\begin{proof}
The subgroup $\Sigma \subset \Blue{\Rm{Aut}(F)}$ induces the quotient map $\theta:F \to \Gamma := F/\Sigma$ of degree $n$. 
From the assumption $r \geq 4n$, there \Red{exists} \Blue{an isomorphism} $\kappa_{\Gamma}:\Gamma \to \Gamma$ such that the diagram
\[\xymatrix{
 F \ar[d]_{\theta} \ar[r]^{\kappa_{F}} & F \ar[d]^{\theta} \\
 \Gamma \ar[r]_{\kappa_{\Gamma}} & \Gamma \\
}\]
commutes for any $\kappa_{F}\in \Rm{Aut}(F)$ from Proposition~3.1 of 
\cite{Aka}.
Thus, we obtain $\kappa_{F}(\Sigma\cdot{z})=\Sigma\cdot{\kappa_{F}(z)}$.
\end{proof}
In what follows in section~4, we tacitly assume that $r \geq 4n$. 

We can show the following by a similar argument as in Lemma~3.3 of \cite{Aka}.
\begin{lem}
Let $F$ be a smooth fiber of $f:S \to B$.
Regard $\Sigma$ as a subgroup of $\Rm{Aut} (F)$.
Then $\Sigma$ is a normal subgroup of $\Rm{Aut} (F)$.
\end{lem}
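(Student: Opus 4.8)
The plan is to exploit the descent of automorphisms established in the preceding lemma together with the fact that $\theta\colon F\to\Gamma:=F/\Sigma$ is a Galois covering whose group of covering transformations is exactly $\Sigma$. Recall that we are tacitly assuming $r\geq 4n$, so the preceding lemma applies: for every $\kappa_{F}\in\Rm{Aut}(F)$ there is an isomorphism $\kappa_{\Gamma}\in\Rm{Aut}(\Gamma)$ fitting into the commutative square $\theta\circ\kappa_{F}=\kappa_{\Gamma}\circ\theta$. Rewriting this relation gives $\theta\circ\kappa_{F}^{-1}=\kappa_{\Gamma}^{-1}\circ\theta$, which is what I will use below.

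To prove normality it suffices to show $\kappa_{F}\sigma\kappa_{F}^{-1}\in\Sigma$ for arbitrary $\kappa_{F}\in\Rm{Aut}(F)$ and $\sigma\in\Sigma$. First I would verify that the conjugate $\kappa_{F}\sigma\kappa_{F}^{-1}$ covers the identity on $\Gamma$. Since $\sigma$ is a covering transformation of $\theta$ we have $\theta\circ\sigma=\theta$, and therefore
\[
\theta\circ\kappa_{F}\circ\sigma\circ\kappa_{F}^{-1}
=\kappa_{\Gamma}\circ\theta\circ\sigma\circ\kappa_{F}^{-1}
=\kappa_{\Gamma}\circ\theta\circ\kappa_{F}^{-1}
=\kappa_{\Gamma}\circ\kappa_{\Gamma}^{-1}\circ\theta
=\theta .
\]
Thus $\kappa_{F}\sigma\kappa_{F}^{-1}$ belongs to the group $\{\tau\in\Rm{Aut}(F)\mid \theta\circ\tau=\theta\}$ of deck transformations of $\theta$.

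The remaining step, which I regard as the one requiring genuine care rather than formal manipulation, is the identification of this full deck group with $\Sigma$. Here I would argue that on the unramified locus $F^{\circ}:=F\setminus(\text{ramification of }\theta)$ a nontrivial deck transformation has no fixed points, so the restriction map from deck transformations to permutations of a general fiber $\theta^{-1}(q)$ is injective; hence the deck group has order at most $\deg\theta=n$. Since $\Sigma$ already sits inside it and $|\Sigma|=n$, the two groups coincide. Consequently $\kappa_{F}\sigma\kappa_{F}^{-1}\in\Sigma$, and as $\kappa_{F}$ and $\sigma$ were arbitrary we conclude $\Sigma\lhd\Rm{Aut}(F)$. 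Everything except this last identification is a direct consequence of the commuting diagram furnished by the preceding lemma, exactly as in Lemma~3.3 of \cite{Aka}.
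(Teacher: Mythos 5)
Your proof is correct and takes essentially the same route as the paper, which gives no in-text argument but defers to Lemma~3.3 of \cite{Aka}: one descends $\kappa_{F}$ to $\kappa_{\Gamma}$ via the preceding lemma (valid under the standing assumption $r\geq 4n$), checks that $\kappa_{F}\sigma\kappa_{F}^{-1}$ is a deck transformation of $\theta$, and identifies the full deck group with $\Sigma$. One small wording repair: injectivity of the restriction map into the permutations of $\theta^{-1}(q)$ by itself only bounds the order by $n!$; the bound $\sharp\{\tau\mid\theta\circ\tau=\theta\}\leq n$ follows from the freeness you establish, since for fixed $z\in\theta^{-1}(q)$ the evaluation $\tau\mapsto\tau(z)$ is then injective into the $n$-point fiber, after which $\sharp\Sigma=n$ forces equality exactly as you conclude.
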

Since we have $\Sigma \lhd K$,
the action of $K\subset \Rm{Aut} (S/B)$ on $S$ can be lifted to the one on $\Ti{S}$ and we can regard $K$ as a subgroup of $\Rm{Aut}(\Ti{S}/B)$.
If we put $\Ti{K}=K/\Sigma$, then we have the exact sequence  
\begin{align*}
1\to \Sigma \to K \to \Ti{K} \to 1.
\end{align*}
Note that $\Ti{K} \subset \Rm{Aut} (\Ti{W}/ B)$ and $\Ti{R}$ is $\Ti{K}$-stable (namely $\Ti{K}(\Ti{R})=\Ti{R}$).

\smallskip

\begin{lem}
The action of $\Ti{K}$ descends down faithfully on the relatively minimal model $\varphi:W \to B$.
Hence we can regard $\Ti{K}$ as a subgroup of $\Rm{Aut} (W/B)$.
\end{lem}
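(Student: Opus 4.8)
The plan is to realize each element of $\Ti{K}$ as a birational self-map of $W$ over $B$ and then upgrade it to a biregular automorphism using the relative minimality of $\varphi$. First I would recall that an element $\Ti{\kappa}\in\Ti{K}$ lies in $\Rm{Aut}(\Ti{W}/B)$, so that $\Ti{\varphi}\circ\Ti{\kappa}=\Ti{\varphi}$; in particular $\Ti{\kappa}$ carries each fiber of $\Ti{\varphi}$ to itself and preserves the intersection pairing on $\Ti{W}$. Composing with the contraction $\Ti{\psi}\colon\Ti{W}\to W$, I set
\[
\kappa:=\Ti{\psi}\circ\Ti{\kappa}\circ\Ti{\psi}^{-1}\colon W\dashrightarrow W,
\]
which is a priori only a birational map. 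Since $\varphi\circ\Ti{\psi}=\Ti{\varphi}$ and $\Ti{\varphi}\circ\Ti{\kappa}=\Ti{\varphi}$, one computes $\varphi\circ\kappa=\varphi$ on its domain of definition, so $\kappa$ is a birational self-map over $B$.

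The key step is to show that $\kappa$ is in fact regular. Here I would invoke the uniqueness of the relatively minimal model for fibrations of fiber genus $\geq1$: a birational self-map of the relatively minimal elliptic surface $\varphi\colon W\to B$ compatible with $\varphi$ must be biregular. Concretely, resolving $\kappa$ by a smooth surface $V$ dominating $W$ through two birational morphisms $p,q\colon V\to W$, any curve contracted by one but not the other would produce a $(-1)$-curve inside a fiber of the relatively minimal $\varphi$, which is impossible; hence $p$ and $q$ have the same exceptional locus and $\kappa\in\Rm{Aut}(W/B)$. Equivalently, one argues that $\Ti{\kappa}$ preserves the intrinsically defined exceptional locus $\Rm{Exc}(\Ti{\psi})$ of the minimalization, so that $\Ti{\psi}\circ\Ti{\kappa}$ contracts exactly the curves contracted by $\Ti{\psi}$ and therefore factors as $\kappa\circ\Ti{\psi}$ for a morphism $\kappa\colon W\to W$ (an automorphism, since the same reasoning applies to $\Ti{\kappa}^{-1}$). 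Either way this yields a group homomorphism $\Ti{K}\to\Rm{Aut}(W/B)$, $\Ti{\kappa}\mapsto\kappa$, characterized by $\kappa\circ\Ti{\psi}=\Ti{\psi}\circ\Ti{\kappa}$, the homomorphism property being immediate from this defining relation.

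Finally, for faithfulness I would argue as follows. Suppose $\kappa=\Rm{id}_{W}$ for some $\Ti{\kappa}\in\Ti{K}$; then $\Ti{\psi}\circ\Ti{\kappa}=\Ti{\psi}$. Let $U\subset\Ti{W}$ be the dense open locus on which $\Ti{\psi}$ is an isomorphism onto its image, so that $\Ti{\psi}|_{U}$ is injective. On the dense open set $U\cap\Ti{\kappa}^{-1}(U)$ the equality $\Ti{\psi}(\Ti{\kappa}(x))=\Ti{\psi}(x)$ together with the injectivity of $\Ti{\psi}|_{U}$ forces $\Ti{\kappa}(x)=x$; since $\Ti{W}$ is reduced and separated, the equalizer $\{\Ti{\kappa}=\Rm{id}\}$ is closed and contains a dense set, whence $\Ti{\kappa}=\Rm{id}_{\Ti{W}}$. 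Thus the homomorphism is injective and $\Ti{K}$ is identified with a subgroup of $\Rm{Aut}(W/B)$.

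I expect the main obstacle to be the middle step, namely rigorously justifying that the birational self-map $\kappa$ is regular. The cleanest route is to cite the standard uniqueness of relatively minimal models for fiber genus $\geq1$ (valid for elliptic surfaces, multiple fibers notwithstanding, since a relatively minimal elliptic fibration contains no $(-1)$-curve in its fibers); the only care needed is to phrase the argument so that it applies to a self-map rather than to a map between two a priori different models.
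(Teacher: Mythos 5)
Your proof is correct, but it takes a genuinely different route from the paper's. You transport each element of $\Ti{K}$ to a birational self-map $\kappa=\Ti{\psi}\circ\Ti{\kappa}\circ\Ti{\psi}^{-1}$ of $W$ over $B$ and then invoke the uniqueness of relatively minimal models for fibrations of fiber genus $\geq 1$ (valid for elliptic fibrations, multiple fibers included) to upgrade $\kappa$ to an automorphism, adding an explicit faithfulness argument via density of the isomorphism locus and separatedness. The paper instead works upstairs on $\Ti{W}$ and descends the action through the minimalization step by step: for a $(-1)$-curve $E$ in a singular fiber of $\Ti{\varphi}$, the orbit $\Ti{K}\cdot E$ either consists of pairwise disjoint $(-1)$-curves, which can then be contracted equivariantly so that the action passes to the blown-down surface, or contains two meeting curves $E_{1},E_{2}$; in the latter case negative semi-definiteness of the fiber intersection form (Zariski's lemma) forces $E_{1}E_{2}=1$ and $(E_{1}+E_{2})^{2}=0$, hence $\Rm{Supp}\,\Gamma_{p}=E_{1}\cup E_{2}$, contradicting the Kodaira classification of singular fibers. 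The paper's argument is self-contained and constructive, and in effect makes the contraction $\Ti{\psi}$ itself $\Ti{K}$-equivariant, which is slightly stronger than producing an automorphism of $W$ after the fact; yours is shorter and works uniformly for any fiber genus $\geq 1$, at the price of citing the minimal-model uniqueness theorem as a black box. One caveat you rightly flagged: your parenthetical sketch of that theorem is glossed, since the image under $q$ of a $(-1)$-curve contracted by $p$ need not itself be a $(-1)$-curve in a fiber --- it could a priori be a singular rational curve or a full fiber of self-intersection $0$, cases one rules out using that fibers have arithmetic genus $1$ --- but the standard cited theorem does cover this. Your explicit injectivity argument is a harmless addition to what the paper leaves implicit (the descended map agrees with $\Ti{\kappa}$ on a dense open set).
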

\begin{proof}
\Blue{Let $\Ti{\Gamma}_{p}$ be an arbitrary singular fiber of $\Ti{W} \to B$ and $E$ any $(-1)$-curve contained in $\Ti{\Gamma}_{p}$}.
Then its $\Ti{K}$-orbits $\Ti{K}\cdot E$ satisfies one of the following:
\begin{itemize}
\item $\Ti{K}\cdot E$ consists \Blue{of a disjoint} union of $(-1)$-curves in $\Ti{\Gamma}_{p}$.
\item We can find two curves in $\Ti{K}=\{E_{1},\cdots,E_{t}\}$ meeting a point.
\end{itemize}
In the former case, contracting $\Ti{K}\cdot E$ to points, the action of $\Ti{K}$ descends down to the action on the new fiber 
obtained by the contraction.

We will show the latter case does not occur.
Since the intersection form on $E_{1}\cup E_{2}$ is negative semi-definite, $E_{1}^2 = E_{2}^2 = -1$ and $E_{1}E_{2} > 0$,
the only possibility is:
$E_{1} E_{2}=1$ and $(E_{1}+E_{2})^2=0$.
So we get 
\begin{align*}
\Rm{Supp}\; \Gamma_{p}= E_{1}\cup E_{2}
\end{align*}
by Zariski's lemma. This is impossible by the classification of singular fibers of elliptic surfaces.
\end{proof}

\smallskip 

We fix a point $p \in B$ and denote by 
$\Delta \subset B$ an analytic open neighborhood of $p$.  
We suppose that $\Gamma_{p}$ is not multiple.
Let $\varphi_{\Delta}:W_{\Delta} \to \Delta$ be a restriction of $\varphi:W \to B$ to $\Delta$.
We note that there exists a natural inclusion $\Ti{K} \subset \Rm{Aut} (W_{\Delta,\Rm{sm}}/\Delta)$.
\begin{define}
We call $\kappa \in \Ti{K}$ \Blue{a {\it translation}} of $\Ti{K}$ if $\kappa$ satisfies
either $\kappa(C_{\Delta})\neq C_{\Delta}$ or $\kappa|_{C_{\Delta}}\neq \Rm{id}_{C_{\Delta}}$ for any $\varphi_{\Delta}$-horizontal local analytic branch $C_{\Delta}$ on $W_{\Delta,\Rm{sm}}$.
Let $T(\Ti{K})_{p}$ be \Blue{the set} consisting of \Blue{translations} of $\Ti{K}$ and $\Rm{id}_{W_{\Delta,\Rm{sm}}}$.
\end{define}

\begin{prop}
It holds that 
\begin{align*}
T(\Ti{K})_{p}=\Ti{K}\cap W_{\Rm{sm}}(\Delta).
\end{align*}
In particular, $T(\Ti{K})_{p}$ is a subgroup of $\Ti{K}$.
\end{prop}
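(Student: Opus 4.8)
The plan is to prove both inclusions by reducing to a fiberwise analysis, using the semidirect product decomposition $\Rm{Aut} (W_{\Delta,\Rm{sm}}/\Delta) \cong W_{\Rm{sm}}(\Delta) \rtimes \Rm{Aut} (W_{\Delta,\Rm{sm}}/\Delta, O_{\Delta})$ established in the preceding proposition. Given $\kappa \in \Ti{K} \subset \Rm{Aut} (W_{\Delta,\Rm{sm}}/\Delta)$ and a smooth fiber $\Gamma_{q}$, I would write its restriction as $\kappa|_{\Gamma_{q}}(x) = \psi_{q}(x)\oplus \beta(q)$, where $\beta(q):=\kappa(O_{q})$ defines the section $\beta=\kappa\circ O_{\Delta}\in W_{\Rm{sm}}(\Delta)$ and $\psi_{q}:=(\tau_{\ominus\beta}\circ\kappa)|_{\Gamma_{q}} \in \Rm{Aut} (\Gamma_{q}, O_{q})$ is the origin-fixing part. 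The decomposition then says precisely that $\kappa \in W_{\Rm{sm}}(\Delta)$ if and only if $\psi_{q}=\Rm{id}$ for all $q$, so the whole statement amounts to identifying ``$\psi_{q}\equiv\Rm{id}$'' with ``$\kappa$ is a translation or the identity.''

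For the inclusion $\Ti{K}\cap W_{\Rm{sm}}(\Delta)\subseteq T(\Ti{K})_{p}$, I would take $\kappa=\tau_{\beta}$. If $\beta=O_{\Delta}$ then $\kappa=\Rm{id}\in T(\Ti{K})_{p}$ by definition. If $\beta\neq O_{\Delta}$, I claim $\kappa$ is a translation: were some $\varphi_{\Delta}$-horizontal branch $C_{\Delta}$ fixed pointwise, then at its intersection point $x_{q}$ with a generic smooth fiber one would have $x_{q}\oplus\beta(q)=x_{q}$, forcing $\beta(q)=O_{q}$ on a dense set of $q$ and hence $\beta=O_{\Delta}$ by the identity theorem, a contradiction. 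Thus no horizontal branch is fixed pointwise and $\kappa\in T(\Ti{K})_{p}$.

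For the reverse inclusion $T(\Ti{K})_{p}\subseteq \Ti{K}\cap W_{\Rm{sm}}(\Delta)$, the identity lies in $W_{\Rm{sm}}(\Delta)$ trivially, so I would take a translation $\kappa$ and argue by contradiction, assuming $\kappa\notin W_{\Rm{sm}}(\Delta)$. Then $\psi_{q}\neq\Rm{id}$ for $q$ in a dense open subset of $\Delta$ (if $\psi_{q}=\Rm{id}$ on an open set, then $\tau_{\ominus\beta}\circ\kappa=\Rm{id}$ by the identity theorem, giving $\kappa=\tau_{\beta}\in W_{\Rm{sm}}(\Delta)$). On each such fiber the fixed-point equation $\kappa(x)=x$ reads $(\psi_{q}\ominus\Rm{id})(x)=\ominus\beta(q)$, where $\psi_{q}\ominus\Rm{id}$ is a group endomorphism of $\Gamma_{q}$ that is nonzero precisely because $\psi_{q}\neq\Rm{id}$, hence a surjective isogeny; so the equation has finitely many, but at least one, solution. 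As $q$ varies these fixed points sweep out a nonempty one-dimensional analytic set in $W_{\Delta,\Rm{sm}}$ dominating $\Delta$, from which one extracts a horizontal branch fixed pointwise by $\kappa$, contradicting the translation property. Hence $\kappa\in W_{\Rm{sm}}(\Delta)$. Combining the two inclusions gives the equality, and since $\Ti{K}$ and $W_{\Rm{sm}}(\Delta)$ are both subgroups of $\Rm{Aut} (W_{\Delta,\Rm{sm}}/\Delta)$, their intersection is a subgroup, yielding the ``in particular'' clause.

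The main obstacle is the second inclusion: one must verify that when $\psi_{q}\neq\Rm{id}$ the fixed locus is a genuine nonempty horizontal branch \emph{inside the smooth locus}. This uses the algebraic fact that a nonzero endomorphism of an elliptic curve is a surjective isogeny (guaranteeing fixed points on each fiber), together with an analyticity argument that the fixed points vary holomorphically to form an honest local analytic branch dominating $\Delta$; one must also ensure these fixed points stay in $W_{\Delta,\Rm{sm}}$ rather than degenerating into $\Rm{Sing}(\Gamma_{p})$ as $q\to p$, which is why it suffices to produce the branch over the smooth fibers and invoke the non-multiplicity assumption on $\Gamma_{p}$.
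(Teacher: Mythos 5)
Your proposal is correct and takes essentially the same route as the paper: both arguments rest on the semidirect decomposition $\Rm{Aut}(W_{\Delta,\Rm{sm}}/\Delta)\cong W_{\Rm{sm}}(\Delta)\rtimes \Rm{Aut}(W_{\Delta,\Rm{sm}}/\Delta,O_{\Delta})$ from the preceding proposition, on the fact that if the origin-fixing part $\psi_{q}$ on a smooth fiber is nontrivial then $\psi_{q}\ominus\Rm{id}$ is a surjective isogeny and fixed points exist, and on the correspondence between fiberwise fixed points and pointwise-fixed horizontal branches. The only difference is one of packaging: the paper runs the hard inclusion directly (a translation has no fixed point on a general fiber, so $\kappa|_{\Gamma_{q}}$ lies in $(\Gamma_{q,\Rm{sm}},O_{q})$, and the injectivity of $\Ti{K}/(\Ti{K}\cap W_{\Rm{sm}}(\Delta))\hookrightarrow \Rm{Aut}(\Gamma_{q,\Rm{sm}},O_{q})$ then forces $\kappa\in W_{\Rm{sm}}(\Delta)$), whereas you argue the contrapositive via the fixed-locus sweep and the identity theorem, in the process filling in details (that sweep, and the ``trivial'' converse inclusion) which the paper merely asserts.
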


\begin{proof}
Since $W_{\Rm{sm}}(\Delta)$ is a normal subgroup of $\Rm{Aut} (W_{\Rm{sm}}/\Delta)$,
we deduce that $\Ti{K}\cap W_{\Delta,\Rm{sm}}(\Delta)$ is a normal subgroup of $\Ti{K}$.
We have an exact commutative diagram
\[\xymatrix{
1 \ar[r] \ar@{=}[d]&
\Ti{K}\cap W_{\Rm{sm}}(\Delta)  \ar[r] \ar@{^{(}->}[d]&
 \Ti{K} \ar[r] \ar@{^{(}->}[d]&
 \Ti{K}/(\Ti{K}\cap W_{\Rm{sm}}(\Delta)) \ar[r] \ar@{^{(}->}[d]&
 1 \ar@{=}[d]\\
1 \ar[r] &
 W_{\Rm{sm}} (\Delta)\ar[r]&
 \Rm{Aut}(W_{\Delta,\Rm{sm}}/\Delta) \ar[r]&
 \Rm{Aut} (W_{\Delta,\Rm{sm}}/\Delta,O_{\Delta}) \ar[r]&
 1.\\
}\]
We note that the injectivity of $\Ti{K}/(\Ti{K} \cap W_{\Delta,\Rm{sm}}(\Delta)) \to \Rm{Aut} (W_{\Delta,\Rm{sm}}/\Delta,O_{\Delta})$ \Red{is shown} by a simple \Blue{diagram} chasing.
Therefore we have the following exact commutative diagram  \[\xymatrix{
1 \ar[r] \ar@{=}[d]&
\Ti{K}\cap W_{\Rm{sm}}(\Delta)  \ar[r] \ar@{^{(}->}[d]&
 \Ti{K} \ar[r] \ar@{^{(}->}[d]&
 \Ti{K}/(\Ti{K}\cap W_{\Rm{sm}}(\Delta)) \ar[r] \ar@{^{(}->}[d]&
 1 \ar@{=}[d]\\
1 \ar[r] &
 (\Gamma_{q,\Rm{sm}},O_{q}) \ar[r]&
 \Rm{Aut}(\Gamma_{q,\Rm{sm}}) \ar[r]&
 \Rm{Aut} (\Gamma_{q,\Rm{sm}}, O_{q}) \ar[r]&
 1\\
}\]
for an arbitrary fiber $\Gamma_{q}$ ($q \in \Delta$) of $\varphi_{\Delta}$.
Since $\Ti{K}/(\Ti{K}\cap W_{\Delta,\Rm{sm}}(\Delta)) \to \Rm{Aut} (\Gamma_{q,\Rm{sm}}, O_{q}) $ is injective,
we deduce that 
\begin{align*} 
\Ti{K}\cap (\Gamma_{q},O_{q}) \subset \Ti{K}\cap W_{\Rm{sm}}(\Delta)
\end{align*}
for \Blue{an arbitrary} smooth fiber $\Gamma_{q}$.

Take a non-trivial arbitrary automorphism $\kappa \in T(\Ti{K})_{p}$.
Since $\kappa$ is \Blue{a translation} of $\Ti{K}$, 
$\kappa|_{\Gamma_{q}}$ has no fixed point for general $q \in \Delta$.
It implies that $\kappa|_{\Gamma_{q}} \in \Ti{K}\cap (\Gamma_{q},O_{q})$.
By $\Ti{K}\cap (\Gamma_{q},O_{q}) \subset \Ti{K}\cap W_{\Rm{sm}}(\Delta)$, 
we have $\kappa \in\Ti{K}\cap W_{\Rm{sm}}(\Delta)$.
Therefore we obtain $T(\Ti{K})_{p} \subset \Ti{K}\cap W_{\Rm{sm}}(\Delta)$.
The converse $\Ti{K}\cap W_{\Rm{sm}}(\Delta) \subset  T(\Ti{K})_{p}$ is trivial.
\end{proof}

\begin{cor}
\label{groupstr}
There exists an exact commutative diagram 
\[\xymatrix{
1 \ar[r] \ar@{=}[d]&
T(\Ti{K})_{p}  \ar[r] \ar@{^{(}->}[d]&
 \Ti{K} \ar[r] \ar@{^{(}->}[d]&
 \Ti{K}/T(\Ti{K})_{p} \ar[r] \ar@{^{(}->}[d]&
 1 \ar@{=}[d]\\
1 \ar[r] &
 (\Gamma_{q,\Rm{sm}},O_{q}) \ar[r]&
 \Rm{Aut}(\Gamma_{q,\Rm{sm}}) \ar[r]&
 \Rm{Aut} (\Gamma_{q,\Rm{sm}}, O_{q}) \ar[r]&
 1\\
}\]
for an arbitrary fiber $\Gamma_{q}$ $(q \in \Delta)$ of $\varphi_{\Delta}$.
\end{cor}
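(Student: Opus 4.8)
The plan is to deduce this directly from the preceding Proposition and from the commutative diagram already constructed in its proof, so that the corollary amounts to a relabelling. Recall that the Proposition establishes the equality $T(\Ti{K})_{p} = \Ti{K}\cap W_{\Rm{sm}}(\Delta)$ of subgroups of $\Rm{Aut}(W_{\Delta,\Rm{sm}}/\Delta)$, and that in the course of proving it one already exhibits the exact commutative diagram whose upper row is $1\to \Ti{K}\cap W_{\Rm{sm}}(\Delta)\to \Ti{K}\to \Ti{K}/(\Ti{K}\cap W_{\Rm{sm}}(\Delta))\to 1$, whose lower row is the fiberwise sequence $1\to (\Gamma_{q,\Rm{sm}},O_{q})\to \Rm{Aut}(\Gamma_{q,\Rm{sm}})\to \Rm{Aut}(\Gamma_{q,\Rm{sm}},O_{q})\to 1$, and whose three vertical arrows are the injective restriction maps to a fixed smooth fiber $\Gamma_{q}$.

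The first and essentially only step is to substitute $T(\Ti{K})_{p}$ for $\Ti{K}\cap W_{\Rm{sm}}(\Delta)$ in the upper-left entry and in the kernel defining the quotient. Since the two subgroups coincide by the Proposition, the upper row, the quotient $\Ti{K}/T(\Ti{K})_{p}$, the three restriction homomorphisms, and the commutativity of the square are all literally unchanged, so exactness of both rows and commutativity are inherited verbatim. For completeness I would confirm that the arrows behave as drawn: the middle map $\Ti{K}\to \Rm{Aut}(\Gamma_{q,\Rm{sm}})$ is restriction to $\Gamma_{q}$ and is injective because an element of $\Ti{K}\subset\Rm{Aut}(W_{\Delta,\Rm{sm}}/\Delta)$ acting trivially on one smooth fiber acts trivially on the generic fiber, hence on all of $W_{\Delta,\Rm{sm}}$; the left map carries $T(\Ti{K})_{p}=\Ti{K}\cap W_{\Rm{sm}}(\Delta)$ into the fiber translations $(\Gamma_{q,\Rm{sm}},O_{q})$ because the restriction to $\Gamma_{q}$ of a section-induced translation $\tau_{\beta}$ is again a translation, and its injectivity is inherited from that of the middle map; finally, injectivity of the induced right map $\Ti{K}/T(\Ti{K})_{p}\to \Rm{Aut}(\Gamma_{q,\Rm{sm}},O_{q})$ is the elementary diagram chase already pointed out in the Proposition.

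I expect no genuine obstacle here: all of the analytic input, namely Kodaira's group-manifold structure of the fiber germ and the resulting semidirect-product description of $\Rm{Aut}(W_{\Delta,\Rm{sm}}/\Delta)$, is absorbed into the Proposition, and the corollary is its cosmetic repackaging via the single identification $T(\Ti{K})_{p}=\Ti{K}\cap W_{\Rm{sm}}(\Delta)$. The only point one must not overlook is that it is precisely this identification that legitimises replacing the kernel in the upper row, which is what makes the lower-row translation subgroup $(\Gamma_{q,\Rm{sm}},O_{q})$ the correct target for $T(\Ti{K})_{p}$.
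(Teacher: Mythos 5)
Your proposal is correct and matches the paper exactly: the paper gives no separate proof of Corollary~\ref{groupstr}, since it is obtained precisely by substituting the identification $T(\Ti{K})_{p}=\Ti{K}\cap W_{\Rm{sm}}(\Delta)$ from the preceding Proposition into the second exact commutative diagram (with lower row $1\to(\Gamma_{q,\Rm{sm}},O_{q})\to\Rm{Aut}(\Gamma_{q,\Rm{sm}})\to\Rm{Aut}(\Gamma_{q,\Rm{sm}},O_{q})\to 1$) already exhibited in that Proposition's proof, including the injectivity of the right vertical arrow established there by the diagram chase. Your additional verifications of the vertical restriction maps are harmless extras the paper leaves implicit, so the corollary is indeed the cosmetic repackaging you describe.
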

The following two Corollaries are mentioned in \cite{Mir}. 
\begin{cor}
\label{nonss}
If $\varphi:W \to B$ has a singular fiber $\Gamma_{p}$ $(p \in B)$ except for type $l\Rm{I}_{c}$.
Then it holds that $\sharp T(\Ti{K})_{p}\leq 4$.
\end{cor}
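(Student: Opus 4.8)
The plan is to restrict the action of $T(\Ti{K})_{p}$ to the smooth part of the distinguished singular fiber $\Gamma_{p}$ and read off the bound from Kodaira's description of the fiberwise group structure (Theorem~\ref{kodaira}). First I would record the purely group-theoretic fact that $T(\Ti{K})_{p}=\Ti{K}\cap W_{\Rm{sm}}(\Delta)$ is a \emph{finite abelian} group: it is a subgroup of the finite group $\Ti{K}$ and of the abelian group $W_{\Rm{sm}}(\Delta)$. In particular every element of $T(\Ti{K})_{p}$ is a translation $\tau_{\beta}$ attached to a torsion section $\beta\in W_{\Rm{sm}}(\Delta)$, and since such sections avoid $\Rm{Sing}(\Gamma_{p})$ we get a well-defined evaluation $\beta\mapsto \beta(p)\in \Gamma_{p,\Rm{sm}}$.

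Next I would consider the restriction homomorphism to the fiber over $p$. Applying Corollary~\ref{groupstr} at $q=p$, restriction to $\Gamma_{p,\Rm{sm}}$ embeds $T(\Ti{K})_{p}$ into $(\Gamma_{p,\Rm{sm}},O_{p})$, with $\tau_{\beta}$ acting as the translation by $\beta(p)$. Because $\Gamma_{p}$ is a singular fiber \emph{not} of type $l\Rm{I}_{c}$, it is of additive reduction, so Theorem~\ref{kodaira} gives an isomorphism $(\Gamma_{p,\Rm{sm}},O_{p})\cong \Bb{G}_{a}\times G(\Gamma_{p})$.

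The key point is then that $\Bb{G}_{a}=(\Bb{C},+)$ is torsion-free, so the finite subgroup $T(\Ti{K})_{p}$ must land in the second factor $G(\Gamma_{p})$; hence $\sharp T(\Ti{K})_{p}\le \sharp G(\Gamma_{p})$. Running through the additive types in Kodaira's list (namely $\Rm{II}, \Rm{III}, \Rm{IV}, \Rm{I}^{\ast}_{m}, \Rm{II}^{\ast},\Rm{III}^{\ast},\Rm{IV}^{\ast}$), the largest possible order of $G$ is $4$, attained by $G(\Rm{I}^{\ast}_{2c})\cong(\Bb{Z}/2\Bb{Z})^{2}$ and $G(\Rm{I}^{\ast}_{2c+1})\cong\Bb{Z}/4\Bb{Z}$, which yields exactly $\sharp T(\Ti{K})_{p}\le 4$.

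The step I expect to need the most care is the injectivity of the restriction to the singular fiber, that is, that no nontrivial $\tau_{\beta}\in T(\Ti{K})_{p}$ acts trivially on $\Gamma_{p,\Rm{sm}}$. If this is not taken directly from the left vertical arrow of Corollary~\ref{groupstr} at $q=p$, I would argue it by hand: a finite-order $\tau_{\beta}$ fixing $\Gamma_{p,\Rm{sm}}$ pointwise has, at a fixed point $x\in\Gamma_{p,\Rm{sm}}$ and in suitable fiber/base coordinates $(u,t)$ with $t=\varphi_{\Delta}$, a differential of unipotent shape $\bigl(\begin{smallmatrix}1 & c\\ 0 & 1\end{smallmatrix}\bigr)$; finite order forces $c=0$, so $d\tau_{\beta}|_{x}=\Rm{id}$ and therefore $\tau_{\beta}=\Rm{id}$ on the connected surface $W_{\Delta,\Rm{sm}}$. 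Equivalently, this is the injectivity of the specialization of torsion sections into the fiber group, which in characteristic zero holds for all torsion. Once injectivity is secured, the three displayed paragraphs above combine to give the stated inequality.
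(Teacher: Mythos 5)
Your proposal is correct and follows essentially the same route as the paper: applying Corollary~\ref{groupstr} at the fiber over $p$ together with Theorem~\ref{kodaira} yields the injection $T(\Ti{K})_{p}\hookrightarrow \Bb{G}_{a}\times G(\Gamma_{p,\Rm{sm}})$, and since $\Bb{G}_{a}$ is torsion-free the finite group $T(\Ti{K})_{p}$ embeds into $G(\Gamma_{p,\Rm{sm}})$, which has order at most $4$ for singular fibers not of type $l\Rm{I}_{c}$. The injectivity you flag as delicate is exactly what the left vertical arrow of Corollary~\ref{groupstr} already provides, so your supplementary differential argument, while sound, is not needed.
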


\begin{proof}
From Theorem~\ref{kodaira} and Corollary~\ref{groupstr}, we have an injective homomorphism 
\begin{align*}
T(\Ti{K})_{p} \hookrightarrow \Bb{G}_{a}\times G(\Gamma_{p,\Rm{sm}}). 
\end{align*}
Since $\Bb{G}_{a}$ is a torsion free group, we have $T(\Ti{K})_{p} \hookrightarrow  G(\Gamma_{p,\Rm{sm}})$.
By the assumption that $\Gamma_{p}$ is not of type $l\Rm{I}_{c}$, we have $\sharp G(\Gamma_{p,\Rm{sm}})\leq 4$.
Thus, it holds that $\sharp T(\Ti{K})_{p}\leq 4$.
\end{proof}

\begin{cor}
\label{stabfree}
Let $\Gamma_{p}$ be a singular fiber of type $\Rm{I}_{c}$.
Then the only points of $\Gamma_{p}$ with non-trivial stabilizer subgroup under the action of $T(\Ti{K})_{p}$ are \Blue{nodes} of 
$\Gamma_{p}$. 
\end{cor}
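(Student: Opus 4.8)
The plan is to prove that \emph{every} point of $\Gamma_{p,\Rm{sm}}$ has trivial stabilizer under $T(\Ti{K})_{p}$; since $\Gamma_{p}\setminus\Gamma_{p,\Rm{sm}}=\Rm{Sing}(\Gamma_{p})$ is exactly the set of nodes of the $\Rm{I}_{c}$-fiber, this yields the assertion. The starting point is the identification $T(\Ti{K})_{p}=\Ti{K}\cap W_{\Rm{sm}}(\Delta)$ together with Theorem~\ref{kodaira}: every $\kappa\in T(\Ti{K})_{p}$ has the form $\kappa=\tau_{\beta}$ for a section $\beta\in W_{\Rm{sm}}(\Delta)$, and on the smooth locus of any fiber $\Gamma_{q}$ $(q\in\Delta)$ it acts as the translation $x\mapsto \beta(q)\oplus x$ in the commutative group $(\Gamma_{q,\Rm{sm}},O_{q})$ furnished by Kodaira's theorem.

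First I would fix a smooth point $z\in\Gamma_{p,\Rm{sm}}$ and suppose $\kappa=\tau_{\beta}\in T(\Ti{K})_{p}$ fixes it. Restricting to the special fiber and using the group law on $\Gamma_{p,\Rm{sm}}$, the equation $\kappa(z)=z$ reads $\beta(p)\oplus z=z$, whence $\beta(p)=O_{p}$. The key elementary observation is that in any group a left translation fixing one element is the identity translation; thus $\kappa$ actually fixes all of $\Gamma_{p,\Rm{sm}}$ pointwise, so the stabilizer of a single smooth point can be non-trivial only if the entire restriction $\kappa|_{\Gamma_{p,\Rm{sm}}}$ is trivial. Conversely, if $\beta(p)\neq O_{p}$ then $\kappa$ is a translation by a non-identity element and is fixed-point-free on $\Gamma_{p,\Rm{sm}}$, so it fixes no smooth point at all.

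The step I expect to be decisive is ruling out a non-trivial $\kappa$ with $\kappa|_{\Gamma_{p,\Rm{sm}}}=\Rm{id}$, that is, showing that the restriction homomorphism $T(\Ti{K})_{p}\to\Rm{Aut}(\Gamma_{p,\Rm{sm}})$ is faithful on the special fiber. This is precisely the injectivity of the leftmost vertical arrow $T(\Ti{K})_{p}\hookrightarrow(\Gamma_{q,\Rm{sm}},O_{q})$ of Corollary~\ref{groupstr} read at $q=p$: the evaluation $\tau_{\beta}\mapsto\beta(p)$ is injective, so $\beta(p)=O_{p}$ forces $\kappa=\Rm{id}$. Conceptually this is the analytic-local incarnation of the fact that a non-trivial torsion section of a relatively minimal elliptic surface does not meet the zero section over an $\Rm{I}_{c}$-fiber, the finite group $T(\Ti{K})_{p}$ consisting of such torsion translations. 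Combining the two cases, no non-trivial element of $T(\Ti{K})_{p}$ fixes any point of $\Gamma_{p,\Rm{sm}}$; hence every smooth point has trivial stabilizer, and the only points of $\Gamma_{p}$ that can carry a non-trivial stabilizer are the nodes.
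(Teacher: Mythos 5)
Your proof is correct and coincides with the argument the paper intends: Corollary~\ref{stabfree} is stated there without proof (it is attributed to \cite{Mir}), and the justification implicit in the surrounding text is exactly your combination of Corollary~\ref{groupstr} with Theorem~\ref{kodaira} --- every $\kappa\in T(\Ti{K})_{p}$ acts on $\Gamma_{p,\Rm{sm}}\cong \Bb{G}_{m}\times \Z/c\Z$ as translation by $\beta(p)$, which is fixed-point free unless $\beta(p)=O_{p}$, in which case the injectivity $T(\Ti{K})_{p}\hookrightarrow(\Gamma_{p,\Rm{sm}},O_{p})$ of Corollary~\ref{groupstr} read at $q=p$ (the same injection used in the proof of Corollary~\ref{nonss}) forces $\kappa=\Rm{id}$. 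So nontrivial stabilizers can occur only at the nodes, exactly as you conclude.
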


\begin{cor}
\label{K/T(K)}
Assume $\varphi:W \to B$ is not isotrivial.
Then it holds that $\sharp(\Ti{K}/T(\Ti{K})_{p})\leq 2$ for any $p \in B$.
In particular, it holds that $\sharp \Ti{K}\leq 2\sharp T(\Ti{K})_{p}$.
\end{cor}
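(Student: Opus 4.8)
The plan is to exploit the injection $\Ti{K}/T(\Ti{K})_{p}\hookrightarrow \Rm{Aut}(\Gamma_{q,\Rm{sm}},O_{q})$ furnished by the lower row of Corollary~\ref{groupstr}, which is available for \emph{every} fiber $\Gamma_q$ with $q\in\Delta$, and to choose $q$ so that the target group is as small as possible. Concretely, I would fix a smooth fiber $\Gamma_q$; since it has no critical points we have $\Gamma_{q,\Rm{sm}}=\Gamma_q$, so the relevant bound on $\sharp(\Ti{K}/T(\Ti{K})_p)$ is $\sharp\Rm{Aut}(\Gamma_q,O_q)$.

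First I would recall the structure of $\Rm{Aut}(\Gamma_q,O_q)$ for an elliptic curve $\Gamma_q$ with chosen origin $O_q$: it is cyclic of order $2$, $4$, or $6$, the orders $4$ and $6$ occurring exactly for the special $j$-invariants $1728$ and $0$ respectively, and the order being $2$ (the subgroup $\{\pm 1\}$) in all other cases. Hence for every $q$ with $j(\Gamma_q)\notin\{0,1728\}$ one has $\sharp\Rm{Aut}(\Gamma_q,O_q)=2$, and the injection above forces $\sharp(\Ti{K}/T(\Ti{K})_p)\leq 2$ for such $q$.

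The role of the hypothesis is to guarantee that such a $q$ exists inside the given analytic neighbourhood $\Delta$. Non-isotriviality of $\varphi$ is equivalent to the $j$-invariant function on $B$ being non-constant; by the identity theorem its restriction to the open set $\Delta$ is then non-constant as well, so the preimages $j^{-1}(0)$ and $j^{-1}(1728)$ meet $\Delta$ in a discrete set, as does the locus of singular fibers. Therefore one can pick $q\in\Delta$ with $\Gamma_q$ smooth and $j(\Gamma_q)\notin\{0,1728\}$; applying Corollary~\ref{groupstr} for this particular $q$ yields the first assertion $\sharp(\Ti{K}/T(\Ti{K})_p)\leq 2$. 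The final ``in particular'' is then immediate from the exact sequence $1\to T(\Ti{K})_p\to \Ti{K}\to \Ti{K}/T(\Ti{K})_p\to 1$, giving $\sharp\Ti{K}=\sharp T(\Ti{K})_p\cdot\sharp(\Ti{K}/T(\Ti{K})_p)\leq 2\,\sharp T(\Ti{K})_p$.

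The only delicate point—the ``main obstacle,'' though a mild one—is precisely the existence of a fiber with generic $j$-invariant inside the \emph{small} neighbourhood $\Delta$, rather than merely somewhere on $B$. This is exactly where non-isotriviality enters, through the analyticity of $j$: without it the target group $\Rm{Aut}(\Gamma_q,O_q)$ could have order $4$ or $6$ for all $q\in\Delta$, and the bound $2$ would fail. Everything else reduces to reading off the order of the automorphism group of a generic elliptic curve and to a one-line order count from the exact sequence.
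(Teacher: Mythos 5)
Your proposal is correct and follows essentially the same route as the paper: the paper likewise applies the injection $\Ti{K}/T(\Ti{K})_{p}\hookrightarrow \Rm{Aut}(\Gamma_{q},O_{q})$ from Corollary~\ref{groupstr} to a smooth fiber $\Gamma_{q}$ over $\Delta$ with $\sharp\Rm{Aut}(\Gamma_{q},O_{q})=2$, whose existence it deduces directly from non-isotriviality. Your identity-theorem argument for finding such a $q$ inside the small neighbourhood $\Delta$ merely makes explicit the step the paper leaves implicit, and the final order count is the same.
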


\begin{proof}
Let $\varphi_{\Delta}:W_{\Delta} \to \Delta$ be a restriction of $\varphi:W \to B$ to $p \in \Delta$.
Since $\varphi:W \to B$ is not isotrivial,
there exists a smooth fiber $\Gamma_{q}$ of $\varphi|_{\Delta}$
such that $\sharp (\Rm{Aut}(\Gamma_{q},O_{q}))=2$.
From Corollary~\ref{groupstr}, we have the injective homomorphism 
$\Ti{K}/T(\Ti{K})_{p} \hookrightarrow \Rm{Aut} (\Gamma_{q},O_{q})$.   
\end{proof}

\smallskip

We suppose that $\Gamma_{p}$ is the multiple fiber of type $l\Rm{I}_{c}$.
Let $\pi:\Delta^{\dagger} \to \Delta$ be a $l$-th root map branched at $p$.
Then there exists a commutative diagram 
\begin{equation}
\label{diagram}
\xymatrix{
W_{\Delta^{\dagger}} \ar[rd]  \ar@/_1.5pc/[ddr]_{\varphi_{\Delta^{\dagger}}} \ar@/^1.5pc/[rrd]^{\Pi}&  & \\
  &W_{\Delta}\times_{\Delta}\Delta^{\dagger} \ar[r] \ar[d]& W_{\Delta} \ar[d]^{\varphi_{\Delta}} \\
& \Delta^{\dagger}  \ar[r]_{\pi} &\Delta
}
\end{equation}
where $W_{\Delta^{\dagger}}$ is the normalization of $W_{\Delta}\times_{\Delta}\Delta^{\dagger}$.
We note that $W_{\Delta^{\dagger}}$ is non-singular.
Since $\pi^{-1}(p)$ \Red{consists of} only one point,
denote this point by $p^{\dagger}$.
Let $\Gamma_{p^{\dagger}}$ be a fiber of $\varphi_{\Delta^{\dagger}}$ over ${p^{\dagger}}$.
We note that $\Gamma_{p^{\dagger}}$ is a singular fiber of type $\Rm{I}_{lc}$.
Put $R_{\dagger}:=\Pi^{\ast} R$.
We note that $\Pi:W_{\Delta^{\dagger}} \to W_{\Delta}$ is an unramified covering of degree $l$.

\begin{prop}
There exists an injective homomorphism $\Rm{Aut}(W_{\Delta}/\Delta)\to \Rm{Aut} (W_{\Delta^{\dagger}}/{\Delta^{\dagger}})$.
\end{prop}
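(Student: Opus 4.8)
The plan is to lift automorphisms in two stages: first along the base change $\pi:\Delta^{\dagger}\to\Delta$, and then along the normalization $\nu:W_{\Delta^{\dagger}}\to W_{\Delta}\times_{\Delta}\Delta^{\dagger}$, sending $\kappa\in\Rm{Aut}(W_{\Delta}/\Delta)$ to an induced automorphism $\kappa^{\dagger}$ of $W_{\Delta^{\dagger}}$ over $\Delta^{\dagger}$. The homomorphism and injectivity will both come out of the universal properties involved.

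First I would use the universal property of the fiber product. Since $\kappa$ is an automorphism over $\Delta$, i.e.\ $\varphi_{\Delta}\circ\kappa=\varphi_{\Delta}$, the pair $(\kappa,\Rm{id}_{\Delta^{\dagger}})$ induces an automorphism $\kappa':=\kappa\times_{\Delta}\Rm{id}_{\Delta^{\dagger}}$ of $W_{\Delta}\times_{\Delta}\Delta^{\dagger}$ over $\Delta^{\dagger}$, uniquely characterized by $\Rm{pr}_{1}\circ\kappa'=\kappa\circ\Rm{pr}_{1}$ and $\Rm{pr}_{2}\circ\kappa'=\Rm{pr}_{2}$. By functoriality of base change, $\kappa\mapsto\kappa'$ is a group homomorphism.

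Next I would lift $\kappa'$ to the normalization. The fiber product $W_{\Delta}\times_{\Delta}\Delta^{\dagger}$ need not be normal along the fiber over $p$, which is exactly why $W_{\Delta^{\dagger}}$ is introduced as its normalization $\nu$; so this step is genuinely needed. Because $\kappa'$ is an automorphism, $\kappa'\circ\nu:W_{\Delta^{\dagger}}\to W_{\Delta}\times_{\Delta}\Delta^{\dagger}$ is again a finite birational morphism with normal source, hence again a normalization. By the universal property of normalization there is a unique automorphism $\kappa^{\dagger}$ of $W_{\Delta^{\dagger}}$ with $\nu\circ\kappa^{\dagger}=\kappa'\circ\nu$; since $\kappa'$ and $\nu$ are over $\Delta^{\dagger}$, so is $\kappa^{\dagger}$, and uniqueness makes $\kappa\mapsto\kappa^{\dagger}$ a homomorphism $\Rm{Aut}(W_{\Delta}/\Delta)\to\Rm{Aut}(W_{\Delta^{\dagger}}/\Delta^{\dagger})$.

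Finally, for injectivity I would compute $\Pi\circ\kappa^{\dagger}$. Using $\Pi=\Rm{pr}_{1}\circ\nu$ together with the defining relations one obtains $\Pi\circ\kappa^{\dagger}=\Rm{pr}_{1}\circ\kappa'\circ\nu=\kappa\circ\Rm{pr}_{1}\circ\nu=\kappa\circ\Pi$. Thus if $\kappa^{\dagger}=\Rm{id}$, then $\kappa\circ\Pi=\Pi$, and since $\Pi:W_{\Delta^{\dagger}}\to W_{\Delta}$ is an unramified covering of degree $l$, hence surjective, we may cancel $\Pi$ to conclude $\kappa=\Rm{id}$. I expect the step requiring the most care to be the normalization lift: one must note that the fiber product is typically singular over $p$ (so the passage to $W_{\Delta^{\dagger}}$ is not vacuous) and then invoke the universal property of normalization to produce and characterize $\kappa^{\dagger}$ uniquely, since both the homomorphism property and the key compatibility $\Pi\circ\kappa^{\dagger}=\kappa\circ\Pi$ driving the injectivity argument rely on that uniqueness.
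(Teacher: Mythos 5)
Your proposal is correct and follows essentially the same route as the paper: lift $\kappa$ to $W_{\Delta}\times_{\Delta}\Delta^{\dagger}$ by the universal property of the fiber product, then to $W_{\Delta^{\dagger}}$ by the universal property of normalization. Your explicit injectivity check via $\Pi\circ\kappa^{\dagger}=\kappa\circ\Pi$ and the surjectivity of $\Pi$ is a welcome elaboration of a step the paper only asserts, but it is not a different method.
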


\begin{proof}
An automorphism $\kappa:W_{\Delta} \to W_{\Delta} \in \Rm{Aut}(W_{\Delta}/\Delta)$
induces the automorphism 
\begin{align*}
\kappa \times _{\Delta} \Delta^{\dagger}:W_{\Delta}\times_{\Delta}\Delta^{\dagger} \to W_{\Delta}\times_{\Delta}\Delta^{\dagger} 
\end{align*}
by the universality of the fiber product.
Hence there exists the injective homomorphism 
$\Rm{Aut}(W_{\Delta}/\Delta)\to \Rm{Aut} (W_{\Delta}\times_{\Delta}\Delta^{\dagger} /{\Delta^{\dagger}})$.
Furthermore, the universality of normalization induces 
the injective homomorphism
\begin{align*}
\Rm{Aut} (W_{\Delta}\times_{\Delta}\Delta^{\dagger} /{\Delta^{\dagger}})\hookrightarrow\Rm{Aut}(W_{\Delta^{\dagger}}/\Delta^{\dagger}).
\end{align*}
Thus, we get the desired homomorphism.
\end{proof}

Hence we can regard $\Ti{K}$ as \Blue{a subgroup} of $\Rm{Aut}(W_{\Delta^{\dagger}}/{\Delta^{\dagger}})$.
We denote by $\kappa^{\dagger} \in \Rm{Aut}(W_{\Delta^{\dagger}}/{\Delta^{\dagger}})$ the action 
which corresponds to $\kappa \in \Rm{Aut}(W_{\Delta}/\Delta)$ and 
let $\Ti{K}^{\dagger}:=\{ \kappa^{\dagger} \mid \kappa \in \Ti{K}\}$.

\begin{prop}
\label{lifting}
\Blue{Let $z^{\dagger} \in \Gamma_{p^{\dagger}}$ be a point and put $z=\Pi(z^{\dagger}) \in \Gamma_{p}$}.
\Blue{Let} $\kappa \in \Rm{Stab}_{\Ti{K}}(z)$.
Then $\kappa^{\dagger} \in \Rm{Stab}_{\Ti{K}^{\dagger}}(z^{\dagger})$.
In particular, it holds $\sharp(\Ti{K}\cdot z)=\sharp(\Ti{K}^{\dagger}\cdot z^{\dagger})$ 
\end{prop}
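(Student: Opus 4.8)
The plan is to reduce the whole statement to a single compatibility relation between the lift $\kappa^\dagger$ and the covering $\Pi$, and then to locate the image $\kappa^\dagger(z^\dagger)$ inside the finite fibre $\Pi^{-1}(z)$. First I would record that $\Pi \circ \kappa^\dagger = \kappa \circ \Pi$. This is forced by the very construction of $\kappa^\dagger$: the automorphism $\kappa \times_{\Delta} \Rm{id}_{\Delta^\dagger}$ of $W_{\Delta}\times_{\Delta}\Delta^\dagger$ commutes with the first projection to $W_\Delta$ (it acts by $\kappa$ on the $W_\Delta$-factor and trivially on $\Delta^\dagger$), and, writing $\nu$ for the normalization $W_{\Delta^\dagger}\to W_{\Delta}\times_{\Delta}\Delta^\dagger$, this relation is inherited by the induced $\kappa^\dagger$ through $\nu \circ \kappa^\dagger = (\kappa\times_{\Delta}\Rm{id})\circ\nu$, since $\Pi$ factors as $\nu$ followed by the first projection.

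Granting this, the first consequence is immediate: $\Pi(\kappa^\dagger(z^\dagger)) = \kappa(\Pi(z^\dagger)) = \kappa(z) = z = \Pi(z^\dagger)$, so that $\kappa^\dagger(z^\dagger)$ lies in the fibre $\Pi^{-1}(z)$, which consists of exactly $l$ points because $\Pi$ is an unramified covering of degree $l$. Moreover $\kappa^\dagger \in \Rm{Aut}(W_{\Delta^\dagger}/\Delta^\dagger)$ preserves every fibre of $\varphi_{\Delta^\dagger}$, so both $z^\dagger$ and $\kappa^\dagger(z^\dagger)$ lie on $\Gamma_{p^\dagger}$.

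The main step is to upgrade the inclusion $\kappa^\dagger(z^\dagger)\in\Pi^{-1}(z)$ to the equality $\kappa^\dagger(z^\dagger)=z^\dagger$, i.e. to rule out that the lift carries $z^\dagger$ to one of the other $l-1$ points of the fibre, equivalently to a $\rho$-translate $\rho^{j}(z^\dagger)$ where $\rho$ generates the deck group of the Galois covering $\Pi$. The clean reduction is via $\nu$: from $\nu\circ\kappa^\dagger=(\kappa\times_{\Delta}\Rm{id})\circ\nu$ and $\nu(z^\dagger)=(z,p^\dagger)$ together with $\kappa(z)=z$ one gets $\nu(\kappa^\dagger(z^\dagger))=(z,p^\dagger)=\nu(z^\dagger)$, so $z^\dagger$ and $\kappa^\dagger(z^\dagger)$ have the same image in the fibre product. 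The desired equality then follows once one knows that $\nu$ separates these two points, i.e. that the germ of $W_{\Delta}\times_{\Delta}\Delta^\dagger$ is unibranch at $(z,p^\dagger)$, so that the local analytic structure of the $l$-th root cover near the multiple fibre $\Gamma_p$ of type $l\Rm{I}_c$ determines the preimage uniquely. This local analysis is the real content, and I expect the hard part to be checking that the \emph{specific} lift coming from the construction, rather than one of its deck-translates $\rho^{j}\kappa^\dagger$, is the one fixing $z^\dagger$. I would attack this by comparing with nearby smooth fibres $\Gamma_{q^\dagger}$, on which $\Pi$ restricts to an isomorphism so that $\kappa^\dagger|_{\Gamma_{q^\dagger}}=(\Pi|_{\Gamma_{q^\dagger}})^{-1}\circ\kappa\circ(\Pi|_{\Gamma_{q^\dagger}})$ and the corresponding statement is transparent, and then specializing as the parameter tends to $p^\dagger$, using the continuity of the holomorphic automorphism $\kappa^\dagger$ and the fibre-preservation to transport the conclusion onto $\Gamma_{p^\dagger}$.

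Finally, the ``in particular'' is formal. The reverse implication is trivial from the compatibility relation: if $\kappa^\dagger(z^\dagger)=z^\dagger$, then $\kappa(z)=\Pi(\kappa^\dagger(z^\dagger))=\Pi(z^\dagger)=z$. Hence $\kappa\mapsto\kappa^\dagger$ restricts to a bijection $\Rm{Stab}_{\Ti{K}}(z)\to\Rm{Stab}_{\Ti{K}^\dagger}(z^\dagger)$, being the restriction of the isomorphism $\Ti{K}\cong\Ti{K}^\dagger$ with both inclusions now verified, so $\sharp\Rm{Stab}_{\Ti{K}}(z)=\sharp\Rm{Stab}_{\Ti{K}^\dagger}(z^\dagger)$. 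Combining this with the orbit--stabilizer formula and $\sharp\Ti{K}=\sharp\Ti{K}^\dagger$ yields $\sharp(\Ti{K}\cdot z)=\sharp(\Ti{K}^\dagger\cdot z^\dagger)$, as claimed.
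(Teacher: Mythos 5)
Your setup is sound: the relation $\Pi\circ\kappa^{\dagger}=\kappa\circ\Pi$ does follow from the construction of $\kappa^{\dagger}$ via the fiber product and normalization, it yields $\kappa^{\dagger}(z^{\dagger})\in\Pi^{-1}(z)$, and your final paragraph deriving $\sharp(\Ti{K}\cdot z)=\sharp(\Ti{K}^{\dagger}\cdot z^{\dagger})$ from stabilizer preservation, $\Ti{K}\cong\Ti{K}^{\dagger}$ and orbit--stabilizer is fine. But the main step fails exactly where you placed your hope: the germ of $W_{\Delta}\times_{\Delta}\Delta^{\dagger}$ at $(z,p^{\dagger})$ is never unibranch. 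Since $\pi^{-1}(p)=\{p^{\dagger}\}$, all $l$ points of $\Pi^{-1}(z)$ lie on $\Gamma_{p^{\dagger}}$, and the normalization $\nu$ sends every one of them to the single point $(z,p^{\dagger})$. Concretely, in the local model the paper uses, $\varphi_{\Delta}^{\ast}y=x_{1}^{l}$ (resp.\ $(x_{1}x_{2})^{l}$ at a node of $\Gamma_{p,\Rm{red}}$) and $\pi^{\ast}y=x_{3}^{l}$, so the fiber product is $\{x_{1}^{l}=x_{3}^{l}\}$ (resp.\ $\{(x_{1}x_{2})^{l}=x_{3}^{l}\}$), a union of $l$ smooth sheets $H_{i}=\{x_{3}=\zeta_{l}^{i}x_{1}\}$ (resp.\ $\{x_{3}=\zeta_{l}^{i}x_{1}x_{2}\}$) all passing through $(z,p^{\dagger})$. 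Thus $\nu$ collapses the $l$ candidate points rather than separating them, and your ``clean reduction'' via $\nu(\kappa^{\dagger}(z^{\dagger}))=\nu(z^{\dagger})$ only re-proves the inclusion $\kappa^{\dagger}(z^{\dagger})\in\Pi^{-1}(z)$ that you already had.

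The decisive content --- which the paper supplies and you defer --- is a sheet-by-sheet computation: writing $\kappa(x_{1},x_{2})=(\kappa_{1}(x_{1},x_{2}),\kappa_{2}(x_{1},x_{2}))$, the induced map on the fiber product is $(x_{1},x_{2},x_{3})\mapsto(\kappa_{1},\kappa_{2},x_{3})$, and the paper shows $(\kappa\times_{\Delta}\Delta^{\dagger})(H_{i})=H_{i}$ for every $i$; since the points of $\Pi^{-1}(z)$ correspond bijectively to the sheets through $(z,p^{\dagger})$, the lift $\kappa^{\dagger}$ then fixes each of them, in particular $z^{\dagger}$. (Fiber-preservation gives $\kappa_{1}^{l}=x_{1}^{l}$, hence $\kappa_{1}=\zeta_{l}^{j}x_{1}$ locally, and sheet-preservation is precisely the assertion $j=0$; this is the one identity on which the whole proposition hinges, and any complete writeup must establish it.) Your fallback --- conjugating by $\Pi|_{\Gamma_{q^{\dagger}}}$ on nearby fibers and letting $q^{\dagger}\to p^{\dagger}$ --- does determine $\kappa^{\dagger}$ away from $\Gamma_{p^{\dagger}}$, but the limit at $z^{\dagger}$ lands you back at the same question of which of the $l$ points of $\Pi^{-1}(z)$ (equivalently, which sheet $H_{i}$) is selected: all $l$ deck-translates $\rho^{j}\kappa^{\dagger}$ satisfy the same relation $\Pi\circ(\rho^{j}\kappa^{\dagger})=\kappa\circ\Pi$, so continuity and compatibility alone cannot distinguish them. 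The proposal is therefore missing the local coordinate analysis that constitutes the paper's actual proof.
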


\begin{proof}
(1) The case that $\Gamma_{p^{\dagger}}$ is smooth at $z^{\dagger}$.

Then $\Gamma_{p,\Rm{red}}$ is smooth at $z$.
We take analytic local neighborhoods $(U;(x_{1},x_{2}))$
of $z$ on $W_{\Delta}$, 
$(\Delta;y)$ of $p$ and $(\Delta^{\dagger};x_{3})$ of $p^{\dagger}$ such that $\varphi^{\ast}y=x_{1}^{l}$ and $\pi^{\ast}y=x_{3}^{l}$.
Then $U\times_{\Delta}\Delta^{\dagger}$ is defined by $x_{1}^{l}-x_{3}^{l}=0$ in $U\times\Delta^{\dagger}$.
Put $H_{i}:=\{(x_{1},x_{2},\zeta_{l}^{i}x_{1})\mid(x_{1},x_{2}) \in U\}$ for $i=0,\cdots,l-1$,
where $\zeta_{l}$ is a primitive $l$-th root of unity.
Then we have
\[
U\times_{\Delta}\Delta^{\dagger}=\bigcup_{i=0}^{l-1} H_{i} \subset U\times \Delta .
\]
Thus, the natural projection $\sqcup_{i=0}^{l-1}H_{i}\to U\times_{\Delta}\Delta^{\dagger}$ is nothing 
more than the normalization of $U\times_{\Delta}\Delta^{\dagger}$.
Let $\Rm{id} \neq \kappa \in \Rm{Stab}_{\Ti{K}}(z)$.
Replacing $U$ by a smaller neighborhood of $z$ if necessary, we may assume that $\kappa (U) = U$.
We write $\kappa(x_{1},x_{2})=(\kappa_{1}(x_{1,x_{2}}),\kappa_{2}(x_{1},x_{2}))$ on U.
Then the induced automorphism 
$\kappa\times_{\Delta}\Delta^{\dagger}:U\times_{\Delta}\Delta^{\dagger} \to U\times_{\Delta}\Delta^{\dagger}$
by $\kappa$ is written by $(\kappa\times_{\Delta}\Delta^{\dagger})(x_{1},x_{2},x_{3}) = (\kappa_{1}(x_{1},x_{2}),\kappa_{2}(x_{1},x_{2}),x_{3})$.
It holds that $(\kappa\times_{\Delta}\Delta^{\dagger})(H_{i})=H_{i}$ for $i=0,\cdots,l-1$.
Furthermore, we have a commutative diagram
\[\xymatrix{
H_{i} \ar[d]_{\Pi|_{H_{i}}}   \ar[rr]^{(\kappa\times_{\Delta}\Delta^{\dagger})}                  & &H_{i} \ar[d]^{\Pi|_{H_{i}}}\\
U    \ar[rr]^{\kappa}    &         & U\\
}\]
 for $i=0,\cdots,l-1$.
Thus, we have $\kappa^{\dagger} \in \Rm{Stab}_{\Ti{K}}(z^{\dagger})$.

(2) The case that $\Gamma_{p^{\dagger}}$ has a node at $z^{\dagger}$.

Then $\Gamma_{p.\Rm{red}}$ has a node at $z$.
We take analytic local neighborhoods $(U;(x_{1},x_{2}))$
of $z$ on $W_{\Delta}$, 
$(\Delta;y)$ of $p$ and $(\Delta^{\dagger};x_{3})$ of $p^{\dagger}$ such that $\varphi^{\ast}y=(x_{1}x_{2})^{l}$ and $\pi^{\ast}y=x_{3}^{l}$.
Then $U\times_{\Delta}\Delta^{\dagger}$ is defined by $(x_{1}x_{2})^{l}-x_{3}^{l}=0$ in $U\times\Delta^{\dagger}$.
Put $H_{i}:=\{(x_{1},x_{2},\zeta_{l}^{i}x_{1}x_{2})\mid(x_{1},x_{2}) \in U\}$ for $i=0,\cdots,l-1$,
where $\zeta_{l}$ is a primitive $l$-th root of unity.
Then we have
\[
U\times_{\Delta}\Delta^{\dagger}=\bigcup_{i=0}^{l-1} H_{i} \subset U\times \Delta .
\]
Thus, the natural projection $\sqcup_{i=0}^{l-1}H_{i}\to U\times_{\Delta}\Delta^{\dagger}$ is the normalization of $U\times_{\Delta}\Delta^{\dagger}$.
We can show  $\kappa^{\dagger} \in \Rm{Stab}_{\Ti{K}}(z^{\dagger})$ similarly to (1).
\end{proof}
We can define $T(\Ti{K}^{\dagger})_{p^{\dagger}}$ for 
$\Ti{K}^{\dagger} \subset \Rm{Aut} (W_{\Delta^{\dagger}}/{\Delta^{\dagger}})$
since $\Gamma_{p^{\dagger}}$ is not multiple.
Hence we define 
\[
T(\Ti{K})_{p}:=\{\kappa \in \Ti{K}\mid \kappa^{\dagger} \in T(\Ti{K}^{\dagger})_{p^{\dagger}} \}
\]
for a multiple fiber $\Gamma_{p}$.
We note that $T(\Ti{K})_{p} \cong T(\Ti{K}^{\dagger})_{p^{\dagger}} $
 and $\Ti{K} \cong \Ti{K}^{\dagger}$.

\section{Estimation of the order of $\sharp \Ti{K}$}

Let $f: S\to B$ be a primitive cyclic covering fibration of type $(g,1,n)$ with \Blue{$r=\frac{2(g-1)}{n-1} \geq 4n$}.
\begin{define}
Let $z \in R_{h}$ be a (smooth) ramification point of $\varphi|_{R_{h}}$.
We call $z$ a {\it good ramification point} when the ramification index of $\varphi|_{R_{h}}$ at 
$z$ is greater than the multiplicity of the fiber of $\varphi$ passing through $z$.
\end{define}

The \Blue{goal} in this section is to show Proposition~\ref{estimateK2}, \ref{estimateK1} and \ref{estimateK1.5}.

\begin{define}
\Blue{
Put
\begin{align*}
\delta:=\Rm{min}\{ \sharp \Rm{Aut}(\Gamma_{p},O_{p})|\;
\forall p \in \Delta \textrm{ with }\Gamma_{p} \textrm{ is smooth} \},
\end{align*}
where $\Rm{Aut}(\Gamma_{p},O_{p})$ is the same notation defined in (\ref{Fixauto}).}
We note that $\delta=2,4$ or $6$.
\end{define}

\begin{prop}
\label{estimateK2}
Let $f:S \to B $ be a primitive cyclic covering fibration of \Blue{$(g,1,n)$} with $r \geq 4n$.
\Red{Let $\Gamma_{p}$ be} of type $l\Rm{I}_{c}$.
Assume $R$ is smooth locally around $\Gamma_{p}$ and 
has no good ramification points on $\Gamma_{p}$.
Then it holds that 
\begin{align*}
 K_{f}^{2}(\Gamma_{p}) & \geq \frac{(n^{2}-1)}{12n\delta}\sharp \Ti{K} .
\end{align*}
\end{prop}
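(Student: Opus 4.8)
The plan is to estimate $\sharp\Ti{K}$ through its subgroup of translations $T(\Ti{K})_{p}$ and to bound $K_{f}^2(\Gamma_{p})$ from below via the localization of Lemma~\ref{Enolem}. Set $N:=\sharp T(\Ti{K})_{p}$. By Corollary~\ref{groupstr} the quotient $\Ti{K}/T(\Ti{K})_{p}$ injects into $\Rm{Aut}(\Gamma_{q,\Rm{sm}},O_{q})$ for a smooth fiber $\Gamma_{q}$ of $\varphi_{\Delta}$, so choosing $\Gamma_{q}$ that realizes $\delta$ (or invoking Corollary~\ref{K/T(K)} in the non-isotrivial case, where the factor is $2\le\delta$) yields $\sharp\Ti{K}\le\delta N$. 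Hence it suffices to prove $K_{f}^2(\Gamma_{p})\ge\frac{n^{2}-1}{12n}N$, and I will deduce this from the two inequalities $K_{f}^2(\Gamma_{p})\ge\frac{n^{2}-1}{12n}r$ and $N\le r$.

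For the first inequality I localize Lemma~\ref{Enolem} at $p$. Since $R$ is smooth near $\Gamma_{p}$, no blow-up occurs over $p$, so $\alpha_{k}(\Gamma_{p})=0$ for $k\ge1$; moreover every $\Ti{\varphi}$-vertical component of $\Ti{R}$ over $p$ is a component of the type $l\Rm{I}_{c}$ fiber and hence a $(-2)$-curve, so by Remark~\ref{pcc} it can occur only when $n=2$, forcing $j_{0,a}(\Gamma_{p})=0$ for $a\ge2$ and $j_{0,1}(\Gamma_{p})=0$ unless $n=2$. Substituting, the total coefficient of $j_{0,1}(\Gamma_{p})$ equals $\frac{-(2n-1)(n-2)}{n}$, which vanishes at $n=2$ and multiplies $0$ when $n\ge3$; since $\alpha_{0}(\Gamma_{p})=\alpha_{0}^{+}(\Gamma_{p})\ge0$, what remains is $K_{f}^2(\Gamma_{p})\ge\frac{n^{2}-1}{n}r\bigl(\chi_{\varphi}(\Gamma_{p})+\nu(\Gamma_{p})\bigr)$. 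For type $l\Rm{I}_{c}$ one has $\chi_{\varphi}(\Gamma_{p})=c/12$ and $\nu(\Gamma_{p})=1-1/l$, so $\chi_{\varphi}(\Gamma_{p})+\nu(\Gamma_{p})\ge1/12$ (because $c\ge1$ or $l\ge2$), giving $K_{f}^2(\Gamma_{p})\ge\frac{n^{2}-1}{12n}r$.

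The main point, and the main obstacle, is $N\le r$. The divisor $R_{h}$ is $\Ti{K}$-stable and meets a general fiber in $r$ points, and the hypothesis that $\Gamma_{p}$ carries no good ramification point means precisely that $R_{h}$ is transversal to $\Gamma_{p,\Rm{red}}$ at each smooth intersection point. When $\Gamma_{p}$ is non-multiple, Corollary~\ref{stabfree} shows $T(\Ti{K})_{p}$ acts freely on $\Gamma_{p,\Rm{sm}}$, so the intersection points of $R_{h}$ with $\Gamma_{p}$ lying there form free orbits of size $N$, whence $N\mid r$ and $N\le r$. Two technical hurdles remain: (a) that $R_{h}$ avoids the nodes of $\Gamma_{p}$ --- writing $\Gamma_{p,\Rm{sm}}\cong\Bb{G}_m\times\Bb{Z}/c\Bb{Z}$ as in Theorem~\ref{kodaira}, the subgroup $\mu_{d}:=T(\Ti{K})_{p}\cap\Bb{G}_m$ fixes every node and acts on the tangent plane with the two fiber branches as eigenlines, so if $\mu_{d}$ is nontrivial the smooth horizontal curve $R_{h}$ cannot pass through a node, while if $\mu_{d}$ is trivial the nodes already form free orbits under the component rotation and again bound $N$; and (b) the multiple-fiber case, which I reduce to the previous one by the $l$-th root base change of Section~4: on $W_{\Delta^{\dagger}}$ the fiber $\Gamma_{p^{\dagger}}$ is the non-multiple type $\Rm{I}_{lc}$, Proposition~\ref{lifting} gives $N=\sharp T(\Ti{K}^{\dagger})_{p^{\dagger}}$, and $R_{\dagger,h}\cdot\Gamma_{p^{\dagger}}=r$ with $R_{\dagger,h}$ still transversal since $\Pi$ is étale, so the non-multiple argument applies verbatim. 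Combining the three inequalities gives
\[
K_{f}^2(\Gamma_{p})\ \ge\ \frac{n^{2}-1}{12n}\,r\ \ge\ \frac{n^{2}-1}{12n}\,N\ \ge\ \frac{n^{2}-1}{12n\delta}\,\sharp\Ti{K},
\]
as desired.
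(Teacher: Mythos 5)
Your proposal is correct and arrives at the same final chain $\sharp\Ti{K}\le\delta\,\sharp T(\Ti{K})_{p}\le\delta r$ combined with $K_{f}^{2}(\Gamma_{p})\ge\frac{n^{2}-1}{12n}r$, but the middle inequality is obtained by a genuinely different route. The paper never does any counting on the special fiber: from the same localization it computes $\alpha_{0}^{+}(\Gamma_{p})=r(1-\frac{1}{l})$ exactly (the no-good-ramification hypothesis forces every ramification point of $\varphi|_{R_{h}}$ over $p$ to have index exactly $l$), keeps that term to get $K_{f}^{2}(\Gamma_{p})=\frac{(n-1)^{2}}{n}r(1-\frac{1}{l})+\frac{n^{2}-1}{n}r(\frac{c}{12}+1-\frac{1}{l})$, and reads $\delta r\ge\sharp\Ti{K}$ off Corollary~\ref{groupstr} applied to a \emph{general nearby smooth} fiber $\Gamma_{q}$: there $T(\Ti{K})_{p}$ embeds into $(\Gamma_{q,\Rm{sm}},O_{q})$, acts by translations and hence freely, and preserves the $r$ transversal points of $R_{h}\cap\Gamma_{q}$, while $\Ti{K}/T(\Ti{K})_{p}$ injects into $\Rm{Aut}(\Gamma_{q},O_{q})$, of order $\delta$ for suitable $q$. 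Because you instead prove $N\le r$ on $\Gamma_{p}$ itself, you are forced into your hurdles (a) and (b) — the node analysis and the $l$-th root base change — both of which evaporate under the paper's choice of a generic fiber. You also discard $\alpha_{0}^{+}\ge 0$ rather than evaluating it, which is harmless; both proofs share the implicit assumption $(l,c)\ne(1,0)$, which you at least flagged.

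Two remarks on your hurdle (a). First, it is unnecessary even on your route: under the reading of ``good ramification point'' that the paper's own proof uses (any point where $R_{h}$ is smooth — automatic here since $R$ is smooth near $\Gamma_{p}$ — and the index of $\varphi|_{R_{h}}$ exceeds the fiber multiplicity), a point of $R_{h}$ lying over a node of $\Gamma_{p,\Rm{red}}$ is itself a good ramification point: locally $\varphi=x^{l}y^{l}$, and any smooth branch through the node pulls this back with vanishing order at least $2l>l$. So the hypothesis already confines $R_{h}\cap\Gamma_{p}$ to the smooth locus, met transversally; the paper's sentence that every ramification index over $p$ equals $l$ is exactly this observation. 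Second, as written your eigenline argument has a hole when $\mu_{d}\cong\Z/2\Z$: the generator then acts on the tangent plane at the node by $-\mathrm{id}$ (eigenvalues $\zeta=\zeta^{-1}=-1$), so \emph{every} line is an eigenline, and smooth invariant branches transverse to both fiber branches do exist (e.g.\ $y=cx$ is invariant under $(x,y)\mapsto(-x,-y)$); likewise your ``$\mu_{d}$ trivial'' subcase is only sketched. One can repair the count even allowing such points (the stabilizer has order $2$, the local intersection number is at least $2l$, so each orbit still contributes at least $N$ to $r$), but the clean fix is the one above, or simply the paper's device of passing to a nearby smooth fiber.
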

\begin{proof}
Since $R$ is smooth locally around $\Gamma_{p}$,
we have $j_{0,a}'(\Gamma_{p})=0$ for any $a \geq 1$ if $n \geq 3$.
Similarly, we have $j_{0,a}'(\Gamma_{p})=0$ for $a \geq 2$ if $n=2$.
Hence we have 
\begin{align*}
K_{f}^2(\Gamma_{p}) = \frac {(n-1)^2}{n}\alpha_{0}^{+}(\Gamma_{p})
+\frac{n^2-1}{n}r\left(\chi_{f}(\Gamma_{p})+1-\frac{1}{l}\right)
\end{align*}
from Lemma~\ref{Enolem}.
Since the ramification index of any ramification point of 
$\varphi|_{R_{h}}$ over $p$ is $l$,
we have 
\[
\alpha_{0}^{+}(\Gamma_{p})=r(1-\frac{1}{l}).
\]
By \Blue{Corollary}~\ref{groupstr}, we have $\delta r \geq \sharp \Ti{K}$.
Thus, we have 
\begin{align*}
K_{f}(\Gamma_{p}) &\geq \left(\frac{2(n-1)}{\delta}\left(1-\frac{1}{l} \right)+\frac{n^2-1}{\delta n}\frac{c}{12}\right)\sharp \Ti{K}\\
&\geq \frac{(n^{2}-1)}{12n\delta}\sharp \Ti{K}.
\end{align*}
\end{proof}

\begin{prop}
\label{estimateK1}
Let $f:S \to B $ be a primitive cyclic covering fibration of \Blue{$(g,1,n)$} with $r \geq \Rm{max}\{60 + \frac{12}{n^2-1}-\frac{96}{n+1}, 7n\}$.
Assume either $\Gamma_{p}$ is not of type $l\Rm{I}_{c}$  
or $R$ has a singular point on $\Gamma_{p}$.
Then it holds that
\begin{align*}
 2 n \delta  K_{f}^{2}(\Gamma_{p}) & \geq  \frac{1}{3}(n-1)(5n-4)\sharp \Ti{K}.
\end{align*}
\end{prop}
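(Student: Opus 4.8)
The plan is to reduce the assertion to a bound involving only the translation subgroup $T(\Ti{K})_p$. Writing $t:=\sharp T(\Ti{K})_p$, the rotational quotient $\Ti{K}/T(\Ti{K})_p$ embeds into $\Rm{Aut}(\Gamma_q,O_q)$ for a smooth fiber $\Gamma_q$ realizing $\delta$ (this is exactly the mechanism behind the inequality $\delta r\geq \sharp\Ti{K}$ used in the proof of Proposition~\ref{estimateK2}), so $\sharp\Ti{K}\leq \delta t$. Hence it suffices to prove
\[
K_{f}^{2}(\Gamma_{p})\geq \frac{(n-1)(5n-4)}{6n}\,t .
\]
Since $T(\Ti{K})_p\subset\Ti{K}$ acts vertically, it preserves $\Gamma_p$ and the branch locus $R$, so it permutes the singular points of $R$ lying on $\Gamma_p$. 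Throughout I would apply Proposition~\ref{localization} with $\Ha{W}=W$, giving $K_f^2(\Gamma_p)=\sum_{z\in\Gamma_p}K_f^2(\Gamma_p)_z+(\text{nonnegative terms})$ with each $K_f^2(\Gamma_p)_z\geq 0$ by Lemma~\ref{lowerbdK1}.

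First I would dispose of the case that $\Gamma_p$ is not of type $l\Rm{I}_c$. Then Corollary~\ref{nonss} gives $t\leq 4$, while $e(\Gamma_p)\geq 2$ forces $\chi_{\varphi}(\Gamma_p)\geq 1/6$. As all remaining terms of the localized $K_f^2(\Gamma_p)$ are nonnegative (Lemma~\ref{6/10,2}), the Euler term alone yields $K_f^2(\Gamma_p)\geq \frac{n^2-1}{n}r\cdot\frac{1}{6}$, and $r\geq 7n$ gives $K_f^2(\Gamma_p)\geq \frac{7(n^2-1)}{6}$. A direct comparison (equivalent to $21n^2-39n+48\geq 0$) shows this exceeds $\frac{(n-1)(5n-4)}{6n}\cdot 4$, settling this case.

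It remains to treat the case that $R$ has a singular point on $\Gamma_p$; after the $l$-th root base change of Section~4, which preserves $T(\Ti{K})_p$ and all orbit sizes by Proposition~\ref{lifting}, I may assume $\Gamma_p$ is of type $\Rm{I}_c$. Choose a singular point $w$ of $R$ on $\Gamma_p$ and consider its $T(\Ti{K})_p$-orbit, all of whose points are singular points of $R$. If $w$ is not a node of $\Gamma_p$, then its stabilizer in $T(\Ti{K})_p$ is trivial by Corollary~\ref{stabfree}, so the orbit consists of $t$ distinct singular points, each contributing at least $c_1:=(n^2-1)-n-\frac{(n-1)^2}{n}B_n$ to $K_f^2(\Gamma_p)$ by Lemma~\ref{lowerbdK1}; moreover for $n\geq 3$ the possible negative term $-2\frac{(n-1)^2}{n}j_{0,\bullet}'(\Gamma_p)$ vanishes, since a component of an $\Rm{I}_c$-fiber is a $(-2)$-curve and cannot be a $(-an)$-curve of $\Ti{R}$. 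As $c_1\geq \frac{(n-1)(5n-4)}{6n}$ for $n\geq 3$, this subcase is finished.

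The main obstacle is the subcase in which every accessible singular point $w$ is a node, where the stabilizer $H=\Rm{Stab}_{T(\Ti{K})_p}(w)$ (the $\Bb{G}_m$-part, of order $d$, by Corollary~\ref{stabfree}) may be large and the orbit correspondingly small. The key local observation I would use is that a nontrivial translation fixes no $\varphi$-horizontal local branch of $R$: otherwise it would fix a point on every nearby smooth fiber, contradicting freeness of nontrivial translations. Hence $H$ permutes the horizontal branches of $R$ through $w$ \emph{freely}, so either $\Rm{mult}_w R\geq d$, or blowing up $w$ produces a full $H$-orbit of $d$ infinitely near singular points along the exceptional curve. In either situation the localized contribution $K_f^2(\Gamma_p)_w$ grows like a positive multiple of $nd$, which together with the $t/d$ nodes in the orbit and the Euler term recovers the factor $d\cdot(t/d)=t$; the hypothesis $r\geq 60+\frac{12}{n^2-1}-\frac{96}{n+1}$ is precisely what makes this bookkeeping close. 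I expect the genuinely hard points to be this infinitely near analysis, carried out through the $D_t$-decomposition and Lemmas~\ref{lem5.2}--\ref{6/10,2}, and the borderline case $n=2$, where a single $\alpha_1$-point contributes only $1/4<1/2$ and one must additionally exploit the $\alpha_0^{+}$ term together with Lemma~\ref{Eno4.7}.
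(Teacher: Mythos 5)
Your overall skeleton matches the paper's proof (dispose of non-$l\Rm{I}_{c}$ fibers via Corollary~\ref{nonss} and the Euler term; base change via Proposition~\ref{lifting}; then an orbit/stabilizer dichotomy at singular points of $R$, with the free-orbit case handled exactly as the paper's condition (C1)), but the heart of the argument --- the case where every singular point of $R_{\dagger}$ sits at a node with nontrivial stabilizer --- rests on a claim that is false. You assert that a nontrivial translation fixes no horizontal local branch of $R$, ``otherwise it would fix a point on every nearby smooth fiber,'' and conclude that the stabilizer $H$ of order $d$ permutes the branches through $w$ freely, so that either $\Rm{mult}_{w}R\geq d$ or one blow-up produces a full $H$-orbit of $d$ infinitely near singular points. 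This deduction is only valid for branches of degree one over the base: a branch $D$ \emph{tangent} to the fiber at the node meets each nearby smooth fiber in several points, and a translation can stabilize $D$ setwise while permuting those points, fixing nothing. So a single invariant tangent branch of small multiplicity defeats your dichotomy, and the claimed growth of $K_{f}^{2}(\Gamma_{p})_{w}$ by a multiple of $nd$ does not follow. This is precisely why the paper splits the node case into (C2) (some horizontal branch tangent to a branch of $\Gamma_{p^{\dagger}}$) and (C3) (some transverse branch), recovers the factor $d$ not from multiplicities or orbit counts but from the intersection-number bound $(\Bb{D},\Gamma_{p^{\dagger}})_{z^{\dagger}}\geq \sharp\Rm{Stab}_{T(\Ti{K}^{\dagger})_{p^{\dagger}}}(z^{\dagger})$ combined with $(\Bb{D},\Gamma_{p^{\dagger}})_{z^{\dagger}}\leq 2m(v+2)$ and the $\alpha_{0}^{+}$ contribution with $\gamma\geq 2/3$ (resp.\ $\gamma\geq 1/2$), where $v$ is the minimal tangency order --- bookkeeping you defer to ``I expect the genuinely hard points to be\dots'' rather than carry out. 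You also omit entirely the degenerate possibility that near every singular point $R$ consists only of fiber branches (the paper's (C4)); excluding it requires a separate propagation argument around the $\Rm{I}_{c}$ cycle forcing $n=2$ and $(\Gamma_{p})_{\Rm{red}}\subset R$, a contradiction, and nothing in your proposal rules it out.

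Two smaller slips are fixable but worth flagging. In the non-$l\Rm{I}_{c}$ case, the terms in Lemma~\ref{6/10,2} are \emph{not} all nonnegative: $-2\frac{(n-1)^{2}}{n}\left(\Ch{j}_{0,\bullet}(\Gamma_{p})+j_{0,\bullet}'(\Gamma_{p})\right)$ can be negative, and the paper absorbs it using $j_{0,\bullet}'(\Gamma_{p})\leq \sharp\Gamma_{p}$ together with $e(\Gamma_{p})\geq \sharp\Gamma_{p}+1$, which is why the per-component Euler contribution $\frac{n^{2}-1}{12n}r$ must dominate $\frac{2(n-1)^{2}}{n}$. Likewise, your claim that $j_{0,\bullet}'(\Gamma_{p})=0$ for an $\Rm{I}_{c}$ fiber when $n\geq 3$, because its components are $(-2)$-curves, is unjustified: $j_{0,a}'$ counts proper transforms on $\Ti{W}$, and a $(-2)$-component of $\Gamma_{p}$ can well become a $(-an)$-curve after the blow-ups of the canonical resolution, so this term must again be absorbed by the Euler term rather than discarded.
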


\begin{prop}
\label{estimateK1.5}
Let $f:S \to B $ be a primitive cyclic covering fibration of \Blue{$(g,1,n)$} with $r \geq 4n$.
Let $\Gamma_{p}$ is of type $l\Rm{I}_{c}$.
Assume $R$ is smooth locally around $\Gamma_{p}$ and 
has a good ramification point on $\Gamma_{p}$.
Then it holds that
\begin{align*}
 2 n \delta  K_{f}^{2}(\Gamma_{p}) & \geq  (n-1)^{2}\sharp \Ti{K}.
\end{align*}
\end{prop}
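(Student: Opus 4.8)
The plan is to reduce the whole statement to a single lower bound for $\alpha_0^+(\Gamma_p)$ in terms of $\sharp\Ti{K}$, and then feed it into the simplified shape of $K_f^2(\Gamma_p)$. First, exactly as in the proof of Proposition~\ref{estimateK2}, the local smoothness of $R$ makes all $\alpha_k(\Gamma_p)$ $(k\geq1)$ and the relevant $j_{0,a}'(\Gamma_p)$ vanish, so Lemma~\ref{Enolem} collapses to
\[
K_f^2(\Gamma_p)=\frac{(n-1)^2}{n}\alpha_0^{+}(\Gamma_p)+\frac{n^2-1}{n}r\left(\chi_{\varphi}(\Gamma_p)+1-\frac1l\right),
\]
both of whose terms are non-negative; in particular $K_f^2(\Gamma_p)\geq\frac{(n-1)^2}{n}\alpha_0^{+}(\Gamma_p)$. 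On the group side, Corollary~\ref{groupstr} embeds $\Ti{K}/T(\Ti{K})_{p}$ into $\Rm{Aut}(\Gamma_{q},O_{q})$ for every smooth fibre $\Gamma_{q}$ over $\Delta$, so $\sharp(\Ti{K}/T(\Ti{K})_{p})\leq\delta$ and hence $\sharp\Ti{K}\leq\delta\,m$ with $m:=\sharp T(\Ti{K})_{p}$. Thus it suffices to prove the single inequality $\alpha_0^{+}(\Gamma_p)\geq m/2$, since this yields $K_f^2(\Gamma_p)\geq\frac{(n-1)^2}{2n}m\geq\frac{(n-1)^2}{2n\delta}\sharp\Ti{K}$, which is the assertion.

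Next I would read $\alpha_0^{+}(\Gamma_p)$ as a ramification count. Since $R$ is smooth near $\Gamma_p$, one has $\alpha_0^{+}(\Gamma_p)=\sum_{z}(e_z-1)$, the sum being over the points $z\in R_{h}\cap\Gamma_p$, where $e_z$ is the ramification index of $\varphi|_{R_{h}}$ at $z$ (equivalently the local intersection multiplicity of $R_{h}$ with the scheme-theoretic fibre). Because the fibre has multiplicity $l$, $e_z$ is always a multiple of $l$, so a good ramification point is exactly one with $e_z\geq2l$. The group $T(\Ti{K})_{p}$ preserves $R$, $\Gamma_p$ and $\varphi$, hence permutes these points while preserving the indices $e_z$; in particular the entire $T(\Ti{K})_{p}$-orbit of a good ramification point $z_0$ consists of good ramification points, so it is enough to bound the contribution of this one orbit to $\alpha_0^{+}(\Gamma_p)$ from below by $m/2$.

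Here two cases arise. If $z_0$ lies on $\Gamma_{p,\Rm{sm}}$, then by Corollary~\ref{stabfree} (after passing to the $l$-th root cover of (\ref{diagram}) and invoking Proposition~\ref{lifting} when $\Gamma_p$ is multiple) its stabiliser is trivial, so the orbit has exactly $m$ points, each contributing $e_z-1\geq2l-1\geq1$, giving $\alpha_0^{+}(\Gamma_p)\geq m$. If instead $z_0$ is a node of $\Gamma_{p,\Rm{red}}$, its stabiliser is $T_0:=T(\Ti{K})_{p}\cap\Bb{G}_{m}=\mu_d$, since nodes are fixed by the identity component and permuted freely by the component group; the orbit then has $m/d$ points. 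The decisive local computation is that, in coordinates where $\varphi=(x_1x_2)^{l}$ and $\mu_d$ acts by $(x_1,x_2)\mapsto(\zeta x_1,\zeta^{-1}x_2)$, a $\mu_d$-invariant smooth branch through the node must be tangent to a component to order at least $d-1$, which forces $e_{z_0}\geq l\,\max(2,d)$. Hence this orbit contributes at least $\frac{m}{d}\bigl(l\,\max(2,d)-1\bigr)\geq m\bigl(1-\tfrac1d\bigr)\geq\tfrac{m}{2}$. In either case $\alpha_0^{+}(\Gamma_p)\geq m/2$, completing the argument. I expect this last tangency computation at a node to be the main obstacle: it is precisely what converts the shrinking of the orbit by the stabiliser order $d$ back into an equally large ramification index, and it is the reason the constant is $2n\delta$ rather than $n\delta$.
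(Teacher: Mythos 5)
Your proof is correct, and it follows the same overall strategy as the paper --- bound $K_f^2(\Gamma_p)$ below by the $\alpha_0^+$-contribution of the $T(\Ti{K})_p$-orbit of the good ramification point, and recover the orbit-size loss caused by the stabilizer from the ramification index --- but the execution differs in an interesting way. The paper runs the whole estimate upstairs on the root cover: it bounds $K_f^2(\Gamma_{p^\dagger})\geq\frac{(n-1)^2}{n}\bigl(((R_\dagger)_h,\Gamma_{p^\dagger})_{z^\dagger}-1\bigr)\sharp(T(\Ti{K}^\dagger)_{p^\dagger}\cdot z^\dagger)\,l$, descends via Proposition~\ref{localbc} ($lK_f^2(\Gamma_p)\geq K_f^2(\Gamma_{p^\dagger})$), and then finishes with the single asserted inequality $((R_\dagger)_h,\Gamma_{p^\dagger})_{z^\dagger}\geq\sharp\mathrm{Stab}_{T(\Ti{K}^\dagger)_{p^\dagger}}(z^\dagger)$ combined with $((R_\dagger)_h,\Gamma_{p^\dagger})_{z^\dagger}\geq 2$, i.e.\ $2(e-1)\geq e\geq\sharp\mathrm{Stab}$. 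You instead stay on $W$, use $\alpha_0^+(\Gamma_p)=\sum_z(e_z-1)$ with $e_z\in l\Z$, split into the smooth-point and node cases, and --- this is the genuine added value --- actually \emph{prove} the stabilizer-versus-intersection comparison by the $\mu_d$-weight tangency computation, which the paper leaves unjustified; your target $\alpha_0^+(\Gamma_p)\geq m/2$ reproduces exactly the same constant, and your closing remark correctly identifies the $2(e-1)\geq e$ mechanism as the source of the factor $2$. One point deserves care: in the node case at a multiple fiber, invariance of $\varphi=(x_1x_2)^l$ alone only forces $(\zeta_1\zeta_2)^l=1$, not $\zeta_2=\zeta_1^{-1}$; the inverse-weight normalization you use requires passing through Proposition~\ref{lifting} to the unramified cover, where the lifted translation preserves $u_1u_2$ itself and the two branches (stabilizers of nodes lie in the identity component $\Bb{G}_m$). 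You invoke the lifting only in your smooth-point case, so this should be said explicitly in the node case as well; with that one-line repair the argument is complete.
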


\subsection{The proof of Proposition~\ref{estimateK1}}

\begin{lem}
Let $f:S \to B $ be a primitive cyclic covering fibration of \Blue{$(g,1,n)$} with $r \geq 4n$.
Assume $\Gamma_{p}$ is a singular fiber except for type $l\Rm{I}_{c}$.
Then it holds that 
\begin{align*}
2n\delta K_{f}^{2}(\Gamma_{p}) &\geq \frac{n^2-1}{12}\left(r-12\frac{n-1}{n+1} \right) \sharp \Ti{K}. 
\end{align*}
If $r \geq 7n$, it holds
\begin{align*}
2n\delta K_{f}^{2}(\Gamma_{p}) &\geq  \frac{1}{3}(n-1)(5n-4) \sharp \Ti{K}. 
\end{align*}
\end{lem}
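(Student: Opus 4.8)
The plan is to bound $\sharp\Ti{K}$ from above by $4\delta$ and $K_{f}^{2}(\Gamma_{p})$ from below by $\frac{(n^{2}-1)r}{6n}-\frac{2(n-1)^{2}}{n}$, and then to observe that after multiplication by $2n\delta$ these two estimates fit together \emph{exactly}, yielding the first inequality; the second will follow from it by an elementary manipulation using $r\geq 7n$. Throughout I use that a singular fiber not of type $l\Rm{I}_{c}$ is additive, hence non-multiple (so $\nu(\Gamma_{p})=0$) and of Euler number $e(\Gamma_{p})\geq 2$.

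For the upper bound on $\sharp\Ti{K}$, I would first invoke Corollary~\ref{nonss}: since $\Gamma_{p}$ is not of type $l\Rm{I}_{c}$, we have $\sharp T(\Ti{K})_{p}\leq 4$. The quotient $\Ti{K}/T(\Ti{K})_{p}$ injects into $\Rm{Aut}(\Gamma_{q,\Rm{sm}},O_{q})$ for a smooth fiber $\Gamma_{q}$ by Corollary~\ref{groupstr}, and I would bound it by $\delta$ in two cases. If $\varphi$ is not isotrivial, Corollary~\ref{K/T(K)} gives $\sharp(\Ti{K}/T(\Ti{K})_{p})\leq 2\leq\delta$. If $\varphi$ is isotrivial, all smooth fibers are mutually isomorphic, so $\sharp\Rm{Aut}(\Gamma_{q},O_{q})=\delta$ and again $\sharp(\Ti{K}/T(\Ti{K})_{p})\leq\delta$. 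Either way, $\sharp\Ti{K}=\sharp T(\Ti{K})_{p}\cdot\sharp(\Ti{K}/T(\Ti{K})_{p})\leq 4\delta$.

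For the lower bound on $K_{f}^{2}(\Gamma_{p})$, I would apply Proposition~\ref{localization} with the trivial choice $\Ha{W}=W$, so that $\Ch{\psi}=\Rm{id}_{W}$ and every $\Ch{}$-index vanishes. Using $\nu(\Gamma_{p})=0$, the formula reduces to
\[
K_{f}^{2}(\Gamma_{p})=\sum_{z\in\Gamma_{p}}K_{f}^{2}(\Gamma_{p})_{z}-\frac{2(n-1)^{2}}{n}j_{0,\bullet}'(\Gamma_{p})+\frac{n^{2}-1}{n}r\chi_{\varphi}(\Gamma_{p})+j_{0,1}'(\Gamma_{p}).
\]
Since each $K_{f}^{2}(\Gamma_{p})_{z}\geq 0$ by Lemma~\ref{lowerbdK1} and $j_{0,1}'(\Gamma_{p})\geq 0$, this gives $K_{f}^{2}(\Gamma_{p})\geq\frac{n^{2}-1}{n}r\chi_{\varphi}(\Gamma_{p})-\frac{2(n-1)^{2}}{n}j_{0,\bullet}'(\Gamma_{p})$. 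Now $\chi_{\varphi}(\Gamma_{p})=e(\Gamma_{p})/12$, and each component counted by $j_{0,\bullet}'(\Gamma_{p})$ is the proper transform of a \emph{distinct} component of $\Gamma_{p}$; as an additive Kodaira fiber is a tree of $e(\Gamma_{p})-1$ rational curves, $j_{0,\bullet}'(\Gamma_{p})\leq e(\Gamma_{p})-1$. Writing $e=e(\Gamma_{p})$ and substituting, the target bound $K_{f}^{2}(\Gamma_{p})\geq\frac{(n^{2}-1)r}{6n}-\frac{2(n-1)^{2}}{n}$ becomes equivalent to $(e-2)\bigl(\frac{(n^{2}-1)r}{12n}-\frac{2(n-1)^{2}}{n}\bigr)\geq 0$, which holds since $e\geq 2$ and $r\geq 4n$ forces $(n+1)r-24(n-1)\geq 4(n-2)(n-3)\geq 0$.

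Finally I would combine the two estimates. Multiplying the lower bound by $2n\delta$ gives $2n\delta K_{f}^{2}(\Gamma_{p})\geq\delta\bigl(\frac{(n^{2}-1)r}{3}-4(n-1)^{2}\bigr)$, while $\sharp\Ti{K}\leq 4\delta$ gives $\frac{n^{2}-1}{12}\bigl(r-12\frac{n-1}{n+1}\bigr)\sharp\Ti{K}\leq\delta\bigl(\frac{(n^{2}-1)r}{3}-4(n-1)^{2}\bigr)$, the two right-hand sides being literally equal; this proves the first inequality. The second then follows by inserting $r\geq 7n$ and checking $7n^{2}-25n+28>0$ (negative discriminant). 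I expect the genuine point to be the control of the correction term $j_{0,\bullet}'(\Gamma_{p})$ — identifying it with fiber components and letting the Euler-number term dominate it once $r\geq 4n$ — while the group-theoretic bound $\sharp\Ti{K}\leq 4\delta$ and the final arithmetic are comparatively routine.
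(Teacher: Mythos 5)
Your proof is correct and follows essentially the same route as the paper: the paper likewise combines Proposition~\ref{localization} (with $\Ch{\psi}=\Rm{id}$) and Lemma~\ref{lowerbdK1} with the bounds $j_{0,\bullet}'(\Gamma_{p})\leq\sharp\Gamma_{p}$ and $\chi_{\varphi}(\Gamma_{p})\geq(\sharp\Gamma_{p}+1)/12$ for the non-$l\Rm{I}_{c}$ fibers, then closes with $\sharp\Ti{K}\leq 4\delta$ via Corollaries~\ref{nonss} and~\ref{groupstr} and the same arithmetic (including $7n^{2}-25n+28>0$ for the $r\geq 7n$ case). Your only deviations are cosmetic: you express the component count through $e(\Gamma_{p})=\sharp\Gamma_{p}+1$ and $e\geq 2$ rather than $\sharp\Gamma_{p}\geq 1$, and you spell out the isotrivial/non-isotrivial case split behind $\sharp(\Ti{K}/T(\Ti{K})_{p})\leq\delta$, a detail the paper leaves implicit when citing Corollary~\ref{nonss}.
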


\begin{proof}

Let $\Gamma_{p}$ be a singular \Blue{fiber except} for type $l\Rm{I}_{c}$.
We note that $e(\Gamma_{p}) \geq \sharp\Gamma_{p}+1$ in this case.
So we have $j_{0,\bullet}'(\Gamma_{p}) \leq \sharp \Gamma_{p}$ and $\chi_{\varphi}(\Gamma_{p})\geq (\sharp\Gamma_{p}+1)/12$. 
We have 
\begin{align*}
K_{f}^{2} (\Gamma_{p}) &\geq -2\frac{(n-1)^2}{n}j_{0,\bullet}'(\Gamma_{p})+
j_{0,1}'(\Gamma_{p})+\frac{n^{2}-1}{n}r\chi_{\varphi}(\Gamma_{p})
\end{align*}
from Proposition~\ref{localization} and Lemma~\ref{lowerbdK1}.
Hence we have
\begin{align*}
K_{f}^{2} (\Gamma_{p}) &\geq -2\frac{(n-1)^2}{n}j_{0,\bullet}'(\Gamma_{p})+
j_{0,1}'(\Gamma_{p})+\frac{n^{2}-1}{12n}r(\sharp\Gamma_{p}+1)\\
&=\left(\frac{n^{2}-1}{12n}r-\frac{2(n-1)^2}{n} \right)\sharp \Gamma_{p} + \frac{n^{2}-1}{12n}r \\
&\geq \frac{n^{2}-1}{6n}r-\frac{2(n-1)^2}{n}\;(\text{by}\;\sharp \Gamma_{p} \geq 1)    \\
&=\frac{n^{2}-1}{6n}\left(r - 12 \frac{n-1}{n+1} \right).
\end{align*}
Thus, 
it is sufficient to show 
\begin{align*}
2n\delta \frac{n^{2}-1}{6n}\left(r - 12 \frac{n-1}{n+1} \right)
- \frac{n^{2}-1}{12}\left(r - 12 \frac{n-1}{n+1} \right) \sharp \Ti{K} \geq 0.
\end{align*}
Since $\Gamma_{p}$ is not of type $l\Rm{I}_{c}$,
we have $\sharp \Ti{K} \leq 4\delta $ from Corollary~\ref{nonss}.
Hence we get the desired inequality.
\end{proof}


We assume $\varphi:W \to B$ has at most singular fibers of type $l\Rm{I}_{c}$ in what follows in this subsection.
Let $z$ be a singular point of $R$ and \Red{$z \in \Gamma_{p}$ a fiber }of type $l\Rm{I}_{c}$ where $l$
and $c$ are non-negative integers.
Recall the diagram (\ref{diagram}).
\[\xymatrix{
W_{\Delta^{\dagger}} \ar[rd]  \ar@/_1.5pc/[ddr]_{\varphi_{\Delta^{\dagger}}} \ar@/^1.5pc/[rrd]^{\Pi}&  & \\
  &W_{\Delta}\times_{\Delta}\Delta^{\dagger} \ar[r] \ar[d]& W_{\Delta} \ar[d]^{\varphi_{\Delta}} \\
& \Delta^{\dagger}  \ar[r]_{\pi} &\Delta
}\]
We recall that $\pi^{-1}(p)=\{p^{\dagger}\}$ and $R_{\dagger}=\Pi^{\ast}(R|_{W_{\Delta}})$.
Replacing $\Delta$ by a smaller neighborhood of $p$ if necessary,
we may assume that $\varphi_{\Delta^{\dagger}}$ has only one singular fiber $\Gamma_{p^{\dagger}}$ and \Red{$R_{\dagger}$ has a singular point} only on $\Gamma_{p^{\dagger}}$.

Since $\Pi:W_{\Delta^{\dagger}} \to W_{\Delta}$ is an unramified covering,
each singular point of $R_{\dagger}$ 
is \Blue{analytically equivalent} to the corresponding singular point of $R|_{W_{\Delta}}$.
Hence we can consider a formal canonical resolution of $R_{\dagger}$ as follows.

\begin{define}
Let $z^{\dagger}_{1}$ be a singular point of $R_{\dagger}$ on $\Gamma_{p^{\dagger}}$ and 
\Red{$(\psi_{\Delta^{\dagger}})_{1}:W_{\Delta^{\dagger},1} \to W_{\Delta^{\dagger}}$ the blowing-up} at $z^{\dagger}_1$.
We put 
\begin{align*}
R_{\dagger,1}:=(\psi_{\Delta^{\dagger}})^{\ast}R_{\dagger}-n\left[\frac{m_1}{n}\right]E_{1},
\end{align*}
where $E_1:=(\psi_{\Delta^{\dagger}})^{-1}_{1}(z^{\dagger}_1)$ 
and $m_1:=\Rm{mult}_{z^{\dagger}_1}(R_{\dagger})$.
We define $(W_{\Delta^{\dagger},i},R_{\dagger,i})$ for $i=2, \cdots, N^{\dagger}$ inductively
as follows.
Let $z^{\dagger}_{i}$ be a singular point of $R_{\dagger,i-1}$ and 
\Red{$(\psi_{\Delta^{\dagger}})_{i}:W_{\Delta^{\dagger},i} \to W_{\Delta^{\dagger},i-1}$ the 
blowing-up} at $z^{\dagger}_{i}$.
We put 
\begin{align*}
R_{\dagger,i}:=(\psi_{\Delta^{\dagger}})_{i}^{\ast}R_{\dagger,i-1}-n\left[\frac{m_i}{n}\right]E_{i},
\end{align*}
where $E_{i}:=(\psi_{\Delta^{\dagger}})^{-1}_{i}(z^{\dagger}_i)$ 
and $m_{i}=\Rm{mult}_{z^{\dagger}_{i}}(R_{\dagger,i-1})$.
We continue this process until $R_{\dagger,i}$ is smooth.
Since $\Pi$ is an unramified covering,
this process terminates.
Let $(W_{\Delta^{\dagger},N^{\dagger}},R_{\dagger,N^{\dagger}})$ 
be the model which the above process terminates 
and put $\Ti{\psi}_{\Delta^{\dagger}}:=(\psi_{\Delta^{\dagger}})_{1}\circ \cdots \circ (\psi_{\Delta^{\dagger}})_{N^{\dagger}}$.
We call $R_{\dagger,i}$ ($i=1, \cdots N^{\dagger}$) \Blue{the {\it branch locus}} on $W_{\Delta^{\dagger},i}$.
We rewrite $(W_{\Delta^{\dagger},N^{\dagger}},R_{\dagger,N^{\dagger}})$ as $(\Ti{W}_{\Delta^{\dagger}},\Ti{R}_{\dagger})$.
\end{define}

\begin{rmk}
\label{formalpcc}
The following hold.
\begin{itemize}
\item The multiplicity of an arbitrary singular point of $R_{\dagger,i}$
is in $n\Z$ or $n\Z+1$ for $i=1, \cdots , N^{\dagger}$. 
\item The self-intersection number of any vertical component of $\Ti{R}_{\dagger}$ which is contracted to a point by $\Ti{\psi}_{\Delta^{\dagger}}$ is divisible by $n$.
\end{itemize}
\end{rmk}

 Let $\Ha{\varphi}_{\Delta^{\dagger}}: \Ha{W}_{\Delta^{\dagger}} \to \Delta^{\dagger}$ be any intermediate elliptic fiber germ between $\Ti{W}_{\Delta^{\dagger}}$ and $W_{\Delta^{\dagger}}$, and regard $\Ti{\psi}_{\Delta^{\dagger}}:\Ti{W}_{\Delta^{\dagger}}\to W_{\Delta^{\dagger}}$ as the composite of 
the natural birational morphisms $\Ha{\psi}_{\Delta^{\dagger}}: \Ti{W}_{\Delta^{\dagger}}\to \Ha{W}_{\Delta^{\dagger}}$ and $\Ch{\psi}:\Ha{W}_{\Delta^{\dagger}}\to W_{\Delta^{\dagger}}$.
\[
  \xymatrix{
    \Ti{W}_{\Delta^{\dagger}} \ar[r]^{\Ha{\psi}_{\Delta^{\dagger}}} \ar[dr]_{\Ti{\varphi}_{\Delta^{\dagger}}} & \Ha{W}_{\Delta^{\dagger}} \ar[d]^{\Ha{\varphi}_{\Delta^{\dagger}}} \ar[r]^{\Ch{\psi}_{\Delta^{\dagger}}}& W_{\Delta^{\dagger}} \ar[ld]^{\varphi_{\Delta^{\dagger}}}\\
     & B &
}
\]
We put $\Ha{R}_{\dagger}=(\Ha{\psi}_{\Delta^{\dagger}})_{\ast}\Ti{R}_{\dagger}$.
The fiber of $\Ha{\varphi}_{\Delta^{\dagger}}$ over $p \in \Delta^{\dagger}$ will be denoted by $\Ha{\Gamma}_{p^{\dagger}}$.

For any $\Ha{z}^{\dagger} \in \Ha{\Gamma}_{p^{\dagger}}$, we introduce indices as Section~2.
\begin{align*}
\begin{aligned}
\alpha_0^{+} (\Gamma_{p^{\dagger}})_{\Ha{z}^{\dagger}} &:=
&  \begin{cases}
   \left( (\Ti{R}_{\dagger})_{h}, \Ti{\Gamma}_{p^{\dagger},\Ha{z}^{\dagger}}\right)-\sharp \left(\Rm{Supp}  (\Ti{R}_{\dagger})_{h} \cap \Rm{Supp}(\Ti{\Gamma}_{p^{\dagger},\Ha{z}^{\dagger}})\right) &({\rm if\;} \Ha{R}_{\dagger}{\rm\;is\;singular\;at\;} \Ha{z}^{\dagger} \;),\\
    \\
    \left({\rm The\;ramification\;index\;of\;} (\Ti{R}_{\dagger})_{h} \to \Delta^{\dagger}{\rm \;at\;}\Ha{z}\right)-1 &({\rm if\;} \Ha{R}_{\dagger}  {\rm\;is\;smooth\;at\;} \Ha{z}^{\dagger} \;),\\
    \\
   0 & ({\rm if\;} \Ha{z}^{\dagger} \not\in  \Ha{R}_{\dagger}),
  \end{cases}
\end{aligned}
\end{align*}
where $\Ti{\Gamma}_{p^{\dagger},\Ha{z}^{\dagger}}$ is the biggest subdivisor of $\Ti{\Gamma}_{p^{\dagger}}$ which is contracted to $\Ha{z}^{\dagger}$ by $\Ha{\psi}_{\Delta^{\dagger}}$.
\begin{itemize}
\item Let $\Ch{\alpha}_{k}(\Gamma_{p^{\dagger}})$
be the number of singular points of multiplicity either $kn$ or $kn+1$ among 
all singular points of \Red{the branch loci} appearing in $\Ch{\psi}_{\Delta^{\dagger}}$.
If $\Ch{\psi}_{\Delta^{\dagger}}=\Rm{id}_{W_{\Delta^{\dagger}}}$, we define $\Ch{\alpha}_{k}(\Gamma_{p^{\dagger}})=0$.
\item Let $\Ha{\alpha}_{k}(\Gamma_{p^{\dagger}})_{\Ha{z}^{\dagger}}$
be the number of singular points of multiplicity either $kn$ or $kn+1$ among 
all singular points of \Red{the branch loci} which are infinitely near to $\Ha{z}^{\dagger}$.
If $\Ch{\psi}_{\Delta^{\dagger}}=\Rm{id}_{W_{\Delta^{\dagger}}}$, we rewrite $\Ha{\alpha}_{k}(\Gamma_{p^{\dagger}})_{\Ha{z}^{\dagger}}$ as $\alpha_{k}(\Gamma_{p^{\dagger}})_{\Ha{z}^{\dagger}}$.
 \item Put $j_{0,a}^{'}(\Gamma_{p^{\dagger}}):= l j_{0,a}^{'}(\Gamma_{p})$ for $a \geq 1$. 
 Put $j_{0,\bullet}^{'}(\Gamma_{p^{\dagger}}):= \sum_{a\geq 1}j_{0,a}^{'}(\Gamma_{p^{\dagger}})$
 \item \Blue{Let $\Ch{j}_{0,a}(\Gamma_{p^{\dagger}})$ be the number of vertical irreducible  components of $\Ti{R}_{\dagger}$ that satisfy following two conditions.
 \begin{itemize}
 \item The self-intersection number is $-an$.
 \item It is a proper transform of an exceptional curve appearing from $\Ch{\psi}_{\Delta^{\dagger}}$.
 \end{itemize}
 }
 Put $\Ch{j}_{0,\bullet}(\Gamma_{p^{\dagger}}):= \sum_{a\geq 1}\Ch{j}_{0,a}(\Gamma_{p^{\dagger}})$.
If $\Ch{\psi}_{\Delta^{\dagger}}=\Rm{id}_{W_{\Delta^{\dagger}}}$, we define $\Ch{j}_{0,\bullet}(\Gamma_{p^{\dagger}})=0$.
\item Let $\Ha{j}_{0,a}(\Gamma_{p^{\dagger}})_{\Ha{z}^{\dagger}}$ be the number of irreducible curves with self-intersection number \Red{$-an$} 
 contained in $(\Ti{R}_{\dagger})_{v}(p^{\dagger})$ and
 contracted to $\Ha{z}^{\dagger}$ by $\Ha{\psi}_{\Delta^{\dagger}}$.
  Put $\Ha{j}_{0,\bullet}(\Gamma_{p^{\dagger}})_{\Ha{z}^{\dagger}}:= \sum_{a\geq 1}\Ha{j}_{0,a}(\Gamma_{p^{\dagger}})_{\Ha{z}^{\dagger}}$.
If $\Ch{\psi}_{\Delta^{\dagger}}=\Rm{id}_{W_{\Delta^{\dagger}}}$, we rewrite
$\Ha{j}_{0,a}(\Gamma_{p^{\dagger}})_{\Ha{z}^{\dagger}}$ as 
$j_{0,a}^{''}(\Gamma_{p^{\dagger}})_{\Ha{z}^{\dagger}}$.
  \item Put $\Ha{\alpha}_{0}(\Gamma_{p^{\dagger}})_{\Ha{z}^{\dagger}}:= \alpha_{0}^{+}(\Gamma_{p^{\dagger}})_{\Ha{z}^{\dagger}}-2\sum_{a \geq 2}\Ha{j}_{0,a}(\Gamma_{p^{\dagger}})_{\Ha{z}^{\dagger}}$.
If $\Ch{\psi}_{\Delta^{\dagger}}=\Rm{id}_{W_{\Delta^{\dagger}}}$, we rewrite $\Ha{\alpha}_{0}(\Gamma_{p^{\dagger}})_{\Ha{z}^{\dagger}}$ as $\alpha_{0}(\Gamma_{p^{\dagger}})_{\Ha{z}^{\dagger}}$.  
\item 
$K_{f}^2 (\Gamma_{p^{\dagger}})_{\Ha{z}^{\dagger}} :=  \sum_{k\geq 1}\left((n^{2}-1)k-n \right)
\Ha{\alpha}_{k}(\Gamma_{p^{\dagger}})_{\Ha{z}^{\dagger}} +\frac{(n-1)^{2}}{n}(\Ha{\alpha}_{0}(\Gamma_{p^{\dagger}})_{\Ha{z}^{\dagger}}  -2\Ha{j}_{0,1}(\Gamma_{p})_{\Ha{z}^{\dagger}})+ \Ha{j}_{0,1}(\Gamma_{p^{\dagger}})_{\Ha{z}^{\dagger}}$.
\item $\chi_{\varphi_{\Delta^{\dagger}}}(\Gamma_{p^{\dagger}}):= l \chi_{\varphi_{\Delta}}(\Gamma_{p})$.
\end{itemize}
Using indices of the case $\Ch{\psi}_{\Delta^{\dagger}}=\Rm{id}_{W_{\Delta^{\dagger}}}$, we put 
\begin{align}
 K_{f}^2 (\Gamma_{p^{\dagger}}) :=& \sum_{z^{\dagger} \in \Gamma_{p^{\dagger}}}  
 K_{f}^2 (\Gamma_{p^{\dagger}})_{z^{\dagger}}
 -2\frac{(n-1)^2}{n}j_{0,\bullet}'(\Gamma_{p^{\dagger}})
 +\frac{n^{2}-1}{n}r\chi_{\varphi_{\Delta^{\dagger}}}(\Gamma_{p^{\dagger}})  + j_{0,1}'(\Gamma_{p^{\dagger}}).
 \end{align}

\begin{prop}
\label{localbc}
Let the notation and the assumption \Blue{be as} above.
Then it holds that
\begin{align*}
K_{f}^2 (\Gamma_{p^{\dagger}}) = l K_{f}^2 (\Gamma_{p}) -2(n-1)r(l-1).
\end{align*}
In particular, we have 
\begin{align*}
 l K_{f}^2 (\Gamma_{p}) \geq K_{f}^2 (\Gamma_{p^{\dagger}}).
\end{align*}
\end{prop}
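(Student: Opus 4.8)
The plan is to expand both $K_{f}^2(\Gamma_{p})$ and $K_{f}^2(\Gamma_{p^{\dagger}})$ into their unlocalized shapes and then compare the contributing indices one by one, using throughout that $\Pi\colon W_{\Delta^{\dagger}}\to W_{\Delta}$ is an unramified covering of degree $l$ (Proposition~\ref{lifting} and the preceding remark). First I would rewrite the defining expression of $K_{f}^2(\Gamma_{p^{\dagger}})$: substituting each $K_{f}^2(\Gamma_{p^{\dagger}})_{z^{\dagger}}$ and collecting the correction terms $-2\frac{(n-1)^{2}}{n}j_{0,\bullet}'(\Gamma_{p^{\dagger}})+j_{0,1}'(\Gamma_{p^{\dagger}})$ exactly as in the proof of Proposition~\ref{localization} with $\Ch{\psi}=\Rm{id}$, one obtains
\[
K_{f}^2(\Gamma_{p^{\dagger}})=\sum_{k\geq1}\bigl((n^{2}-1)k-n\bigr)\alpha_{k}(\Gamma_{p^{\dagger}})+\frac{(n-1)^{2}}{n}\bigl(\alpha_{0}(\Gamma_{p^{\dagger}})-2j_{0,1}(\Gamma_{p^{\dagger}})\bigr)+\frac{n^{2}-1}{n}r\chi_{\varphi_{\Delta^{\dagger}}}(\Gamma_{p^{\dagger}})+j_{0,1}(\Gamma_{p^{\dagger}}),
\]
which is the shape of Lemma~\ref{Enolem} for the reduced fiber $\Gamma_{p^{\dagger}}$, the $\nu$-term being absent because $\Gamma_{p^{\dagger}}$ has multiplicity one. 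This reduces the problem to comparing the indices of $\Ti{R}_{\dagger}$ over $\Gamma_{p^{\dagger}}$ with those of $\Ti{R}$ over $\Gamma_{p}$.

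Next I would establish the multiplicativity of the ``resolution'' indices. Because $\Pi$ is unramified of degree $l$, the canonical resolution of $R_{\dagger}$ is, locally, $l$ disjoint copies of the canonical resolution of $R$: each singular point of $R$ over $\Gamma_{p}$ (including infinitely near ones, and those at nodes of $\Gamma_{p}$ handled in Proposition~\ref{lifting}) has exactly $l$ preimages of the same multiplicity type, and each vertical $(-an)$-curve produced downstairs lifts to $l$ disjoint copies of the same self-intersection, since self-intersection is preserved when a component splits into $l$ isomorphic sheets over it. Hence $\alpha_{k}(\Gamma_{p^{\dagger}})=l\,\alpha_{k}(\Gamma_{p})$ for $k\geq1$ and $j_{0,a}(\Gamma_{p^{\dagger}})=l\,j_{0,a}(\Gamma_{p})$ for all $a$; the identities $\chi_{\varphi_{\Delta^{\dagger}}}(\Gamma_{p^{\dagger}})=l\,\chi_{\varphi_{\Delta}}(\Gamma_{p})$ and $j_{0,a}'(\Gamma_{p^{\dagger}})=l\,j_{0,a}'(\Gamma_{p})$ are built into the definitions and are consistent with $e(\Rm{I}_{lc})=l\,e(\Rm{I}_{c})$.

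The one index that does not simply multiply by $l$ is $\alpha_{0}^{+}$, and this is where the fiber multiplicity enters. Writing $S:=\sharp(\Rm{Supp}(\Ti{R}_{h})\cap\Rm{Supp}(\Ti{\Gamma}_{p}))$ and using $\Ti{R}_{h}\Ti{\Gamma}_{p}=r$ (as $\Ti{\Gamma}_{p}$ is homologous to a general fiber and $\Ti{R}_{v}\Ti{\Gamma}_{p}=0$), one has $\alpha_{0}^{+}(\Gamma_{p})=r-S$. Upstairs $(\Ti{R}_{\dagger})_{h}\Ti{\Gamma}_{p^{\dagger}}=r$ as well, since the general-fiber genus, hence $r$, is preserved by the base change, while the reduced fiber $\Gamma_{p^{\dagger}}$ meets $(\Ti{R}_{\dagger})_{h}$ in $lS$ support points, each support point downstairs having exactly $l$ preimages. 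Thus $\alpha_{0}^{+}(\Gamma_{p^{\dagger}})=r-lS=l\,\alpha_{0}^{+}(\Gamma_{p})-r(l-1)$, and since the $j_{0,a}$ with $a\geq2$ multiply by $l$, also $\alpha_{0}(\Gamma_{p^{\dagger}})=l\,\alpha_{0}(\Gamma_{p})-r(l-1)$.

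Finally I would substitute these relations into the unlocalized formula for $K_{f}^2(\Gamma_{p^{\dagger}})$. Every term then multiplies by $l$ except at two places: the extra $-\frac{(n-1)^{2}}{n}r(l-1)$ produced by $\alpha_{0}$, and the $\nu$-term $\frac{n^{2}-1}{n}r\nu(\Gamma_{p})=\frac{n^{2}-1}{n}r\frac{l-1}{l}$ of $K_{f}^2(\Gamma_{p})$, which has no counterpart upstairs, so that $l$ times it, namely $\frac{n^{2}-1}{n}r(l-1)$, must be subtracted. Adding the two corrections gives $-\frac{r(l-1)}{n}\bigl((n-1)^{2}+(n^{2}-1)\bigr)=-2(n-1)r(l-1)$, which is exactly the claimed identity $K_{f}^2(\Gamma_{p^{\dagger}})=l\,K_{f}^2(\Gamma_{p})-2(n-1)r(l-1)$; the ``in particular'' statement follows at once since $2(n-1)r(l-1)\geq0$ for $n\geq2$, $r>0$ and $l\geq1$. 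I expect the main obstacle to be the rigorous bookkeeping of the $\alpha_{0}^{+}$ support count, that is, showing cleanly that passing from the multiple fiber $\Gamma_{p}$ to the reduced fiber $\Gamma_{p^{\dagger}}$ replaces $r-S$ by $r-lS$, together with verifying that the canonical resolution genuinely commutes with the unramified base change so that all the remaining indices multiply by $l$.
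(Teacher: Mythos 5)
Your proposal is correct and is essentially the paper's own argument: the paper likewise reduces everything to the case $\Ch{\psi}_{\Delta^{\dagger}}=\Rm{id}$ of the localization formula, observes that unramifiedness of $\Pi$ makes all point-supported indices multiply by $l$, and isolates the same two non-multiplicative contributions, namely the $\alpha_{0}^{+}$ support count (the paper's identity $\sum_{z^{\dagger}}K_{f}^2(\Gamma_{p^{\dagger}})_{z^{\dagger}}=l\sum_{z}K_{f}^2(\Gamma_{p})_{z}-\frac{(n-1)^{2}}{n}(l-1)r$, stated there as ``simple calculations'') and the $\nu$-term $\frac{n^{2}-1}{n}r(l-1)$, which together give $-2(n-1)r(l-1)$. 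Your write-up merely reorganizes the bookkeeping through the global indices instead of the pointwise sums, and in fact makes explicit the support-count $r-lS$ versus $l(r-S)$ that the paper leaves implicit.
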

\begin{proof}
By simple calculations,
we have
\begin{align}
\label{K_bc}
\sum_{z^{\dagger}\in\Gamma_{p^{\dagger}}}K_{f}^2 (\Gamma_{p^{\dagger}})_{z^{\dagger}}
= l \sum_{z\in\Gamma_{p}}K_{f}^2 (\Gamma_{p})_{z} -\frac{(n-1)^2}{n}(l-1)r.
\end{align}
Applying Proposition~\ref{localization} for $\Ch{\psi}_{\Delta}=\Rm{id}_{W_{\Delta}}$, we have
\begin{align*}
 l \sum_{z\in \Gamma_{p}} K_{f}^2 (\Gamma_{p})_{z}  
 = &l K_{f}^2 (\Gamma_{p})
 +2\frac{(n-1)^{2}}{n}l j'_{0,\bullet}(\Gamma_{p})  - l j'_{0,1}(\Gamma_{p}) 
\\
 &-\frac{n^{2}-1}{n}rl\chi_{\varphi}(\Gamma_{p}) -\frac{n^{2}-1}{n}r(l-1).
 \end{align*}
Substituting (\ref{K_bc}) to the above equation,
we have 
\begin{align}
\label{K_loc}
 \sum_{z^{\dagger}\in\Gamma_{p^{\dagger}}}K_{f}^2 (\Gamma_{p^{\dagger}}) 
 = &l K_{f}^2 (\Gamma_{p})
 +2\frac{(n-1)^{2}}{n}l j'_{0,\bullet}(\Gamma_{p})  - l j'_{0,1}(\Gamma_{p}) 
\\
\nonumber
 -&\frac{n^{2}-1}{n}rl\chi_{\varphi}(\Gamma_{p}) -2r(l-1)(n-1)
  \end{align}
 On the other hand, we have 
 \begin{align*}
 K_{f}^2 (\Gamma_{p^{\dagger}}) = &\sum_{z^{\dagger} \in \Gamma_{p^{\dagger}}} K_{f}^2 (\Gamma_{p^{\dagger}})_{z^{\dagger}}
 -\frac{2(n-1)^2}{n}j'_{0,\bullet}(\Gamma_{p^{\dagger}})
 +\frac{n^{2}-1}{n}r\chi_{\varphi_{\Delta^{\dagger}}}(\Gamma_{p^{\dagger}})  + j'_{0,1}(\Gamma_{p^{\dagger}}).
 \end{align*}
Substituting (\ref{K_loc}) to the above equation,
we get  
\begin{align*}
K_{f}^2 (\Gamma_{p^{\dagger}}) = l K_{f}^2 (\Gamma_{p}) -2(n-1)r(l-1).
\end{align*}
\end{proof}

\begin{prop}[cf.~Proposition~\ref{localization}]
\label{localization'}
Let $\Ha{\varphi}_{\Delta^{\dagger}}:\Ha{W}_{\Delta^{\dagger}} \to \Delta^{\dagger}$ be any intermediate elliptic fiber germ between $\Ti{W}_{\Delta^{\dagger}}$ and $W_{\Delta^{\dagger}}$.
\[
  \xymatrix{
    \Ti{W}_{\Delta^{\dagger}} \ar[r]^{\Ha{\psi}_{\Delta^{\dagger}}} \ar[dr]_{\Ti{\varphi}_{\Delta^{\dagger}}} & \Ha{W}_{\Delta^{\dagger}} \ar[d]^{\Ha{\varphi}_{\Delta^{\dagger}}} \ar[r]^{\Ch{\psi}_{\Delta^{\dagger}}}& W_{\Delta^{\dagger}} \ar[ld]^{\varphi_{\Delta^{\dagger}}}\\
     & B &
}
\]
Then it holds that 
\begin{align*}
 K_{f}^2 (\Gamma_{p^{\dagger}}) = &\sum_{\Ha{z}^{\dagger}\in \Ha{\Gamma}_{p^{\dagger}}} K_{f}^2 (\Gamma_{p^{\dagger}})_{\Ha{z}^{\dagger}} 
 + \sum_{k\geq 1}\left((n^{2}-1)k-n \right)\Ch{\alpha}_{k} (\Gamma_{p^{\dagger}}) \\
 &-2\frac{(n-1)^{2}}{n}\left(\Ch{j}_{0,\bullet}(\Gamma_{p^{\dagger}})+j_{0,\bullet}'(\Gamma_{p^{\dagger}})\right)
 +\frac{n^{2}-1}{n}r\chi_{\varphi_{\Delta^{\dagger}}}(\Gamma_{p^{\dagger}})  + \Ch{j}_{0,1}(\Gamma_{p^{\dagger}})+j_{0,1}'(\Gamma_{p^{\dagger}}).
 \end{align*}
 \end{prop}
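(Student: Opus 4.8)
The plan is to mimic the proof of Proposition~\ref{localization} essentially verbatim, working now inside the fiber germ over $\Delta^{\dagger}$ and starting not from Lemma~\ref{Enolem} but from the \emph{definition} of $K_{f}^2(\Gamma_{p^{\dagger}})$, which is precisely the asserted right-hand side evaluated in the case of the full resolution $\Ch{\psi}_{\Delta^{\dagger}}=\Rm{id}_{W_{\Delta^{\dagger}}}$. Thus it suffices to show that the right-hand side of the claimed identity does not depend on the choice of intermediate germ $\Ha{W}_{\Delta^{\dagger}}$, and in particular coincides with its value at $\Ch{\psi}_{\Delta^{\dagger}}=\Rm{id}_{W_{\Delta^{\dagger}}}$.

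First I would record the germ analogue of the additivity relations (\ref{localalphaj}). Since the full canonical resolution factors as $\Ti{\psi}_{\Delta^{\dagger}}=\Ch{\psi}_{\Delta^{\dagger}}\circ\Ha{\psi}_{\Delta^{\dagger}}$, every singular point of the branch loci infinitely near a point $z^{\dagger}\in\Gamma_{p^{\dagger}}$ either already appears in $\Ch{\psi}_{\Delta^{\dagger}}$ or is infinitely near a unique point $\Ha{z}^{\dagger}\in(\Ch{\psi}_{\Delta^{\dagger}})^{-1}(z^{\dagger})$ of $\Ha{\Gamma}_{p^{\dagger}}$, and the same dichotomy governs the vertical $(-an)$-curves of $\Ti{R}_{\dagger}$. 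Summing over all $z^{\dagger}$ gives
\begin{align*}
\alpha_{k}(\Gamma_{p^{\dagger}})&=\sum_{\Ha{z}^{\dagger}\in\Ha{\Gamma}_{p^{\dagger}}}\Ha{\alpha}_{k}(\Gamma_{p^{\dagger}})_{\Ha{z}^{\dagger}}+\Ch{\alpha}_{k}(\Gamma_{p^{\dagger}}),\\
j_{0,a}(\Gamma_{p^{\dagger}})&=\sum_{\Ha{z}^{\dagger}\in\Ha{\Gamma}_{p^{\dagger}}}\Ha{j}_{0,a}(\Gamma_{p^{\dagger}})_{\Ha{z}^{\dagger}}+\Ch{j}_{0,a}(\Gamma_{p^{\dagger}})+j_{0,a}'(\Gamma_{p^{\dagger}}),
\end{align*}
where on the left the indices are those of the case $\Ch{\psi}_{\Delta^{\dagger}}=\Rm{id}_{W_{\Delta^{\dagger}}}$. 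I would also use the additivity $\alpha_{0}^{+}(\Gamma_{p^{\dagger}})=\sum_{\Ha{z}^{\dagger}}\alpha_{0}^{+}(\Gamma_{p^{\dagger}})_{\Ha{z}^{\dagger}}$, obtained by decomposing $(\Ti{R}_{\dagger})_{h}\cdot\Ti{\Gamma}_{p^{\dagger}}$ along the subdivisors $\Ti{\Gamma}_{p^{\dagger},\Ha{z}^{\dagger}}$ contracted to the various $\Ha{z}^{\dagger}$. Combined with the definition $\Ha{\alpha}_{0}(\Gamma_{p^{\dagger}})_{\Ha{z}^{\dagger}}=\alpha_{0}^{+}(\Gamma_{p^{\dagger}})_{\Ha{z}^{\dagger}}-2\sum_{a\geq2}\Ha{j}_{0,a}(\Gamma_{p^{\dagger}})_{\Ha{z}^{\dagger}}$ and the $j_{0,a}$-relation above, this yields $\alpha_{0}(\Gamma_{p^{\dagger}})=\sum_{\Ha{z}^{\dagger}}\Ha{\alpha}_{0}(\Gamma_{p^{\dagger}})_{\Ha{z}^{\dagger}}-2\sum_{a\geq2}\bigl(\Ch{j}_{0,a}(\Gamma_{p^{\dagger}})+j_{0,a}'(\Gamma_{p^{\dagger}})\bigr)$.

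With these relations in hand, I would substitute them into the base-case definition of $K_{f}^2(\Gamma_{p^{\dagger}})$ and collect terms exactly as in the displayed computation proving Proposition~\ref{localization}: the singularity-index contributions reassemble into $\sum_{\Ha{z}^{\dagger}}K_{f}^2(\Gamma_{p^{\dagger}})_{\Ha{z}^{\dagger}}$ together with the $\Ch{\alpha}_{k}$ and $\Ch{j}_{0,a}$ terms, while the $\chi_{\varphi_{\Delta^{\dagger}}}$ and $j_{0,a}'$ terms are common to both sides and simply pass through. The one structural difference from Proposition~\ref{localization} is the absence of the multiplicity term $\nu(\Gamma_{p})$: after the $l$-th root base change $\pi$ the fiber $\Gamma_{p^{\dagger}}$ has type $\Rm{I}_{lc}$ and is no longer multiple, so $\nu$ contributes nothing, which is already built into the definitions $\chi_{\varphi_{\Delta^{\dagger}}}(\Gamma_{p^{\dagger}})=l\chi_{\varphi_{\Delta}}(\Gamma_{p})$ and $j_{0,a}'(\Gamma_{p^{\dagger}})=lj_{0,a}'(\Gamma_{p})$.

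The computation is routine bookkeeping once the additivity relations are in place, so I expect the main obstacle to be purely combinatorial: verifying that the singular points and the vertical $(-an)$-curves of $\Ti{R}_{\dagger}$ partition cleanly and without double counting between the part resolved by $\Ch{\psi}_{\Delta^{\dagger}}$ and the parts localized at the points $\Ha{z}^{\dagger}$, and confirming the additivity of the horizontal-intersection index $\alpha_{0}^{+}$ along the factorization. Because $\Pi:W_{\Delta^{\dagger}}\to W_{\Delta}$ is unramified, each singular point of $R_{\dagger}$ is analytically equivalent to one of $R$, so the formal resolution and all of these indices behave precisely as in the global situation of Section~2, and the required bookkeeping transfers without essential change.
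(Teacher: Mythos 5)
Your proposal is correct and matches the paper's intent: the paper's own proof is just the remark ``we can check it by simple calculations,'' meaning exactly the computation of Proposition~\ref{localization} repeated in the germ setting, which is what you spell out — the germ analogues of the additivity relations (\ref{localalphaj}) together with the additivity of $\alpha_{0}^{+}$, substituted into the base-case definition of $K_{f}^2(\Gamma_{p^{\dagger}})$ (the case $\Ch{\psi}_{\Delta^{\dagger}}=\Rm{id}_{W_{\Delta^{\dagger}}}$). Your observation that the $\nu$-term of Proposition~\ref{localization} disappears because $\Gamma_{p^{\dagger}}$ is of type $\Rm{I}_{lc}$, hence non-multiple, is also consistent with how the paper sets up the indices after the base change.
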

\begin{proof}
We can check it by simple calculations. 
\end{proof}
We \Blue{can show the} following by a similar argument as in Lemma~\ref{lowerbdK1}.
\begin{lem}[cf.~Lemma~\ref{lowerbdK1}]
\label{lowerbdK1'}
It holds that 
 \begin{align*}
 K_{f}^2(\Gamma_{p^{\dagger}})_{\Ha{z}^{\dagger}} \geq \sum_{k \geq 1} \left( (n^2-1)k-n-\frac{(n-1)^{2}}{n}B_{n}\right)
 \Ha{\alpha}_{k}(\Gamma_{p^{\dagger}})_{\Ha{z}^{\dagger}}.
 \end{align*}
 In particular, $K_{f}^2(\Gamma_{p^{\dagger}})_{\Ha{z}^{\dagger}}$ is non-negative.
 \end{lem}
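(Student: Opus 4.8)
The plan is to imitate the proof of Lemma~\ref{lowerbdK1} line for line, the only genuinely new point being the verification that the auxiliary local estimates used there (Lemmas~\ref{lem5.2}, \ref{Eno4.7} and \ref{alphabound}) remain valid for the indices attached to the base-changed germ $\varphi_{\Delta^{\dagger}}:W_{\Delta^{\dagger}}\to\Delta^{\dagger}$. First I would note that, since $\Pi:W_{\Delta^{\dagger}}\to W_{\Delta}$ is an unramified covering, each singular point of $R_{\dagger}$ is analytically equivalent to the corresponding singular point of $R|_{W_{\Delta}}$, and by Remark~\ref{formalpcc} the canonical resolution of $R_{\dagger}$ has all multiplicities in $n\Z$ or $n\Z+1$, while every vertical component of $\Ti{R}_{\dagger}$ contracted by $\Ti{\psi}_{\Delta^{\dagger}}$ has self-intersection divisible by $n$. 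These are exactly the hypotheses on which the combinatorial arguments of Section~2 rest, so those arguments carry over unchanged to the dagger-decorated indices.

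Concretely, I would first record the dagger analogue of Lemma~\ref{alphabound}: the quantity $\Ha{\alpha}_{0}(\Gamma_{p^{\dagger}})_{\Ha{z}^{\dagger}}+B_{n}\sum_{k\geq1}\Ha{\alpha}_{k}(\Gamma_{p^{\dagger}})_{\Ha{z}^{\dagger}}$, diminished by $nB_{n}\Ha{j}_{0,1}(\Gamma_{p^{\dagger}})_{\Ha{z}^{\dagger}}$ when $n\geq3$ and by $0$ when $n=2$, is non-negative. Its proof is verbatim that of Lemma~\ref{alphabound}, resting on the tree-counting of Lemma~\ref{lem5.2} and, for $n=2$, on Lemma~\ref{Eno4.7}; both depend only on the local intersection pattern of the exceptional configuration over $\Ha{z}^{\dagger}$, which coincides with the non-dagger one by the analytic equivalence just invoked.

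With this in hand I would run the same three-case split. Recalling the definition
\begin{align*}
K_{f}^2(\Gamma_{p^{\dagger}})_{\Ha{z}^{\dagger}}
&=\sum_{k\geq1}\bigl((n^2-1)k-n\bigr)\Ha{\alpha}_{k}(\Gamma_{p^{\dagger}})_{\Ha{z}^{\dagger}}\\
&\quad+\frac{(n-1)^2}{n}\bigl(\Ha{\alpha}_{0}(\Gamma_{p^{\dagger}})_{\Ha{z}^{\dagger}}-2\Ha{j}_{0,1}(\Gamma_{p^{\dagger}})_{\Ha{z}^{\dagger}}\bigr)+\Ha{j}_{0,1}(\Gamma_{p^{\dagger}})_{\Ha{z}^{\dagger}},
\end{align*}
for $n\geq4$ (where $nB_{n}=2$) the dagger analogue of Lemma~\ref{alphabound} gives $\Ha{\alpha}_{0}-2\Ha{j}_{0,1}\geq -B_{n}\sum_{k}\Ha{\alpha}_{k}$ directly, and discarding the non-negative term $\Ha{j}_{0,1}(\Gamma_{p^{\dagger}})_{\Ha{z}^{\dagger}}$ yields the claim. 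For $n=3$ I would deduce $\Ha{\alpha}_{0}-2\Ha{j}_{0,1}\geq -B_{3}\sum_{k}\Ha{\alpha}_{k}-\tfrac12\Ha{j}_{0,1}$, substitute, and observe that the surviving positive multiple of $\Ha{j}_{0,1}(\Gamma_{p^{\dagger}})_{\Ha{z}^{\dagger}}$ may be dropped. For $n=2$ the two $\Ha{j}_{0,1}$ contributions cancel, leaving $K_{f}^2(\Gamma_{p^{\dagger}})_{\Ha{z}^{\dagger}}=\sum_{k}(3k-2)\Ha{\alpha}_{k}+\tfrac12\Ha{\alpha}_{0}$, and the same analogue bounds $\tfrac12\Ha{\alpha}_{0}$ below by $-\tfrac12 B_{2}\sum_{k}\Ha{\alpha}_{k}$. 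In each case the coefficient of $\Ha{\alpha}_{k}(\Gamma_{p^{\dagger}})_{\Ha{z}^{\dagger}}$ is exactly $(n^2-1)k-n-\tfrac{(n-1)^2}{n}B_{n}$, which is positive for every $k\geq1$, so the stated inequality and the final non-negativity assertion both follow.

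The main obstacle is purely bookkeeping: ensuring that each dagger-decorated index ($\Ha{\alpha}_{k}$, $\Ha{\alpha}_{0}$, $\Ha{j}_{0,1}$, $\alpha_{0}^{+}$) is set up over the base-changed germ in exact parallel with Section~2, so that the graph-theoretic inequalities transfer with identical constants. Once the analytic equivalence of the singularities across $\Pi$ is used, there is no new geometric input, and the argument closes as above.
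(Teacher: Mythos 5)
Your proposal is correct and is exactly the route the paper intends: the paper gives no separate proof of Lemma~\ref{lowerbdK1'}, stating only that it follows ``by a similar argument as in Lemma~\ref{lowerbdK1},'' and your write-up carries out precisely that argument, including the verification (via the unramified covering $\Pi$ and Remark~\ref{formalpcc}) that the local estimates of Lemmas~\ref{lem5.2}, \ref{Eno4.7} and \ref{alphabound} transfer to the dagger-decorated indices. The three-case split over $n$ and the resulting coefficient $(n^2-1)k-n-\frac{(n-1)^2}{n}B_{n}$ match the paper's computation in Lemma~\ref{lowerbdK1} exactly.
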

 
\begin{lem}[cf.~Lemma~\ref{6/10,2}]
\label{6/15,1}
 It holds that 
 \begin{align*}
 K_{f}^2 (\Gamma_{p^{\dagger}}) \geq &\sum_{\Ha{z}^{\dagger}\in \Ha{\Gamma}_{p^{\dagger}}} 
  \frac{(n-1)^{2}}{n}\alpha^{+}_{0}(\Gamma_{p^{\dagger}})_{\Ha{z}^{\dagger}}
 +\sum_{\Ha{z}^{\dagger}\in \Ha{\Gamma}_{p^{\dagger}}}
 \sum_{k\geq 1}\left((n^{2}-1)k-n-2\frac{(n-1)^2}{n} \right)\Ha{\alpha}_{k} (\Gamma_{p^{\dagger}}) _{\Ha{z}^{\dagger}}\\
  &+ \sum_{k\geq 1}\left((n^{2}-1)k-n \right)\Ch{\alpha}_{k} (\Gamma_{p^{\dagger}}) 
 -2\frac{(n-1)^{2}}{n}\left(\Ch{j}_{0,\bullet}(\Gamma_{p^{\dagger}}) +j'_{0,\bullet}(\Gamma_{p^{\dagger}}) \right)\\
 &+\frac{n^{2}-1}{n}r\chi_{\varphi_{\Delta^{\dagger}}}(\Gamma_{p^{\dagger}}) + \Ch{j}_{0,1}(\Gamma_{p^{\dagger}})+j_{0,1}'(\Gamma_{p^{\dagger}}).
 \end{align*}
\end{lem}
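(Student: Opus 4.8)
The plan is to mimic the proof of Lemma~\ref{6/10,2} verbatim, replacing every object by its dagger counterpart and using the localized expression of Proposition~\ref{localization'} in place of Proposition~\ref{localization}. First I would start from the definition of $K_{f}^2 (\Gamma_{p^{\dagger}})_{\Ha{z}^{\dagger}}$ and expand the term $\Ha{\alpha}_{0}(\Gamma_{p^{\dagger}})_{\Ha{z}^{\dagger}}=\alpha_{0}^{+}(\Gamma_{p^{\dagger}})_{\Ha{z}^{\dagger}}-2\sum_{a\geq 2}\Ha{j}_{0,a}(\Gamma_{p^{\dagger}})_{\Ha{z}^{\dagger}}$, so that the $\Ha{j}$-terms collect into $-2\frac{(n-1)^2}{n}\Ha{j}_{0,\bullet}(\Gamma_{p^{\dagger}})_{\Ha{z}^{\dagger}}+\Ha{j}_{0,1}(\Gamma_{p^{\dagger}})_{\Ha{z}^{\dagger}}$.

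The key input is the pointwise inequality
\[
\Ha{j}_{0,\bullet}(\Gamma_{p^{\dagger}})_{\Ha{z}^{\dagger}}\leq \sum_{k\geq 1}\Ha{\alpha}_{k}(\Gamma_{p^{\dagger}})_{\Ha{z}^{\dagger}}.
\]
To justify this I would invoke Remark~\ref{formalpcc}: every vertical irreducible component of $\Ti{R}_{\dagger}$ contracted to $\Ha{z}^{\dagger}$ is the proper transform of an exceptional curve created by blowing up a singular point of the branch loci of type $n\Z+1$, and each such singular point is counted exactly once by $\sum_{k\geq 1}\Ha{\alpha}_{k}(\Gamma_{p^{\dagger}})_{\Ha{z}^{\dagger}}$; hence the number of such curves cannot exceed the number of those singular points. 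Dropping the nonnegative term $\Ha{j}_{0,1}(\Gamma_{p^{\dagger}})_{\Ha{z}^{\dagger}}$ and applying this inequality to the remaining $-2\frac{(n-1)^2}{n}\Ha{j}_{0,\bullet}(\Gamma_{p^{\dagger}})_{\Ha{z}^{\dagger}}$ yields
\[
K_{f}^2 (\Gamma_{p^{\dagger}})_{\Ha{z}^{\dagger}} \geq \frac{(n-1)^{2}}{n}\alpha^{+}_{0}(\Gamma_{p^{\dagger}})_{\Ha{z}^{\dagger}}+\sum_{k\geq 1}\left((n^{2}-1)k-n-2\tfrac{(n-1)^2}{n}\right)\Ha{\alpha}_{k}(\Gamma_{p^{\dagger}})_{\Ha{z}^{\dagger}},
\]
exactly as in the nondagger case.

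Finally I would sum this over all $\Ha{z}^{\dagger}\in \Ha{\Gamma}_{p^{\dagger}}$ and feed the result into the identity of Proposition~\ref{localization'}, whose remaining terms (those carrying $\Ch{\alpha}_{k}$, $\Ch{j}_{0,\bullet}$, $j'_{0,\bullet}$, the $\chi_{\varphi_{\Delta^{\dagger}}}$ contribution, and $\Ch{j}_{0,1}+j'_{0,1}$) are transported unchanged; this produces precisely the claimed lower bound. I do not expect any genuine obstacle here: the only point needing care is that all the dagger indices ($\alpha_0^{+}$, $\Ha{\alpha}_{k}$, $\Ha{j}_{0,a}$, and so on) were defined on the base change $W_{\Delta^{\dagger}}$ in full analogy with Section~2, so that the combinatorial counting behind the key inequality transports verbatim. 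Because $\Pi$ is unramified, the formal canonical resolution of $R_{\dagger}$ matches the resolution of $R$ point by point, so Remark~\ref{formalpcc} applies and no new geometric phenomena arise.
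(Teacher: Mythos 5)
Your proposal is correct and takes essentially the same route as the paper: the paper's own proof of this lemma is just ``We can check it by simple calculations,'' pointing back (via the ``cf.'' tag) to the argument of Lemma~\ref{6/10,2}, which is exactly what you reproduce in the dagger setting. Your expansion of $\Ha{\alpha}_{0}(\Gamma_{p^{\dagger}})_{\Ha{z}^{\dagger}}$, the key inequality $\Ha{j}_{0,\bullet}(\Gamma_{p^{\dagger}})_{\Ha{z}^{\dagger}}\leq \sum_{k\geq 1}\Ha{\alpha}_{k}(\Gamma_{p^{\dagger}})_{\Ha{z}^{\dagger}}$ justified through Remark~\ref{formalpcc}, and the final substitution into Proposition~\ref{localization'} are precisely the intended ``simple calculations.''
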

\begin{proof}
We can check it by simple calculations. 
\end{proof}

\begin{lem}
\label{lifting2}
Let $z^{\dagger} \in R_{\dagger}$ be a singular point of $R_{\dagger}$ and let $\Pi(z^{\dagger})=z$. 
Then there \Red{exist} $l\cdot\sharp(T(\Ti{K}^{\dagger})_{p}\cdot z^{\dagger})$ singular points of $R_{\dagger}$ which are \Blue{analytically equivalent} to $z \in R$.
\end{lem}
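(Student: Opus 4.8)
The plan is to exhibit the required singular points explicitly as the full $\Pi$-preimage of a single orbit downstairs, and to count them using that $\Pi\colon W_{\Delta^{\dagger}}\to W_{\Delta}$ is an unramified covering of degree $l$. Since $\Pi$ is unramified it is a local analytic isomorphism, so for every $w^{\dagger}\in W_{\Delta^{\dagger}}$ the germ of $R_{\dagger}=\Pi^{\ast}R$ at $w^{\dagger}$ is carried isomorphically onto the germ of $R$ at $\Pi(w^{\dagger})$; in particular $w^{\dagger}$ is a singular point of $R_{\dagger}$ analytically equivalent to $\Pi(w^{\dagger})$, and conversely every singular point of $R_{\dagger}$ arises this way. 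Consequently the singular points of $R_{\dagger}$ lying over a fixed singular point $w$ of $R$ are precisely the $l$ points of $\Pi^{-1}(w)$, each analytically equivalent to $w$.

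Next I would set $z=\Pi(z^{\dagger})$ and compare the two orbits $A:=T(\Ti{K}^{\dagger})_{p^{\dagger}}\cdot z^{\dagger}$ on $W_{\Delta^{\dagger}}$ and $B:=T(\Ti{K})_{p}\cdot z$ on $W_{\Delta}$. Because each $\kappa\in\Ti{K}$ preserves $R$ and satisfies $\Pi\circ\kappa^{\dagger}=\kappa\circ\Pi$, the lift $\kappa^{\dagger}$ preserves $R_{\dagger}$, so $A$ consists of singular points of $R_{\dagger}$ analytically equivalent to $z^{\dagger}$ (hence to $z$), while $B$ consists of singular points of $R$ analytically equivalent to $z$, and $\Pi$ maps $A$ onto $B$ via $\Pi(\kappa^{\dagger}(z^{\dagger}))=\kappa(z)$. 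The crucial step is to upgrade this surjection to a bijection $\Pi|_{A}\colon A\to B$, i.e. to prove $\sharp A=\sharp B$. For injectivity I would argue that if $\kappa_{1}(z)=\kappa_{2}(z)$ then $\kappa:=\kappa_{2}^{-1}\kappa_{1}\in\Rm{Stab}_{T(\Ti{K})_{p}}(z)\subset\Rm{Stab}_{\Ti{K}}(z)$, and Proposition~\ref{lifting} yields $\kappa^{\dagger}\in\Rm{Stab}_{\Ti{K}^{\dagger}}(z^{\dagger})$; since $\kappa\mapsto\kappa^{\dagger}$ is a homomorphism, $\kappa_{1}^{\dagger}(z^{\dagger})=\kappa_{2}^{\dagger}\kappa^{\dagger}(z^{\dagger})=\kappa_{2}^{\dagger}(z^{\dagger})$. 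Hence $\sharp(T(\Ti{K}^{\dagger})_{p^{\dagger}}\cdot z^{\dagger})=\sharp A=\sharp B$.

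Finally I would conclude by counting $\Pi^{-1}(B)$: by the unramifiedness and the degree $l$ of $\Pi$, and since $B$ consists of $\sharp B$ distinct points, $\Pi^{-1}(B)$ consists of exactly $l\cdot\sharp B$ distinct points, each of which, by the first paragraph, is a singular point of $R_{\dagger}$ analytically equivalent to $z$. Combining this with $\sharp B=\sharp(T(\Ti{K}^{\dagger})_{p^{\dagger}}\cdot z^{\dagger})$ produces the asserted $l\cdot\sharp(T(\Ti{K}^{\dagger})_{p^{\dagger}}\cdot z^{\dagger})$ singular points. I expect the only genuine obstacle to be the bijectivity of $\Pi$ on the orbits, which amounts to matching $\Rm{Stab}_{T(\Ti{K})_{p}}(z)$ with $\Rm{Stab}_{T(\Ti{K}^{\dagger})_{p^{\dagger}}}(z^{\dagger})$; this is supplied precisely by Proposition~\ref{lifting}, and the remaining points (that the lifts preserve $R_{\dagger}$ and that preimages of distinct points are distinct) are routine verifications.
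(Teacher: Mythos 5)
Your proposal is correct and follows essentially the same route as the paper: it counts the $l\cdot\sharp(T(\Ti{K})_{p}\cdot z)$ points of $\Pi^{-1}(T(\Ti{K})_{p}\cdot z)$, observes that each is a singular point of $R_{\dagger}$ analytically equivalent to $z$ because the unramified covering $\Pi$ is a local biholomorphism, and then converts the count using $\sharp(T(\Ti{K})_{p}\cdot z)=\sharp(T(\Ti{K}^{\dagger})_{p^{\dagger}}\cdot z^{\dagger})$. The only difference is cosmetic: you re-derive this orbit-cardinality equality by an explicit stabilizer-matching bijection, whereas the paper simply invokes the ``in particular'' clause of Proposition~\ref{lifting}, which states it directly.
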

\begin{proof}
We note that $\sharp\Pi^{-1}(T(\Ti{K})_{p}\cdot z)=l\cdot \sharp(T(\Ti{K})_{p}\cdot z)$ 
since $\Pi:W^{\dagger} \to W$ is an unramified covering of degree $l$. 
There exists an analytic neighborhood $U$ of each $z^{\dagger} \in \Pi^{-1}(T(\Ti{K})_{p}\cdot z) $ 
such that $\Pi|_{U}:U \to \Pi(U)$ is a biholomorphic map.
Hence the singular point $z^{\dagger} \in R_{\dagger}$ is \Blue{analytically equivalent} to $z \in R$.
Thus, there exist $l\cdot\sharp(T(\Ti{K})_{p}\cdot z)$ singular points of $R_{\dagger}$ which are 
\Blue{analytically equivalent} to $z \in R$.
On the other hand, we have $\sharp(T(\Ti{K})_{p}\cdot z)=\sharp(T(\Ti{K}^{\dagger})_{p}\cdot z^{\dagger})$
 from Proposition~\ref{lifting}.
\end{proof}

Let $(R_{\dagger})_{h}$ be the local analytic curve consisting of $\varphi_{\Delta^{\dagger}}$-horizontal local analytic branches of $R_{\dagger}$. 
Take a singular point $z^{\dagger} \in R_{\dagger}$ on $\Gamma_{p^{\dagger}}$. 
We introduce the following notations for $z^{\dagger}$.
\begin{itemize}
\item[(1)] We denote by $\Rm{L}(R_{\dagger},z^{\dagger})$ and $\Rm{L}((R_{\dagger})_{h},z^{\dagger})$ \Blue{the sets} of local analytic branches of $R_{\dagger}$ and $(R_{\dagger})_{h}$ at $z^{\dagger}$, respectively.
\item[(2)] Let $\Rm{L}_{\Rm{tr}}(R_{\dagger},z^{\dagger})$ and $\Rm{L}_{\Rm{tr}}((R_{\dagger})_{h},z^{\dagger})$ be \Blue{the subsets} of $\Rm{L}(R_{\dagger},z^{\dagger})$ and $\Rm{L}((R_{\dagger})_{h},z^{\dagger})$ which consist of local analytic branches that meet $\Gamma_{p^{\dagger}}$ transversally at $z^{\dagger}$, respectively.
\item[(3)] Let $\Rm{L}_{\Rm{ta}}(R_{\dagger},z^{\dagger})$ and $\Rm{L}_{\Rm{ta}}((R_{\dagger})_{h},z^{\dagger})$ be \Blue{the subsets} of $\Rm{L}(R_{\dagger},z^{\dagger})$ and $\Rm{L}((R_{\dagger})_{h},z^{\dagger})$ which consist of local analytic branches that are tangent  to 
$\Gamma_{p^{\dagger}}$ at $z^{\dagger}$, respectively.
\item[(4)] For a local analytic branch $\Gamma_{i}$ of $\Gamma_{p^{\dagger}}$ around $z^{\dagger}$,
let $\Rm{L}_{\Rm{ta}}(R_{\dagger},\Gamma_{i},z^{\dagger})$ and $\Rm{L}_{\Rm{ta}}((R_{\dagger})_{h},\Gamma_{i},z^{\dagger})$ be \Blue{the subsets} of $\Rm{L}_{\Rm{ta}}(R_{\dagger},z^{\dagger})$ and $\Rm{L}_{\Rm{ta}}((R_{\dagger})_{h},z^{\dagger})$ which consist of local analytic branches that are tangent to $\Gamma_{i}$ at $z^{\dagger}$, respectively.
\end{itemize}
\Blue{
If the local analytic branch $\Gamma_{i}$ of $\Gamma_{p^{\dagger}}$ is contained in $R_{\dagger}$,
we regard the local analytic branch $\Gamma_{i}$ as belonging to $\Rm{L}_{\Rm{ta}}(R_{\dagger},\Gamma_{i},z^{\dagger})$.
Hence we have $\Gamma_{i} \in \Rm{L}_{\Rm{ta}}(R_{\dagger},\Gamma_{i},z^{\dagger}) \subset 
\Rm{L}_{\Rm{ta}}(R_{\dagger},z^{\dagger})$.}

To show Proposition~\ref{estimateK1},
we consider \Blue{the following four conditions for} $\Gamma_{p^{\dagger}}$ where $R_{\dagger}$ has a singular point.
\begin{itemize}
\item[(C1)] There exists a singular point $z^{\dagger} \in R_{\dagger}$ on $\Gamma_{p^{\dagger}}$ such that $\Rm{Stab}_{T(\Ti{K}^{\dagger})_{p^{\dagger}}}(z^{\dagger})=\{\Rm{id}\}$.

\item[(C2)] The condition $(C1)$ does not hold and there exist a singular point $z^{\dagger}$ of $(R_{\dagger})$ on $\Gamma_{p^{\dagger}}$ and a local analytic branch $\Gamma_{i}$ of $\Gamma_{p^{\dagger}}$ around
$z^{\dagger}$ such that $\Rm{L}_{\Rm{ta}}((R_{\dagger})_{h},\Gamma_{i},z^{\dagger}) \neq \emptyset$.

\item[(C3)] Conditions $(C1)$ and $(C2)$ do not hold and there \Red{exists} a singular point $z^{\dagger} \in R_{\dagger}$ on $\Gamma_{p}$ such that $ \in \Rm{L}_{\Rm{tr}}((R_{\dagger})_{h},z^{\dagger}) \neq \emptyset$.

\item[(C4)] Conditions $(C1)$, $(C2)$ and $(C3)$ do not hold.
\end{itemize}

\begin{prop}
Assume $r \geq 4n$ and $(C1)$ holds.
Then it holds that
\begin{align*}
2 \delta n K_{f}^{2}(\Gamma_{p}) \geq &2n \left(n^{2}-1-n \right) \sharp \Ti{K}.
\end{align*}
\end{prop}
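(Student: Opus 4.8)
The plan is to manufacture many singular points of $R_{\dagger}$ on $\Gamma_{p^{\dagger}}$ out of condition (C1), to bound the local contribution of each to $K_{f}^2(\Gamma_{p^{\dagger}})$ from below by the ``base'' value $n^2-1-n$, and then to push the estimate down to $\Gamma_{p}$ while converting the point count into $\sharp\Ti{K}$. First I would pass to the base change $\pi:\Delta^{\dagger}\to\Delta$ and the degree-$l$ \'etale covering $\Pi:W_{\Delta^{\dagger}}\to W_{\Delta}$ of diagram~(\ref{diagram}), so that $\Gamma_{p^{\dagger}}$ is of type $\Rm{I}_{lc}$ and $\Ti{K}\cong\Ti{K}^{\dagger}$. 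Let $z^{\dagger}$ be the singular point supplied by (C1), with $\Rm{Stab}_{T(\Ti{K}^{\dagger})_{p^{\dagger}}}(z^{\dagger})=\{\Rm{id}\}$. By Corollary~\ref{stabfree} the only points of $\Gamma_{p^{\dagger}}$ with nontrivial $T(\Ti{K}^{\dagger})_{p^{\dagger}}$-stabilizer are the nodes, so $z^{\dagger}$ lies over a smooth point of the fiber, and the same holds for its whole orbit. Since the stabilizer is trivial, $\sharp(T(\Ti{K}^{\dagger})_{p^{\dagger}}\cdot z^{\dagger})=\sharp T(\Ti{K}^{\dagger})_{p^{\dagger}}$, and Lemma~\ref{lifting2} then produces exactly $N:=l\,\sharp T(\Ti{K}^{\dagger})_{p^{\dagger}}$ distinct singular points of $R_{\dagger}$ on $\Gamma_{p^{\dagger}}$, all analytically equivalent to $z=\Pi(z^{\dagger})$ and all lying over smooth fiber points.

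The main step, and the hard part, is the local lower bound $K_{f}^2(\Gamma_{p^{\dagger}})_{z^{\dagger}}\geq n^2-1-n$ at each such point. Because $z^{\dagger}$ sits over a smooth point of $\Gamma_{p^{\dagger}}$, the fiber germ is a single smooth branch and the canonical resolution there is governed by the same local data as in the absolute ($h=0$) setting of \cite{Aka}. If no vertical component appears over $z^{\dagger}$ (no infinitely near singular point of type $n\Z+1$), then $j_{0,1}^{''}(\Gamma_{p^{\dagger}})_{z^{\dagger}}=0$ and $\alpha_{0}(\Gamma_{p^{\dagger}})_{z^{\dagger}}=\alpha_{0}^{+}(\Gamma_{p^{\dagger}})_{z^{\dagger}}\geq0$, so the defining formula for $K_{f}^2(\Gamma_{p^{\dagger}})_{z^{\dagger}}$ gives at once $\sum_{k\geq1}\bigl((n^2-1)k-n\bigr)\alpha_{k}(\Gamma_{p^{\dagger}})_{z^{\dagger}}\geq (n^2-1)-n$, using $\sum_{k}\alpha_{k}\geq1$. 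When vertical components do occur the terms $-2\frac{(n-1)^2}{n}j_{0,1}^{''}+j_{0,1}^{''}$ are negative, but each vertical $(-an)$-curve forces a long chain of infinitely near singular points; quantitatively this is exactly the content of Lemma~\ref{lem5.2}(3) (equivalently Lemma~\ref{lowerbdK1'}), whose growth of $\sum_{k}\alpha_{k}(\Gamma_{p^{\dagger}})_{z^{\dagger}}$ more than compensates. Carrying out this bookkeeping -- the careful count of blow-ups of type $n\Z$ versus $n\Z+1$ and the induced multiplicities -- is the delicate point, and it is the analogue for smooth fiber points of Lemma~2.2 of \cite{Aka}.

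Granting the per-point bound, I would assemble the estimate as follows. Taking $\Ch{\psi}_{\Delta^{\dagger}}=\Rm{id}_{W_{\Delta^{\dagger}}}$ in Proposition~\ref{localization'} writes $K_{f}^2(\Gamma_{p^{\dagger}})$ as $\sum_{z^{\dagger}}K_{f}^2(\Gamma_{p^{\dagger}})_{z^{\dagger}}$ plus $-2\frac{(n-1)^2}{n}j_{0,\bullet}'(\Gamma_{p^{\dagger}})+\frac{n^2-1}{n}r\chi_{\varphi_{\Delta^{\dagger}}}(\Gamma_{p^{\dagger}})+j_{0,1}'(\Gamma_{p^{\dagger}})$. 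Since the components of a type $\Rm{I}_{lc}$ fiber are $(-2)$-curves, $j_{0,\bullet}'(\Gamma_{p^{\dagger}})=0$ for $n\geq3$, while for $n=2$ one has $j_{0,\bullet}'=j_{0,1}'$ so that $-2\frac{(n-1)^2}{n}j_{0,\bullet}'+j_{0,1}'=0$; in either case the extra terms are non-negative. As each $K_{f}^2(\Gamma_{p^{\dagger}})_{z^{\dagger}}\geq0$ by Lemma~\ref{lowerbdK1'}, restricting the sum to the $N$ points above gives $K_{f}^2(\Gamma_{p^{\dagger}})\geq N(n^2-1-n)=l\,\sharp T(\Ti{K}^{\dagger})_{p^{\dagger}}(n^2-1-n)$. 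Proposition~\ref{localbc} yields $l\,K_{f}^2(\Gamma_{p})\geq K_{f}^2(\Gamma_{p^{\dagger}})$, whence $K_{f}^2(\Gamma_{p})\geq \sharp T(\Ti{K})_{p}(n^2-1-n)$. Finally, as in Proposition~\ref{estimateK2}, Corollary~\ref{groupstr} embeds $\Ti{K}/T(\Ti{K})_{p}$ into $\Rm{Aut}(\Gamma_{q},O_{q})$ for a smooth fiber realizing the minimum, so $\sharp\Ti{K}\leq\delta\,\sharp T(\Ti{K})_{p}$ by the definition of $\delta$; multiplying the previous inequality by $2\delta n$ gives $2\delta n\,K_{f}^2(\Gamma_{p})\geq 2n(n^2-1-n)\sharp\Ti{K}$, as desired. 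Throughout, the standing hypothesis $r\geq4n$ is what legitimizes the descent of $\Ti{K}$ to $W$ and the use of Corollaries~\ref{groupstr} and \ref{K/T(K)}.
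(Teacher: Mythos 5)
There is a genuine gap at the central step of your argument. Your skeleton — producing $l\,\sharp T(\Ti{K}^{\dagger})_{p^{\dagger}}$ analytically equivalent singular points via Lemma~\ref{lifting2}, assembling with Proposition~\ref{localization'} for $\Ch{\psi}_{\Delta^{\dagger}}=\Rm{id}$, descending with Proposition~\ref{localbc}, and converting $\sharp T(\Ti{K}^{\dagger})_{p^{\dagger}}$ into $\sharp \Ti{K}$ via Corollary~\ref{groupstr} — is exactly the paper's. But the per-point bound $K_{f}^2(\Gamma_{p^{\dagger}})_{z^{\dagger}}\geq n^{2}-1-n$ is precisely the hard content, and you do not prove it: in the branch where vertical components occur you appeal to an unwritten ``analogue of Lemma~2.2 of \cite{Aka}'' and assert that Lemma~\ref{lowerbdK1'} ``more than compensates''. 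It does not do so automatically: for a single singular point over $z^{\dagger}$, Lemma~\ref{lowerbdK1'} yields only the strictly weaker constant $(n^{2}-1)-n-\frac{(n-1)^{2}}{n}B_{n}$. The paper never proves (or needs) a per-point bound; instead it dichotomizes on whether some singular point of the branch loci is infinitely near $z^{\dagger}$. If yes, each orbit point carries at least \emph{two} such points, so Lemma~\ref{lowerbdK1'} gives the total lower bound $2\bigl((n^{2}-1)-n-\frac{(n-1)^{2}}{n}B_{n}\bigr)\sharp T(\Ti{K}^{\dagger})_{p^{\dagger}}\,l$, which is compared with $(n^{2}-1-n)\sharp T(\Ti{K}^{\dagger})_{p^{\dagger}}\,l$ by a direct numerical check — no bookkeeping of vertical chains is required. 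If no, the multiplicity at $z^{\dagger}$ must be of type $n\Z$ (a type $n\Z+1$ point with nothing above it would leave a vertical $(-1)$-curve inside $\Ti{R}_{\dagger}$, contradicting Remark~\ref{formalpcc}), so taking $\Ch{\psi}_{\Delta^{\dagger}}$ to be the blow-ups at the orbit points gives $\Ch{j}_{0,\bullet}(\Gamma_{p^{\dagger}})=0$ and the full coefficient $(n^{2}-1)k-n\geq n^{2}-1-n$ per point through $\Ch{\alpha}_{k}$. Your sketch contains neither the count of infinitely near points nor this type-$n\Z$ observation, so the compensation claim is unsupported.

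Two subsidiary errors. First, from (C1) and Corollary~\ref{stabfree} you conclude that $z^{\dagger}$ lies over a smooth point of $\Gamma_{p^{\dagger}}$; this is a converse error — Corollary~\ref{stabfree} says a non-trivial stabilizer forces a node, not that a trivial stabilizer forces a smooth point, and under (C1) the singular point may well sit at a node (it is only when (C1) \emph{fails} that the Remark following the paper's proof places all singular points of $R_{\dagger}$ at nodes). Second, your disposal of the correction terms is wrong: $j_{0,\bullet}'(\Gamma_{p^{\dagger}})$ need not vanish for $n\geq3$, nor equal $j_{0,1}'(\Gamma_{p^{\dagger}})$ for $n=2$, because a $(-2)$-curve component of $\Gamma_{p^{\dagger}}$ contained in the branch locus can be blown up until its proper transform is a $(-an)$-curve in $\Ti{R}_{\dagger}$ with $a\geq2$. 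The paper instead absorbs $-2\frac{(n-1)^{2}}{n}j_{0,\bullet}'(\Gamma_{p^{\dagger}})$ into $\frac{n^{2}-1}{n}r\chi_{\varphi_{\Delta^{\dagger}}}(\Gamma_{p^{\dagger}})$, using $j_{0,\bullet}'(\Gamma_{p^{\dagger}})\leq\sharp\Gamma_{p^{\dagger}}\leq 12\,\chi_{\varphi_{\Delta^{\dagger}}}(\Gamma_{p^{\dagger}})$ together with $r\geq 4n\geq 24\frac{n-1}{n+1}$; this absorption is where the hypothesis $r\geq 4n$ enters the estimate itself, beyond legitimizing the descent of $\Ti{K}$ to $W$.
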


\begin{proof}
Let $z^{\dagger}$ be \Red{a singular point} of $R_{\dagger}$ on $\Gamma_{p^{\dagger}}$.
By Lemma~\ref{lifting2}, there exist $\sharp T(\Ti{K}^{\dagger})_{p^{\dagger}}l$ singular points of \Red{the branch loci} on $\Gamma_{p^{\dagger}}$ whose multiplicity is at least $n$.

If there \Blue{exists} a singular point of \Red{the branch loci} over $z^{\dagger} \in R_{\dagger}$,
there exist $2\sharp(T(\Ti{K}^{\dagger})_{p^{\dagger}})l$ singular points of $R_{\dagger}$
on $\Gamma_{p}$ whose multiplicity is at least $n$.
\Blue{Applying} Proposition~\ref{localization'} for $\Ch{\psi}_{\Delta^{\dagger}}=\Rm{id}$ and 
Lemma~\ref{lowerbdK1'}, 
we have
\begin{align*}
 K_{f}^2 (\Gamma_{p^{\dagger}})& \geq 
  2 \left((n^{2}-1)-n-\frac{(n-1)^{2}}{n}B_{n} \right) \sharp T(\Ti{K}^{\dagger})_{p^{\dagger}} l  \\
 &-2\frac{(n-1)^{2}}{n}j_{0,\bullet}'(\Gamma_{p^{\dagger}})
 +\frac{n^{2}-1}{n}r\chi_{\varphi_{\Delta^{\dagger}}}(\Gamma_{p^{\dagger}}) +j_{0,1}'(\Gamma_{p^{\dagger}})\\
 &\geq  2 \left((n^{2}-1)-n-\frac{(n-1)^{2}}{n}B_{n} \right) \sharp T(\Ti{K}^{\dagger})_{p^{\dagger}} l.
 \end{align*}
Hence it is sufficient to show 
\begin{align*}
4\delta n \left((n^{2}-1)-n-\frac{(n-1)^{2}}{n}B_{n} \right) \sharp T(\Ti{K}^{\dagger})_{p^{\dagger}}
-2n\left(n^{2}-1-n \right) \sharp \Ti{K} \geq 0.
\end{align*}
From Corollary~\ref{groupstr}, we have 
$\sharp \Ti{K} \leq \delta \sharp  T(\Ti{K}^{\dagger})_{p^{\dagger}}$.
Hence it is sufficient to show  
\begin{align*}
2 \left((n^{2}-1)-n-\frac{(n-1)^{2}}{n}B_{n} \right) -\left(n^{2}-1-n \right) \geq 0.
\end{align*}
We can check it by a simple calculation.

Suppose that there exist no singular points over $z^{\dagger} \in R_{\dagger}$.
We recall that there exist $\sharp(T(\Ti{K}^{\dagger})_{p^{\dagger}})l$ singular points of 
$R_{\dagger}$ on $\Gamma_{p}$ which is \Blue{analytically equivalent} to $z^{\dagger} \in R_{\dagger}$.
Let $\Ch{\psi}_{\Delta^{\dagger}}:\Ha{W}_{\Delta^{\dagger}}\to W_{\Delta^{\dagger}}$ 
be the composite of blowing-ups at those $\sharp(T(\Ti{K}^{\dagger})_{p^{\dagger}})l$ singular points.
Since there exist no singular points over $z^{\dagger} \in R_{\dagger}$,
we have $\Ch{j}_{0,\bullet}(\Gamma_{p^{\dagger}})=0$.
From Proposition~\ref{localization'} for $\Ch{\psi}_{\Delta^{\dagger}}$,
we get
\begin{align*}
 K_{f}^2 (\Gamma_{p^{\dagger}}) = &\sum_{\Ha{z}^{\dagger}\in \Ha{\Gamma}_{p^{\dagger}}} K_{f}^2 (\Gamma_{p^{\dagger}})_{\Ha{z}^{\dagger}} 
 + \left(n^{2}-1-n \right)\sharp(T(\Ti{K}^{\dagger})_{p^{\dagger}})l \\
 &-2\frac{(n-1)^{2}}{n}j_{0,\bullet}'(\Gamma_{p^{\dagger}})
 +\frac{n^{2}-1}{n}r\chi_{\varphi_{\Delta^{\dagger}}}(\Gamma_{p^{\dagger}}) +j_{0,1}'(\Gamma_{p^{\dagger}}).
 \end{align*}
Since we have $\sharp \Gamma_{p^{\dagger}} \geq j_{0,1}'(\Gamma_{p^{\dagger}})$ 
and $\chi_{\varphi_{\Delta^{\dagger}}}(\Gamma_{p^{\dagger}}) \geq \sharp \Gamma_{p^{\dagger}}/12$, we have
\begin{align*}
 K_{f}^2 (\Gamma_{p^{\dagger}}) &\geq 
  \left(n^{2}-1-n \right)\sharp(T(\Ti{K}^{\dagger})_{p^{\dagger}})l 
 +\frac{n^{2}-1}{n}\left(r-24\frac{n-1}{n+1}\right)\sharp\Gamma_{p^{\dagger}}\\
 &\geq \left(n^{2}-1-n \right)\sharp(T(\Ti{K}^{\dagger})_{p^{\dagger}})l .
 \end{align*}
Hence we have 
\begin{align*}
 K_{f}^2 (\Gamma_{p}) \geq \left(n^{2}-1-n \right)\sharp(T(\Ti{K}^{\dagger})_{p^{\dagger}}) .
\end{align*}
Thus, it is sufficient to show 
\begin{align*}
2\delta n  \left(n^{2}-1-n \right) \sharp T(\Ti{K}^{\dagger})_{p^{\dagger}}
-2n\left(n^{2}-1-n \right) \sharp \Ti{K} \geq 0.
\end{align*}
From Corollary~\ref{groupstr}, we have 
$\sharp \Ti{K} \leq \delta \sharp  T(\Ti{K}^{\dagger})_{p^{\dagger}}$.
Hence we get the desired inequality.
\end{proof}

\begin{rmk}
If (C1) does not hold, there exist no singular points of $R_{\dagger}$
on $\Gamma_{p,\Rm{sm}}$ by Corollary~\ref{stabfree}.
\end{rmk}

\begin{prop}
Assume $r \geq \Rm{max}\{60 + \frac{12}{n^2-1}-\frac{96}{n+1}, 4n\}$ and $(C2)$ holds.
Put 
\Blue{\[C_{2,n}:=\frac{1}{3}(n-1)(5n-4).\]}
Then it holds that
\begin{align*}
 2n \delta  K_{f}^{2}(\Gamma_{p})  \geq C_{2,n} \sharp \Ti{K}.
\end{align*}
\end{prop}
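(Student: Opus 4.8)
The plan is to mirror the structure of the $(C1)$ case but to extract the needed quantity from the tangency hypothesis via the action of the node stabilizer. First I would record the geometry forced by the failure of $(C1)$: by the Remark preceding this Proposition together with Corollary~\ref{stabfree}, every singular point of $R_{\dagger}$ lying on $\Gamma_{p^{\dagger}}$ must sit at a node of the type-$\Rm{I}_{lc}$ fiber $\Gamma_{p^{\dagger}}$. Using Kodaira's structure theorem (Theorem~\ref{kodaira}) for the $\Rm{I}_{c}$-type group law $\Bb{G}_{m}\times G(\Gamma_{p^{\dagger},\Rm{sm}})$, I would describe how $T(\Ti{K}^{\dagger})_{p^{\dagger}}=\Ti{K}^{\dagger}\cap W_{\Rm{sm}}(\Delta^{\dagger})$ acts on the set of nodes: the component-group part $G(\Gamma_{p^{\dagger},\Rm{sm}})\cong\Z/lc\Z$ permutes the nodes freely by rotation, while the connected part $T_{0}:=T(\Ti{K}^{\dagger})_{p^{\dagger}}\cap\Bb{G}_{m}$ fixes every node. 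Hence $\Rm{Stab}_{T(\Ti{K}^{\dagger})_{p^{\dagger}}}(z^{\dagger})=T_{0}$ for the chosen node $z^{\dagger}$, the orbit has size $\sharp T(\Ti{K}^{\dagger})_{p^{\dagger}}/\sharp T_{0}$, and by Lemma~\ref{lifting2} there are $l\,\sharp T(\Ti{K}^{\dagger})_{p^{\dagger}}/\sharp T_{0}$ singular points of $R_{\dagger}$ on $\Gamma_{p^{\dagger}}$ analytically equivalent to $z^{\dagger}$.

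The heart of the argument is a local estimate at $z^{\dagger}$, and this is where $(C2)$ enters. In local coordinates $(u,v)$ with the two branches of the node given by $\{v=0\}$ and $\{u=0\}$ and the fiber by $uv=0$, the generator of $T_{0}$ acts by $(u,v)\mapsto(tu,t^{-1}v)$ with $t$ a primitive $\sharp T_{0}$-th root of unity, and $R_{\dagger}$ is $T_{0}$-stable because $\Ti{R}$ is $\Ti{K}$-stable. I would then analyze the horizontal branch $H\in\Rm{L}_{\Rm{ta}}((R_{\dagger})_{h},\Gamma_{i},z^{\dagger})$ supplied by $(C2)$ under this cyclic action. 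The dichotomy is: either $T_{0}$ carries $H$ to many distinct tangent branches, raising the multiplicity of $R_{\dagger}$ at $z^{\dagger}$ and, in the canonical resolution, producing a $T_{0}$-orbit of singular points along the exceptional curve; or $H$ is $T_{0}$-invariant, in which case its contact order $k$ with $\Gamma_{i}$ satisfies $\sharp T_{0}\mid k+1$, forcing $k\ge\sharp T_{0}-1$ and hence a long chain of infinitely near singular points. In either case the indices localized at $z^{\dagger}$ grow proportionally to $\sharp T_{0}$: one makes $\alpha_{0}^{+}(\Gamma_{p^{\dagger}})_{z^{\dagger}}$ and $\sum_{k\ge1}\Ha{\alpha}_{k}(\Gamma_{p^{\dagger}})_{z^{\dagger}}$ large enough that, after feeding them into Lemma~\ref{6/15,1} (equivalently Proposition~\ref{localization'} with Lemma~\ref{lowerbdK1'}), the contribution of the cluster over $z^{\dagger}$ to $K_{f}^{2}(\Gamma_{p^{\dagger}})$ is at least $\tfrac{(n-1)(5n-4)}{6n}\sharp T_{0}$.

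With the local estimate in hand, I would sum over the orbit: multiplying the per-node contribution $\tfrac{(n-1)(5n-4)}{6n}\sharp T_{0}$ by the number $l\,\sharp T(\Ti{K}^{\dagger})_{p^{\dagger}}/\sharp T_{0}$ of equivalent singular points, the two powers of $\sharp T_{0}$ cancel and yield
\begin{align*}
K_{f}^{2}(\Gamma_{p^{\dagger}})\ge\frac{(n-1)(5n-4)}{6n}\,l\,\sharp T(\Ti{K}^{\dagger})_{p^{\dagger}}.
\end{align*}
Then Proposition~\ref{localbc} gives $l\,K_{f}^{2}(\Gamma_{p})\ge K_{f}^{2}(\Gamma_{p^{\dagger}})$, so $K_{f}^{2}(\Gamma_{p})\ge\tfrac{(n-1)(5n-4)}{6n}\sharp T(\Ti{K}^{\dagger})_{p^{\dagger}}$, while Corollary~\ref{groupstr} gives $\sharp\Ti{K}\le\delta\,\sharp T(\Ti{K}^{\dagger})_{p^{\dagger}}$. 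Combining these, $2n\delta K_{f}^{2}(\Gamma_{p})\ge\tfrac{(n-1)(5n-4)}{3}\sharp\Ti{K}=C_{2,n}\sharp\Ti{K}$, as desired.

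The hard part will be the local estimate of the second paragraph: one must make the orbit-versus-contact dichotomy uniform and extract exactly the constant $\tfrac{(n-1)(5n-4)}{6n}$. This is most delicate at $n=2$, where the coefficient $(n^{2}-1)k-n-2\tfrac{(n-1)^{2}}{n}$ of the $\Ha{\alpha}_{k}$-terms in Lemma~\ref{6/15,1} degenerates to $0$ for $k=1$, so the bound must then be carried entirely by $\alpha_{0}^{+}(\Gamma_{p^{\dagger}})_{z^{\dagger}}$; this also explains the stronger hypothesis $r\ge 60+\tfrac{12}{n^{2}-1}-\tfrac{96}{n+1}$, which I expect to be exactly what is needed to absorb the negative correction terms $-2\tfrac{(n-1)^{2}}{n}\big(\Ch{j}_{0,\bullet}(\Gamma_{p^{\dagger}})+j_{0,\bullet}'(\Gamma_{p^{\dagger}})\big)$ and the Euler-characteristic bookkeeping $\tfrac{n^{2}-1}{n}r\chi_{\varphi_{\Delta^{\dagger}}}(\Gamma_{p^{\dagger}})$ so that only the tangency contribution survives in the final inequality.
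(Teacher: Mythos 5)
Your global skeleton matches the paper's (pass to the unramified cover $\Pi:W_{\Delta^{\dagger}}\to W_{\Delta}$, count analytically equivalent singular points via Lemma~\ref{lifting2} and the orbit of $T(\Ti{K}^{\dagger})_{p^{\dagger}}$, feed localized indices into Lemma~\ref{6/15,1}, descend by Proposition~\ref{localbc}, and finish with $\sharp\Ti{K}\leq\delta\,\sharp T(\Ti{K}^{\dagger})_{p^{\dagger}}$), but the entire substance of the proposition is the local estimate you defer to your second paragraph, and the mechanism you propose for it does not suffice. The paper does not argue by a dichotomy (many tangent branches versus one invariant high-contact branch); it tracks \emph{both} parameters simultaneously: $v$, the number of blow-ups until the chosen branch $D_{1}$ is no longer tangent to $\Gamma_{1}$, and $m$, the rounded multiplicity of the stabilizer-orbit $\Bb{D}$ at $z^{\dagger}$. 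The tangency hypothesis gives $(D_{1},\Gamma_{p^{\dagger}})_{z^{\dagger}}\geq 3$, hence $\gamma\geq 2/3$ in the $\alpha_{0}^{+}$-term, the chain of $v$ blow-ups contributes $\left((n^{2}-1)\left[\frac{m}{n}\right]-n\right)v$ through $\Ch{\alpha}_{k}$, and the link between the two sides is $2m(v+2)\geq(\Bb{D},\Gamma_{p^{\dagger}})_{z^{\dagger}}\geq\sharp\Rm{Stab}_{T(\Ti{K}^{\dagger})_{p^{\dagger}}}(z^{\dagger})$; the constant $C_{2,n}$ then emerges from minimizing over $v\geq1$ and $m\geq n+1$. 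Note that $2\gamma(n-1)^{2}\geq\frac{4}{3}(n-1)^{2}$ alone is strictly smaller than $C_{2,n}=\frac{1}{3}(n-1)(5n-4)$, so neither horn of your dichotomy can carry the bound by itself: the $\alpha_{0}^{+}$-contribution and the $\Ch{\alpha}_{k}$-contribution must be combined, which is precisely what your per-cluster constant $\frac{(n-1)(5n-4)}{6n}\sharp T_{0}$ asserts without derivation.

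Two further points are genuinely missing, not just compressed. First, your hope that the hypothesis on $r$ absorbs the correction $-2\frac{(n-1)^{2}}{n}\Ch{j}_{0,\bullet}(\Gamma_{p^{\dagger}})$ cannot work: $\Ch{j}_{0,\bullet}$ grows with the number of blow-ups, i.e.\ like $v$ times the orbit size, while the term $\frac{n^{2}-1}{n}r\chi_{\varphi_{\Delta^{\dagger}}}(\Gamma_{p^{\dagger}})$ scales with the orbit size only, so for large $v$ no bound on $r$ helps. The paper controls $\Ch{j}_{0,\bullet}$ structurally: $D_{1}$ is chosen with \emph{minimal} $v$ among $\Rm{L}_{\Rm{ta}}((R_{\dagger})_{h},\Gamma_{1},z^{\dagger})$, which forces the intermediate exceptional curves $\Ha{E}_{2},\dots,\Ha{E}_{v-1}$ to be $(-2)$-curves, whence at most two of them (namely $\Ha{E}_{1}$, $\Ha{E}_{v}$) can lie in $\Ti{R}_{\dagger}$ when $n\geq3$, giving $\Ch{j}_{0,\bullet}(\Gamma_{p^{\dagger}})_{z^{\dagger}}\leq 2$ (in fact $\leq1$). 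Your sketch never makes this minimality choice, so the negative correction is uncontrolled. Second, the case $n=2$ with $\Rm{mult}_{z^{\dagger}}R_{\dagger}$ odd requires a separate argument (the paper's case (2)): there every exceptional curve of the $v$ blow-ups is absorbed into the branch locus, creating $(v-1)$ additional double points per orbit point that must themselves be blown up and then counted on the positive side; your remark about the degeneration of the coefficient $(n^{2}-1)k-n-2\frac{(n-1)^{2}}{n}$ at $n=2$, $k=1$ identifies a symptom but not this resolution-of-parity mechanism. As it stands the proposal is an accurate table of contents for the proof, with the proof itself absent at exactly the point where it is hard.
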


\begin{proof}

Take $z^{\dagger} \in R_{\dagger}$ and $\Gamma_{1} \subset \Gamma_{p^{\dagger}}$ such that  
$L_{\Rm{ta}}((R_{\dagger})_{h},\Gamma_{1},z^{\dagger}) \neq \emptyset$.
\Blue{
For a local analytic branch $D \in L_{\Rm{ta}}((R_{\dagger})_{h},\Gamma_{1},z^{\dagger})$,
let $v(D)$ be the number of blowing-ups until the proper transform of $D$ is not tangent to $\Gamma_1$.
Since $L_{\Rm{ta}}((R_{\dagger})_{h},\Gamma_{1},z^{\dagger}) \neq \emptyset$ is a finite set,
there exists $D_{1} \in L_{\Rm{ta}}((R_{\dagger})_{h},\Gamma_{1},z^{\dagger})$ such that 
$v(D_1)$ is a minimal value among $L_{\Rm{ta}}((R_{\dagger})_{h},\Gamma_{1},z^{\dagger})$.}
Let $\Rm{Stab}_{T(\Ti{K}^{\dagger})_{p^{\dagger}}}(\Gamma_{1},z^{\dagger})
:=\{\kappa \in \Rm{Stab}_{T(\Ti{K}^{\dagger})_{p^{\dagger}}}(z^{\dagger})\mid \kappa(\Gamma_{1})=\Gamma_{1}\}$ and 
$\Bb{D}_{1}$ be the  $\Rm{Stab}_{T(\Ti{K}^{\dagger})_{p^{\dagger}}}(\Gamma_{1},z^{\dagger})$-orbit of $D_{1}$.
We introduce some notations as follows.
\begin{itemize}
\item $m_{D_{1}}:=\Rm{mult}_{z^{\dagger}}D_{1}$
\item $m_{\Bb{D}_{1}}:=\Rm{mult}_{z^{\dagger}}\Bb{D}_{1}$
\item $m:=\Rm{min}\{m' \mid m' \geq m_{\Bb{D}},\quad m' \in n\Z$ or $n\Z + 1\}$
\end{itemize}
Assume that we need \Red{blowing-ups $v$ times} until the proper transform of $D_{1}$ is not tangent to that of $\Gamma_{1}$.
Since $\Gamma_{p^{\dagger}}$ has a node at $z^{\dagger}$, we have $(\Bb{D}_{1},\Gamma_{p^{\dagger}})_{z^{\dagger}} \leq (v+2)m$.

From Lemma~\ref{lifting2}, there exist
$\sharp ({T(\Ti{K}^{\dagger})_{p^{\dagger}}} \cdot z^{\dagger}) l$
local analytic curves of $R_{\dagger}$ 
which is \Blue{analytically equivalent} to $z^{\dagger} \in \Bb{D}_{1}$.
Let $z^{\dagger}_{j} \in \Bb{D}_{j}$ be such local analytic curves for 
$j=1, \cdots, ({T(\Ti{K}^{\dagger})_{p^{\dagger}}} \cdot z^{\dagger}) l$,
where we define $z^{\dagger}_{1} = z^{\dagger}$.
Since $z^{\dagger}_{j} \in \Bb{D}_{j}$ is analytically equivalent to $z^{\dagger}_{1} \in \Bb{D}_{1}$,
the proper transform of $\Bb{D}_{j}$ is not tangent to 
the proper transform of $\Gamma_{p^{\dagger}}$ after $v$-times blowing-ups over $z^{\dagger}_{j}$ for each
$j=1, \cdots, ({T(\Ti{K}^{\dagger})_{p^{\dagger}}} \cdot z^{\dagger}) l$.

We consider two cases separately.

\smallskip

(1)~$n \geq 3$ or $n=2$ and $\Rm{mult}_{z^{\dagger}} R_{\dagger} \in 2\Z$.

Let $\Ch{\psi}_{\Delta^{\dagger}}:\Ha{W}_{\Delta^{\dagger}}\to W_{\Delta^{\dagger}}$ 
be the composite of the above $v$-times blowing-ups over $z^{\dagger}_{j}$ for 
each $j=1, \cdots, ({T(\Ti{K}^{\dagger})_{p^{\dagger}}} \cdot z^{\dagger}) l$.
Let $\Ha{R}_{\dagger}$ be the branch locus on $\Ha{W}_{\Delta^{\dagger}}$ and
 \Red{$\Ha{\Gamma}_{p^{\dagger}}$ a fiber} of 
 $\Ha{\varphi}:=\varphi_{\Delta^{\dagger}} \circ \Ch{\psi}_{\Delta^{\dagger}}$ over $p^{\dagger}$.
 We show 
\begin{align*}
 \Ch{j}_{0,\bullet}(\Gamma_{p^{\dagger}}) \leq
\sharp(T(\Ti{K}^{\dagger})_{p^{\dagger}} \cdot z^{\dagger}) l
\end{align*}
if $n\geq 3$,
\begin{align*}
 \Ch{j}_{0,\bullet}(\Gamma_{p^{\dagger}}) \leq
 \Ch{j}_{0,1}(\Gamma_{p^{\dagger}}) 
+\sharp(T(\Ti{K}^{\dagger})_{p^{\dagger}} \cdot z^{\dagger}) l 
\end{align*}
if $n=2$.
We may assume $v \geq 2$.
Let $\Ch{j}_{0,\bullet}(\Gamma_{p^{\dagger}})_{z^{\dagger}}$
be the number of irreducible components contributing to
$\Ch{j}_{0,\bullet}(\Gamma_{p^{\dagger}})$ which is contracted to $z^{\dagger}$ by $\Ti{\psi}_{\Delta^{\dagger}}$.
If $n\geq 3$, it is sufficient to show 
$\Ch{j}_{0,\bullet}(\Gamma_{p^{\dagger}})_{z^{\dagger}} \leq 2$.
\Blue{
Let $(\Ch{\psi}_{\Delta^{\dagger}})_{z^{\dagger}}:(\Ha{W}_{\Delta^{\dagger}})_{z^{\dagger}}\to W_{\Delta^{\dagger}}$ be the composite of blowing-ups $v$-times over $z^{\dagger}$ 
and $\Ca{E}_{z^{\dagger}}$ the set of exceptional curves of $(\Ch{\psi}_{\Delta^{\dagger}})_{z^{\dagger}}$.
We note that $\sharp\Ca{E}_{z^{\dagger}}=v$.
For an exceptional curve $E \in \Ca{E}_{z^{\dagger}}$, let $n(E)$ be the number such that 
$E$ is the exceptional curve of the $n(E)$-th blowing-up in $(\Ch{\psi}_{\Delta^{\dagger}})_{z^{\dagger}}$.
Let $E_i$ be the exceptional curve in $\Ca{E}_{z^{\dagger}}$ 
such that $n(E_i)=i$ for $i =1, \cdots v$.
We note that $E_1$ and $E_v$ are exceptional curves of first and last blowing-ups in $(\Ch{\psi}_{\Delta^{\dagger}})_{z^{\dagger}}$, respectively.
Then the proper transform of any local analytic branch in $\Rm{L}((R_{\dagger})_{h},z^{\dagger})\setminus\Rm{L}_{\Rm{ta}}(R_{\dagger},\Gamma_{i},z^{\dagger})$ can be tangent to only $E_1$ among $\Ca{E}_{z^{\dagger}}$.
Furthermore, the proper transform of any local analytic branch in $\Rm{L}_{\Rm{ta}}(R_{\dagger},\Gamma_{i},z^{\dagger})$ can be tangent to only $E_{v}$ from
the choice of $D_1$.
Thus, the self-intersection number of $\Ha{E}_{i}(i=2, \cdots, v-1)$ is $-2$,
where $\Ha{E}_{i} \subset (\Ha{W}_{\Delta})_{z^{\dagger}}$ is the proper transform of $E_{i}$.
On the other hand, the self-intersection number of each irreducible component of $\Ti{R}_{\dagger}$
contributing $\Ch{j}_{0,\bullet}(\Gamma_{p^{\dagger}})_{z^{\dagger}}$ is at most $-n$.
Thus, if $n \geq 3$, a proper transform of $\Ca{E}_{z^{\dagger}}$ which is contained in $\Ti{R}_{\dagger}$ is either $\Ha{E}_1$ or $\Ha{E}_v$ at most.
Hence we get $\Ch{j}_{0,\bullet}(\Gamma_{p^{\dagger}})_{z^{\dagger}} \leq 2$ for $n \geq 3$.
}
Furthermore, since \Red{the branch loci} have no double points, we have  
$\Ch{j}_{0,\bullet}(\Gamma_{p^{\dagger}})_{z^{\dagger}} \leq 1$
by chasing a resolution of $z^{\dagger} \in R_{\dagger}$.
If $n=2$, we can show 
$\Ch{j}_{0,\bullet}(\Gamma_{p^{\dagger}})_{z^{\dagger}} \leq \Ch{j}_{0,1}(\Gamma_{p^{\dagger}})_{z^{\dagger}} +1$
by chasing a resolution of $z^{\dagger} \in R_{\dagger}$.
Thus, we can get \Blue{the desired} inequalities for $\Ch{j}_{0,\bullet}(\Gamma_{p^{\dagger}})$.

Since $\Gamma_{p^{\dagger}}$ has at least 
$\sharp({T(\Ti{K}^{\dagger})_{p^{\dagger}}} \cdot z^{\dagger}) l$ nodes,
we have $\sharp\Gamma_{p^{\dagger}} \geq \sharp({T(\Ti{K}^{\dagger})_{p^{\dagger}}} \cdot z^{\dagger}) l$.
Hence we get 
\begin{align*}
\Ch{j}_{0,\bullet}(\Gamma_{p^{\dagger}}) \leq
\sharp\Gamma_{p^{\dagger}}
\end{align*}
if $n\geq 3$,
\begin{align*}
 \Ch{j}_{0,\bullet}(\Gamma_{p^{\dagger}}) \leq
 \sharp\Gamma_{p^{\dagger}}+\Ch{j}_{0,1}(\Gamma_{p^{\dagger}}) 
\end{align*}
if $n=2$.
Put $\Bb{D}:=\Rm{Stab}_{T(\Ti{K}^{\dagger})_{p^{\dagger}}}(z^{\dagger}) \cdot D_{1}$
and $\gamma:=1-1/(D_{1},\Gamma_{p^{\dagger}})_{z^{\dagger}}$.
Then we have 
\begin{align*}
\Blue{\sum_{\Ha{z}^{\dagger}\in \Ha{\Gamma}_{p^{\dagger}}}\alpha^{+}_{0}(\Gamma_{p^{\dagger}})_{\Ha{z}^{\dagger}}
\geq \gamma (\Bb{D},\Gamma_{p^{\dagger}})_{z^{\dagger}} 
\sharp({T(\Ti{K}^{\dagger})_{p^{\dagger}}} \cdot z^{\dagger}) l.}
\end{align*}
Since there exist $\sharp ({T(\Ti{K}^{\dagger})_{p^{\dagger}}} \cdot z^{\dagger}) vl$-singular points of \Red
{the branch loci} on $\Gamma_{p^{\dagger}}$ which appear in $\Ch{\psi}_{\Delta^{\dagger}}$, we have 
\begin{align*}
\sum_{k\geq 1}\left((n^{2}-1)k-n \right)\Ch{\alpha}_{k} (\Gamma_{p^{\dagger}})
\geq \left((n^{2}-1)\left[ \frac{m}{n}\right]-n \right) \sharp ({T(\Ti{K}^{\dagger})_{p^{\dagger}}} \cdot z^{\dagger})vl.
\end{align*}
Thus, we have 
\begin{align*}
K_{f}^2 (\Gamma_{p^{\dagger}}) \geq &
 \gamma \frac{(n-1)^2}{n} (\Bb{D},\Gamma_{p^{\dagger}})_{z^{\dagger}} 
\sharp({T(\Ti{K}^{\dagger})_{p^{\dagger}}} \cdot z^{\dagger}) l \\
+&\left((n^{2}-1)\left[ \frac{m}{n}\right]-n \right) \sharp ({T(\Ti{K}^{\dagger})_{p^{\dagger}}} \cdot z^{\dagger})vl \\
-&2\frac{(n-1)^2}{n}\left(\Ch{j}_{0,\bullet}(\Gamma_{p^{\dagger}})+j_{0,\bullet}'(\Gamma_{p^{\dagger}})\right)
+\frac{n^{2}-1}{n}r\chi_{\varphi_{\Delta^{\dagger}}}(\Gamma_{p^{\dagger}})  + \Ch{j}_{0,1}(\Gamma_{p^{\dagger}}) + j_{0,1}'(\Gamma_{p^{\dagger}})
\end{align*}
by Lemma~\ref{6/15,1}.
By $\chi_{\varphi_{\Delta^{\dagger}}}(\Gamma_{p^{\dagger}})=e(\Gamma_{p^{\dagger}})/12 = 
\sharp \Gamma_{p^{\dagger}} /12
\geq \sharp (T(\Ti{K}^{\dagger})_{p^{\dagger}} \cdot z^{\dagger}) l /12$,
we have 
\begin{align*}
K_{f}^2 (\Gamma_{p^{\dagger}}) \geq &
 \gamma \frac{(n-1)^2}{n} (\Bb{D},\Gamma_{p^{\dagger}})_{z^{\dagger}} 
\sharp({T(\Ti{K}^{\dagger})_{p^{\dagger}}} \cdot z^{\dagger}) l \\
+&\left((n^{2}-1)\left[ \frac{m}{n}\right]-n \right) \sharp ({T(\Ti{K}^{\dagger})_{p^{\dagger}}} \cdot z^{\dagger})vl \\
+&\left( -4\frac{(n-1)^2}{n}+\frac{n^2-1}{12n}r\right) 
\sharp ({T(\Ti{K}^{\dagger})_{p^{\dagger}}} \cdot z^{\dagger})l.
\end{align*}
By $K_{f}^{2}(\Gamma_{p}) \geq K_{f}^{2}(\Gamma_{p^{\dagger}})/l$, we get
\begin{align*}
K_{f}^2 (\Gamma_{p}) \geq &
 \gamma \frac{(n-1)^2}{n} (\Bb{D},\Gamma_{p^{\dagger}})_{z^{\dagger}} 
\sharp({T(\Ti{K}^{\dagger})_{p^{\dagger}}} \cdot z^{\dagger})  \\
+&\left((n^{2}-1)\left[ \frac{m}{n}\right]-n \right) \sharp ({T(\Ti{K}^{\dagger})_{p^{\dagger}}} \cdot z^{\dagger})v \\
+&\left( -4\frac{(n-1)^2}{n}+\frac{n^2-1}{12n}r\right) 
\sharp ({T(\Ti{K}^{\dagger})_{p^{\dagger}}} \cdot z^{\dagger}).
\end{align*}
Hence it is sufficient to show
\begin{align*}
2n\delta \biggl\{ &\gamma \frac{(n-1)^2}{n} (\Bb{D},\Gamma_{p^{\dagger}})_{z^{\dagger}}\biggr.
+\left((n^{2}-1)\left[ \frac{m}{n}\right]-n \right)v\\
+\biggl. &\left( -4\frac{(n-1)^2}{n}+\frac{n^2-1}{12n}r\right) \biggr\}
\sharp ({T(\Ti{K}^{\dagger})_{p^{\dagger}}} \cdot z^{\dagger}) 
-C_{2,n} \sharp \Ti{K} \geq 0.
\end{align*}
By $(\Bb{D},\Gamma_{p^{\dagger}})_{z^{\dagger}} \geq \sharp \Rm{Stab}_{T(\Ti{K}^{\dagger})_{p^{\dagger}}}(z^{\dagger}) $, it is sufficient to show
\begin{align*}
2n \biggl\{ &\gamma \frac{(n-1)^2}{n} (\Bb{D},\Gamma_{p^{\dagger}})_{z^{\dagger}}\biggr.
+\left((n^{2}-1)\left[ \frac{m}{n}\right]-n \right)v\\
+\biggl. &\left( -4\frac{(n-1)^2}{n}+\frac{n^2-1}{12n}r\right) \biggr\}
-C_{2,n}(\Bb{D},\Gamma_{p^{\dagger}})_{z^{\dagger}}   \geq 0.
\end{align*}
It is equivalent to 
\begin{align*}
2n \left( \left((n^{2}-1)\left[ \frac{m}{n}\right]-n \right)v
-4\frac{(n-1)^2}{n}+\frac{n^2-1}{12n}r \right) 
+\left(2 \gamma(n-1)^2  -C_{2,n}\right)(\Bb{D},\Gamma_{p^{\dagger}})_{z^{\dagger}} \geq 0.
\end{align*}
By $2m(v+2)\geq 2(\Bb{D}_{1},\Gamma_{p^{\dagger}})_{z^{\dagger}} 
\geq (\Bb{D},\Gamma_{p^{\dagger}})_{z^{\dagger}}$,
it suffices to show
\begin{align}
\label{6/9,1}
2n \left( \left((n^{2}-1)\left[ \frac{m}{n}\right]-n \right)v
-4\frac{(n-1)^2}{n}+\frac{n^2-1}{12n}r \right) 
+2m(v+2)\left(2 \gamma(n-1)^2  -C_{2,n}\right) \geq 0.
\end{align}
Since it holds $(D_{1},\Gamma_{p^{\dagger}})_{z^{\dagger}}\geq 3$,
we have \Blue{$\gamma = 1-\frac{1}{(D_{1},\Gamma_{p^{\dagger}})_{z^{\dagger}}} \geq 2/3$}.
Hence \Blue{it suffices to show}
\begin{align}
\label{6/8,1}
2n \left( \left((n^{2}-1)\left[ \frac{m}{n}\right]-n \right)v
-4\frac{(n-1)^2}{n}+\frac{n^2-1}{12n}r \right)+m(v+2)\left(\frac{8}{3}(n-1)^2  -2C_{2,n}\right)\geq 0.
\end{align}
The coefficient of $v$ in (\ref{6/8,1}) 
\begin{align*}
2n\left((n^{2}-1)\left[ \frac{m}{n}\right]-n \right)+\left(\frac{8}{3}(n-1)^2  -2C_{2,n}\right)m
\end{align*}
is non-negative.
Hence we may assume $v=1$ to show (\ref{6/8,1}).
We will show 
\begin{align}
\label{6/8,2}
2n \left( \left((n^{2}-1)\left[ \frac{m}{n}\right]-n \right)
-4\frac{(n-1)^2}{n}+\frac{n^2-1}{12n}r \right)+m\left(8(n-1)^2  -6C_{2,n}\right)\geq 0.
\end{align}
Since the coefficient of $m$ is non-negative,
we may assume $m=n+1$ to show (\ref{6/8,2}).
Hence it suffices to show 
\begin{align*}
 &2n(n^2-1)+(n+1)\left(8(n-1)^2  -6C_{2,n}\right)\geq 0, \\
 &\frac{n^2-1}{12n}r -4\frac{(n-1)^2}{n}-n \geq 0.
\end{align*}
We can check the former inequality by \Red{a simple calculation}.
The latter one clearly holds by the assumption $r \geq 60 + \frac{12}{n^2-1}-\frac{96}{n+1}$.

\smallskip

(2)~$n=2$ and $\Rm{mult}_{z^{\dagger}}R_{\dagger} \in 2\Z+1$.

Let $(\Ch{\psi}_{\Delta^{\dagger}})_{0}:(\Ha{W}_{\Delta^{\dagger}})_{0}\to W_{\Delta^{\dagger}}$ be the composite of the above \Blue{blowing-ups $v$-times} over $z^{\dagger}_{j}$ for each
$j=1, \cdots, ({T(\Ti{K}^{\dagger})_{p^{\dagger}}} \cdot z^{\dagger}) l$.
Let $(\Ha{R}_{\dagger})_{0}$ be the branch locus on $(\Ha{W}_{\Delta^{\dagger}})_{0}$.
Then all exceptional curves appearing from $(\Ch{\psi}_{\Delta^{\dagger}})_{0}$
are contained in $(\Ha{R}_{\dagger})_{0}$ by the assumption (2).
Thus, there exist $\sharp ({T(\Ti{K}^{\dagger})_{p^{\dagger}}} \cdot z^{\dagger})(v-1)l$
double points of $(\Ha{R}_{\dagger})_{0}$. 
Let $\Ch{\psi}_{\Delta^{\dagger}}:\Ha{W}_{\Delta^{\dagger}}\to W_{\Delta^{\dagger}}$ be the composite of blowing-ups at those double points and $(\Ch{\psi}_{\Delta^{\dagger}})_{0}$.
Let $\Ha{R}_{\dagger}$ be the branch locus on $\Ha{W}_{\Delta^{\dagger}}$ and
\Red{ $\Ha{\Gamma}_{p^{\dagger}}$ a fiber} of 
 $\Ha{\varphi}:=\varphi_{\Delta^{\dagger}} \circ \Ch{\psi}_{\Delta^{\dagger}}$ 
 over $p^{\dagger}$.
 Since an arbitrary $\Ha{\varphi}_{\Delta^{\dagger}}$-vertical component of $\Ha{R}_{\dagger}$
 appears from either  exceptional curves in $(\Ch{\psi}_{\Delta^{\dagger}})_{0}$ or  irreducible components of $\Gamma_{p^{\dagger}}$,
 we have
\begin{align*}
 \Ch{j}_{0,\bullet}(\Gamma_{p^{\dagger}}) \leq
 \sharp(T(\Ti{K}^{\dagger})_{p^{\dagger}} \cdot z^{\dagger})v l.
\end{align*}
Put $\Bb{D}:=\Rm{Stab}_{T(\Ti{K}^{\dagger})_{p^{\dagger}}}(z^{\dagger}) \cdot D_{1}$
and $\gamma:=1-1/(D_{1},\Gamma_{p^{\dagger}})_{z^{\dagger}}$.
Then we have 
\begin{align*}
\sum_{\Ha{z}\in \Ha{\Gamma}_{p}}\alpha^{+}_{0}(\Gamma_{p^{\dagger}})_{\Ha{z}}
\geq \gamma (\Bb{D},\Gamma_{p^{\dagger}})_{z^{\dagger}} 
\sharp({T(\Ti{K}^{\dagger})_{p^{\dagger}}} \cdot z^{\dagger}) l.
\end{align*}
Since $R_{\dagger}$ has 
$\sharp ({T(\Ti{K}^{\dagger})_{p^{\dagger}}} \cdot z^{\dagger}) vl$-singular points 
of multiplicity at least $m$ and 
$\sharp ({T(\Ti{K}^{\dagger})_{p^{\dagger}}} \cdot z^{\dagger})(v-1)l$
double points which appear in $(\Ch{\psi}_{\Delta^{\dagger}})_{0}$,
we have 
\begin{align*}
\sum_{k\geq 1}\left(3k-2 \right)\Ch{\alpha}_{k} (\Gamma_{p^{\dagger}})
\geq \left(3\left[ \frac{m}{2}\right]-2 \right) \sharp ({T(\Ti{K}^{\dagger})_{p^{\dagger}}} \cdot z^{\dagger})vl
+\sharp ({T(\Ti{K}^{\dagger})_{p^{\dagger}}} \cdot z^{\dagger})(v-1)l.
\end{align*}
 Thus, we have
 \begin{align*}
K_{f}^2 (\Gamma_{p^{\dagger}}) \geq &
 \frac{\gamma}{2} (\Bb{D},\Gamma_{p^{\dagger}})_{z^{\dagger}} 
\sharp({T(\Ti{K}^{\dagger})_{p^{\dagger}}} \cdot z^{\dagger}) l \\
+&\left(3\left[ \frac{m}{2}\right]-2 \right) \sharp ({T(\Ti{K}^{\dagger})_{p^{\dagger}}} \cdot z^{\dagger})vl
+\sharp ({T(\Ti{K}^{\dagger})_{p^{\dagger}}} \cdot z^{\dagger})(v-1)l\\
-&\left(\Ch{j}_{0,\bullet}(\Gamma_{p^{\dagger}})+j_{0,\bullet}'(\Gamma_{p^{\dagger}})\right)+
\frac{3}{2}r\chi_{\varphi_{\Delta^{\dagger}}}(\Gamma_{p^{\dagger}})  + \Ch{j}_{0,1}(\Gamma_{p^{\dagger}}) + \j_{0,1}'(\Gamma_{p^{\dagger}})
\end{align*}
by Lemma~\ref{6/15,1}.
By $\chi_{\varphi_{\Delta^{\dagger}}}(\Gamma_{p^{\dagger}})\geq \sharp (T(\Ti{K}^{\dagger})_{p^{\dagger}} \cdot z^{\dagger}) l /12$
and $2\sharp \Gamma_{p^{\dagger}}+(v-1)l\sharp (T(\Ti{K}^{\dagger})_{p^{\dagger}} \cdot z^{\dagger})  \geq \left(\Ch{j}_{0,\bullet}(\Gamma_{p^{\dagger}})+j_{0,\bullet}'(\Gamma_{p^{\dagger}})\right)$,
we have 
 \begin{align*}
K_{f}^2 (\Gamma_{p^{\dagger}}) \geq &
  \frac{\gamma}{2} (\Bb{D},\Gamma_{p^{\dagger}})_{z^{\dagger}} 
\sharp({T(\Ti{K}^{\dagger})_{p^{\dagger}}} \cdot z^{\dagger}) l \\
+&\left(3\left[ \frac{m}{2}\right]-n \right) \sharp ({T(\Ti{K}^{\dagger})_{p^{\dagger}}} \cdot z^{\dagger})vl
+\left(\frac{1}{8}r-2 \right)\sharp ({T(\Ti{K}^{\dagger})_{p^{\dagger}}} \cdot z^{\dagger})l.
\end{align*}
Hence it is sufficient to show 
\begin{align*}
4 \left( \left(3\left[ \frac{m}{2}\right]-2 \right)v
+\frac{1}{8}r-2 \right) 
+2m(v+2)\left(2\gamma  -C_{2,2}\right) \geq 0.
\end{align*}
\Red{It clearly holds} by (\ref{6/9,1}).
\end{proof}

\begin{prop}
Assume $r \geq \Rm{max}\{60 + \frac{12}{n^2-1}-\frac{96}{n+1}, 4n\}$ and $(C3)$ holds.
Put
\[C_{3,n}:=(n-1)(2n-1).\]
Then it holds that
\begin{align*}
2\delta nK_{f}^{2}(\Gamma_{p}) \geq C_{3,n}\sharp \Ti{K}.
 \end{align*}
\end{prop}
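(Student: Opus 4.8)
The plan is to run the same localization-and-descent argument used for condition $(C2)$, the only structural change being that the distinguished horizontal branch now meets the fiber transversally rather than tangentially. First I would record what the failure of $(C1)$ and $(C2)$ buys: by the Remark following the proposition treating $(C1)$ (which rests on Corollary~\ref{stabfree}), every singular point of $R_{\dagger}$ lying over $\Gamma_{p}$ sits at a node of $\Gamma_{p^{\dagger}}$, and no local analytic branch of $(R_{\dagger})_{h}$ is tangent to a local branch of $\Gamma_{p^{\dagger}}$ there. Condition $(C3)$ then furnishes a node $z^{\dagger}$ and a branch $D\in \Rm{L}_{\Rm{tr}}((R_{\dagger})_{h},z^{\dagger})$. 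Since $D$ is transversal to the node we have $(D,\Gamma_{p^{\dagger}})_{z^{\dagger}}=2$, so $\gamma:=1-1/(D,\Gamma_{p^{\dagger}})_{z^{\dagger}}=\tfrac{1}{2}$ and no tangency remains to resolve; hence I may localize with $\Ch{\psi}_{\Delta^{\dagger}}=\Rm{id}_{W_{\Delta^{\dagger}}}$.

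The structural input beyond $(C2)$ is the following orbit count. Put $\Bb{D}:=\Rm{Stab}_{T(\Ti{K}^{\dagger})_{p^{\dagger}}}(z^{\dagger})\cdot D$. A nontrivial element of $T(\Ti{K}^{\dagger})_{p^{\dagger}}$ is a translation, so by definition it fixes no horizontal branch pointwise; an element of $\Rm{Stab}_{T(\Ti{K}^{\dagger})_{p^{\dagger}}}(z^{\dagger})$ fixing $D$ must therefore restrict to the identity on $D$ (as $D$ is transversal, $\varphi_{\Delta^{\dagger}}|_{D}$ is a local isomorphism), and so it is the identity. Thus the branch-stabilizer of $D$ is trivial, $\Bb{D}$ consists of exactly $\sharp\Rm{Stab}_{T(\Ti{K}^{\dagger})_{p^{\dagger}}}(z^{\dagger})$ distinct transversal branches, and $(\Bb{D},\Gamma_{p^{\dagger}})_{z^{\dagger}}=2\,\sharp\Rm{Stab}_{T(\Ti{K}^{\dagger})_{p^{\dagger}}}(z^{\dagger})$.

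Next I would spread this configuration over the full orbit by Lemma~\ref{lifting2}: there are $\sharp(T(\Ti{K}^{\dagger})_{p^{\dagger}}\cdot z^{\dagger})\,l$ nodes of $\Gamma_{p^{\dagger}}$ carrying analytically equivalent data, whence $\sharp\Gamma_{p^{\dagger}}\ge \sharp(T(\Ti{K}^{\dagger})_{p^{\dagger}}\cdot z^{\dagger})\,l$ and $\chi_{\varphi_{\Delta^{\dagger}}}(\Gamma_{p^{\dagger}})\ge \sharp(T(\Ti{K}^{\dagger})_{p^{\dagger}}\cdot z^{\dagger})\,l/12$. Applying Lemma~\ref{6/15,1} with $\Ch{\psi}_{\Delta^{\dagger}}=\Rm{id}$, I would bound $K_{f}^{2}(\Gamma_{p^{\dagger}})$ below by the ramification contribution $\tfrac{(n-1)^{2}}{n}\sum_{\Ha{z}^{\dagger}}\alpha_{0}^{+}(\Gamma_{p^{\dagger}})_{\Ha{z}^{\dagger}}\ge \tfrac{(n-1)^{2}}{n}\gamma(\Bb{D},\Gamma_{p^{\dagger}})_{z^{\dagger}}\,\sharp(T(\Ti{K}^{\dagger})_{p^{\dagger}}\cdot z^{\dagger})\,l$ together with the multiplicity contribution of the singular points (each of multiplicity at least $n$, and at least $\sharp\Rm{Stab}_{T(\Ti{K}^{\dagger})_{p^{\dagger}}}(z^{\dagger})$ coming from the branches of $\Bb{D}$), after absorbing the negative $j_{0,\bullet}'$-terms into the positive $\tfrac{n^{2}-1}{n}r\chi_{\varphi_{\Delta^{\dagger}}}$-term via the node count and the hypothesis on $r$.

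Finally I would descend to $\Gamma_{p}$ through $l\,K_{f}^{2}(\Gamma_{p})\ge K_{f}^{2}(\Gamma_{p^{\dagger}})$ (Proposition~\ref{localbc}) and replace $\sharp\Ti{K}$ using $\sharp\Ti{K}\le \delta\,\sharp T(\Ti{K}^{\dagger})_{p^{\dagger}}=\delta\,\sharp\Rm{Stab}_{T(\Ti{K}^{\dagger})_{p^{\dagger}}}(z^{\dagger})\cdot\sharp(T(\Ti{K}^{\dagger})_{p^{\dagger}}\cdot z^{\dagger})$ (Corollary~\ref{groupstr} and orbit--stabilizer). Cancelling the common factor $\sharp(T(\Ti{K}^{\dagger})_{p^{\dagger}}\cdot z^{\dagger})$ reduces the claim to one inequality in $n$ and $r$ whose part proportional to $\sharp\Rm{Stab}_{T(\Ti{K}^{\dagger})_{p^{\dagger}}}(z^{\dagger})$ must dominate $C_{3,n}=(n-1)(2n-1)$ and whose remainder stays non-negative once $r\ge 60+\tfrac{12}{n^{2}-1}-\tfrac{96}{n+1}$. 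I expect the main obstacle to be precisely this local bookkeeping: apportioning the $\sharp\Rm{Stab}_{T(\Ti{K}^{\dagger})_{p^{\dagger}}}(z^{\dagger})$ transversal branches between the ramification term $\alpha_{0}^{+}$ and the multiplicity term $\sum_{k}\alpha_{k}$ so that the combined coefficient meets $(n-1)(2n-1)$ for every admissible multiplicity, together with the separate treatment of $n=2$ (where a node already has multiplicity in $n\Z$) and $n\ge 3$, just as in the argument for $(C2)$.
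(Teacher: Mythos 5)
Your outline is the paper's own proof in all structural respects: pass to $\Gamma_{p^{\dagger}}$ through the diagram (\ref{diagram}), use the remark after the $(C1)$ case (via Corollary~\ref{stabfree}) to place every singular point at a node, take the $\Rm{Stab}_{T(\Ti{K}^{\dagger})_{p^{\dagger}}}(z^{\dagger})$-orbit $\Bb{D}$ of a transversal branch, spread it over the $T(\Ti{K}^{\dagger})_{p^{\dagger}}$-orbit by Lemma~\ref{lifting2}, absorb the $j$-terms into $\frac{n^{2}-1}{n}r\chi_{\varphi_{\Delta^{\dagger}}}$ via $\sharp\Gamma_{p^{\dagger}}\geq \sharp(T(\Ti{K}^{\dagger})_{p^{\dagger}}\cdot z^{\dagger})l$, descend by Proposition~\ref{localbc}, and finish with Corollary~\ref{groupstr} and orbit--stabilizer. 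Two of your deviations are harmless or mildly better. Taking $\Ch{\psi}_{\Delta^{\dagger}}=\Rm{id}$ instead of the paper's single blow-up at each orbit point merely moves the singular points from the $\Ch{\alpha}_{k}$-count (coefficient $(n^{2}-1)k-n$, paid for by $\Ch{j}_{0,\bullet}(\Gamma_{p^{\dagger}})\leq\sharp(T(\Ti{K}^{\dagger})_{p^{\dagger}}\cdot z^{\dagger})l$) into the $\Ha{\alpha}_{k}$-count (coefficient carrying the extra $-2(n-1)^{2}/n$); the totals coincide. And your trivial-branch-stabilizer argument, giving $(\Bb{D},\Gamma_{p^{\dagger}})_{z^{\dagger}}=2\sharp\Rm{Stab}_{T(\Ti{K}^{\dagger})_{p^{\dagger}}}(z^{\dagger})$ exactly, is correct and sharper than the inequality $(\Bb{D},\Gamma_{p^{\dagger}})_{z^{\dagger}}\geq\sharp\Rm{Stab}_{T(\Ti{K}^{\dagger})_{p^{\dagger}}}(z^{\dagger})$ the paper actually uses.

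The genuine gap is that you stop exactly where the content of the proposition lies: you never verify that the assembled contributions reach $C_{3,n}=(n-1)(2n-1)$, and you explicitly defer this as expected bookkeeping. This cannot be waved through, because the constants have zero slack. The paper reduces, via $\gamma\geq 1/2$, $2m\geq 2m_{\Bb{D}}=(\Bb{D},\Gamma_{p^{\dagger}})_{z^{\dagger}}\geq\sharp\Rm{Stab}_{T(\Ti{K}^{\dagger})_{p^{\dagger}}}(z^{\dagger})$ and $m\in n\Z_{>0}\cup(n\Z_{>0}+1)$, to the single inequality
\begin{align*}
2n \left( \left((n^{2}-1)\left[ \tfrac{m}{n}\right]-n \right)
-4\tfrac{(n-1)^2}{n}+\tfrac{n^2-1}{12n}r \right)
+2m\left((n-1)^2  -C_{3,n}\right) \geq 0,
\end{align*}
notes that the coefficient of $m$ is non-negative so the critical case is the minimal admissible multiplicity $m=n+1$ (not large $m$, where your $4n(n-1)>C_{3,n}$ slack appears), and there the first check $2n(n^{2}-1)+2(n+1)\left((n-1)^{2}-C_{3,n}\right)$ is \emph{identically zero} --- this equality is what calibrates $C_{3,n}$ --- while the second check $\frac{n^2-1}{12n}r-\frac{4(n-1)^2}{n}-n\geq 0$ is \emph{exactly} the hypothesis $r\geq 60+\frac{12}{n^{2}-1}-\frac{96}{n+1}$. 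So the local bookkeeping you flag as the obstacle is the whole proof, and your ramification term alone (contributing only $2\gamma(n-1)^{2}=(n-1)^{2}<C_{3,n}$ per stabilizer element) confirms it cannot be skipped. One further miscalibration: the separate treatment of $n=2$ you anticipate does not occur in the $(C3)$ case --- that split is forced in $(C2)$ only by odd-multiplicity points pushing exceptional curves into the branch locus, whereas the transversal configuration is handled uniformly in $n$.
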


\begin{proof}
Take $z^{\dagger} \in R_{\dagger}$ and 
$D \in L_{\Rm{tr}}(R_{\dagger},z^{\dagger})$.
Denote by $\Bb{D}$ the 
$\Rm{Stab}_{T(\Ti{K}^{\dagger})_{p^{\dagger}}}(z^{\dagger}$)-orbit of $D$.
We introduce the following notations.
\begin{itemize}
\item $m_{\Bb{D}}:=\Rm{mult}_{z^{\dagger}}\Bb{D}$.
\item $m:=\Rm{min}\{m' \mid m' \geq m_{\Bb{D}}, m' \in n\Z_{>0}$ or $n\Z_{>0}+1\}$.
\end{itemize}
From Lemma~\ref{lifting2},
there exist $\sharp (T(\Ti{K}^{\dagger})_{p^{\dagger}}\cdot z^{\dagger})l$ singular points
of \Red{the branch loci} on $\Gamma_{p^{\dagger}}$ which is \Blue{analytically equivalent} to 
$z^{\dagger} \in \Bb{D}$.
Let $\Ch{\psi}_{\Delta^{\dagger}}:\Ha{W}_{\Delta^{\dagger}}\to W_{\Delta^{\dagger}}$ be the composite of blowing-ups at $z^{\dagger}_{j}$ ($j=1, \cdots, ({T(\Ti{K}^{\dagger})_{p^{\dagger}}} \cdot z^{\dagger}) l$).
Let $\Ha{R}_{\dagger}$ be the branch locus on $\Ha{W}_{\Delta^{\dagger}}$ and
\Red{ $\Ha{\Gamma}_{p^{\dagger}}$ a fiber} of 
 $\Ha{\varphi}:=\varphi_{\Delta^{\dagger}} \circ \Ch{\psi}_{\Delta^{\dagger}}$ 
 over $p^{\dagger}$.
Put $\gamma:=1-1/(D_{1},\Gamma_{p^{\dagger}})_{z^{\dagger}}$.
Then we have 
\begin{align*}
K_{f}^2 (\Gamma_{p^{\dagger}}) \geq &
 \gamma \frac{(n-1)^2}{n} (\Bb{D},\Gamma_{p^{\dagger}})_{z^{\dagger}} 
\sharp({T(\Ti{K}^{\dagger})_{p^{\dagger}}} \cdot z^{\dagger}) l \\
+&\left((n^{2}-1)\left[ \frac{m}{n}\right]-n \right) \sharp ({T(\Ti{K}^{\dagger})_{p^{\dagger}}} \cdot z^{\dagger})l \\
-&2\frac{(n-1)^2}{n}\left(\Ch{j}_{0,\bullet}(\Gamma_{p^{\dagger}})+j_{0,\bullet}'(\Gamma_{p^{\dagger}})\right)+
\frac{n^{2}-1}{n}r\chi_{\varphi_{\Delta^{\dagger}}}(\Gamma_{p^{\dagger}}) .
\end{align*}
By $\chi_{\varphi_{\Delta^{\dagger}}}(\Gamma_{p^{\dagger}})\geq \sharp \Gamma_{p^{\dagger}} /12$,
$\sharp \Gamma_{p^{\dagger}} \geq \sharp (T(\Ti{K}^{\dagger})_{p^{\dagger}} \cdot z^{\dagger}) l$
and $2\sharp \Gamma_{p^{\dagger}} \geq \left(\Ch{j}_{0,\bullet}(\Gamma_{p^{\dagger}})+j_{0,\bullet}'(\Gamma_{p^{\dagger}})\right)$,
we have 
\begin{align*}
K_{f}^2 (\Gamma_{p}) \geq &
 \gamma \frac{(n-1)^2}{n} (\Bb{D},\Gamma_{p^{\dagger}})_{z^{\dagger}} 
\sharp({T(\Ti{K}^{\dagger})_{p^{\dagger}}} \cdot z^{\dagger})  \\
+&\left((n^{2}-1)\left[ \frac{m}{n}\right]-n \right) \sharp ({T(\Ti{K}^{\dagger})_{p^{\dagger}}} \cdot z^{\dagger}) \\
+&\left( -4\frac{(n-1)^2}{n}+\frac{n^2-1}{12n}r\right) 
\sharp ({T(\Ti{K}^{\dagger})_{p^{\dagger}}} \cdot z^{\dagger}).
\end{align*}
Thus, it is sufficient to show
\begin{align*}
2n \left( \left((n^{2}-1)\left[ \frac{m}{n}\right]-n \right)
-4\frac{(n-1)^2}{n}+\frac{n^2-1}{12n}r \right) 
+\left(2 \gamma(n-1)^2  -C_{3,n}\right)(\Bb{D},\Gamma_{p^{\dagger}})_{z^{\dagger}} \geq 0.
\end{align*}
\Blue{Since $D$ is in $L_{\Rm{tr}}((R_{\dagger})_{h},z^{\dagger})$ and 
$\Gamma_{p^{\dagger}}$ has nodes at $z^{\dagger}$,}
we have
$2m \geq 2m_{\Bb{D}} = (\Bb{D},\Gamma_{p^{\dagger}})_{z^{\dagger}}$ 
and $\gamma \geq1/2$.
Hence it suffices to show 
\begin{align}
\label{6/9,3}
2n \left( \left((n^{2}-1)\left[ \frac{m}{n}\right]-n \right)
-4\frac{(n-1)^2}{n}+\frac{n^2-1}{12n}r \right) 
+2m\left((n-1)^2  -C_{3,n}\right) \geq 0.
\end{align}
The coefficient of $m$ in (\ref{6/9,3}) 
\[
2(n^2-1)+2((n-1)^2-C_{3,n})
\]
 is non-negative.
Hence we may assume $m=n+1$ to show (\ref{6/9,3}).
Thus, it suffices to show
\begin{align*}
 &2n(n^2-1)+2(n+1)\left((n-1)^2  -C_{3,n}\right)\geq 0, \\
 &\frac{n^2-1}{12n}r -4\frac{(n-1)^2}{n}-n \geq 0.
\end{align*}
We can check it by simple calculations.
\end{proof}

\begin{prop}
\Blue{One of  $(C1)$, $(C2)$ and $(C3)$ holds.}
\end{prop}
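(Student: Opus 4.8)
The plan is to argue by contradiction, assuming $(C4)$, i.e. that none of $(C1)$, $(C2)$, $(C3)$ holds, and to derive a contradiction. Since $(C1)$ fails, Corollary~\ref{stabfree} (as recorded in the remark following the treatment of $(C1)$) shows that every singular point of $R_{\dagger}$ lies at a node of $\Gamma_{p^{\dagger}}$. Fix a singular point $z^{\dagger}$ of $R_{\dagger}$ on $\Gamma_{p^{\dagger}}$ and let $\Gamma_{1},\Gamma_{2}$ be the two local branches of $\Gamma_{p^{\dagger}}$ at this node. Every local analytic branch of $R_{\dagger}$ at $z^{\dagger}$ is either vertical, hence one of $\Gamma_{1},\Gamma_{2}$, or $\varphi_{\Delta^{\dagger}}$-horizontal; and a horizontal branch is either transversal to $\Gamma_{p^{\dagger}}$ at $z^{\dagger}$ or tangent to one of $\Gamma_{1},\Gamma_{2}$. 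The first possibility is excluded because $(C3)$ fails and the second because $(C2)$ fails, so $R_{\dagger}$ has no horizontal branch at $z^{\dagger}$. As $z^{\dagger}$ is a singular point of the reduced curve $R_{\dagger}$ whose only possible branches are $\Gamma_{1}$ and $\Gamma_{2}$, both must be contained in $R_{\dagger}$; in particular $\Rm{mult}_{z^{\dagger}}R_{\dagger}=2$.

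For $n\geq 3$ this already finishes the proof: a singular point of multiplicity $2$ lies in neither $n\Z_{\geq 1}$ nor $n\Z_{\geq 1}+1$, contradicting Remark~\ref{formalpcc} (equivalently Lemma~\ref{lem1.2}~(1)). Hence the remaining, and main, difficulty is the case $n=2$, where $\Rm{mult}_{z^{\dagger}}R_{\dagger}=2\in 2\Z$ is admissible; I plan to rule it out through a parity analysis of self-intersection numbers. Let $S$ denote the set of irreducible components of $\Gamma_{p^{\dagger}}$ contained in $R_{\dagger}$; by the previous paragraph the two components through $z^{\dagger}$ belong to $S$. For each $C\in S$, no $\varphi_{\Delta^{\dagger}}$-horizontal component of $R_{\dagger}$ can meet $C$: such an intersection would be a singular point of $R_{\dagger}$ carrying a horizontal branch, which by the failure of $(C1)$ cannot lie on $\Gamma_{p^{\dagger},\Rm{sm}}$ and by the failure of $(C2)$ and $(C3)$ cannot lie at a node. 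Consequently the only points blown up on $C$ in the canonical resolution are its nodes with other members of $S$, so the proper transform $\Ti{C}\subset \Ti{R}_{\dagger}$ satisfies $\Ti{C}^{2}=C^{2}-(\text{the number of such nodes})$. Now $\Ti{C}$ is a vertical component of the smooth branch locus $\Ti{R}_{\dagger}$; since it is therefore disjoint from $\Ti{R}_{\dagger}-\Ti{C}$ and $\Ti{R}_{\dagger}$ is $n$-divisible in $\Rm{Pic}$, its self-intersection is divisible by $n=2$, exactly as in Remark~\ref{pcc}. As $C^{2}$ is even, the number of nodes of $R_{\dagger}$ on $C$ is even; for a component meeting another member of $S$ it is at least $1$, hence exactly $2$, so both cyclic neighbours of $C$ lie in $S$.

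Propagating this around the cycle of components of $\Gamma_{p^{\dagger}}$ (the degenerate types $\Rm{I}_{1}$ and $\Rm{I}_{2}$ being immediate, since there a single relevant node already places the whole fiber in $R_{\dagger}$) forces $S$ to consist of all components, i.e. $\Rm{Supp}\,\Gamma_{p^{\dagger}}\subset R_{\dagger}$. To conclude, observe that the horizontal part $(R_{\dagger})_{h}$ is a nonzero horizontal divisor, since its intersection number with a general fiber equals $r>0$; therefore it meets $\Gamma_{p^{\dagger}}$ at some point $w$. Because the whole fiber lies in $R_{\dagger}$, the point $w$ is a singular point of $R_{\dagger}$ carrying a horizontal branch; it lies either on $\Gamma_{p^{\dagger},\Rm{sm}}$, contradicting the consequence of $\lnot(C1)$, or at a node, contradicting $\lnot(C2)\wedge\lnot(C3)$. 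This contradiction shows that $(C4)$ is impossible, so one of $(C1)$, $(C2)$, $(C3)$ must hold. The step I expect to require the most care is the $n=2$ parity argument together with the verification that no horizontal component can meet a vertical component of $R_{\dagger}$, which is what both drives the propagation and delivers the final contradiction.
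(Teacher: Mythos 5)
Your proof is correct and takes essentially the same route as the paper's: from the failure of $(C1)$, $(C2)$ and $(C3)$ you force every singular point of the branch locus to lie at a node with only the two vertical branches (hence $n=2$ by the multiplicity constraint of Lemma~\ref{lem1.2}), use evenness of the self-intersection of vertical components of the smooth branch locus to propagate containment around the cycle of the $\Rm{I}_{c}$-fiber, and conclude $(\Gamma_{p^{\dagger}})_{\Rm{red}}\subset R_{\dagger}$, a contradiction. Your per-component parity count and your explicit verification that the whole fiber cannot lie in $R_{\dagger}$ (its horizontal part must meet the fiber, producing a forbidden singular point) merely spell out what the paper compresses into ``$\Ti{\Gamma}_{2}^{2}\leq -4$, hence a second blow-up at the other node'' and the asserted ``$\Gamma_{p^{\dagger}}\not\subset R_{\dagger}$''.
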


\begin{proof}
\Red{Supposing that (C1), (C2) and (C3) do not hold,
we lead a contradiction.}
We note that $\Gamma_{p^{\dagger}} \not\subset R_{\dagger}$ in this assumption.
Hence we have $\Gamma_{p} \not \subset R$.
Take an arbitrary singular point $z \in R$.
\Blue{
Let $z^{\dagger} \in R_{\dagger}$ be the singular point such that $\Pi(z^{\dagger})=z$,
where the morphism $\Pi$ is in (\ref{diagram}).
Since (C1) does not hold,
we have $\Rm{Stab}_{T(\Ti{K}^{\dagger})_{p^{\dagger}}}(z^{\dagger} )\neq \{\Rm{id}\}$.
By Corollary~\ref{stabfree}, the singular point $z^{\dagger} \in R_{\dagger}$ is on a node of $\Gamma_{p^{\dagger}}$.
Hence the singular point $z \in R$ is on a node of $\Gamma_{p}$.
Let $\Gamma_1$ and $\Gamma_2$ be local analytic branches of $(\Gamma)_{\Rm{red}}$ at $z$.
}
Since (C2) and (C3) do not hold,
$R$ \Blue{consists of} $\Gamma_1$ and $\Gamma_2$ locally around $z$.
Hence it occurs only if $n=2$.
If $\Gamma_{p}$ is of type $l \Rm{I}_1$, we have $(\Gamma_{p})_{\Rm{red}} \subset R$.
But it is \Red{a contradiction}.
Hence we may assume that $\Gamma_{p}$ is of type $l \Rm{I}_c$ with $c \geq 2$.
Since $R$ has the singular point $z$,
we \Blue{need to} blow up at $z$ to get $\Ti{R}$.
Let $\Ov{\Gamma}_{2}$ be the irreducible component of $(\Gamma_{p})_{\Rm{red}}$
which contains $\Gamma_2$ and $\Ti{\Gamma}_{2}$ \Blue{the proper transform of} $\Gamma_2$ in  $\Ti{R}$.
By $n=2$, it holds $\Ti{\Gamma}_{2}^{2}\leq -4$.
Hence we need \Red{blowing-ups on $\Ov{\Gamma}_{2}$ at least two times} to get $\Ti{R}$.
Furthermore, 
since an arbitrary singular point of $R$ is on a node of $(\Gamma_{p})_{\Rm{red}}$,
there exists a singular point of $R$ 
on the other node of $(\Gamma_{p})_{\Rm{red}}$ consisting of $\Ov{\Gamma}_{2}$.
\Blue{
Let $z^{\dagger}_{23}$ be the node and $\Ov{\Gamma}_{3}$ another irreducible component of $(\Gamma_{p})_{\Rm{red}}$ passing through $z^{\dagger}_{23}$.
We note that the node $z^{\dagger}_{23}$ of $(\Gamma_{p})_{\Rm{red}}$ consists of the intersection of 
 $\Ov{\Gamma}_{2}$ and $\Ov{\Gamma}_{3}$.}
Since (C2) and (C3) do not hold,
$R$ consists with $\Ov{\Gamma}_{2}$ and $\Ov{\Gamma}_{3}$ locally around $z_{23}$.
Hence we have $\Ov{\Gamma}_{3} \subset R$.
Inductively, we can show $(\Gamma_{p})_{\Rm{red}} \subset R$.
But it is \Blue{a contradiction}.
\end{proof}
\subsection{The proof of Proposition~\ref{estimateK1.5}}
\Blue{Let $\Gamma_{p}$ be} of type $l\Rm{I}_{c}$.
Assume $R$ is smooth locally around $\Gamma_{p}$ and 
has a good ramification point on $\Gamma_{p}$.
We will show 
\begin{align*}
 2 n \delta  K_{f}^{2}(\Gamma_{p}) & \geq  (n-1)^{2}\sharp \Ti{K}.
 \end{align*}
 
Let $z$ be a good ramification point of $\varphi|_{R_{h}}$ and 
\Blue{$\Gamma_{p}$ the fiber through $z$ of type} $l\Rm{I}_{c}$ where $l$ and $c$ are non-negative integers.
Recall the diagram (\ref{diagram}).
\[\xymatrix{
W_{\Delta^{\dagger}} \ar[rd]  \ar@/_1.5pc/[ddr]_{\varphi_{\Delta^{\dagger}}} \ar@/^1.5pc/[rrd]^{\Pi}&  & \\
  &W_{\Delta}\times_{\Delta}\Delta^{\dagger} \ar[r] \ar[d]& W_{\Delta} \ar[d]^{\varphi_{\Delta}} \\
& \Delta^{\dagger}  \ar[r]_{\pi} &\Delta
}\]
Since the ramification index of $\varphi|_{R_{h}}:R_{h} \to B$ at $z$ is greater than $l$,
\Blue{the restriction map} $\varphi_{\Delta{\dagger}}|_{(R_{\dagger})_{h}}:(R_{\dagger})_{h} \to \Delta^{\dagger}$ is ramified 
at $z^{\dagger} \in \Pi^{-1}(z)$. 
We have 
\begin{align*}
K_{f}^{2}(\Gamma_{p^{\dagger}})\geq&
\frac{(n-1)^{2}}{n}\left(((R_{\dagger})_{h}, \Gamma_{p^{\dagger}})_{z^{\dagger}}-1\right)\sharp (T(\Ti{K}^{\dagger})_{p^{\dagger}} \cdot z^{\dagger})l .
%
%
\end{align*}
Hence we have 
\begin{align*}
K_{f}^{2}(\Gamma_{p})\geq&
\frac{(n-1)^{2}}{n}\left( ((R_{\dagger})_{h}, \Gamma_{p^{\dagger}})_{z^{\dagger}}-1\right)\sharp (T(\Ti{K}^{\dagger})_{p^{\dagger}} \cdot z^{\dagger}).
%
%
\end{align*}
So it is sufficient to show
\begin{align*}
 2 \delta (n-1)^2 \left(((R_{\dagger})_{h}, \Gamma_{p^{\dagger}})_{z^{\dagger}}-1\right) %
 - \delta (n-1)^2 \sharp( \Rm{Stab}_{T(\Ti{K}^{\dagger})_{p^{\dagger}}}(z^{\dagger}))  \geq 0 .
 \end{align*} 
From $((R_{\dagger})_{h},\Gamma_{p^{\dagger}})_{z^{\dagger}} 
\geq \sharp( \Rm{Stab}_{T(\Ti{K}^{\dagger})_{p^{\dagger}}}(z^{\dagger}))$,
it suffices to show
\begin{align*}
 2 \delta(n-1)^2 \left(((R_{\dagger})_{h}, \Gamma_{p^{\dagger}})_{z^{\dagger}}-1\right) %
 - \delta (n-1)^2 ((R_{\dagger})_{h},\Gamma_{p^{\dagger}})_{z^{\dagger}}  \geq 0 .
 \end{align*} 
\Blue{By the assumption $((R_{\dagger})_{h},\Gamma_{p^{\dagger}})_{z^{\dagger}} \geq 2$,
 we have the desired inequality}.

\section{Upper bound of the order}

Let $f:S \to B$ be a primitive cyclic covering fibration of type $(g,1,n)$
over an elliptic surface $\varphi:W \to B$.
We consider the upper bound of the order of \Red{the automorphism groups} of $f$.
Let $\Gamma_{p}$ be a fiber of $\varphi$ such that $K_{f}^{2}(\Gamma_{p})>0$.
Put $r_{p}:=\sharp \Rm{Stab}_{H}(p)$.
Since $K_{f}^{2}(\Gamma) \geq 0$ for an arbitrary fiber $\Gamma$,
we have 
\begin{align*}
K_{f}^2 \geq \sharp(H\cdot p) K_{f}^2(\Gamma_{p}).
\end{align*}
By $\sharp G=\sharp K \sharp H = n \sharp \Ti{K} \sharp H$, we have 
\begin{align*}
\sharp G \leq \frac{n\sharp \Ti{K}}{K_{f}^2(\Gamma_{p})}r_{p}K_{f}^2.
\end{align*}
Put 
\begin{align*}
\mu_{n}:=&\frac{12 n^2 \delta}{n^2 -1},\\
\mu_{n}':=&\frac{6 n^{2} \delta }{(n-1)(5n-4)}.
\end{align*}
If $K_{f}^2(\Gamma_{p}) > 0$ , we have
\[
\frac{n\sharp \Ti{K}}{K_{f}^2(\Gamma_{p})} \leq 
\mu_{n}.
\]
If $R$ has a singular point on $\Gamma_{p}$, we have 

\[
 \frac{n\sharp \Ti{K}}{K_{f}^2(\Gamma_{p})} \leq 
\mu_{n}' 
\]
from Proposition~\ref{estimateK2} and \ref{estimateK1}.
\Red{
We recall
\begin{align*}
\delta:=\Rm{min}\{ \sharp \Rm{Aut}(\Gamma_{p},O_{p})|\;
\forall p \in \Delta \textrm{ with }\Gamma_{p} \textrm{ is smooth} \}.
\end{align*}
}
We note that $\delta=2,4$ or $6$.

\begin{thm}
\label{main1}
Let $f:S \to B$ be a \Red{non-locally trivial} primitive cyclic covering fibration of type $(g,1,n)$ 
with $g \geq \Rm{max}\{\frac{30n^2 -47n +25}{n+1}, \frac{7}{2}n(n-1)+1\}$.
Put 
\begin{align*}
\mu_{n}:=&\frac{12 n^2 \delta}{n^2 -1}.
\end{align*}
Assume furthermore that when $g(B)=0$, $f$ has at least $3$ singular fibers.
Let $G$ be a finite subgroup of $\Rm{Aut} (f)$.
Then it holds
\[
  \sharp G \leq \begin{cases}
    6(2g(B)-1)\mu_{n}K_{f}^2 & (g(B) \geq 1 ),\\
     5\mu_{n}K_{f}^2 & (g(B)=0).
  \end{cases}
\]
\end{thm}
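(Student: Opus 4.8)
The plan is to feed the inequality assembled just above into an orbit analysis of the $H$-action on $B$. First I would record that, for every $p\in B$ with $K_{f}^{2}(\Gamma_{p})>0$, writing $r_{p}=\sharp\Rm{Stab}_{H}(p)$ and combining $K_{f}^{2}\geq \sharp(H\cdot p)K_{f}^{2}(\Gamma_{p})$ with $\sharp G=n\sharp\Ti{K}\sharp H$ and $\sharp H=\sharp(H\cdot p)\,r_{p}$, one gets
\[
\sharp G=n\sharp\Ti{K}\,\sharp(H\cdot p)\,r_{p}\leq \frac{n\sharp\Ti{K}}{K_{f}^{2}(\Gamma_{p})}\,r_{p}\,K_{f}^{2}\leq \mu_{n}\,r_{p}\,K_{f}^{2},
\]
where the last step uses Propositions~\ref{estimateK2}, \ref{estimateK1} and \ref{estimateK1.5}, the largest of whose constants is $\mu_{n}$ because $\mu_{n}>\mu_{n}'$ for $n\geq2$. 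The hypothesis $g\geq\Rm{max}\{\frac{30n^{2}-47n+25}{n+1},\frac{7}{2}n(n-1)+1\}$ is precisely the translation, through $r=\frac{2(g-1)}{n-1}$, of the thresholds on $r$ demanded by those propositions. Writing $P:=\{p\in B\mid K_{f}^{2}(\Gamma_{p})>0\}$, I would note that $P$ is finite and $H$-stable (since $K_{f}^{2}(\Gamma_{p})$ depends only on the fiber), and that non-local triviality forces $K_{f}^{2}>0$, hence $P\neq\emptyset$. The whole theorem then reduces to exhibiting one $p\in P$ with $r_{p}\leq 6(2g(B)-1)$ when $g(B)\geq1$, and with $r_{p}\leq5$ when $g(B)=0$.

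For $g(B)\geq1$ I would bound \emph{every} point stabilizer. An element fixing $p$ acts faithfully on the cotangent line at $p$, so $\Rm{Stab}_{H}(p)$ is cyclic; for $g(B)=1$ its order is at most $6$, and for $g(B)\geq2$ Wiman's bound on cyclic automorphisms gives order at most $4g(B)+2$. As $4g(B)+2\leq 6(2g(B)-1)$ for all $g(B)\geq1$, any choice of $p\in P$ yields the first bound.

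The genuinely delicate case is $g(B)=0$, where $B=\Bb{P}^{1}$ and $H$ is a finite subgroup of $\Rm{PGL}_{2}(\Bb{C})$, so cyclic, dihedral, tetrahedral, octahedral or icosahedral. For the three polyhedral groups every point stabilizer has order at most $5$ (with $5$ attained only in the icosahedral case), so any $p\in P$ suffices. For a cyclic or dihedral group the only points whose stabilizer can exceed $5$ are the at most two poles of the rotation axis; here I would invoke the standing hypothesis that $f$ has at least three singular fibers, each of which lies in $P$, so that $\sharp P\geq3$ and $P$ must contain a non-pole, with stabilizer of order at most $2$. Either way some $p\in P$ has $r_{p}\leq5$.

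The main obstacle is exactly this last step: one must run through the classification of finite subgroups of $\Rm{PGL}_{2}(\Bb{C})$ and verify, orbit by orbit, that the points of maximal ramification are too few to absorb all of $P$ once $\sharp P\geq3$. Along the way I would also want to pin down the two supporting facts used above, namely that non-local triviality gives $K_{f}^{2}>0$ and that every singular fiber of $f$ contributes positively to $K_{f}^{2}(\Gamma_{p})$, so that the ``three singular fibers'' hypothesis really does yield $\sharp P\geq3$.
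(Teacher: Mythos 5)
Your proposal is correct, and in outline it is the paper's own proof: the reduction $\sharp G = n\sharp\Ti{K}\,\sharp(H\cdot p)\,r_{p} \leq \frac{n\sharp\Ti{K}}{K_{f}^{2}(\Gamma_{p})}\,r_{p}\,K_{f}^{2} \leq \mu_{n}\,r_{p}\,K_{f}^{2}$ is exactly the computation carried out at the start of Section~6 (including your observations that $\mu_{n}$ dominates the constants of Propositions~\ref{estimateK2}, \ref{estimateK1} and \ref{estimateK1.5}, and that the genus hypothesis is the translation via $r=\frac{2(g-1)}{n-1}$ of the thresholds on $r$), and your cases $g(B)=1$ (point stabilizer of order at most $6$) and $g(B)=0$ (polyhedral stabilizers of order at most $5$; for cyclic or dihedral $H$ only the two poles have large stabilizer, and three singular fibers force a usable point off the poles with $r_{p}\leq 2$) coincide with the paper's cases (ii) and (iii) — you in fact make explicit the pole argument the paper only sketches. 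The one genuinely different step is $g(B)\geq 2$: the paper applies the Hurwitz formula to the quotient $\pi:B \to B/H$ and runs a case analysis on $g(B/H)$ and the number of branch points to get $r_{p}\leq 6(2g(B)-1)$ for every $p\in B$, whereas you note that $\Rm{Stab}_{H}(p)$ is cyclic (faithful action on the cotangent line) and invoke Wiman's bound $4g(B)+2\leq 6(2g(B)-1)$. That is valid, shorter, and yields a sharper intermediate constant; what the paper's route buys is self-containedness (only the Hurwitz formula) and it is the computation that actually calibrates the constant $6(2g(B)-1)$ in the statement. The two supporting facts you flag — $K_{f}^{2}>0$ for non-locally trivial $f$, and positivity of $K_{f}^{2}(\Gamma_{p})$ at singular fibers of $f$ — are used just as tacitly by the paper (the latter is in effect delivered by the three propositions, whose lower bounds are positive multiples of $\sharp\Ti{K}$ in every configuration a singular fiber can produce), so leaving them at that level does not open a gap relative to the paper's own proof.
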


\begin{proof}
Put $r_{p}=\sharp \Rm{Stab}_{H}(p)$ for $p \in B$.
Let $\pi:B\to B/H$ be the quotient map of $B$ by $H$.
Note that $r_{p}$ \Blue{is the ramification} index of $\pi$ at $p$.

\smallskip

(i) The case of $g(B)\geq 2$.

\smallskip

We denote the genus of $B/H$ by $g(B/H)$.
From the Hurwitz formula, we get
\begin{align*}
2g(B)-2=\sharp H \left(2g(B/H)-2+\sum_{i=1}^s \frac{r_{i}-1}{r_{i}}\right), 
\end{align*}
where $s$ is the number of  ramification points and $r_{i}$ is the ramification index.
Put
\begin{align*}
T:=2g(B/H)-2+\sum_{i=1}^{s} \frac{r_{i}-1}{r_{i}},
\end{align*}
which \Blue{is positive}.
If $g(B/H)\geq 2$, then we get $\sharp H\leq g(B)-1$ by $T\geq 2$, and 
it follows that $r_{i}\leq \sharp H \leq g(B)-1$ for any $i=1, \cdots s$.

Assume that $g(B/H)=1$. Then we get  $s>0$ by $T>0$.
By $r_{i}\geq 2$ for any $i=1, \cdots s$,
we get
$1-1/r_{i}\geq 1/2.$
Therefore we obtain 
$r_{i}\leq \sharp H \leq 4(g(B)-1)$
for any $i=1, \cdots s$ by $T \geq 1/2$. 

Assume that $g(B/H)=0$.
When $s\geq 5$, we get 
$r_{i}\leq \sharp H \leq 4(g(B)-1)$
for any $i=1, \cdots s$ by $T\geq 1/2$.
When $s=4$, one of $r_{i}$ is not less than $3$.
So we get 
$r_{i}\leq \sharp H \leq 12(g(B)-1)$
for any $i=1, \cdots s$ by $T \geq 1/6$. 
When $s=3$, we may assume $r_{1}\geq r_{2}\geq r_{3}$.
By the definition of $T$,
we get  
\begin{align*}
r_{1}\leq \sharp H=\frac{2g(B)-2}{1-\frac{1}{r_{1}}-\frac{1}{r_{2}}-\frac{1}{r_{3}}},
\end{align*}
so we obtain 
\begin{align*}
r_{1}-1-\frac{r_{1}}{r_{2}}-\frac{r_{1}}{r_{3}} \leq 2g(B)-2.
\end{align*}
If $r_{3}=2$, then  $r_{2}\geq 3$, so we get 
\begin{align*}
r_{1}-1-\frac{r_{1}}{3}-\frac{r_{1}}{2} \leq 2g(B)-2.
\end{align*}
Hence we get
$r_{1}\leq 6(2g(B)-1)$.
If $r_{2}\geq r_{3}\geq 3$, we get
\Blue{$r_{1}\leq 3(2g(B)-1)$.}
Therefore we obtain 
$r_{p}\leq 6(2g(B)-1)$
for any $p\in B$.

\smallskip

(ii) The case of $g(B)=1$.

We \Blue{do not have to consider} a translation, since it has no fixed points.
Then it is well known that the order of \Red{the automorphism groups} of $B$ which fixes a point is at most $6$.
So the order of \Blue{the stabilizer} of $H$ is at most $6$, and we get $r_{p}\leq 6$.

\smallskip 

(iii) The case of $g(B)=0$

If $H$ is neither a cyclic group nor a dihedral group, the order of \Blue{the stabilizer} of $H$ is at most $5$.
So we may assume $H$ is either a cyclic group or a dihedral group.
It is well known that a rational pencil with at most two singular fibers is iso-trivial.
If the number of \Red{singular fibers} of $f$ is at least three, then 
there exist a point $p \in B$ such that $\mu_{n} \geq n \sharp \Ti{K}/K_{f}^{2}(\Gamma_{p})$
 and $r_p \leq 2$.
Hence we may assume $r_{p} \leq 2$.
\end{proof}

\begin{thm}
\label{main2}
Let $f:S \to B$ be a \Red{non-locally trivial} primitive cyclic covering fibration of type $(g,1,n)$ 
with $g \geq \Rm{max}\{\frac{30n^2 -47n +25}{n+1},  \frac{7}{2}n(n-1)+1\}$.
Put 
\begin{align*}
\mu_{n}':=\frac{6 n^{2} \delta }{(n-1)(5n-4)}.
\end{align*}
\Red{
Assume furthermore that 
the branch locus $R$ has singular points on at least three $($resp. one$)$ fibers 
when $g(B)=0$ $($resp. $g(B)\geq 1$$)$.
}
Let $G$ be a finite subgroup of $\Rm{Aut} (f)$.
Then it holds
\[
  \sharp G \leq \begin{cases}
    6(2g(B)-1)\mu_{n}'K_{f}^2 & (g(B) \geq 1 ),\\
     5\mu_{n}' K_{f}^2 & (g(B)=0).
  \end{cases}
\]
\end{thm}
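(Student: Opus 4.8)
The plan is to run the proof of Theorem~\ref{main1} almost verbatim, replacing the multiplier $\mu_{n}$ by $\mu_{n}'$ and taking care that the fibre at which we localize $K_{f}^{2}$ carries a singular point of $R$. Recall the basic inequality established at the beginning of this section: for any $p\in B$ with $K_{f}^{2}(\Gamma_{p})>0$ one has
\[
\sharp G \leq \frac{n\sharp\Ti{K}}{K_{f}^{2}(\Gamma_{p})}\,r_{p}\,K_{f}^{2}, \qquad r_{p}:=\sharp\Rm{Stab}_{H}(p),
\]
and, whenever $R$ has a singular point on $\Gamma_{p}$, Propositions~\ref{estimateK2} and \ref{estimateK1} give $n\sharp\Ti{K}/K_{f}^{2}(\Gamma_{p})\leq\mu_{n}'$ (in particular $K_{f}^{2}(\Gamma_{p})>0$). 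Hence it suffices to exhibit a fibre $\Gamma_{p}$ on which $R$ is singular and whose stabilizer index $r_{p}$ is bounded by $6(2g(B)-1)$ when $g(B)\geq 1$ and by $5$ when $g(B)=0$.

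First I would record that the set $\mathcal{B}:=\{p\in B\mid R\text{ has a singular point on }\Gamma_{p}\}$ is finite and $H$-invariant. Finiteness is clear, since $R$ has only finitely many singular points. For the invariance, note that $\Sigma$ is normal not merely in $K$ but in all of $G$: this follows as in Section~4 from $\Sigma\lhd\Rm{Aut}(F)$ for a smooth fibre $F$ together with the fact that $\sigma$ is a single global fibre-preserving automorphism. Consequently the whole of $G$ lifts to $\Ti{S}$ and descends to an action on $\Ti{W}$ preserving $\Ti{R}$, hence to an action on $W$ preserving $R$ and lying over the $H$-action on $B$; therefore $R$ is singular over $p$ if and only if it is singular over $\kappa_{B}(p)$, so $\mathcal{B}$ is $H$-invariant. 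By hypothesis $\mathcal{B}\neq\emptyset$ when $g(B)\geq 1$ and $\sharp\mathcal{B}\geq 3$ when $g(B)=0$.

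Next I would quote, unchanged, the stabilizer bounds derived inside the proof of Theorem~\ref{main1}, which concern only the action of $H$ on $B$ and are independent of the chosen fibre. When $g(B)\geq 2$ one has $r_{p}\leq 6(2g(B)-1)$ for every $p\in B$, and when $g(B)=1$ one has $r_{p}\leq 6=6(2g(B)-1)$ for every $p$; choosing any $p\in\mathcal{B}$ then yields the asserted bound. When $g(B)=0$ and $H$ is neither cyclic nor dihedral, every point satisfies $r_{p}\leq 5$, so once more any $p\in\mathcal{B}$ works and gives $\sharp G\leq 5\mu_{n}'K_{f}^{2}$.

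The remaining case, $g(B)=0$ with $H$ cyclic or dihedral, is exactly where the three-fibre hypothesis is used. Here the points of $\Bb{P}^{1}$ with $r_{p}\geq 3$ are confined to the (at most two) fixed points of the cyclic rotation, respectively the two poles of the dihedral group, while every other point satisfies $r_{p}\leq 2$. Since $\mathcal{B}$ is $H$-invariant with $\sharp\mathcal{B}\geq 3$, at least one $p\in\mathcal{B}$ avoids these exceptional points and hence has $r_{p}\leq 2\leq 5$, so $\sharp G\leq 5\mu_{n}'K_{f}^{2}$ in every $g(B)=0$ subcase. I expect the only genuinely new point (and thus the main obstacle) to be the bookkeeping that justifies the $H$-invariance of $\mathcal{B}$, namely checking that the full group $G$ descends to $W$ compatibly with $H$ and preserves $R$; the stabilizer estimates themselves are simply inherited from the proof of Theorem~\ref{main1}.
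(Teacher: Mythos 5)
Your proposal is correct and takes essentially the same route as the paper: Theorem~\ref{main2} is proved there by applying the inequality $\sharp G \leq \frac{n\sharp\Ti{K}}{K_{f}^{2}(\Gamma_{p})}\,r_{p}\,K_{f}^{2}$ together with the bound $n\sharp\Ti{K}/K_{f}^{2}(\Gamma_{p})\leq \mu_{n}'$ (Propositions~\ref{estimateK2} and \ref{estimateK1}) at a fiber on which $R$ is singular, and then reusing verbatim the stabilizer bounds on $r_{p}$ from the proof of Theorem~\ref{main1}, exactly as you do. Your one deviation, the $H$-invariance of the set $\mathcal{B}$, is superfluous (and its justification via descent of all of $G$ to $W$ is more than the paper establishes): since a cyclic or dihedral subgroup of $\Rm{Aut}(\Bb{P}^{1})$ has at most two points with $r_{p}\geq 3$, the hypothesis $\sharp\mathcal{B}\geq 3$ alone already produces a point of $\mathcal{B}$ with $r_{p}\leq 2$.
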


\begin{cor}
\label{biell}
Let $f:S \to B$ be a \Red{non-locally trivial} bielliptic fibration 
with $g \geq 17$.
\Red{
Assume furthermore that 
the branch locus $R$ has singular points on at least three $($resp. one$)$ fibers 
when $g(B)=0$ $($resp. $g(B)\geq 1$$)$.
}
Let $G$ be a finite subgroup of $\Rm{Aut} (f)$.
Then it holds
\[
  \sharp G \leq \begin{cases}
    24\delta(2g(B)-1)K_{f}^2 & (g(B) \geq 1 ),\\
     20 \delta  K_{f}^2 & (g(B)=0).
  \end{cases}
\]
\end{cor}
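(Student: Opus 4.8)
The plan is to recognize Corollary~\ref{biell} as the special case $n=2$ of Theorem~\ref{main2} and to carry out the resulting arithmetic simplification. First I would recall from the introduction that any bielliptic fibration of genus $g \geq 6$ is necessarily a primitive cyclic covering fibration of type $(g,1,2)$; since the hypothesis $g \geq 17$ certainly forces $g \geq 6$, the fibration $f$ falls within the scope of Theorem~\ref{main2} with $n=2$, and it remains only to verify that the remaining hypotheses specialize correctly and to evaluate the constants.

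Next I would check that the genus hypothesis of Theorem~\ref{main2} is met for $n=2$. The two expressions inside the maximum become $\frac{30\cdot 4 - 47\cdot 2 + 25}{2+1} = \frac{51}{3} = 17$ and $\frac{7}{2}\cdot 2\cdot 1 + 1 = 8$, so the required lower bound $\max\{17,8\} = 17$ is exactly the assumption $g \geq 17$ in the corollary. The singularity condition on the branch locus $R$ --- singular points on at least three (resp. one) fibers when $g(B)=0$ (resp. $g(B)\geq 1$) --- is identical to that of Theorem~\ref{main2}, as is the non-locally-trivial assumption, so both transfer verbatim.

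Finally I would evaluate the constant $\mu_n'$ at $n=2$. Substituting into $\mu_n' = \frac{6 n^{2}\delta}{(n-1)(5n-4)}$ gives $\mu_2' = \frac{6\cdot 4\cdot \delta}{1\cdot 6} = 4\delta$. Plugging this into the two cases of Theorem~\ref{main2} yields $6(2g(B)-1)\cdot 4\delta\cdot K_f^2 = 24\delta(2g(B)-1)K_f^2$ when $g(B)\geq 1$ and $5\cdot 4\delta\cdot K_f^2 = 20\delta K_f^2$ when $g(B)=0$, which are precisely the bounds asserted. There is essentially no analytic obstacle here; the only point demanding a little care is confirming that the structural identification of bielliptic fibrations as type $(g,1,2)$ covering fibrations applies unconditionally under $g \geq 17$, after which the statement is a direct substitution into Theorem~\ref{main2}.
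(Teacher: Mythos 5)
Your proposal is correct and coincides with the paper's (implicit) proof: Corollary~\ref{biell} is stated as an immediate specialization of Theorem~\ref{main2} to $n=2$, using the fact noted in the introduction that a bielliptic fibration of genus $g\geq 6$ is a primitive cyclic covering fibration of type $(g,1,2)$. Your arithmetic checks are all accurate: $\max\bigl\{\frac{51}{3},\,8\bigr\}=17$ matches the genus hypothesis, and $\mu_2'=\frac{24\delta}{6}=4\delta$ yields exactly the constants $24\delta(2g(B)-1)$ and $20\delta$.
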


\begin{cor}
\label{main3}
Let $f:S \to B$ be a non-locally trivial primitive cyclic covering fibration of type $(g,1,n)$ 
with $g \geq \Rm{max}\{\frac{30n^2 -47n +25}{n+1},  \frac{7}{2}n(n-1)+1\}$.
Put  $\Rm{Aut} (S/B):=\{(\kappa_{S},\Rm{id}_{B})\in \Rm{Aut}(f)\}$.
Assume that the branch locus $R$ has a singular point.
Then it holds
\[
  \sharp \Rm{Aut} (S/B) \leq \frac{6 n^{2} \delta }{(n-1)(5n-4)}K_{f}^2. \]
\end{cor}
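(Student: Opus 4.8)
The plan is to read the bound directly off the localization estimates of Section~5, observing that $\Rm{Aut}(S/B)$ is precisely the vertical part of the automorphism group. Taking $G=\Rm{Aut}(S/B)$ in the exact sequence $1\to K\to G\to H\to 1$ of Section~4, the horizontal part $H$ is trivial, so $K=\Rm{Aut}(S/B)$. The covering transformation $\sigma$ of Remark~\ref{pcc} lies in $\Rm{Aut}(S/B)$, hence $\Sigma\subset K$, and the sequence $1\to\Sigma\to K\to\Ti{K}\to 1$ gives $\sharp\Rm{Aut}(S/B)=\sharp K=n\,\sharp\Ti{K}$. Thus everything reduces to bounding $\sharp\Ti{K}$ from above in terms of $K_f^2$.

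First I would check that the genus hypothesis supplies the numerical input the estimates require. Since $r=\frac{2(g-1)}{n-1}$, a short computation shows that $g\geq\frac{30n^2-47n+25}{n+1}$ is equivalent to $r\geq 60+\frac{12}{n^2-1}-\frac{96}{n+1}$, while $g\geq\frac{7}{2}n(n-1)+1$ gives $r\geq 7n\geq 4n$. Together these are exactly the hypotheses $r\geq\Rm{max}\{60+\frac{12}{n^2-1}-\frac{96}{n+1},\,7n\}$ of Proposition~\ref{estimateK1}, so that proposition is available, as are Propositions~\ref{estimateK2} and \ref{estimateK1.5}.

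Next I would use the standing assumption that $R$ has a singular point: choose a fiber $\Gamma_p$ on which $R$ is singular. By the singular-point case of Proposition~\ref{estimateK1},
\[
2n\delta\,K_f^2(\Gamma_p)\geq\frac{1}{3}(n-1)(5n-4)\,\sharp\Ti{K},
\]
equivalently $K_f^2(\Gamma_p)\geq\frac{(n-1)(5n-4)}{6n\delta}\,\sharp\Ti{K}$. Because $K_f^2(\Gamma_q)\geq 0$ for every fiber $\Gamma_q$ (Propositions~\ref{estimateK2}, \ref{estimateK1} and \ref{estimateK1.5} exhaust all fiber types, and each yields a nonnegative lower bound) and $K_f^2=\sum_{q\in B}K_f^2(\Gamma_q)$, we get $K_f^2\geq K_f^2(\Gamma_p)$. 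Combining these inequalities gives
\[
\sharp\Ti{K}\leq\frac{6n\delta}{(n-1)(5n-4)}K_f^2,
\]
and multiplying by $n$ yields $\sharp\Rm{Aut}(S/B)=n\,\sharp\Ti{K}\leq\frac{6n^2\delta}{(n-1)(5n-4)}K_f^2$, as claimed.

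This corollary is essentially a packaging of the Section~5 machinery rather than a new argument, so I do not anticipate a serious obstacle; the only points demanding care are verifying that $\Rm{Aut}(S/B)$ coincides with the full vertical group $K$ (so that no horizontal contribution is lost and $\sharp K=n\,\sharp\Ti{K}$ holds), and confirming that the two genus bounds translate into the precise $r$-inequalities required by Proposition~\ref{estimateK1}.
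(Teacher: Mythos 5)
Your proposal is correct and follows essentially the same route as the paper, which proves Corollary~\ref{main3} implicitly via the Section~6 computation $\sharp G\leq\frac{n\,\sharp\Ti{K}}{K_{f}^{2}(\Gamma_{p})}\,r_{p}\,K_{f}^{2}$ specialized to $G=\Rm{Aut}(S/B)$: there $H$ is trivial, so $r_{p}=1$ and $\sharp G=n\,\sharp\Ti{K}$, and the bound $\frac{n\,\sharp\Ti{K}}{K_{f}^{2}(\Gamma_{p})}\leq\mu_{n}'$ at a fiber where $R$ is singular is exactly Proposition~\ref{estimateK1}, combined with nonnegativity of $K_{f}^{2}(\Gamma)$ for all other fibers. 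Your verification that the two genus hypotheses are precisely equivalent to $r\geq 60+\frac{12}{n^{2}-1}-\frac{96}{n+1}$ and $r\geq 7n$ is also the same numerical translation the paper relies on.
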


\section{Example}

We construct bielliptic fibrations $f:S \to B$ with
a large automorphism group.
Let $(E, O)$ be an elliptic curve with the identity element $O \in E$.
The symbol $P \oplus Q$ denotes the sum of $P$, $Q\in E$ \Blue{as the group law}.
Furthermore, we define 
\begin{align*}
[k]P:=\underbrace{P \oplus \cdots \oplus P}_{k-summands}.
\end{align*}
Let $e$ be a positive even number and \Red{$P$ a point} of $E$ of order $e$.
We define \Red{the automorphisms} of $E$ as follows:
\begin{align*}
 \tau_{P}:E \to E \;&;\; Q  \mapsto P\oplus Q \\
 \iota:E \to E \; &; \; Q \mapsto  \ominus Q
 \end{align*}
where $\ominus Q $ denotes the inverse element of $Q$.
Let $Q_{1},Q_{2} \in E$ be distinct points such that $[2]Q_{1}=[2]Q_{2}=P$.
We note that the order of $Q_{1}$ and $Q_{2}$ is $2e$.
We define divisors
\begin{align*}
\Fr{d}_{i}&:=Q_{i}+[3]Q_{i}+\cdots +[2e-1]Q_{i}\;(i=1,2),\\
R_{E}&:=\Fr{d}_{1}+\Fr{d}_{2}.
 \end{align*}
 We recall that $P \oplus Q = R $ if and only if $P + Q \sim R +O$,
 where the symbol $\sim$ means the linearly equivalence.
Since $e$ is even, we have $\Fr{d}_{1}\sim \Fr{d}_{2} \sim eO$.
Hence we consider a double covering 
\begin{align*}
\pi:F:=
\mathrm{Spec}_{E}\left(\Ca{O}_{E}\bigoplus \Ca{O}_{E}(-\Fr{d}_{1}) \right) \to E
\end{align*}
branched along $R_{E} \in |2\Fr{d}_{1}|$.
Let $K_{E}$ be the automorphism group on $E$ generated by $\tau_{P}$ and $\iota$,
then $K_{E}$ \Blue{has order $2e$}. 
Since $\Fr{d}_{1}$ and $R_{E}$ are $K_{E}$-stable, there exists $\Ti{\kappa} \in \Rm{Aut}(F)$
for any $\kappa \in K_{E}$ such that the diagram 
\begin{equation*}
\xymatrix{
F  \ar[r]^{\Ti{\kappa}} \ar[d]_{\pi} & F \ar[d]^{\pi} \\
E \ar[r]_{\kappa}           & E
}
\end{equation*}
commutes.

Put $W:=\Bb{P}^{1} \times E$ and let $\varphi:W \to \Bb{P}^1$ be the projection.
An action of $\kappa \in K_{E}$ induces the following \Blue{action on} $W$:
\[
\kappa: W \to W\; : \; (x,y)\mapsto (x,\kappa(y))
\]
Thus, we can regard $K_{E}$ as a subgroup of $\Rm{Aut}(W/\Bb{P}^1)$.
Let $R_{h}$ and $\Fr{d}_{h}$ be \Blue{the pullback} of $R_{E}$ and $\Fr{d}_{1}$ 
by the natural projection $W \to E$.
%

We recall that the icosahedral group $I_{60}$ acts $\Bb{P}^{1}$.
We denote by $p_{1},\cdots, p_{12}$ the $I_{60}$-orbit on $\Bb{P}^{1}$ with length $12$.
Let $\Gamma_{i}$ be the fiber of $\varphi$ over $p_{i}$.
We define the action on $W$ for $h \in I_{60}$ as follows.
\[
h: W \to W\;;\; (x,y)\mapsto (h(x),y)
\]
Hence we define an automorphism group 
\[
\Ti{G}:=I_{60}\times K_{E} \cong I_{60} K_{E} \subset \Rm{Aut}(\varphi).
\]
The order of $\Ti{G}$ is $120e$.
We define divisors
\[
\Fr{d}:=\Fr{d}_{h}+\sum_{i=1}^{6}\Gamma_{i},\quad
R:=R_{h}+\sum_{i=1}^{12}\Gamma_{i}.
\]
Since $R_{E}$ and $2\Fr{d}_{1}$ are \Blue{linearly equivalent},
we have $R \in |2\Fr{d}|$.
Hence \Blue{we can take} a double covering 
\begin{align*}
\theta:S_{0}:=
\mathrm{Spec}_{W}\left(\Ca{O}_{W}\bigoplus \Ca{O}_{W}(-\Fr{d}) \right) \to W
\end{align*}
branched along $R \in |2\Fr{d}|$.
Let $S$ be the minimal resolution of $S_{0}$.
Then the induced fibration $f:S \to \Bb{P}^{1}$ is a bielliptic fibration with $g=l+1$.
By the assumption of $\Ti{G}$,
we have $\kappa^{\ast}\Fr{d}\sim\Fr{d}$ and $\kappa^{\ast}R=R$.
Thus, any action $\kappa$ induces an action $\Ti{\kappa} \in \Rm{Aut}(f)$.
Hence we can construct a subgroup $G$ of $\Rm{Aut} (f)$ generated by $\Ti{\kappa}$'s and 
the covering transformation.
Then $G$ is a finite subgroup with $\sharp G \geq 240e$.

On the other hand, 
we have
\[
K_{f}^{2}(\Gamma_{i})=2e.
\]
since $R$ has $e$ ordinary double points on each $\Gamma_{i}$.
Thus, we have $K_{f}^{2}=24e$ and it holds that $\sharp G \geq 10K_{f}^{2}$.

\vspace{2\baselineskip}
{}

\bigskip
\bigskip

Hiroto Akaike

Department of Mathematics, 
Graduate School of Science, 
Osaka University,

1-1 Machikaneyama, Toyonaka, Osaka 
560-0043, Japan

e-mail: u802629d@ecs.osaka-u.ac.jp

\end{document}